\newcommand{\1}{\mathbbm{1}}
\newcommand{\CC}{\mathbb{C}}
\newcommand{\NN}{\mathbb{N}}
\newcommand{\RR}{\mathbb{R}}
\newtheorem{theo}{Theorem}
\newtheorem{prop}[theo]{Proposition}
\newtheorem{lem}[theo]{Lemma}
\newtheorem{cor}[theo]{Corollary}
\newtheorem{rem}[theo]{Remark}
\newcommand{\beqn}{\begin{equation}}
\newcommand{\eeqn}{\end{equation}}
\newcommand{\bear}{\begin{eqnarray}}
\newcommand{\eear}{\end{eqnarray}}
\newcommand{\bean}{\begin{eqnarray*}}
\newcommand{\eean}{\end{eqnarray*}}
\begin{document}

\title{On  the linearized  system of equations for the condensate-normal fluid interaction near the critical temperature. }
\maketitle
\begin{center}
{\large M. Escobedo}\\
{\small Departamento de Matem\'aticas,} \\
{\small Universidad del
Pa{\'\i}s Vasco,} \\
{\small Apartado 644, E--48080 Bilbao, Spain.}\\
{\small E-mail~: {\tt miguel.escobedo@ehu.es}}
\end{center}
\noindent
{\bf Abstract}: The  Cauchy problem for the  linearization of a  system of equations arising in the kinetic theory of a  condensed  gas of bosons near the critical temperature around one of its  equilibria is solved for radially symmetric initial data.  It is proved that the linearized  system has global classical solutions that satisfy the natural conservation laws for a large set of initial data. Some regularity properties of the solutions and their long time asymptotic behavior are described.

\noindent
Subject classification: 45K05, 45A05, 45M05, 82C40, 82C05, 82C22.

\noindent
Keywords: Bose gas, three wave collisions,  Cauchy problem, equilibrium, convergence rate.


\section{Introduction}
\setcounter{equation}{0}
\setcounter{theo}{0}

Correlations between the superfluid component and the normal fluid part in a uniform condensed Bose gas, at temperature below but close to the condensation temperature, and for a small number density of condensed atoms, may be described by the equation
\begin{align}
&\frac {\partial n} {\partial t}(t, p)=n_c(t)I_3(n(t))(p)\qquad t>0,\; p\in \RR^3, \label{PA}\\
&I_3(n)(p)=\!\!\iint _{(\RR^3)^2}\!\!\big[R(p, p_1, p_2)\!-\!R(p_1, p, p_2)\!-\!R(p_2, p_1, p) \big]dp_1dp_2. \label{E1BCD}\\
&R(p, p_1, p_2)\,= \left[\delta ( |p|^2-|p_1|^2- |p_2|^2)  \delta (p-p_1-p_2)\right]\times \nonumber \\
&\hskip 7cm  \times \left[ n_1n_2(1+n)-(1+n_1)(1+n_2)n \right],  \label{S1EA4BEJ}
\end{align}
where $n(t, p)$ represents the density of particles in the normal gas that at time $t>0$ have momentum $p$ and $n_c(t)$ is the density of the condensate at time $t$, that satisfies
\begin{equation}
\frac {dn_c} {dt} (t)=-n_c(t)\int _{ \RR^3 } I_3(n(t))(p))dp \qquad t>0.\label{PB}
\end{equation}
Equation (\ref{PA}) was first derived in \cite{Eckern} and \cite{ Kirkpatrick} and their treatment was afterwards extended to a trapped Bose gas. By including Hartree–Fock corrections to the energy of the excitations  the so called ZNG system was obtained (cf. \cite{Za}). On the interest of system (\ref{PA}),(\ref{PB}) for the description of condensed Bose gases  see also \cite{Sv, ST1, J}.
Other theoretical models  do exist to describe Bose gases in presence of a condensate (cf.\cite{PR}) but ZNG system, and  (\ref{PA}),(\ref{PB}) in particular, are very appealing by their simplicity and  are well suited for analytical PDE methods.

The two following functions of time,
\begin{equation}
\label{S1Eme}
\mathcal N(t)=\int  _{ \RR^3 }n(t, p)dp\,,\,\,\,\,\,\,\mathcal E(t)=\int  _{ \RR^3 }n(t, p)|p|^2dp
\end{equation}
give respectively the total number of particles  and the total energy  of the normal fluid part in the gas,  with density function $n(t, p)$. The total number of particles at time $t$  in the system condensate-normal fluid  is $n_c(t)+\mathcal N(t)$ and its total energy is $\mathcal E(t)$.  It formally follows from (\ref{PA}), (\ref{PB}) that these two quantities are constant in time: $\mathcal E(t)=\mathcal E(0)$  and $n_c(t)+\mathcal N(t)=n_c(0)+\mathcal N(0)$ for all $t>0$. This corresponds to the conservation of the total mass and energy property that is satisfied by the particle system in the physical description (cf. for example \cite{Eckern} ). It is also well known that  equation (\ref{PA})  has  a  family of non trivial equilibria,
\begin{align}
&n_0(p)=\left(e^{\beta |p|^2}-1\right)^{-1} \label{S1EEq21}
\end{align}
where  the mass of the particles is taken to be $m=1/2$ and $\beta $ is a positive constant related to the temperature of the gas whose particle's density is at the equilibrium $n_0$. It is easily checked that  $R(p, p_k, p_\ell)\equiv 0$ in  (\ref{S1EA4BEJ})  for $n=n_0$.

 Our purpose  is to prove the existence of classical solutions to  the Cauchy problem  for  the ``radially symmetric linearization'' of (\ref{PA})--(\ref{PB}) around an equilibrium $n_0$, and describe some of their properties. Such linearization is deduced  through the change of variables (cf. \cite{m, EPV})
\begin{align}
n(t, p)&=n_0(p)+n_0(p)(1+n_0(p))\Omega  (t, |p|)=n_0(p)+\frac {\Omega (t, |p|)} {4 \, \sinh^2 \left(\frac{\beta |p|^2}{2}\right)}.\label{S2Elinzn2}\\
&x=\frac {\sqrt {\beta }\,|p|} {\sqrt 2}=\frac {\sqrt \beta\, k } {\sqrt 2}, \quad  u(t, x)=\frac{\Omega (t, |p|)}{k^{2}},  \label{S2CcV}
\end{align}
and keeping only linear terms with respect to $u$ in (\ref{PA}). Since equation (\ref{PB}) is linear with respect to $n_c$ its linearization (\ref{S1ERRR10xp}) follows  by just keeping the  terms of 
$I_3(n_0(p)+n_0(p)(1+n_0(p))|p|^2u  (t, |p|))$ that are  linear with respect to $u$. It finally reads, for dimensionless variables in units which minimize the number of prefactors
\begin{align}
&\frac {\partial u} {\partial t }(t, x)=p_c(t)\mathcal L(u(t))\label{S3E23590}\\
&\mathcal L(u(t))=\int _0^\infty (u(t , y)-u(t , x)) M(x, y) dy\label{S3E2359Mf}\\
&\frac {d p_c} {dt}(t)=-p_c(t) \int _0^\infty \int _0^\infty  W (x, y)(u(t, y)-u(t, x))y^4x^2dydx\label{S1ERRR10xp}
\end{align}
for where, for all $x>0$, $y>0$, $x\not =y$,
\begin{align}
M(x, y)&= \left(\frac {1} {\sinh|x^2-y^2|}-\frac {1} {\sinh (x^2+y^2) } \right) \frac {y^3\sinh x^2} {x^3\sinh y^2},\label{S3E2359M}\\
W(x, y)&= \frac {M(x, y)} {(\sinh x^2)^2}\frac {x^2} {y^4} \label{S3E2359UU}\\
&= \left(\frac {1} {\sinh|x^2-y^2|}-\frac {1} {\sinh (x^2+y^2) } \right) \frac {1} {xy\sinh x^2\sinh y^2}.
\end{align} 
A different limit of the ZNG system for uniform condensed Bose gases was obtained in references \cite{Eckern} and \cite{ Kirkpatrick}, corresponding to very low temperatures and large number density of condensed atoms. Related works in the mathematical literature for the isotropic case may be found in  \cite{A, Ar, TS}. The  non isotropic  linearized system around the same equilibria $n_0$ is treated in \cite{m2}.

\subsection{The isotropic linearization of (\ref{PA}), (\ref{PB}).} The   linearization of (\ref{PA}), detailed in \cite{EPV} and recalled in \cite{m}, is briefly presented here
for the sake of completeness.
When $R(p, p_1, p_2)\!-\!R(p_1, p, p_2)\!-\!R(p_2, p_1, p)$ is written in terms of the  function $\Omega $ defined in (\ref{S2Elinzn2}) and only 
linear terms in  $\Omega $ are kept, the result is
\begin{align}
n_0(1+n_0)\frac {\partial \Omega (t)} {\partial t}=&\, n_c(t)L _{ I_3 }(\Omega (t)) \label{S1ERRR0}\\
L _{ I_3 }(\Omega (t))=&\int _0^\infty \left( \mathscr U(k, k')\Omega (t, k')-\mathscr V(k, k')\Omega (t, k)\right)k'^2dk', \label{S1ERRR}\\
\mathscr{U}(k, k') &=\frac {16 n_\mathrm{c} a^2 } {kk'}
\Big[ \theta(k-k')\times   \nonumber \\
& \times 
n_0(\omega(k))[1+n_0(\omega(k'))][1+n_0(\omega(k)-\omega(k'))] + (k \leftrightarrow k')  \Big] \nonumber \\ 
&- \, n_0(\omega(k)+\omega(k'))[1+n_0(\omega(k))][1+n_0(\omega(k'))] , \label{S1ERRRU}\\
\mathscr{V}(k, k') &= \frac{16 n_\mathrm{c} a^2}{kk'}\Big[
 \theta(k-k')\times \nonumber \\
&\times n_0(\omega(k))[1+n_0(\omega(k'))][1+n_0(\omega(k)-\omega(k'))]+  (k \leftrightarrow k')\Big]
 \label{S1ERRRV}
\end{align}
where $a$ is the s-wave scattering length,  $k=|p|$ and $k'=|p'|$. The functions  $\mathscr U(k, k')$ and $\mathscr V(k, k')$ have a non integrable singularity along the diagonal $k=k'$.  However, these singularities cancel each other when the two terms are combined as in (\ref{S1ERRR}) as far as it is assumed that, for all $t>0$, $\Omega (t)\in C^\alpha (0, \infty)$ for some $\alpha >0$.  But the integrand $\left( \mathscr U(k, k')\Omega (t, k')-\mathscr V(k, k')\Omega (t, k)\right)$ can not be split as for example in the linearization of   Boltzmann equations for classical particles. However an explicit calculation shows that, for all $k>0$,
\begin{equation}
\label{S1ERRE}
L _{ I_3 }(\omega )(k)=\int _0^\infty \left( \mathscr U(k, k') k'^2-\mathscr V(k, k')k^2\right)k'^2dk'=0
\end{equation}
from where we deduce, for all $k>0$,
\begin{align*}
\int _0^\infty &\left( \mathscr U(k, k') \frac {k'^2} {k^2}\Omega (t, k)-\mathscr V(k, k')\Omega (t, k)\right)k'^2dk'
=\frac {\Omega (t, k) } {k}L _{ I_3 }(\omega )(k)=0.
\end{align*}
We may then write,
\begin{align*}
L _{ I_3 }(\Omega (t))&=\int _0^\infty \left( \mathscr U(k, k')\Omega (t, k')-\mathscr V(k, k')\Omega (t, k)\right)k'^2dk'\\
&=\int _0^\infty  \mathscr U(k, k')\left( \frac {\Omega (t, k')} {k'^2}-  \frac {\Omega (t, k)} {k^2}\right)k'^4dk'
\end{align*}
Since equation (\ref{PB}) is linear with respect to $n_c$ its linearization (\ref{S1ERRR10xp}) follows  by just keeping the  terms of 
$I_3(n_0(p)+n_0(p)(1+n_0(p))|p|^2u  (t, |p|))$ that are  linear with respect to $u$. The linearized system reads then,
\begin{align}
&n_0(1+n_0)\frac {\partial \Omega (t, k)} {\partial t}=p_c(t)\int _0^\infty  \mathscr U(k, k')\left( \frac {\Omega (t, k')} {k'^2}-  \frac {\Omega (t, k)} {k^2}\right)k'^4dk' \label{S1ERRR01}\\
&p_c'(t)=-p_c(t)\int _0^\infty \int _0^\infty \mathscr U(k, k')\left( \frac {\Omega (t, k')} {k'^2}- \frac {\Omega (t, k)} {k^2}\right)k'^4k^2dk'dk,
 \label{S1ERRR01B}
\end{align}
or, in terms of $\tilde \Omega (t, k)=\Omega (t, k)/k^2$,
\begin{align*}
\frac {\partial \tilde \Omega(t, k)} {\partial t}=p_c(t)
\int _0^\infty \frac { \mathscr U(k, k')} {n_0(k)(1+n_0(k))k^2}\left(\tilde \Omega  (t, k')- \tilde \Omega (t, k)\right)k'^4dk' \\
p_c'(t)=-p_c(t)\int _0^\infty \int _0^\infty  \mathscr U(k, k')\left(\tilde \Omega  (t, k')- \tilde \Omega (t, k)\right)k'^4k^2dk'dk.
\end{align*}
Since $\left(k^2n_0(k)(1+n_0(k))\right)^{-1}=4k^{-2}\sinh^2\left(\frac {\beta k^2} {2}\right)$, 
\begin{align*}
\frac {\partial \tilde \Omega(t, k)} {\partial t}=4p_c(t)
\int _0^\infty  \left[\mathscr U(k, k')\sinh^2\left(\frac {\beta k^2} {2}\right) \frac {k'^4} {k^2}\right]\left(\tilde \Omega  (t, k')- \tilde \Omega (t, k)\right)dk'
\end{align*}
Use of the change of variables (\ref{S2Elinzn2}-\ref{S2CcV}) in (\ref{S1ERRR01}) yields system (\ref{S3E23590}), (\ref{S1ERRR10xp})
for  $(u, p_c)$, after scaling the time variable to get rid of some positive numerical constants. 

\subsection{A nonlinear approximation.}
Another  approximation of the system (\ref{PA}), (\ref{PB}) is possible where, in  the
equation (\ref{PB}), the function $n$ is replaced by $n_0+n_0(1+n_0)x^2 u$ in the nonlinear collision term $I_3$ given by (\ref{E1BCD}) to obtain the system
\begin{align}
&\frac {\partial v} {\partial t }(t, x)=\tilde p_c(t)\int _0^\infty (v(t , y)-v(t , x)) M(x, y) dy\label{S3E23590B}\\
&\frac {\partial \tilde p_c} {\partial t}(t)=-\tilde p_c(t)\int  _{0}^\infty I_3\left(n_0+n_0(1+n_0)x^2 v(t, x)\right)x^2dx \label{PBB}
\end{align}
instead of (\ref{S3E23590}), (\ref{S1ERRR10xp}). In that way the non linearity of $I_3$ in the equation for $\tilde p_c$ is kept. But the conservation in time of $\tilde p_c(t)+N(t)$ does not hold, and so an important global property of the original system (\ref{PA})-(\ref{PB}) is lost. As a consequence the time existence of the solutions to system (\ref{S3E23590B}), (\ref{PBB}) can not be proved to be $(0, \infty)$.
Then, system (\ref{S3E23590B}), (\ref{PBB}) is not too satisfactory to describe global properties of  the particle's system. But it may be a better approximation of the local properties of the solutions to the nonlinear system of equations (\ref{PA}), (\ref{PB}). In order to avoid any confusion, system (\ref{S3E23590B}), (\ref{PBB}) is considered in the Appendix.

\subsection{Further Motivation}
\label{motivation}
It is known  that  for all non negative measure $n _{ in }$ with a finite first  moment, and for every constant  $\rho >0$, system  (\ref{PA})-(\ref{PB})   has a weak solution  $(n(t), n_c(t))$ with initial data  $(n _{ in }, \rho )$ that satisfies the conservation of mass and energy (cf. \cite{CE}). For all $t>0$,  $n(t)$ is  a non negative measure that does not charge the origin, with finite first moment,  and $n_c(t)>0$. However, one basic aspect of the non equilibrium behavior of the system condensate--normal fluid  is the growth of the condensate after its formation (cf. \cite{Za, Bi, PR} and references therein). In the kinetic formulation  (\ref{PA})-(\ref{PB}), this behavior is driven  by the integral  of $I_3(n)$ in the right hand side of equation (\ref{PB}).  As shown in  \cite{S}, the behaviour of that term  crucially depends  on the behavior of $n(t, p)$ as $|p|\to 0$ (this was discussed also  in \cite{CE, L, Sv}). If for example the measure $n(t)$ is a radially symmetric, bounded function near the origin then,  from a simple use of Fubini's Theorem, 
\begin{equation*}
\int  _{ \RR^3 }I_3(n(t))(p)dp=C\int _0^\infty  x^3 n(t, x ) dx 
\end{equation*}
for some constant  $C>0$ independent of $n$, and this would give a monotone decreasing behavior of $n_c(t)$. On the contrary, as  it is shown in \cite{S}, if the measure $n(t)$ is a function such that 
\begin{equation}
\label{S1EASN}
n(t, p) \underset{p\to 0 }{\sim} a(t)|p|^{-2}
\end{equation} 
for some  $a(t)>0$,
and satisfies some H\"older regularity property with respect to $p$ in a neighborhood of the origin,  then for some other constant $C_1>0$ independent of $n$,
\begin{equation}
\label{S1EASN2}
\int  _{ \RR^3 }I_3(n(t))(p)dp=-C_1a^2(t)+C\int_0^\infty x^3 n(t, p )dx
\end{equation}
On the other hand, it was proved in  \cite{CE} that  if the measure  $|p|^2 n(t, p)$  has no atomic part and has an algebraic behavior as $|p|\to 0$ then it satisfies  (\ref{S1EASN}). Both results in \cite{S} and \cite{CE} assume some regularity of the solution $n$ with respect to $p$, although no regular solutions to (\ref{PA}) are known yet. The existence of regular classical solutions to (\ref{PA})-(\ref{PB}) satisfying (\ref{S1EASN}) is one of the motivations of our present work.  

Since (\ref{S1EASN}) is the behavior of the equilibrium  $n_0$ (with $a(t)\equiv \beta$), it is natural to first consider the existence of such regular solutions for the linearization of (\ref{PA}) around $n_0$. Because of the singular behavior (\ref{S1EASN}) of $n_0$ near the origin, the linear operator $\mathcal L$ in (\ref{S3E2359Mf}) has regularizing effects.  Similar regularizing effects may be expected also in the  non linear equation (\ref{PA}).

\subsection{Basic arguments and Main results.}
\label{MR}
The function $p_c(t)$ in the right hand side of (\ref{S3E23590}) may be absorbed by the  change of  variables,
\begin{align}
\label{S2NewTime}
\tau =\int _0^tp_c(s)ds,\,\,f(\tau , x)=u(t, x)
\end{align}
to obtain
\begin{align}
\label{S1ERRR10fx}
&\frac {\partial f(\tau , x)} {\partial \tau }=\mathcal L(f(\tau ))(x).
\end{align} 
This equation  may  be written,
\begin{align}
&\frac {\partial f} {\partial \tau }(\tau , x)=L(f(\tau ))(x)+F(f(\tau ))(x)\label{S3E1.1}\\
&L(f)(x)=\int _0^\infty (f (y)-f (x) )\left(\frac {1} {|x^2-y^2|}-\frac {1} {x^2+y^2} \right)\frac {y} {x}dy \label{S3E2359B}\\
&F(f(\tau ))(x)=(\mathcal L-L)(f(\tau ))(x)\label{S3E1.2}
\end{align}
where,  from (\ref{S3E2359Mf}), (\ref{S3E2359B}) and (\ref{S3E1.2}), the operator $F$ may be written,
\begin{align}
&F(f)(x)=-f(x)  \int _0^\infty T(x, y) dy+ \int _0^\infty T(x, y) f (y)dy\label{S3E1.2}\\
&T(x, y)=\frac {y^3\sinh x^2} {x^3\sinh y^2} \left(\frac {1} {\sinh|x^2-y^2|}-\frac {1} {\sinh (x^2+y^2) } \right) - \nonumber\\ 
&\hskip 5.5cm -\frac {y} {x}\left(\frac {1} {|x^2-y^2|}-\ \frac {1} {x^2+y^2} \right).\label{S3EInt2}
\end{align}
The equation (\ref{S3E1.1}) is  solved as a perturbation of
\begin{equation}
\frac {\partial f} {\partial \tau }(\tau , x)=L(f(\tau ))(x)\label{S3E1.L1}
\end{equation}
with  forcing term in  (\ref{S3E1.2}). To this end several results about equation (\ref{S3E1.L1}) obtained  in \cite{m}  are used, in particular the regularizing effects of the operator $L$ . If $\Lambda$ denotes the fundamental solution of (\ref{S3E1.L1}), for  all initial data $f_0\in L^1$  there exists a weak solution of (\ref{S3E1.L1}), 
\begin{equation}
\label{S2ESG}
S(t)f_0(x)=\int _0^\infty \Lambda\left(\frac {t} {y}, \frac {x} {y} \right)f_0(y)\frac {dy} {y},\,\,\forall t>0,\,\,\forall x>0,
\end{equation}
such that $S(\cdot )f_0\in C([0, \infty); L^1(0, \infty))$, $S(t)f_0\in C([0, \infty))$ for  all $t>0$ and,  for all $t>0$ and any compact interval $[x_0, x_1]\subset  (0, \infty)$ there exists $\alpha =\alpha (t, x_0)>0$ such that $S(t)f_0\in C^{\alpha }([x_0, x_1])$, and  (\ref{S3E1.L1}) is satisfied pointwise for almost every $t>0, x>0$. It was also shown that if $f_0\in L^1(0, \infty)\cap L^\infty _{ loc }(0, \infty)$ then  $L(u)\in L^\infty ((0, \infty)\times (0, \infty))$, $u_t\in L^\infty ((0, \infty)\times (0, \infty))$.

Once the Cauchy problem for equation (\ref{S3E1.1}) is solved using the  semigroup $S(t)$,  the change of time variable in (\ref{S2NewTime})  is inverted to obtain the function $u(t)$, and deduce $p_c(t)$ using the conservation of mass of system and equation (\ref{S1ERRR10xp}). Our first  result is then as follows. 
\begin{theo}
\label{MainThm}
Suppose that $u_0\in L^1(0, \infty)$ satisfies
\begin{equation}
|||u_0|||_\theta\equiv \sup _{ 0<x<1 }x^\theta |u_0(x)|+\sup _{ x>1 }|u_0(x)|<\infty \label{S4Etheta}
\end{equation}
for  some $\theta\in (0, 1)$. Then, there exists a pair $(u, p_c)$,
\begin{align}
&u\in C([0, \infty); L^1 (0, \infty))\cap L^\infty ((\delta , \infty); L^\infty(0, \infty)),  \forall \delta >0, \label{S4E124}\\
&p_c\in C([0, \infty)) \label{S4E124p}
\end{align}
such that for each $t>0$,  $u(t)$ is locally Lipschitz on $(0, \infty)$ and, for all almost every $t>0$ and $x>0$,
\begin{align}
&\frac {\partial u(t, x)} {\partial t}=p_c(t)\mathcal L(u(t))(x),\label{S4E125u}\\
&\frac {d p_c(t)} {dt}=-p_c(t)\int_0^\infty \mathcal L(u(t))(x) n_0(x^2)(1+n_0(x^2))x^4dx. \label{S4E125B}
\end{align}
Moreover:
\begin{align}
\frac {\partial u} {\partial t},\, \mathcal L(u) \in  L^\infty  _{ loc }((0, \infty)\times (0, \infty))\cap L^1((0, T);L^1(0, \infty)),\,\,\forall T>0,\label{S4E140}
\end{align}
there exists a function $H\in L^\infty ((\delta , 0)\times (0, \delta ))$ for all $\delta >0$, defined in (\ref{S5EH}), such that
\begin{align}
&\left|\frac {\partial u(t, x)} {\partial t} \right|+ |\mathcal L(u)(t, x)|\le C\left(\sup _{ 0<y<1 }y^{\theta}|f_0(y)| +||u_0|| _{ L^\infty(1, \infty) }+ ||u_0||_1\right)H(t, x)\label{S4E500}
\end{align}
and for all $\beta \in (0, 1)$ and $\delta (0, 1)$ there exists $ \lambda _{ \beta , \delta  } (t, x)$, defined in (\ref{S5Ell}), such that,
\begin{align}
\label{S1EPLXl}
&\left|\frac {\partial u} {\partial x}(t, x) \right|\le  \lambda _{ \beta , \delta  } (t, x)|||f_0||| _{ \theta, 1 }+\sup _{ 0\le s\le t }(|||f(s)|||_\theta+|||f(s)|||_1)
\int _0^t  \lambda _{\beta, \delta   } (s, x)ds. 
\end{align}
\begin{align}
&\forall T>0, \exists C_T>0;\,\, ||u(t)||_1\le C_T||u_0||_1,\,\,\forall t\in (0, T). \label{S4E164-1X}\\
&\forall  \delta >0,\,\,||u(t)||_\infty \le 
C(\theta) \left(||u_0|| _{ L^\infty(1, \infty) }+ \delta ^{-\theta }\sup _{ 0<y<1 }y^\theta|u_0(y)|+ ||u_0||_1\right),
\,\forall t\ge\delta. \label{S4E168X}
\end{align}
For all $\varphi \in C^1_0(0, \infty)$, the map
$
t\mapsto \displaystyle{\int _0^\infty \varphi (x)u(t, x)dx}
$
belongs to $W _{ loc }^{1,1}(0, \infty)$ and for almost every $t>0$,
\begin{equation}
\frac {d} {dt}\int _0^\infty \varphi (x)u(t, x)dx=\int_0^\infty L(u(t))(x)\varphi (x)dx+\int _0^\infty F(u(t, x))\varphi (x)dx. \label{S4cor1E2}
\end{equation}
\end{theo}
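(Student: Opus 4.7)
The plan is to work first in the auxiliary time variable $\tau$ introduced in (\ref{S2NewTime}), solve the autonomous equation (\ref{S1ERRR10fx}) for $f(\tau, x)$, and only then recover $p_c(t)$ and invert the time change. Having $f$ in hand, I will define $p_c(t)$ from the linearized analogue of the conservation law $n_c + \mathcal{N} = \text{const}$ and build $t(\tau)$ by $dt/d\tau = 1/p_c$.

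For the first step I treat (\ref{S3E1.1}) as $\partial_\tau f = L(f) + F(f)$ and use the Duhamel representation
\begin{equation*}
f(\tau) = S(\tau) f_0 + \int_0^\tau S(\tau - s) F(f(s))\, ds
\end{equation*}
based on the semigroup $S$ from \cite{m} described in (\ref{S2ESG}). Existence and uniqueness follow by a Banach fixed-point in a space built from $\|\cdot\|_1$ together with the weighted $|||\cdot|||_\theta$ norm. The essential analytic input is that $F$ is bounded in this setting. Examining (\ref{S3EInt2}), one sees that near $y = x$ the hyperbolic singularities $1/\sinh|x^2-y^2|$ cancel against $1/|x^2-y^2|$ up to a smooth remainder (Taylor expansion of $\sinh$), while for $y \to 0$ and $y \to \infty$ the exponential decay of $1/\sinh y^2$ dominates. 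Splitting $(0,\infty)$ into the regions $\{|y-x| < \eta\}$, $\{y \text{ small}\}$, and $\{y \text{ large}\}$ and using the $\theta$-weighted control of $f(y)$ produces pointwise bounds on $F(f)(x)$. These feed through the semigroup via the Duhamel formula and a Gronwall argument to yield the a priori estimates (\ref{S4E164-1X})--(\ref{S4E168X}), together with the pointwise majorant $H$ in (\ref{S4E500}) and, by differentiating the Duhamel identity and invoking the regularization of $S(\tau)$ in the $x$-variable recorded just before the theorem, the derivative bound (\ref{S1EPLXl}) with $\lambda_{\beta,\delta}$.

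To return to the $(t, u, p_c)$ variables, I use that any classical solution of (\ref{S4E125u})--(\ref{S4E125B}) satisfies
\begin{equation*}
\frac{d}{dt}\left(p_c(t) + \int_0^\infty u(t,x)\, n_0(x^2)(1+n_0(x^2))\, x^4\, dx\right) = 0
\end{equation*}
by (\ref{S4E125B}), so $p_c(t)$ can be defined algebraically in terms of $f$ and $p_c(0)$. In the $\tau$-variable this gives a function $P_c(\tau) > 0$ at least for small $\tau$, and the time change $t = \int_0^\tau ds/P_c(s)$ inverts (\ref{S2NewTime}) provided $P_c$ stays bounded away from $0$ on compacts. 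Global positivity is propagated using the $L^1$ bounds in (\ref{S4E140}), which control the variation of $p_c$ through the right-hand side of (\ref{S4E125B}) evaluated as an integral of $\mathcal{L}(f)$ against the equilibrium weights. Once $t \leftrightarrow \tau$ is a $C^1$ diffeomorphism of $[0,\infty)$, the pair $(u(t,x), p_c(t))$ inherits all the regularity and identities claimed; in particular the weak formulation (\ref{S4cor1E2}) follows from the $L^1_{\mathrm{loc}}$ integrability in (\ref{S4E140}) together with a standard test-function argument applied to (\ref{S1ERRR10fx}).

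The main obstacle I expect is the joint handling of the kernel $T(x,y)$ near the diagonal and at the endpoints $y = 0$ and $y = \infty$ in such a way that $F$ maps the space carrying the $|||\cdot|||_\theta$ norm into itself with constants sharp enough to close the fixed-point argument globally rather than only on a short time interval; this is what ultimately determines admissibility of $\theta \in (0,1)$. A secondary but delicate issue is showing that $P_c(\tau)$ remains strictly positive for all $\tau \geq 0$, since without this the time-change inversion fails and the global existence asserted in (\ref{S4E124}) cannot be reached; here one must exploit a quantitative form of the conservation law together with the sign/detailed-balance structure of $\mathcal{L}$ inherited from (\ref{S1ERRE}).
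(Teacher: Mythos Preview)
Your overall strategy matches the paper's: solve the autonomous equation for $f(\tau,\cdot)$ via the Duhamel formula built on the semigroup $S$, using boundedness of $F$ in suitable norms to close a fixed-point argument, then recover $p_c$ and invert the time change. The kernel analysis you sketch for $T(x,y)$ is what the paper carries out in detail (Propositions~2.2--2.3 leading to Proposition~2.1), and the fixed-point space in the paper is essentially your $L^1 \cap |||\cdot|||_\theta$ space, with the refinement that a time weight $s^\theta$ is put on $\sup_{0<x<1}|f(s,x)|$ to absorb the singular smoothing estimate (\ref{S5P1E1}).

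Where your route diverges is in the construction of $p_c$. You propose defining $P_c(\tau)$ \emph{algebraically} from the conservation law $p_c+N=\text{const}$, whereas the paper integrates the ODE (\ref{S4E125B}) directly in $\tau$ to set $q_c(\tau)=q_c(0)\exp(-\mathcal{M}(\tau))$ with $\mathcal{M}(\tau)=\int_0^\tau m(\sigma)\,d\sigma$ and $m(\sigma)=\int_0^\infty \mathcal{L}(f(\sigma))\,n_0(1+n_0)x^4\,dx$. The exponential form makes $q_c>0$ automatic, so the only remaining issue for surjectivity of $t(\tau)=\int_0^\tau q_c^{-1}$ is divergence of this integral; your algebraic definition, by contrast, forces you to prove strict positivity separately, which is the ``secondary but delicate issue'' you flag.

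The genuine gap in your sketch is the mechanism that makes the time change global. The paper does \emph{not} get this from the $L^1$ bounds in (\ref{S4E140}) alone: it uses the long-time behaviour of $N_f(\tau)=\int f(\tau)\,n_0(1+n_0)x^4\,dx$, proved via an entropy--dissipation argument. Specifically, from $\frac{d}{d\tau}\int|f|^2\,d\mu=-D(f)\le 0$ (Proposition~4.1) one extracts $D(f)\in L^1(0,\infty)$, then a compactness/lower-semicontinuity argument forces $f(\tau)\to C_*$ in $L^2(d\mu)$ (Propositions~4.8, 4.11) and hence $N_f(\tau)\to C_*N_0$ (Corollary~4.13). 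This convergence is what pins down $\mathcal{M}(\tau)$ asymptotically and guarantees $\int_0^\infty q_c^{-1}\,d\sigma=\infty$ (Proposition~5.2). Your reference to ``sign/detailed-balance structure of $\mathcal{L}$'' points in the right direction but does not name this step; without it the inversion of the time change cannot be shown to cover all of $[0,\infty)$, and the global existence in (\ref{S4E124}) remains unproved.
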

\medskip

In view of  (\ref{S2Elinzn2}), (\ref{S2CcV}) and (\ref{S2NewTime}), if $u$ is a solution of (\ref{S3E1.1}) given by Theorem \ref{MainThm},   the pair of functions
\begin{align}
\label{S2EM1}
&n(t, p)=n_0(p)+n_0(p)(1+n_0(p))|p|^2 u (t , |p|)\\
&p_c(t)=\exp\left( \int _0^t\int _0^\infty \int _0^\infty  \frac {M(x, y)} {(\sinh x^2)^2}(u(t, y)-u(t, x))\frac {x^2} {y^4}dydxds\right)\label{S2EM1B}\\
&\qquad \equiv\exp\left(\int _0^t\int _0^\infty  \mathcal L(u(t))(x) \frac {x^2dx ds} {(\sinh x^2)^2}\right)
\end{align}
may be seen as an approximated solution of (\ref{PA}), (\ref{PB}), as far as $n_0(p)(1+n_0(p))|p|^2 u (t , |p|)$ remains small compared to $n_0$. In view of (\ref{S1Eme}) it is natural to look at the quantities
\begin{align}
N(t )=\int _0^\infty n_0(x)(1+n_0(x)u(t , x)x^4dx\label{S2ENP}\\
E(t )=\int _0^\infty n_0(x)(1+n_0(x))u(t , x)x^6dx.\label{S2EE}
\end{align}
They represent respectively the variation of the total number of particles and of energy caused by the  initial perturbation $n_0(p)(1+n_0(p))|p|^2 u(0)$ of the equlibrium $n_0$. Let us also define,
\begin{align}
&N_0=\int _0^\infty n_0(x)(1+n_0(x)x^4dx\label{S2ENP}\\
&E_0=\int _0^\infty n_0(x)(1+n_0(x))x^6dx.\label{S2EE}
\end{align}
The two following properties, hold then true,
\begin{cor}
\label{SeEC}
Let $u_0$ and $u$ be as in Theorem \ref{MainThm} and  $n$ defined by (\ref{S2EM1}). Then, 
\begin{align}
&E(t )=E(0),\,\,\,\forall t >0, \label{SeEC1}\\
&p_c(t)+N(t)=p_c(0)+N(0),\,\,\forall t>0.\label{SeEC2}
\end{align} 
\end{cor}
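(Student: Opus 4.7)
After the scaling (\ref{S2Elinzn2})--(\ref{S2CcV}) one has $n_0(x)(1+n_0(x))=(4\sinh^2 x^2)^{-1}$, so that $E(t)$ and $N(t)$ are the pairings of $u(t,\cdot)$ against the test functions $\varphi_E(x)=x^6/(4\sinh^2 x^2)$ and $\varphi_N(x)=x^4/(4\sinh^2 x^2)$, both bounded on $[0,\infty)$ and decaying exponentially at infinity. The plan is to differentiate $E(t)$ and $p_c(t)+N(t)$ in time using (\ref{S4E125u}) and (\ref{S4E125B}), and show that each derivative vanishes, then conclude by integration.

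The identity (\ref{SeEC2}) is almost built into the system. Since $u_t\in L^1((0,T);L^1(0,\infty))$ by (\ref{S4E140}) and $\varphi_N\in L^\infty(0,\infty)$, the map $t\mapsto N(t)$ is absolutely continuous and
\[
\frac{dN}{dt}(t) = \int_0^\infty \varphi_N(x)\, u_t(t,x)\,dx = p_c(t)\int_0^\infty \varphi_N(x)\,\mathcal{L}(u(t))(x)\,dx,
\]
which by (\ref{S4E125B}) is exactly $-dp_c/dt$. Hence $(p_c+N)'=0$ almost everywhere, and the continuity of $p_c$ and $N$ on $[0,\infty)$ gives (\ref{SeEC2}).

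For (\ref{SeEC1}), the same argument yields $dE/dt(t) = p_c(t)\int_0^\infty \varphi_E(x)\,\mathcal{L}(u(t))(x)\,dx$. Expanding $\mathcal{L}$ via (\ref{S3E2359Mf})--(\ref{S3E2359M}), the weighted kernel is
\[
K(x,y) \,:=\, \varphi_E(x)\,M(x,y) \,=\, \frac{x^3 y^3}{4\sinh(x^2)\sinh(y^2)}\left(\frac{1}{\sinh|x^2-y^2|}-\frac{1}{\sinh(x^2+y^2)}\right),
\]
which is manifestly symmetric in $(x,y)$: the asymmetric factor $y^3\sinh(x^2)/(x^3\sinh(y^2))$ in $M$ is precisely cancelled by a piece of $\varphi_E(x)=x^6/(4\sinh^2 x^2)$, and this is the whole point of choosing this particular weight. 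Writing the integral as $\iint_{(0,\infty)^2}(u(y)-u(x))K(x,y)\,dy\,dx$ and swapping $(x,y)\leftrightarrow(y,x)$ shows that it equals its own negative, hence vanishes, giving $dE/dt=0$.

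The main technical hurdle is justifying Fubini for this symmetrization, i.e.\ the absolute integrability of $|u(y)-u(x)|\,K(x,y)$ on $(0,\infty)^2$. Near the diagonal $\{x=y\}$, $K$ has a singularity of order $|x-y|^{-1}$ which is tamed by the local Lipschitz estimate (\ref{S1EPLXl}). For $x^2+y^2$ large, the exponential decay of $1/\sinh(x^2\pm y^2)$ combined with the $L^\infty$ bound (\ref{S4E168X}) provides integrability. Near the origin, the $x^{-\theta}$ growth of $u$ allowed by (\ref{S4Etheta}) is dominated by the $x^3 y^3$ prefactor in $K$. If one prefers to avoid a direct kernel estimate, an alternative is to symmetrize first on the truncated region $(\eta,\eta^{-1})^2$, where Fubini is elementary and gives exact cancellation, and then pass to the limit $\eta\to 0$ using the pointwise bound (\ref{S4E500}) and dominated convergence. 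Integrating $dE/dt=0$ from $0$ to $t$ then yields (\ref{SeEC1}).
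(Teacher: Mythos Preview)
Your proposal is correct and follows essentially the same approach as the paper. For (\ref{SeEC2}) the arguments coincide verbatim; for (\ref{SeEC1}) the paper routes through the time change (\ref{S2NewTime}) and invokes Corollary~\ref{SeECf} for $f$, whose proof is precisely your symmetry observation---namely that $\varphi_E(x)M(x,y)=\tfrac{1}{4}x^4y^4 W(x,y)$ is symmetric in $(x,y)$---applied via the weak formulation (\ref{S4cor1E2}) with $\varphi=n_0(1+n_0)x^6$, which bypasses the need for an explicit Fubini justification.
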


\begin{cor}
\label{SeEC2} 
Let $u_0$ and $u$ be as in Theorem \ref{MainThm}. Then,
\begin{align}
&\lim _{ t \to  \infty } \int _0^\infty |u(t, x)-C_*|^2n_0(x)(1+n_0(x))x^6dx=0, \label{SeEC2E1}\\
&\lim _{ t \to  \infty } \int _0^\infty |u(t, x)-C_*|\,n_0(x)(1+n_0(x))x^4dx=0,\label{SeEC2E2}\\
&\hbox{where}\,\,\,\,C_*=\frac {E(0)} {E_0}.
\end{align}
\end{cor}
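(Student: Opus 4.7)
Proof sketch of Corollary~\ref{SeEC2}. The plan is to work in the $\tau$-variable of (\ref{S2NewTime}), where the equation becomes the autonomous linear $\partial_\tau f = \mathcal L(f)$, exploit a self-adjoint dissipative structure for $\mathcal L$ on a weighted $L^2$-space, run a LaSalle-type argument to get convergence as $\tau\to\infty$, and then revert to $t$-time.

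The first step is to verify by direct computation from (\ref{S3E2359M}) that $w(x) := n_0(x)(1+n_0(x))x^6 = x^6/(4\sinh^2 x^2)$ satisfies the detailed-balance relation $M(x,y)w(x)=M(y,x)w(y)$. It follows that $\mathcal L$ is symmetric and non-positive on $\mathcal H := L^2((0,\infty); w\,dx)$, with Dirichlet form $\mathcal D(f) := \iint (f(y)-f(x))^2 M(x,y) w(x)\, dy\, dx$, and its kernel (restricted to functions of finite Dirichlet energy) is reduced to the constants since $M>0$ off the diagonal. Writing $f(\tau,x) := u(t,x)$ for $\tau = \int_0^t p_c$ and using $\mathcal L(C_*) = 0$,
\begin{equation*}
\frac{d}{d\tau}\|f(\tau)-C_*\|_{\mathcal H}^2 = -\mathcal D(f(\tau)) \le 0.
\end{equation*}
The initial norm is finite: by (\ref{S4Etheta}), $(u_0-C_*)^2 w \lesssim x^{2-2\theta}$ near $0$, integrable since $\theta<1$, with exponential decay at infinity. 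By Corollary~\ref{SeEC}, $\int f(\tau) w\,dx = E(0) = C_* E_0$, so $f(\tau)-C_* \perp 1$ in $\mathcal H$ for every $\tau$. Integration gives $\|f(\tau)-C_*\|_{\mathcal H}^2 \downarrow \ell \ge 0$ and $\int_0^\infty \mathcal D(f(\sigma))\, d\sigma < \infty$.

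The next step is to identify $\ell$ via LaSalle in $\mathcal H$. The $L^\infty$-bound (\ref{S4E168X}) gives $\sup_{\tau\ge 1}\|f(\tau)\|_\infty<\infty$; (\ref{S1EPLXl}) yields a uniform bound on $\partial_x f(\tau)$ on each compact of $(0,\infty)$ for $\tau\ge 1$; together with the integrable tails of $w$ (near $0$, $w\sim x^2/4$; exponentially small at infinity), this makes $\{f(\tau)\}_{\tau\ge 1}$ relatively compact in $\mathcal H$ by Arzel\`a--Ascoli and uniform tightness. Given $\tau_n\to\infty$ with $f(\tau_n)\to f_\infty$ in $\mathcal H$, continuity of the $\tau$-flow ensures that for every $s\ge 0$ the translated trajectory $f(\tau_n + s)$ converges in $\mathcal H$ to the orbit $g(s)$ through $f_\infty$. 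Since $\|f(\tau_n+s)-C_*\|_{\mathcal H}^2\to \ell$, one gets $\|g(s)-C_*\|_{\mathcal H}^2\equiv \ell$ on $s\ge 0$, whence $\mathcal D(g(s))\equiv 0$, forcing $g$ (and hence $f_\infty$) constant in $x$. Orthogonality to $1$ pins this constant to $C_*$, so $\ell=0$; connectedness of the $\omega$-limit promotes this to $\|f(\tau)-C_*\|_{\mathcal H}\to 0$ as $\tau\to\infty$.

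The final step is to transfer back to $t$-time, which requires $\tau(t):=\int_0^t p_c\to\infty$: this is the main technical obstacle, to be handled by using the conservation law $p_c+N=\mathrm{const}$ of Corollary~\ref{SeEC} together with the uniform $L^\infty$-bound on $u$ (hence on $N$) to rule out $\int_0^\infty p_c<\infty$, a scenario which would otherwise force $u(t)\to f(\tau_\infty)$ in $\mathcal H$ for a specific non-equilibrium $f(\tau_\infty)$, contradicting the $\tau$-asymptotics. Given (\ref{SeEC2E1}), the weighted $L^1$ convergence (\ref{SeEC2E2}) follows by splitting at any $\delta>0$: on $(\delta,\infty)$, Cauchy--Schwarz yields
\begin{equation*}
\int_\delta^\infty |u-C_*|\, n_0(1+n_0)x^4\, dx \le \|u-C_*\|_{\mathcal H}\left(\int_\delta^\infty \frac{x^2}{4\sinh^2 x^2}\, dx\right)^{1/2}\longrightarrow 0
\end{equation*}
as $t\to\infty$, while on $(0,\delta)$ the bound $|u-C_*|\le C$ from (\ref{S4E168X}) gives a contribution $\le C\int_0^\delta n_0(1+n_0)x^4\, dx = O(\delta)$, made arbitrarily small by first choosing $\delta$ small.
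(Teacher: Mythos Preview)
Your overall framework—the detailed-balance identity $M(x,y)w(x)=M(y,x)w(y)$ with $w=n_0(1+n_0)x^6$, the $L^2(w\,dx)$ dissipation, and a LaSalle argument in the $\tau$-variable—is exactly the paper's structure (see Proposition~\ref{S4Ep}, identity~(\ref{S5E1}), and Proposition~\ref{SeEC2f2}). The gap is in your compactness step. You invoke (\ref{S1EPLXl}) to get uniform-in-$\tau$ bounds on $\partial_x f(\tau,\cdot)$ over compacts and then Arzel\`a--Ascoli, but (\ref{S1EPLXl})/(\ref{S3fx1}) are \emph{not} uniform as $\tau\to\infty$: for $\tau>1$ and $x$ in a fixed compact one has $\tilde\lambda_{\beta,\delta}(\tau,x)=\tau^2$, the time-integral $\int_0^\tau\tilde\lambda\sim\tau^3$, and the prefactor $\sup_{s\le\tau}\|f(s)\|_1$ is only controlled by (\ref{S4E164-1}) with a $T$-dependent constant. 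Restarting from $f(\tau-1)$ does not help, since the estimate still requires $\|f(\tau-1)\|_{L^1(dx)}$, for which no uniform bound is available (only the weighted $L^1$ of Proposition~\ref{S5EBN1}). Without equicontinuity you cannot upgrade the uniform $L^\infty$ bound and tightness to strong $\mathcal H$-compactness.

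The paper circumvents this by avoiding strong spatial compactness altogether: it takes weak$^*$ limits of the time-shifted sequence $f_k(\tau)=f(\tau+\tau_k)$ in $L^\infty((0,T);L^2(d\mu))$, uses lower semicontinuity of a regularized Dirichlet form $D_n$ (built from the bounded kernel $W_n$) together with $\int_0^\infty D(f)\,d\tau<\infty$ to force the weak limit to be a constant, and then upgrades to strong $L^2(d\mu)$ convergence via a separate $W^{1,1}$-in-time compactness for the scalar quantities $\tau\mapsto\int f(\tau)\varphi\,d\mu$ (Lemma~\ref{S5LP1}) combined with a localization argument (Lemma~\ref{SeElem1}). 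Your argument for $\tau(t)\to\infty$ is also not quite right as stated (the phrase ``contradicting the $\tau$-asymptotics'' does not yield a contradiction), but it is easily fixed: the uniform $L^\infty$ bound on $f$ gives $|\mathcal M(\tau)|\le C$ via (\ref{S5EmnEMM}), hence $q_c(\tau)=q_c(0)e^{-\mathcal M(\tau)}$ is bounded above, so $t(\tau)=\int_0^\tau q_c^{-1}\to\infty$; this is essentially Proposition~\ref{S5Emn}. Your treatment of (\ref{SeEC2E2}) is correct and matches Corollary~\ref{SeEC2f3}.
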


It follows from Corollary \ref{SeEC2} that the mass and the energy variations  due to the perturbation $n_0(1+n_0)|p|^2u (t)$ tend to the mass and energy of $
C_*n_0(1+n_0)|p|^2$, and this however small  the perturbation is at infinity, even if,  for example, $u_0$ is compactly supported.  This kind of flux of energy to infinity could be expected, since it is well known to happen in the nonlinear homogeneous version of wave turbulence type of the system (\ref{PA}), (\ref{PB}) and is called direct energy cascade (\cite{D, Zk} and \cite{EV3, TS}).

\begin{cor}
\label{SeEpc1} 
Let $u_0$ and $u$ be as in Theorem \ref{MainThm}. Then, the function $p_c\in C[0, \infty)$ is bounded on $[0, \infty)$ and
\begin{align}
&\lim _{ t\to \infty }p_c(t)=p_c(0)\exp\left(-\mathcal M_ \infty \right)\label{SeEpc11} \\
&\mathcal M _ \infty=C_*\int _0^\infty n_0(1+n_0)x^4dx - \int _0^\infty n_0(1+n_0)u_0(x)x^4dx\label{SeEpc12} 
\end{align}
\end{cor}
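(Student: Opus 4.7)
The plan is to combine the conservation law from Corollary~\ref{SeEC}, namely $p_c(t)+N(t)=p_c(0)+N(0)$ with $N(t)=\int_0^\infty u(t,x)n_0(x^2)(1+n_0(x^2))x^4\,dx$, with the weighted $L^1$-convergence of $u(t)$ from Corollary~\ref{SeEC2}, and then recover the exponential form (\ref{SeEpc11}) by integrating the scalar ODE (\ref{S4E125B}) and evaluating the exponent via the time change $\tau=\int_0^t p_c(s)\,ds$ of (\ref{S2NewTime}).

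Set $\omega(x)=n_0(x^2)(1+n_0(x^2))x^4$. The first observation is that $\omega\in L^1(0,\infty)\cap L^\infty(0,\infty)$: near the origin it is bounded because $\sinh x^2\sim x^2$ yields $n_0(x^2)(1+n_0(x^2))\sim 1/(4x^4)$, and it decays exponentially at infinity. Combined with the bounds (\ref{S4E164-1X})--(\ref{S4E168X}) on $u(t)$, this gives $N\in L^\infty([0,\infty))$; continuity $N\in C([0,\infty))$ follows from (\ref{S4E124}) by dominated convergence with dominator $\|\omega\|_\infty|u(t,\cdot)-u(s,\cdot)|\in L^1$. The conservation identity $p_c(t)=p_c(0)+N(0)-N(t)$ then yields $p_c\in C([0,\infty))\cap L^\infty([0,\infty))$, proving the first part of the corollary.

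For the limit, (\ref{SeEC2E2}) gives $N(t)\to C_*\int_0^\infty\omega\,dx =: C_*N_0$ as $t\to\infty$. To identify this with the exponential form I would integrate (\ref{S4E125B}) directly to get $p_c(t)=p_c(0)\exp(-\int_0^t H(s)\,ds)$ with $H(s)=\int_0^\infty\mathcal L(u(s))(x)\omega(x)\,dx$, and then evaluate $\int_0^\infty H\,ds$ by the time change of (\ref{S2NewTime}). Under $\tau(t)=\int_0^t p_c(s)\,ds$, the equation for $u$ becomes the autonomous equation (\ref{S1ERRR10fx}) for $f(\tau,\cdot)=u(t,\cdot)$, and $\int\mathcal L(f(\tau))(x)\omega(x)\,dx=\partial_\tau\!\int f(\tau,x)\omega(x)\,dx$. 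Integrating in $\tau$ and applying Corollary~\ref{SeEC2} to identify the $T\to\infty$ limit of $\int f(T,x)\omega\,dx$ with $C_*N_0$ produces the announced value $\mathcal M_\infty=C_*N_0-\int u_0\omega\,dx$ for the total exponent, hence (\ref{SeEpc11}).

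The main technical obstacle is justifying $\tau(t)\to\infty$ as $t\to\infty$ so that the asymptotic state $C_*$ is reached by $f(\tau)$; equivalently, one needs a uniform positive lower bound on $p_c$. This follows from the $L^\infty$ bound on $N$ established above together with the initial positivity of $p_c(0)$, provided the hypotheses of Theorem~\ref{MainThm} keep $p_c(0)+N(0)-\sup_{t\ge 0}N(t)$ strictly positive. Once $p_c\ge c_0>0$, the passage to the limit in the integrals and the interchange of integration order under the time change are routine via dominated convergence with the pointwise bound (\ref{S4E500}) on $\mathcal L(u)$, which together with $\omega\in L^1\cap L^\infty$ makes $H(s)$ integrable on any finite interval and uniformly controlled.
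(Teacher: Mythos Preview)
Your use of the conservation law $p_c(t)+N(t)=p_c(0)+N(0)$ from Corollary~\ref{SeEC}, together with the convergence $N(t)\to C_*N_0$ from Corollary~\ref{SeEC2}, is a legitimate route to continuity and boundedness of $p_c$ and to the existence of a limit. But the step where you recover the exponential form (\ref{SeEpc11}) contains an error in the change of variables. After integrating (\ref{S4E125B}) you have $p_c(t)=p_c(0)\exp\bigl(-\int_0^t H(s)\,ds\bigr)$ with $H(s)=\int\mathcal L(u(s))\omega=m(\tau(s))$. Under the substitution $\sigma=\tau(s)$ the Jacobian is $ds=d\sigma/p_c(s)=d\sigma/q_c(\sigma)$, so
\[
\int_0^t H(s)\,ds=\int_0^{\tau(t)}\frac{m(\sigma)}{q_c(\sigma)}\,d\sigma,
\]
which is \emph{not} $\mathcal M(\tau(t))=\int_0^{\tau(t)} m(\sigma)\,d\sigma$. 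Your identification of the total exponent with $\mathcal M_\infty$ is therefore unjustified. The paper never performs this re-integration: there $p_c$ is \emph{constructed} as $p_c(t)=q_c(\tau(t))$ with $q_c(\tau)=p_c(0)e^{-\mathcal M(\tau)}$ (see (\ref{S5Em6}) and (\ref{S5Eppcc})), so the exponential form is built in from the start and one only has to pass to the limit in the identity $\mathcal M(\tau)=\int f(\tau)\omega-\int f_0\omega$ of (\ref{S5EmnEMM}) using Corollary~\ref{SeEC2f3}.

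Your argument that $\tau(t)\to\infty$ is also left conditional on a strictly positive lower bound for $p_c$, which the hypotheses of Theorem~\ref{MainThm} do not guarantee. The paper establishes $\tau(t)\to\infty$ unconditionally in Proposition~\ref{S5Emn}, working directly in the $\tau$-variable: since $\mathcal M(\tau)$ is bounded below (via (\ref{S5EmnEMM}) and the uniform bound on $\int f(\tau)\omega$ from Proposition~\ref{S5EBN1}/Corollary~\ref{SeECM}), the function $e^{\mathcal M(\sigma)}=p_c(0)/q_c(\sigma)$ is bounded below, hence $t(\tau)=\int_0^\tau q_c(\sigma)^{-1}\,d\sigma\to\infty$ as $\tau\to\infty$, and the inverse map satisfies $\tau(t)\to\infty$. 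No assumption on $p_c(0)+N(0)-\sup_t N(t)$ is needed.
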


\subsection{Some Remarks.}

Several remarks follow from the previous results.
\subsubsection{On the formal approximation}

The approximation of (\ref{PA}), (\ref{PB}) by (\ref{S3E23590}), (\ref{S1ERRR10xp}) may be expected to be  reasonable only as long as  the perturbation  remains small with respect to $n_0$,
\begin{equation}
\label{S2Epert}
n_0(1+n_0)|\Omega (t)|<<n_0,
\end{equation}
and this requires $x^2|f(t, x)|$ small for $x\to \infty$.
However, although it could be proved that (\ref{S2Epert}) holds for small values of time  if it holds at $t=0$, it follows from (\ref{SeEC2E1})  that it can not be true for all $t>0$. Notice indeed that, for all $R\ge R_0>0$,
\begin{align}
&\left |\int _R^\infty x^6n_0(1+n_0)|u(t, x)|dx-\int _R^\infty x^6n_0(1+n_0)|C_*| \right| \le \nonumber\\
& \le
\int _R^\infty x^6n_0(1+n_0)|u(t, x)-C_*|dx\nonumber\\
&\le \left(\int _R^\infty x^6n_0(1+n_0)|u(t, x)-C_*|^2dx\right)^{1/2}
\left(\int _R^\infty x^6n_0(1+n_0)dx\right)^{1/2} \label{S2EPMN}
\end{align}
and the right hand side tends to zero as $t\to \infty$. If, on the other hand,  we had $x^2|u(t, x)|\le C$, for some $C>0$, $R_0>0$ and $t_0>0$ for all $x>R_0$ and all $t>t_0$,
\begin{align*}
\int _R^\infty x^6n_0(1+n_0)|u(t, x)|dx\le \frac {C} {R^2}\int _R^\infty x^6n_0(1+n_0)dx,\,\,\forall R>R_0, t>t_0
\end{align*}
and then, for $R>C/|C_*|$ and all $t>t_0$
\begin{align*}
&\left |\int _R^\infty x^6n_0(1+n_0)|C_*|dx-\int _R^\infty x^6n_0(1+n_0)|u(t, x)| \right| \ge\\
&\hskip 6cm \ge \left(|C_*|-\frac {C} {R^2}\right)\int _R^\infty x^6n_0(1+n_0)dx>0,\,\,\forall t>t_0.
\end{align*}
and this would contradict (\ref{S2EPMN}). System (\ref{S3E23590}), (\ref{S1ERRR10xp}) may then be considered ``close to'' (\ref{PA}), (\ref{PB}) only for small values of $t$.
Of course, $u$ could be such that,  for some $C(t)$ that tends to $\infty$ with $t$,  $x^2|u(t, x)|\le C(t)$ for all $x>0$. 

\subsubsection{The behavior of the perturbation as $|p|\to 0$.}
For all $t>0$,   the perturbation $n_0(1+n_0)\Omega (t, p)$ of $n_0$ satisfies (\ref{S1EASN}), for any $f_0$ as in the hypothesis of Theorem \ref{MainThm}, where $a(t)$ is given in Proposition \ref{S6Limzero}. The behavior $|p|^{-2}$ at the origin (that of the equilibria of (\ref{PA}), (\ref{PB})) is then instantaneously fixed, whatever the behavior at the origin of $f_0$ may be, as far as the  hypothesis of Theorem \ref{MainThm} are satisfied.
\subsubsection{The function $p_c(t)$.} In view of Corollary \ref{SeEpc1}, if the initial data $u_0$ is such that,
\begin{equation}
 \frac {E(0)}{E_0}N_0<N(0). \label{S1.44EM}
\end{equation}
or equivalently,
\begin{equation*}
\mathcal M_\infty=C_*\int _0^\infty n_0(1+n_0)x^4dx - \int _0^\infty n_0(1+n_0)u_0(x)x^4dx<0,
\end{equation*}
then $\lim _{ t\to \infty }p_c(t)>p_c(0)$, and conversely.

Condition (\ref{S1.44EM}) and its converse are both compatibles with $n_0(1+n_0)x^2u_0$ being a small perturbation of $n_0$. For example
\begin{align}
\label{S1.44EM1}
&n_0(1+n_0)x^2u_0(x)=\left(\frac {1.02} {1+x^2} -1\right)n_0,\\
&N(0)\approx-0.344949,\,\,\,E(0)\approx -0.523546 \nonumber\\
&\frac {N(0)} {E(0)}\approx 0.658872 < \frac {N_0} {E_0}\approx 0.778949
 \Longleftrightarrow  \frac{E(0)}{E_0}N_0<N(0)\nonumber
 \end{align}
 and
 \begin{align}
 \label{S1.44EM1}
 &n_0(1+n_0)x^2u_0(x)=0.2 \hbox{Arctg}\left(\frac{x}{10}\right)n_0,\\
 &N(0)\approx 0.0163705,\,\,\,E(0)\approx  0.0238295 \nonumber\\
 &\frac {E(0)} {N(0)}\approx 1.45564 > \frac {E_0} {N_0}\approx 1.28378
 \Longleftrightarrow  \frac{E(0)}{E_0}N_0>N(0).\nonumber
 \end{align}
 
 \subsection{Very low temperature and large $n_c$.}
Linearization of system (\ref{PA}), (\ref{PB}) for large number density of condensed atoms and very low temperature may be performed following similar arguments as above (cf. \cite{Bu} and \cite{m2}). No regularizing effects have been observed and 
the existence of a first positive eigenvalue and spectral gap for a suitable integrable operator (\cite{Bu} and \cite {ET}) provide a convergence rate to the equilibrium for a large set of initial data (cf.\cite{m2},  Theorem 2.2)
A necessary and sufficient condition on $p_c(0)$ to have a global solution.

\section{The operator $F$.}
\setcounter{equation}{0}
\setcounter{theo}{0}
Equation (\ref{S3E1.1}) may be treated as a perturbation of (\ref{S3E1.L1}) only  whenever the term  $F(f)$ in (\ref{S3E1.2}) is  bounded in   spaces where the properties of the solutions of (\ref{S3E1.L1})  may be used. The  purpose of  this Section is to establish that this is the case.
\begin{prop}
\label{S2AFF1}
(i) For all $g\in L^\infty(0, \infty)$, $F(g)\in L^\infty(0, \infty)$ and
$$
||F(g)|| _{ \infty }\le 2M||g|| _{ \infty }.
$$
(ii) For all $g\in L^1(0, \infty)$,
$$
||F(g)||_1\le C_F ||g||_1\,\,\, C_F=(M+\widetilde M),
$$
where
$$
M=\sup _{ x>0 }\int _0^\infty |T(x, x')|dx',\,\,\,\widetilde M=\sup _{ x'>0 }\int _0^\infty |T(x, x')|dx.
$$
(iii) For all $\theta\in [0, 1)$ there exists a positive constant $C(\theta)$ depending on $\theta$,  such that, if $g\in L^\infty _{ loc }(0, \infty)$ satisfies $ |||g|||_\theta<\infty$ then $|||F(g)|||_\theta\le C(\theta)|||g|||_\theta.$
\vskip 0.2cm
\noindent
(iv) For all $g\in L^1(0, \infty)\cap L^\infty _{ loc }(0, \infty)$, $F(g)\in L^1(0, \infty)\cap L^\infty _{ loc }(0, \infty)$.
\vskip 0.2cm
\noindent
(v) For all  $R>0$ there exists a constant $C=C(R)>0$ such that:
\begin{align*}
||F(g)|| _{ \infty }\le C\left(||g|| _{ L^1(0, R) }+||g|| _{ L^\infty(R, \infty) }\right),\,\,\forall g\in &L^1(0, \infty)\cap L^\infty (R, \infty).
\end{align*}
\end{prop}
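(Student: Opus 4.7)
Every statement in the proposition reduces to integrability properties of the kernel $T(x,y)$ defined in (\ref{S3EInt2}). The key observation is that $T$ was obtained precisely by subtracting the kernel of $L$ from the kernel of $\mathcal{L}$, so that the non-integrable singularity along the diagonal $y=x$ of each of the two kernels cancels. Concretely, using $\frac{1}{\sinh z}-\frac{1}{z}=O(z)$ as $z\to 0$, together with the matching of prefactors $\frac{y^3\sinh x^2}{x^3\sinh y^2}\to 1$ and $\frac{y}{x}\to 1$ as $y\to x$, the function $T$ should be pointwise locally bounded and integrable on $(0,\infty)$ in either variable uniformly in the other. From this the bounds (i), (ii), (iv), (v) are routine, while (iii) requires a weighted refinement.

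\textbf{Step 1: Pointwise control of $T$.} I would decompose
\begin{equation*}
T(x,y)=\left(\frac{y^3\sinh x^2}{x^3\sinh y^2}-\frac{y}{x}\right)\!\left(\frac{1}{\sinh|x^2-y^2|}-\frac{1}{\sinh(x^2+y^2)}\right)+\frac{y}{x}\,A(x,y),
\end{equation*}
where
\begin{equation*}
A(x,y)=\left(\frac{1}{\sinh|x^2-y^2|}-\frac{1}{|x^2-y^2|}\right)-\left(\frac{1}{\sinh(x^2+y^2)}-\frac{1}{x^2+y^2}\right).
\end{equation*}
Each bracket in $A$ extends smoothly across $y=x$ because of the cancellation $\frac{1}{\sinh z}-\frac{1}{z}=O(z)$, and the prefactor $\frac{y^3\sinh x^2}{x^3\sinh y^2}-\frac{y}{x}$ in the first summand vanishes on the diagonal. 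A case analysis then yields an envelope for $|T(x,y)|$ that is (a) bounded on compact subsets of $(0,\infty)^2$ by removable singularity at $y=x$, (b) $O(y)$ as $y\to 0^+$, and (c) dominated by $e^{-y^2}\cdot(\mathrm{poly})+ x/y^3$ as $y\to\infty$. This proves $M<\infty$ and, by symmetry, $\widetilde{M}<\infty$.

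\textbf{Step 2: Deduction of (i), (ii), (iv), (v).} Item (i) is immediate from $|F(g)(x)|\le |g(x)|\int|T(x,y)|dy+\int|T(x,y)||g(y)|dy\le 2M||g||_\infty$. Item (ii) follows by Fubini, splitting the two pieces of $F$ and integrating in the variable with respect to which the kernel has a bounded integral, giving $\|F(g)\|_1\le (M+\widetilde M)\|g\|_1$. For (v), split $\int_0^\infty T(x,y)g(y)dy=\int_0^R+\int_R^\infty$; the second integral is dominated by $M\|g\|_{L^\infty(R,\infty)}$, and the first by $\bigl(\sup_{y\in(0,R)}|T(x,y)|\bigr)\|g\|_{L^1(0,R)}$, which is finite for each $R$ by Step~1. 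Item (iv) combines (ii) with a local $L^\infty$ version of (v).

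\textbf{Step 3: Weighted bound (iii).} Writing $|g(y)|\le y^{-\theta}|||g|||_\theta$ on $(0,1)$ and $|g(y)|\le |||g|||_\theta$ on $[1,\infty)$, the estimate $|||F(g)|||_\theta\le C(\theta)|||g|||_\theta$ reduces to
\begin{equation*}
\sup_{0<x<1}x^\theta\!\int_0^\infty|T(x,y)|\bigl(y^{-\theta}\mathbf{1}_{y<1}+\mathbf{1}_{y\ge 1}\bigr)dy<\infty,
\end{equation*}
and the analogous bound for $\sup_{x>1}$. The factor $x^\theta$ on the left combines with the $y/x$-type asymmetry present in the decomposition of Step~1 to absorb the $y^{-\theta}$ weight near $0$ (integrable since $\theta<1$), while near $y=x$ and at infinity we fall back on the estimates from Step~1.

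\textbf{Main obstacle.} Steps 1 and 2 are structural and essentially dictated by the cancellation that defines $T$. The genuine work is in Step~3: the estimate (iii) requires tracking the interplay between three competing effects --- the singular weight $y^{-\theta}$ at the origin, the diagonal cancellation at $y=x$, and the $y/x$-asymmetry of the prefactor --- while keeping the bound uniform in $x$ across both regimes $x\in(0,1)$ and $x\ge 1$. This is where the hypothesis $\theta<1$ must be used in a sharp way, and where a decomposition of the integration domain into $y\ll x$, $y\sim x$, $y\gg x$ is most likely needed.
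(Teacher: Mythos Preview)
Your approach is essentially the same as the paper's. The decomposition in your Step~1 is exactly the paper's splitting $T=T_1+T_2$ (your second summand is $T_1$, your first is $T_2$ after factoring), and the deductions of (i)--(v) from the kernel bounds proceed along the same lines.

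Two small points where you should be more careful. First, the phrase ``by symmetry, $\widetilde M<\infty$'' is misleading: $T(x,y)$ is \emph{not} symmetric in its arguments (the prefactor $y^3\sinh x^2/(x^3\sinh y^2)$ breaks this), so $\widetilde M<\infty$ requires a separate, though analogous, argument---the paper treats it in its own corollary. Second, in your sketch of (v) you need $\sup_{x>0}\sup_{y\in(0,R)}|T(x,y)|<\infty$, i.e.\ uniformity in $x$ as $x\to 0$ and $x\to\infty$ with $y$ bounded; your Step~1 as stated only records the behavior in $y$. The paper handles the regime $x>2R$, $y\in(0,R)$ by going back to the original expression (\ref{S3EInt2}) for $T$ and using the exponential decay of $1/\sinh$ directly, obtaining $|T(x,y)|\le C_R x^{-3}$. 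None of this changes your strategy, but the case analysis is where all the actual work sits: the paper devotes two propositions and two corollaries to the pointwise envelope you summarize in one sentence.
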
 
Proposition \ref{S2AFF1} follows from estimates of the kernel $T$ defined in (\ref{S3EInt2}), that we split as follows,
\begin{align*}
&T(x, x')=T_1(x, x')+T_2(x, x')\\
&T_1(x, x')=\frac {x'} {x}\left(\frac {1} {\sinh|x^2-x'^2|}-\frac {1} {|x^2-x'^2|}-\frac {1} {\sinh (x^2+x')^2} +\frac {1} {|x^2-x'^2|}\right)\\
&T_2(x, x')=\frac {x'^3} {x^3}\left(\frac {\sinh x^2} {\sinh x'^2} -\frac {x^2} {x'^2}\right)\left(\frac {1} {\sinh|x^2-x'^2|}-\frac {1} {\sinh (x^2+x')^2 } \right).
\end{align*}
The kernels $T_1$ and $T_2$ are estimated in the two next Propositions.
\begin{prop}
\label{S2RK1}
\begin{align}
&\forall R>0, \exists C_R>0;\,\,| T_1(x, x')|\le C_Rxx',\,\,\,\forall x'\in (0, R),\,\forall x\in (0, R)\label{S2RK1E1Sm}\\
&| T_1(x, x')|\le \frac {Cx'} {x }\left( \min(x^2 , x'^2 )+\mathcal O(x^2 +x '^2)^3\right),0\le x\le 1/2,\,\,0\le x'\le 1/2.\label{S2RK1E1}\\
& |T_1(x, x')|\le C, \hbox{if}\,\,\, x+x'>1, |x'-x|\le 1/8,\label{S2RK1E2}\\
&\forall x\in (0, 1),\,x'>\min (2, 3x/2),\,\,\,| T_1(x, x')|\le \frac {Cx} {x'^3}\label{S2RK1E3}
\end{align}
\begin{align}
&\hbox{If}\,\, x+x'>1, \,\,1/8<|x-x'|<x/2\nonumber\\
&| T_1(x, x')|\le \frac {Cx'} {x}\left(\frac{2\min(x,x')^2} {(x^2-x'^2)(x^2+x'^2)}+\left|\frac {1} {\sinh |x^2-x'^2|}-\frac {1} {\sinh (x^2 +x'^2)}\right|\right),\label{S2RK1E4}
\end{align}
\begin{align}
&\forall |x-x'|>x/2,\nonumber \\
&|T_1(x, x')|\le \frac {Cx'} {x }\left(\frac {e^{-C_1\max(x,x')^2}} {\left( 1-e^{-\beta (x^2+x'^2)}\right)\left(1-e^{-C_1\max(x', x)^2} \right)}+\frac {\min(x',x)^2} {\max(x', x)^4}\right) \label{S2RK1E5}\\
&|T_1(x, x')|\le \frac{Cx'} {x}\left(\frac {1} {\sinh \frac {3x^2} {4}}+\frac {\min(x',x)^2} {\max(x', x)^4}\right) 
\,\,\,\hbox{if}\,\,\,0<x'<\frac {x} {2}\label{S2RK1E5ter}
\end{align}
\end{prop}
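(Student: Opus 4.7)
The plan is to reduce all the estimates to the analysis of the auxiliary function $\phi(z)=\frac{1}{\sinh z}-\frac{1}{z}$ (extended by $\phi(0)=0$), in terms of which
$$T_1(x,x')=\frac{x'}{x}\bigl[\phi(|x^2-x'^2|)-\phi(x^2+x'^2)\bigr].$$
From the identity $\frac{1}{\sinh z}=\frac{2e^{-z}}{1-e^{-2z}}$ one obtains $\phi(z)=-\frac{1}{z}+O(e^{-z})$ as $z\to\infty$, while the Taylor expansion $\phi(z)=-\frac{z}{6}+\frac{7z^{3}}{360}+O(z^{5})$ shows that $\phi\in C^{\infty}([0,\infty))$, is odd after extension, and that $\phi$ together with all its derivatives is bounded on $[0,\infty)$. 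These two asymptotics drive every estimate below.

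The first block of inequalities is elementary. For (\ref{S2RK1E1Sm}) both $|x^2-x'^2|$ and $x^2+x'^2$ lie in $[0,2R^2]$, so the mean value theorem applied to $\phi$, combined with the algebraic identity $(x^2+x'^2)-|x^2-x'^2|=2\min(x^2,x'^2)$, gives $|T_1|\le C_R(x'/x)\min(x^2,x'^2)\le 2C_R\,xx'$. The refinement (\ref{S2RK1E1}) follows from the same computation using the first-order Taylor expansion of $\phi$. For (\ref{S2RK1E2}) the conditions $x+x'>1$, $|x-x'|\le 1/8$ force $\min(x,x')\ge 7/16$, so $x'/x$ is bounded; since $\phi$ is globally bounded, so is $|T_1|$. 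Estimate (\ref{S2RK1E4}) is just the triangle inequality applied to the natural splitting
$$\phi(|x^2-x'^2|)-\phi(x^2+x'^2)=\Bigl[\tfrac{1}{\sinh|x^2-x'^2|}-\tfrac{1}{\sinh(x^2+x'^2)}\Bigr]-\Bigl[\tfrac{1}{|x^2-x'^2|}-\tfrac{1}{x^2+x'^2}\Bigr],$$
together with $\frac{1}{|x^2-x'^2|}-\frac{1}{x^2+x'^2}=\frac{2\min(x,x')^{2}}{|x^2-x'^2|(x^2+x'^2)}$.

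The central computation concerns (\ref{S2RK1E3}), (\ref{S2RK1E5}) and (\ref{S2RK1E5ter}). Using the addition formula $\sinh A-\sinh B=2\cosh\frac{A+B}{2}\sinh\frac{A-B}{2}$ with $A=x^2+x'^2$, $B=|x^2-x'^2|$ one obtains the closed form
$$\phi(|x^2-x'^2|)-\phi(x^2+x'^2)=\frac{2\cosh(\max(x,x')^{2})\,\sinh(\min(x,x')^{2})}{\sinh|x^2-x'^2|\,\sinh(x^2+x'^2)}-\frac{2\min(x,x')^{2}}{|x^2-x'^2|(x^2+x'^2)}.$$
Under the separation hypothesis $|x-x'|>x/2$ elementary algebra gives $|x^2-x'^2|(x^2+x'^2)\ge c\max(x,x')^{4}$, which controls the second term by $\min(x,x')^{2}/\max(x,x')^{4}$. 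For the first term, rewriting $\sinh z=\frac{1}{2}e^{z}(1-e^{-2z})$ in the denominator and $\cosh(\max^{2})\le e^{\max^{2}}$ produces the exponential factor $e^{-C_1\max(x,x')^{2}}$ together with denominators $(1-e^{-\beta(x^2+x'^2)})(1-e^{-C_1\max^{2}})$, yielding (\ref{S2RK1E5}); when $x'<x/2$ the cruder bound $\sinh|x^2-x'^2|\ge\sinh(3x^2/4)$ gives (\ref{S2RK1E5ter}) directly. For (\ref{S2RK1E3}), the hypotheses $x<1$, $x'>3x/2$ imply $|x^2-x'^2|(x^2+x'^2)\ge cx'^{4}$; the first term is then handled either by the exponential bound (when $x'^{2}$ is bounded below) or by the Taylor estimate of the preceding step (when both arguments are small), giving $|T_1|\le Cx/x'^{3}$ after multiplication by $x'/x$.

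The main technical point is in case (\ref{S2RK1E5}), where the announced bound must remain sharp simultaneously as $\max(x,x')\to\infty$ (exponential regime) and as $\min(x,x')\to 0$ (algebraic regime). The delicate step is to factor the hyperbolic sines so that the auxiliary denominators $1-e^{-\alpha(x^2+x'^2)}$ stay bounded away from zero, which requires a small additional case split around $\max(x,x')\approx 1$ and a verification that the resulting constants match those produced by the Taylor expansion used in (\ref{S2RK1E1Sm})--(\ref{S2RK1E1}); this is the only place where (\ref{S2RK1E5}) is genuinely sharper than what one reads off the closed-form identity above.
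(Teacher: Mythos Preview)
Your approach is essentially the same as the paper's, organized a bit more cleanly around the single auxiliary function $\phi(z)=\frac{1}{\sinh z}-\frac{1}{z}$ (which the paper introduces as $h=-\phi$ but only exploits for the first item). The paper proves (\ref{S2RK1E1Sm}) and (\ref{S2RK1E1}) by the same mean value and Taylor arguments; for (\ref{S2RK1E3}) and (\ref{S2RK1E5}) it writes $\frac{1}{\sinh A}-\frac{1}{\sinh B}=\frac{\sinh B-\sinh A}{\sinh A\,\sinh B}$ and applies the mean value theorem to the numerator, which is the same computation as your $\sinh$ addition formula; for (\ref{S2RK1E5ter}) it simply bounds $\frac{1}{\sinh|x^2-x'^2|}\le\frac{1}{\sinh(3x^2/4)}$ directly on the hyperbolic difference (not on your closed form), which is what your phrase ``the cruder bound'' should be read as doing. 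For (\ref{S2RK1E2}) your observation that $\phi$ is globally bounded and $x'/x$ is bounded is slightly slicker than the paper's Taylor computation, but the content is the same.

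Your last paragraph is off. The denominators $(1-e^{-\beta(x^2+x'^2)})(1-e^{-C_1\max^2})$ are part of the \emph{stated} upper bound in (\ref{S2RK1E5}); they are allowed to be small and do not need to be bounded away from zero, so no case split around $\max(x,x')\approx 1$ or matching of constants with (\ref{S2RK1E1Sm})--(\ref{S2RK1E1}) is required. The exponential rewriting you describe yields (\ref{S2RK1E5}) in one stroke, exactly as the paper does. Drop that paragraph.
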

\begin{proof} \hfill \break
0.- Proof of (\ref{S2RK1E1Sm}).
When $x'\in (0, R)$ and $x\in (0, R)$. We may use the series expansion of the function $1/\sinh x$ to obtain,
\begin{align*}
&h(z)=\frac {1} {z}-\frac {1} {\sinh z}=\sum _{ n=1 }^\infty \frac {2(2^{2n-1}-1)B_n} {(2n)!}z^{2n-1}\\
&B_0=1,\, B_n=\sum _{ \ell =1 }^{n-1}\binom{n}{\ell}\frac {B_\ell} {n+1-\ell},\,\,n\ge 1,\,\,(\hbox{Bernoulli's numbers}),\\
&h'(z)=\sum _{ n=1 }^\infty (2n-1)\frac {2(2^{2n-1}-1)B_n} {(2n)!}z^{2(n-1)}.
\end{align*}
For $x'\in (0, R)$ and $x\in (0, R)$:
\begin{align*}
&h(x^2+x'^2)-h(|x^2-x'^2|)=2x^2h'(\xi ),\,\,\,\xi \in (x'^2-x^2, x'^2+x^2)\\
&|h(x^2+x'^2)-h(|x^2-x'^2|)|\le 2x^2\sup _{ \xi \in (0, 2R^2) } |h'(\xi )|
\end{align*}
and
$$
|T_1(x, x')|\le 2xx'\sup _{ \xi \in (0, 2R^2) } |h'(\xi )|,\,\,\forall x'\in (0, R), x\in (0, R).
$$

1.- Proof of (\ref{S2RK1E1}).
Consider first the set where $x' <1/2$ and $x <1/2$, and use the Taylor's expansion of $1/\sinh z $ around $z =0$,
\begin{align*}
 \frac {1} {\sinh  |x'^2-x^2 | }=\frac {2} {\beta |x^2-x'^2|}-\frac {\beta |x^2-x'^2|} {12}+\mathcal O(|x^2-x'^2|)^3,\,\,|x-x'|\to 0\\
  \frac {1} { \sinh  (x'^2+x^2 )}=\frac {2} {\beta (x^2+x^2)}-\frac {\beta (x^2+x'^2)} {12}+\mathcal O((x^2+x'^2))^3,\,\,|x+x'|\to 0.
\end{align*}
Then 
\begin{align*}
\left( \frac {1} {\sinh  |x'^2-x^2 | }
-\frac {1} {\beta |x^2-x'^2|}-\frac {1} {\sinh  (x'^2+x^2 ) }
+\frac {1} {\beta (x^2+x'^2)}\right)=\\
=\frac {\min(x^2 ,x'^2)} {3}+\mathcal O(x^2 +x'^2)^3,
\end{align*}
\begin{align*}
|T_1(x, x')|\le \frac {Cx'} {x }\left( \min(x^2 , x '^2)+\mathcal O(x^2 +x'^2)^3\right)
\end{align*}
and that proves (\ref{S2RK1E1}).

2.- Proof of (\ref{S2RK1E3}). 
When $x\in (0, 1)$ and $x'>x$, in the identity,
\begin{align*}
\frac {1} {\sinh |x^2-x'^2|} - \frac {1} {\sinh (x^2 +x'^2)}=\frac {\sin (x'^2+x^2 )-\sinh (x'^2-x^2 )} 
{\sin (x'^2+x^2 )\sinh (x'^2-x^2 )}.
\end{align*}
we use the mean value Theorem to obtain
\begin{align*}
&\sin (x'^2+x^2 )-\sinh (x'^2-x^2 )=-2x^2 \cosh \xi \\
&\xi \equiv \xi (x'^2-x^2 , x'^2+x^2 )\in (x'^2-x^2, x'^2+x^2)\subset (x'^2-1, x'^2+1)
\end{align*}
and then,
$$
\cosh \xi \le C\cosh (x'^2).
$$
We deduce,
\begin{align*}
\left|\frac {1} {\sinh |x^2-x'^2|} - \frac {1} {\sinh (x^2+x'^2)}\right|\le C\frac {x^2\cosh x'^2} 
{\sin (x'^2+x^2)\sinh (x'^2-x^2)}
\le C\frac {x^2} {\sinh x'^2}.
\end{align*}
On the other hand, since $x'>\min(2, 3x/2)$, $x'^2-x^2>Cx'^2$ and,
\begin{align*}
\frac {1} {|x^2-x'^2|}-\frac {1} {|x^2+x'^2|}=\frac{2x^2} {(x'^2-x^2)(x^2+x'^2)}\le C\frac {x^2} {x'^4}
\end{align*}
it follows that,
$
|T_1(x, x')|\le C\frac {x^2} {x'^4}\frac {x'} {x}\le C\frac {x} {x'^3}
$
and that proves (\ref{S2RK1E3}).

3.- Proof of (\ref{S2RK1E2})
Suppose now that $x+x'>1$, and $|x-x'|\le 1/8$. We may still use the Taylor's expansion of $1/\sinh (|x^2-x'^2|) $ around $|x^2-x'^2|=0$,
\begin{align*}
\frac {1} {\sinh |x^2-x'^2|}-\frac {1} {|x^2-x'^2|}=-\frac { |x^2-x'^2|} {6}+\mathcal O(|x^2-x'^2|)^3,\,\,|x-x'|\to 0.
\end{align*}
Then, if $x+x'>1$,  $|x'-x|\le 1/8$,
\begin{align*}
|T_1(x, x')|&\le \frac {Cx'} {x}\left(|x^2-x'^2|-\frac {1} {\sinh |x'^2+x^2| }
+\frac {1} { |x^2+x'^2|}\right)\\
&\le \frac {Cx'} {x}\left(1+\frac {1} { (x^2+x'^2)}\right)\le \frac {Cx'} {x}
\end{align*}
and that proves (\ref{S2RK1E2}).

4.- Proof of (\ref{S2RK1E4}). Consider  now, the cases where $x+x'>1$, $|x'-x|>1/8$ and $|x-x'|<x/2$. Since,
\begin{align*}
\frac {1} {|x^2-x'^2|}-\frac {1} {|x^2+x'^2|}=\frac{2\min(x, x')^2} {(x'^2-x^2)(x^2+x'^2)}\le \frac{16\min(x, x')^2} {(x'+x)(x^2+x'^2)}
\end{align*}
If  $x+x'>1$, $|x'-x|>1/8$ and $|x-x'|<x/2$,
\begin{align}
|T_1(x, x')|&\le  \frac {Cx'} {k}\left(\frac{2\min(x, x')^2} {(x^2-x'^2)(x^2+x'^2)}+\left|\frac {1} {\sinh |x^2-x'^2|}-\frac {1} {\sinh (x^2+x'^2)}\right|\right) \label{S2EXpr1}\\
&\le \frac {Cx'} {x}\left(\frac{2\min(x, x')^2} {(x'+x)(x^2+x'^2)}+\frac {1} {\sinh\frac { |x+x '|} {8}}+\frac {1} {\sinh (x^2+x'^2)}\right)\nonumber \\
&\le \frac {Cx'} {x}\left(\frac{2\min(x, x')^2} {(x'+x)(x^2+x'^2)}+\frac {1} {\sinh\frac { |x+x '|} {8}}\right) \label{S2EXpr}
\end{align}
The estimate (\ref{S2EXpr1}) is nothing but  (\ref{S2RK1E4}). 

5.- Proof of (\ref{S2RK1E5})
On the other hand, if $x>1$ and  $|x'-x|>x/2$ we may still use hat,
\begin{align*}
\frac {1} {|x^2-x'^2|}-\frac {1} {|x^2+x'^2|}=\frac{2\min(x, x')^2} {(x'^2-x^2)(x^2+x'^2)} &\le 
\begin{cases}
\frac {Cx^2} {x'^4},\,\,\forall x'>3x/2>0\\
\frac {Cx'^2} {x^4},\,\,\,\forall x'<x/2.
\end{cases}
\end{align*}
If $y>x$,
\begin{align*}
\left( \frac {1} {\sinh  |y^2-x^2| }- \frac {1} {\sinh  |y^2+x^2| }\right)=
\frac {e^{x^2+y^2}\left(1-e^{-2(x^2+y^2)}-e^{-2x^2}+e^{-2y^2}\right)} {e^{x^{2}+y^2}\left( 1-e^{-2(x^2+y^2)}\right)\left( e^{y^2-x^2}-e^{-y^2+x^2}\right)}\\
=\frac {1-e^{-2(x^2+y^2)}-e^{-2x^2}+e^{-2y^2}} {\left( 1-e^{-2(x^2+y^2)}\right)e^{y^2-x^2}\left(1-e^{-2(y^2-x^2)} \right)}
\le\frac {e^{-(y^2-x^2)}} {\left( 1-e^{-2(x^2+y^2)}\right)\left(1-e^{-2(y^2-x^2)} \right)}
\end{align*}
Then, if $x'>3x/2$,
\begin{align*}
\left( \frac {1} {\sinh  |x'^2-x^2| }- \frac {1} {\sinh (x'^2+x^2)}\right)\le 
\frac {e^{-(x'^2-x^2)}} {\left( 1-e^{-2(x^2+x'^2)}\right)\left(1-e^{-2 (x'^2-x^2)} \right)}\\
\le \frac {e^{-\frac {5 } {9}x'^2}} {\left( 1-e^{-2 (x^2+x'^2)}\right)\left(1-e^{-\frac {5 } {9}x'^2} \right)}
\end{align*}
If $x>2x'$,
\begin{align*}
\left( \frac {1} {\sinh|x'^2-x^2|}- \frac {1} {\sinh  |x'^2+x^2| }\right)\le
\frac {e^{-(x^2-x'^2)}} {\left( 1-e^{-2 (x^2+x'^2)}\right)\left(1-e^{-2 (x^2-x'^2)} \right)}\\
\le \frac {e^{-\frac {3} {4}x^2}} {\left( 1-e^{-2 (x^2+x'^2)}\right)\left(1-e^{-\frac {3 } {4}x^2}\right)}
\end{align*}
and that proves (\ref{S2RK1E5}). 
But also, in the case $x>2x'$, $x^2-x'^2>3x^2/4$ and then,
\begin{align*}
\frac {1} {\sinh |x'^2-x^2| }\le  \frac {1} {\sinh \frac {3  x^2 } {4}};\,\,\,\,
 \frac {1} {\sinh  (x'^2+x^2)}\le  \frac {1} {\sinh x^2}
\end{align*}
and that proves (\ref{S2RK1E5ter}).
\end{proof}

\begin{prop} 
\label{S2RK2}
\begin{align}
&\forall R>0,\exists C_R>0;\,\,\, |T_2(x, x')|\le C_R,\,\,\forall x\in (0, R),\,\,\forall x'\in (0,R)\label{S2RK2E1}\\
&\forall \delta >0,\,\,\exists C_\delta >0;\,\,\,\forall x\in (0, 1),\,\forall x'>x+\delta:\,\,\,\,\,\, |T_2(x, x')|\le C_\delta \frac {x x'} {\sinh x'^2} \label{S2RK2E2}\\
&\forall \,\,\,x+x'>1, |x-x'|<1/8:\,\,|T_2(x, x')|\le C \label{S2RK2E3Bis}\\
&\forall \,\,\,x+x'>1,\frac {1} {8}< |x-x'|<x/2: \nonumber \\
&|T_2x, x')|\le C\left| \frac {1} {\sinh|x^2 -x'^2|}-\frac {1} {\sinh (x^2 +x'^2)}\right|,
\label{S2RK2E3}
\end{align}
If $x+x'>1$ and $|x-x'|>x/2$,
\begin{align}
&|T_2(x, x')|\le  \frac {C} {\sinh \frac { 3 x^2 } {4} },\,\,x'<x/2 \label{S2RK2E4}\\
&|T_2(x, x')|\le  Ce^{-(x'^2 -x^2 )}\left( e^{-(x'^2 -x^2)}+\frac {x^2 } {x'^2} \right)\frac {x'^3} {x^3},\,\,\,x'\ge 3x/2
\label{S2RK2E5}
\end{align}

\end{prop}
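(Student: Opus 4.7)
The plan is to prove each estimate by direct calculation, exploiting the factorization $T_2=A\cdot B$ with
\[
A(x,x')=\frac{x'^3}{x^3}\left(\frac{\sinh x^2}{\sinh x'^2}-\frac{x^2}{x'^2}\right),\qquad B(x,x')=\frac{1}{\sinh|x^2-x'^2|}-\frac{1}{\sinh(x^2+x'^2)},
\]
and estimating each factor in the regime prescribed by the hypothesis. Two algebraic rewritings will drive everything. Introducing $h(z)=\sinh(z)/z$, one finds
\[
A(x,x')=\frac{x'^3\bigl(h(x^2)-h(x'^2)\bigr)}{x\,\sinh x'^2},
\]
which exposes the cancellation in the first bracket because $h$ is analytic with $h(0)=1$ and $h'(z)\sim z/3$ near $z=0$. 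The sum-to-product identity for $\sinh$ yields
\[
B(x,x')=\frac{2\cosh(\max(x,x')^2)\sinh(\min(x,x')^2)}{\sinh|x^2-x'^2|\,\sinh(x^2+x'^2)},
\]
which makes the sign of $B$ manifest and localizes the diagonal singularity inside a single $\sinh|x^2-x'^2|$ factor.

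For the bounded-domain case (\ref{S2RK2E1}), Taylor expansions of $\sinh$ and $h$ give $|A|\le C_R\,x'(x+x')|x-x'|/x$ and $|B|\le C_R\min(x,x')^2/\bigl(|x^2-x'^2|(x^2+x'^2)\bigr)$; their product is bounded by $C_R\,xx'/(x^2+x'^2)\le C_R$, since the factors $(x+x')|x-x'|$ cancel against $|x^2-x'^2|$. For (\ref{S2RK2E2}) I would treat bounded $x'$ as a sub-case of (\ref{S2RK2E1}); if $x'\to\infty$ then $\sinh x'^2\sim\cosh x'^2\sim e^{x'^2}/2$ and $h(x'^2)\sim e^{x'^2}/(2x'^2)$, and a direct substitution in the combined expression produces the claimed prefactor $xx'/\sinh x'^2$ after telescoping the exponentials.

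The remaining estimates, all under the common assumption $x+x'>1$, follow the same case-by-case split as the bounds for $T_1$ in Proposition \ref{S2RK1}. When $|x-x'|<1/8$ (case (\ref{S2RK2E3Bis})), both $x$ and $x'$ are bounded below and close, so a Taylor expansion of $1/\sinh\zeta-1/\zeta$ near the diagonal together with the vanishing of $h(x^2)-h(x'^2)$ at the diagonal keeps everything uniformly bounded. When $1/8<|x-x'|<x/2$ (case (\ref{S2RK2E3})), the ratio $x'/x$ lies in $(1/2,3/2)$, so $|A|$ admits a direct bound by a constant and the stated $|T_2|\le C|B|$ follows. When $|x-x'|>x/2$ (cases (\ref{S2RK2E4})--(\ref{S2RK2E5})), $B$ itself has exponential decay because $\sinh|x^2-x'^2|$ is large; the large-argument asymptotics $\sinh\zeta,\cosh\zeta\sim e^\zeta/2$, combined with the identity $2\sinh z\cosh z=\sinh(2z)$, are substituted into the combined form of $T_2$ to extract the dominant exponential and reach the stated bounds.

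The main technical obstacle is the large-argument regime: all of $\sinh x^2$, $\cosh x^2$, $\sinh x'^2$, $\sinh|x^2-x'^2|$ and $\sinh(x^2+x'^2)$ are of order $e^{\cdot}/2$, so the desired bounds emerge only after delicate cancellation between the exponentials in the numerator and denominator. Keeping track of the polynomial prefactors (such as $x'^3/x^3$) and identifying the residual exponential (such as $e^{-(x'^2-x^2)}$ in (\ref{S2RK2E5})) is where the bulk of the book-keeping lies.
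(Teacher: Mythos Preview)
Your overall strategy---factoring $T_2=A\cdot B$, rewriting $A$ via $h(z)=\sinh z/z$, and using the sum-to-product identity for $B$---mirrors the paper's approach; your representation through $h$ is a clean way to extract the cancellation that the paper obtains by more laborious Taylor expansions of $x'^2\sinh x^2-x^2\sinh x'^2$. The case splits and the estimates you outline for (\ref{S2RK2E1}), (\ref{S2RK2E2}), (\ref{S2RK2E3Bis}), (\ref{S2RK2E4}) and (\ref{S2RK2E5}) are correct and essentially identical in spirit to the paper's arguments.

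There is, however, a genuine gap in your treatment of (\ref{S2RK2E3}), and the paper's own proof shares it. You assert that when $x+x'>1$ and $1/8<|x-x'|<x/2$ the factor $|A|$ is bounded by an absolute constant. This is true when $x'>x$, since then both $\sinh x^2/\sinh x'^2$ and $x^2/x'^2$ lie in $(0,1)$. But when $x'<x$---take for instance $x'=3x/4$ with $x$ large---one has
\[
\frac{\sinh x^2}{\sinh x'^2}\sim e^{x^2-x'^2}=e^{7x^2/16},
\]
which is unbounded. In that regime $|A|\sim (x'^3/x^3)\,e^{x^2-x'^2}$ while $|B|\sim 2e^{-(x^2-x'^2)}$, so the product $|T_2|=|A||B|\sim 2x'^3/x^3$ is of order one, \emph{not} of order $|B|$. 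Hence the inequality $|T_2|\le C|B|$ fails on the sub-range $x'\in(x/2,\,x-1/8)$ for large $x$; the downstream use of this bound in Corollary~\ref{S4PM}, namely $\int_{x/2}^{x-1/8}|T_2(x,x')|\,dx'\le Cx/\sinh(x/8)$, is therefore also not justified (the left-hand side is in fact of order $x$). Whatever repair is needed here, the argument as written---both yours and the paper's---does not deliver (\ref{S2RK2E3}) in the sub-case $x'<x$.
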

\begin{proof} \hfill \break

1.- Proof of (\ref{S2RK2E1}). Let us write first,
\begin{align*}
&\left|\frac {\sinh x^2 } {\sinh  x'^2}-\frac {x^2} {x'^2}\right|=
\left|\frac {x'^2\sinh x- x^2 \sinh x'^2} {x'^2\sinh x'^2}\right|\\
&=\left| \frac {x'^2\sinh x^2-x^2\sinh x^2+x^2\sinh  x^2- x^2 \sinh  x'^2} {x'^2\sinh  x'^2}\right|
\end{align*}
and
\begin{align*}
x'^2\sinh x^2 -x^2 \sinh x'^2=x'^2(\sinh x^2 -\sinh x'^2)+(x'^2-x^2 )\sinh x'^2.
\end{align*}
Using Taylor's expansion we have, for some $\xi (x^2 , x'^2)$ between  $x^2 $ and $x'^2$,
\begin{align*}
x'^2&(\sinh x^2 -\sinh x'^2)=\\
&=x'^2\left((x^2 -x'^2)\cosh x'^2+\frac {1} {2}(x^2 -x'^2)^2+\frac {1} {6}(x^2 -x'^2)^3\cosh \xi (x^2, x'^2)\right)
\end{align*}
and,
\begin{align*}
&x'^2(\sinh x^2 -\sinh x'^2)+(x'^2-x^2 )\sinh x'^2=\\
&=x'^2\left((x^2 -x'^2)\cosh x'^2+\frac {1} {2}(x^2 -x'^2)^2+\frac {1} {6}(x^2 -x'^2)^3\cosh\xi (x^2 , x)^2\right)+\\
&\hskip 7cm +(x'^2-x^2 )\sinh x'^2\\
&=(x^2 -x'^2)\left(x'^2 \cosh x'^2 -\sinh x'^2\right)+\frac {x'^2} {2}(x^2 -x'^2)^2+\\
&\hskip 7cm +\frac {x'^2} {6}(x^2 -x'^2)^3\cosh \xi (x^2 , x'^2).
\end{align*}
We deduce that for all $x\in (0, R)$ and $x'\in (0, R)$,
\begin{align*}
&|x'^2(\sinh x^2 -\sinh x'^2)+(x'^2-x^2 )\sinh x'^2|
\le C_R|x^2 -x'^2\times \\
&\times |( x'^6+x'^2|x^2 -x'^2|+|x^2 -x'^2|^2x'^2) \le C'_R|x^2 -x'^2|\left( x'^6+x'^2x^2\right).
\end{align*}
Similarly, using the change $x\leftrightarrow x'$,
\begin{align*}
|x'^2(\sinh x^2 -\sinh x'^2)+(x'^2-x^2 )\sinh x'^2|\le C_R|x^2 -x'^2|\left( x^6+x'^2x^2\right).
\end{align*}
It follows that, for all $x\in (0, R)$, $x'\in (0, R)$,
\begin{align*}
|x'^2(\sinh x^2 -\sinh x'^2)+(x'^2-x^2 )\sinh x'^2|\le C_R|x^2 -x'^2|\times \\
\times \min\{x'^2(x^2+x'^4), x^2(x^4+x'^2)\}\le  C_R|x^2 -x'^2| x^2x'^2
\end{align*}
and
\begin{align*}
&\left|\frac {\sinh x^2 } {\sinh  x'^2}-\frac {x^2} {x'^2}\right|
\le C_R\frac {|x^2 -x'^2| x^2\,  x'^2 } {x'^4}.
\end{align*}
On the other hand, since $\sinh $ is locally Lipschitz,
\begin{align*}
\left|\frac {1} {\sinh |x'^2-x^2|}- \frac {1} {\sinh (x^2+x'^2)}\right|\le
\frac {C_R\min(x^2, x'^2)} {|x'^2-x^2 |(x'^2+x^2 )}
\end{align*}
for some constant $C_R>0$. We deduce, 
\begin{align*}
\left|\frac {\sinh x^2 } {\sinh  x'^2}-\frac {x^2} {x'^2}\right|
\left|\frac {1} {\sinh |x'^2-x^2|}- \frac {1} {\sinh (x^2+x'^2)}\right|
&\le C_R\frac {x^2 x'^2\min(x^2, x'^2) } {x'^4(x'^2+x^2 )}\\
&\le C_R\frac {x^2\min(x^2, x'^2) } {x'^2(x^2 +x'^2)}
\end{align*}
and 
\begin{align*}
|T_2(x, x')|\le C_R\frac {x' \min(x^2, x'^2)} {x(x^2+x'^2)}\le C_R,\,\,\forall x\in (0, R),\, x'\in (0, R).
\end{align*}
This proves (\ref{S2RK2E1}).

2.- Proof of (\ref{S2RK2E2}). 
Suppose now that $x\in (0, 1)$ and $x'>x+\delta $ for any $\delta >0$. Then, as we have ween in the proof of Proposition \ref{S2RK1},
\begin{align*}
\left|\frac {1} {\sinh |x^2-x'^2|} - \frac {1} {\sinh (x^2 +x'^2)}\right|\le C\frac {x^2 \cosh x'^2} 
{\sin (x'^2+x^2 )\sinh (x'^2-x^2 )}
\le C_\delta \frac {x^2 } {\sinh x'^2}.
\end{align*}
We have also, since $\sinh x^2 \le \sinh 1$,
\begin{align*}
\left|\frac {\sinh x^2 } {\sinh x'^2}-\frac {x^2} {x'^2}\right|
&=\left|\frac {x'^2\sinh x^2 -x^2 \sinh x'^2} {x'^2\sinh x'^2}\right|\\
&\le C\frac {x^2 (x'^2+\sinh x'^2)} {x'^2\sinh x'^2}=Cx^2 \left(\frac {1} {\sinh x'^2}+\frac {1} {x'^2}\right)\le \frac {Cx^2 } {x'^2}.
\end{align*}
Then,
\begin{align*}
|T_2(x, x')|\le C_\delta \frac {x^2 } {\sinh x'^2}\frac {x^2 } {x'^2}\frac {x'^3} {x^3}=C_\delta \frac {x\ x'} {\sinh x'^2}
\end{align*}
and this proves (\ref{S2RK2E2}).

3.- Proof of  (\ref {S2RK2E3Bis})
We consider now the case where $x+x'>1$ and $|x-x'|<1/8$. Suppose again that $0<x'^2<x $. We may still use the Taylor's expansion around $x^2 =x'^2$, and write,
\begin{align*}
|x'^2\sinh x^2 -x^2 \sinh x'^2|&\le x'^2|\sinh x^2 -\sinh x'^2|+|x^2 -x'^2 |\sinh x'^2\\
&\le x'^2 |x'^2-x^2 |\cosh \xi (x^2 , x'^2)+|x^2 -x'^2 |\sinh x'^2
\end{align*}
and
\begin{align*}
&\left|\frac {\sinh x^2} {\sinh x'^2}-\frac {x^2} {x'^2}\right|\le
\frac {x'^2 |x'^2-x^2 |\cosh \xi (x^2 , x'^2)+|x^2 -x'^2 |\sinh x'^2} {x'^2\sinh x'^2}\\
&=|x^2 -x'^2|\left( \frac {\cosh \xi  (x^2 , x'^2)} {\sinh x'^2}+\frac {1} {x'^2}\right)
\end{align*}
If $|x-x'|<1/8$ and $x+x'>1$ then, for some $x_0>0$,  $x'\ge x_0$ and $x\ge x_0$. Then, there exists $C>0$ such that
$$\frac {\cosh \xi  (x^2 , x'^2)} {\sinh x'^2}\le C,\,\,\forall x,\,\,\forall x; x+x'>1, |x-x'|<1/8$$
and,
\begin{align*}
&\left|\frac {\sinh x^2} {\sinh x'^2}-\frac {x^2} {x'^2}\right|\le
C|x^2 -x'^2|\,\,\forall x,\,\,\forall x'; x+x'>1, |x-x'|<1/8.
\end{align*}
On the other hand, if $|x+x'|>1$ and $|x-x'|<1/8$,
\begin{align*}
\left|\frac {1} {\sinh |x^2-x'^2|}- \frac {1} {\sinh (x^2+x'^2)}\right|\le
\frac {C} {|x'^2-x^2 |}+\frac {1} {\sinh  (x^2+x'^2)}
\end{align*}
for some positive $C$. It follows,
\begin{align*}
&\left|\frac {\sinh x^2} {\sinh x'^2}-\frac {x^2} {x'^2}\right|
\left|\frac {1} {\sinh |x^2-x'^2|}- \frac {1} {\sinh (x^2+x'^2)}\right|\le
C; \,\, k+k'>1, |k-k'|<1/8.
\end{align*}
and,
\begin{align*}
|T_2(x, x')|\le C \frac {x'^3} {x^3}\le C\frac {(x+1/8)^3} {x^3}\le C',\,\,x+x'>1, |x-x'|<1/8.
\end{align*}
and this shows (\ref {S2RK2E3Bis}).

4.- Proof of (\ref {S2RK2E3})
Suppose now that $x+x'>1$, $|x-x'|>1/8$ and $|x-x'|\le x/2$. Then
\begin{align*}
&\left|\frac {\sinh x^2 } {\sinh x'^2}-\frac {x^2} {x'^2}\right|\le C
\end{align*}
and,
\begin{align*}
\left|\frac {\sinh x^2 } {\sinh x'^2}-\frac {x^2} {x'^2}\right| 
\left|\frac {1} {\sinh |x^2-x'^2|}- \frac {1} {\sinh (x^2+x'^2)}\right|\le
C\left|\frac {1} {\sinh |x^2-x'^2|}- \frac {1} {\sinh (x^2+x'^2)}\right|
\end{align*}
and this shows (\ref {S2RK2E3}).

5.- Proof of (\ref {S2RK2E4}) and  (\ref {S2RK2E5}) .
Suppose now $x+x'>1$, $|x-x'|>x/2$. As we have seen in the proof of  Proposition \ref{S2RK1},
If $x'>x$,
\begin{align*}
\left|\frac {1} {\sinh |x^2-x'^2|}- \frac {1} {\sinh (x^2+x'^2)}\right|\le\frac {e^{-(x'^2-x^2)}} {\left( 1-e^{-2(x^2+x'^2)}\right)\left(1-e^{-2(x'^2-x^2)} \right)}\le Ce^{-(x'^2-x^2 )}
\end{align*}
and
\begin{align*}
\left|\frac {\sinh x^2 } {\sinh x'^2}-\frac {x^2} {x'^2}\right| \le
Ce^{-(x'^2 -x^2)}+\frac {x^2 } {x'^2}.
\end{align*}
Then,
\begin{align*}
\left|\frac {\sinh x^2 } {\sinh x'^2}-\frac {x^2} {x'^2}\right| 
\left|\frac {1} {\sinh |x^2-x'^2|}- \frac {1} {\sinh (x^2+x'^2)}\right|\le Ce^{-(x'^2-x^2 )}\left( e^{-(x'^2 -x^2)}+\frac {x^2 } {x'^2} \right).
\end{align*}
If on the other hand, $x'<x/2$, $x^2-x'^2>3x^2/4$, and then
\begin{align*}
\left|\frac {1} {\sinh |x^2-x'^2|}- \frac {1} {\sinh (x^2+x'^2)}\right|\le
\frac {1} {\sinh \frac {3 x^2 } {4}}+\frac {1} {\sinh x^2}\le \frac {2} {\sinh \frac {3x^2  } {4} }
\end{align*}
We deduce,
\begin{align*}
&|T_2(x, x')|\le \frac {C} {\sinh \frac {3 x^2 } {4} }\frac {x'^3} {x^3}\le \frac {C} {\sinh \frac { 3x^2} {4} },\,\,x'<x/2\\
&|T_2(x, x')|\le  Ce^{-(x'^2-x^2 )}\left( e^{-(x'^2 -x^2)}+\frac {x^2 } {x'^2} \right)\frac {x'^3} {x^3},\,\,\,x'\ge 3x/2
\end{align*}
and this proves (\ref {S2RK2E4}) and  (\ref {S2RK2E5}).
\end{proof}

Two Corollaries  follow from Proposition (\ref{S2RK1}) and Proposition (\ref{S2RK2}). First,
\begin{cor}
\label{S4PM}
$$
M=\sup  _{ x>0 }m(x)=\sup _{ x>0 }\int _0^\infty |T(x, x')|dx'<\infty
$$
\end{cor}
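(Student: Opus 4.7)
Since $T=T_1+T_2$, it suffices to show that $\sup_{x>0}\int_0^\infty |T_i(x,x')|dx'<\infty$ for $i=1,2$. The plan is to partition $(0,\infty)$ (as the range of $x'$) into five disjoint regions, matched to the cases in Propositions \ref{S2RK1} and \ref{S2RK2}, and to verify that the contribution of each region to $m(x)$ is bounded by a constant independent of $x$.

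Fix $x>0$ and write $m(x)\le \sum_{j\in\{A,B,C,D,E\}}\int_{\Omega_j(x)}(|T_1|+|T_2|)dx'$ with
\begin{align*}
\Omega_A(x)&=\{x'>0:\,x+x'\le 1\},\\
\Omega_B(x)&=\{x'>0:\,x+x'>1,\; |x-x'|\le 1/8\},\\
\Omega_C(x)&=\{x'>0:\,x+x'>1,\; 1/8<|x-x'|<x/2\},\\
\Omega_D(x)&=\{x'>0:\,x+x'>1,\; |x-x'|\ge x/2,\; x'<x/2\},\\
\Omega_E(x)&=\{x'>0:\,x+x'>1,\; |x-x'|\ge x/2,\; x'>3x/2\}.
\end{align*}
On $\Omega_A$ apply (\ref{S2RK1E1Sm}) with $R=1$ together with (\ref{S2RK2E1}) to get $|T(x,x')|\le C(1+xx')$, which integrates over a set of length $\le 1$. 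On $\Omega_B$, apply (\ref{S2RK1E2}) and (\ref{S2RK2E3Bis}) to obtain $|T(x,x')|\le C(1+x'/x)$; the constraints $x+x'>1$ and $|x-x'|\le 1/8$ force $x,x'\ge 7/16$, so $x'/x$ is bounded, and $\Omega_B$ has length $\le 1/4$.

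On $\Omega_C$ invoke (\ref{S2RK1E4}) and (\ref{S2RK2E3}). The factor $\frac{1}{\sinh|x^2-x'^2|}-\frac{1}{\sinh(x^2+x'^2)}$ is uniformly bounded there, because $|x^2-x'^2|=(x+x')|x-x'|\ge 1/8$. The remaining term $\frac{x'\min(x,x')^2}{x(x+x')(x^2+x'^2)}$ is $O(1/x)$ on an interval of length $O(x)$, so its integral is $O(1)$. On $\Omega_D$, apply (\ref{S2RK1E5ter}) and (\ref{S2RK2E4}): both bounds contain either the exponentially small factor $1/\sinh(3x^2/4)$ (which implies $x\gtrsim 1$ since $\Omega_D$ requires $x+x'>1$ and $x'<x/2$) or the power decay $\min(x',x)^2/\max(x',x)^4$, so the integrals are $O(1)$. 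On $\Omega_E$, the bounds (\ref{S2RK1E5}) and (\ref{S2RK2E5}) give the exponential factor $e^{-(x'^2-x^2)}$ together with polynomially bounded prefactors, ensuring a uniform bound via a direct tail estimate.

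The main technical obstacle is the intermediate region $\Omega_C$, where the apparent singularity of $1/\sinh|x^2-x'^2|$ near the diagonal is only tamed by combining it with the rational counterpart $1/|x^2-x'^2|$, as is precisely the purpose of the splitting $T=T_1+T_2$ used in the preceding propositions; the lower bound $|x^2-x'^2|\ge(x+x')/8\ge 1/8$ is what renders both the hyperbolic and rational pieces manageable. Once each of the five uniform bounds is established, summing them yields $M=\sup_{x>0}m(x)<\infty$, as claimed.
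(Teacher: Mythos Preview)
Your decomposition is close in spirit to the paper's proof, but there are two genuine gaps where the estimates you cite are not strong enough.

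\textbf{Region $\Omega_C$ for large $x$.} You observe that $|x^2-x'^2|\ge 1/8$ and conclude that the hyperbolic factor is ``uniformly bounded''. That gives only $|T_2(x,x')|\le C$ from (\ref{S2RK2E3}), and likewise the hyperbolic piece of (\ref{S2RK1E4}) contributes $\le C\frac{x'}{x}\le \frac{3C}{2}$. But $|\Omega_C|$ has length of order $x$, so the resulting integral is $O(x)$, not $O(1)$. The point you are missing is that on $\Omega_C$ one has $x+x'>3x/2$ (since $|x-x'|<x/2$), hence $|x^2-x'^2|\ge \frac{3x}{16}$, and the hyperbolic factor is actually bounded by $1/\sinh(3x/16)$. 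This exponential decay in $x$, multiplied by the length $O(x)$, is what gives the uniform bound; the paper uses exactly this via the intermediate estimate (\ref{S2EXpr}).

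\textbf{Region $\Omega_E$ for small $x$.} This is the more serious gap. When $x\to 0$, the region $\Omega_E$ is essentially $\{x'>1-x\}$, and the bound (\ref{S2RK1E5}) reads
\[
|T_1(x,x')|\le \frac{Cx'}{x}\Bigl(e^{-C_1x'^2}\cdot O(1)+\frac{x^2}{x'^4}\Bigr).
\]
Integrating the first bracket over $x'>1-x$ yields $\frac{C'}{x}e^{-C_1(1-x)^2}\to\infty$ as $x\to 0$. The same blow-up occurs for $T_2$ if you use (\ref{S2RK2E5}), where the prefactor $x'^3/x^3$ is even worse. These estimates are correct but not sharp for small $x$; the paper avoids this by treating $x\in(0,1)$ separately and invoking (\ref{S2RK1E3}) and (\ref{S2RK2E2}), which give $|T_1|\le Cx/x'^3$ and $|T_2|\le C_\delta\, xx'/\sinh x'^2$ respectively. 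Those bounds carry a \emph{positive} power of $x$ and integrate to something that vanishes as $x\to 0$. Your unified decomposition is elegant, but to make it work you must switch to (\ref{S2RK1E3}) and (\ref{S2RK2E2}) on $\Omega_E$ whenever $x<1$.
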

\begin{proof}
By definition, for all $x>0$,
\begin{align*}
m(x)=m_1(x)+m_2(x);\,\,\,
|m_1(x)|\le \int _0^\infty  |T_1(x, x')|dx';\,\,\,
|m_2(x)|\le \int _0^\infty  |T_2(x, x')|dx'.
\end{align*}
Suppose first that $x\in (0, 1)$. Then, by  (\ref{S2RK1E1}) and (\ref{S2RK1E3})
\begin{align}
m_1(x)&\le \int _0^2 |T_1(x, x')|dx'+\int _2^\infty  |T_1(x, x')|dx'\nonumber \\
&\le Cx\int _0^1x'dx'+x \int_1^\infty \frac {dx'} {x'^3}=Cx,\,\forall x\in (0, 1).
\label{S2.4ME0}
\end{align}
With a similar argument, using instead (\ref{S2RK2E1}) and (\ref{S2RK2E2}), 
\begin{align}
m_2(x)&\le \int _0^2 |T_2(x, x')|dx'+\int _2^\infty |T_2(x, x')|dx'\nonumber \\
&\le C+Cx \int_2^\infty \frac {x'dx'} {\sinh x'^2}\le C,\,\forall x\in (0, 1).
\label{S2.4ME0Bis}
\end{align}
Suppose now that $x>1$ and write, for $\ell=1, 2$,
\begin{align}
m_\ell(x)=\int _0^{x/2} dx'+\int  _{ x/2 }^{x-\frac {1} {8}}dx'+\int  _{ x-\frac {1} {8} }^{x+\frac {1} {8}}dx'+\int_{x+\frac {1} {8}}^{3x/2}dx'+\int  _{ 3x/2 }^\infty dx'\label{S2.4ME5}
\end{align}
When $x'\in (0, x/2)$ we may use (\ref{S2RK1E5ter}) to obtain,
\begin{align}
\int _0^{\frac {x} {2}}|T_1(x, x')| &\le \frac {C} {x} \int _0^{\frac {x} {2}}x'\left(\frac {1} {\sinh \frac {3x^2} {4}}+\frac {x'^2} { x^4}\right)\le C\left(\frac {x} {\sinh \frac {3x^2} {4}}+\frac {1} {x}\right)\le \frac {C} {x},\,\,\,\,\forall x>1.\label{S2.4ME7}
\end{align}
If we use (\ref{S2RK2E4}) in the same region we obtain, 
\begin{align}
&|T_2(x, x')|\le \frac {C} {\sinh \frac { 3 x^2 } {4} }\nonumber\\
\hskip -4.5cm \hbox{and then,}\hskip 3cm &
\int _0^{\frac {x} {2}}|T_2(x, x')| \le \frac {Cx} {\sinh \frac { 3x^2  } {4} },\,\,\,\,\forall x>1.\label{S2.4ME7Bis}
\end{align}

When $x'\in (x/2, x-1/8)$, by (\ref{S2EXpr}),
\begin{align}
|T_1(x, x')|\le &
\frac {Cx'} {x}\left(\frac{2\min(x, x')^2} {(x'+x)(x^2+x'^2)}+\frac {1} {\sinh\frac { |x+x '|} {8}}\right)\le  C\frac {x'} {x^2}\label{S2Eqpo1} \\
\hskip -4cm \hbox{and then,}\hskip 3cm &
\int _{x/2}^{x-\frac {1} {8}}|T_1(x, x')|dx'\le C,\,\,\forall x>1.\label{S2.4ME6}
\end{align}

We use now (\ref{S2RK2E3}) in the same region,
\begin{align}
|T_2(x, x')|\le &C\left| \frac {1} {\sinh|x^2 -x'^2|}-\frac {1} {\sinh (x^2 +x'^2)}\right|\le C
\left (\frac {1} {\sinh \frac {x} {8}}+\frac {1} {\sinh x^2} \right),\nonumber\\
\label{S2.4ME6Bis}
\hskip-1.8cm \hbox{from where:} \hskip 1.5cm &\int _{x/2}^{x-\frac {1} {8}} |T_2(x, x')|dx'\le C\frac {x} {\sinh \frac {x} {8}},\,\,\forall x>1.
\end{align}
Suppose now that $x'\in (x-1/8, x+1/8)$, by (\ref{S2RK1E2}) and (\ref{S2RK2E3Bis}) $T_1(x, x')$ and $T_2(x, x')$ are both bounded on 
$[x-1/8, x+1/8]$ and then
\begin{align}
\label{S2.4ME7}
\int  _{ x-\frac {1} {8} }^{x+\frac {1} {8}} (|T_1(x, x')|+|T_2(x, x')|)dx'\le C,\,\,\forall x>1.
\end{align}
In the region $x'\in (x+1/8, 3x/2)$, we may use again  (\ref{S2EXpr}) to obtain, as in (\ref{S2Eqpo1}),
\begin{align}
 |T_1(x, x')|\le&
 \frac {Cx'} {x}\left(\frac{2\min(s, x')^2} {(x'+x)(x^2+x'^2)}+\frac {1} {\sinh\frac { |x+x '|} {8}}\right)\le \frac {Cx'} {x^2}\nonumber \\
\hskip -2.4cm\hbox{from where}\hskip 3cm&
\int _{x-\frac {1} {8}}^{\frac {3x} {2}}|T_1(x, x')|dx'\le C,\,\,\forall x>1.\label{S2.4ME8}
\end{align}
By  (\ref{S2RK2E3}) we have  in the same region,
\begin{align}
|T_2(x, x')|\le &C\left| \frac {1} {\sinh|x^2 -x'^2|}-\frac {1} {\sinh (x^2 +x'^2)}\right|\le C
\left (\frac {1} {\sinh \frac {x} {8}}+\frac {1} {\sinh x^2} \right),\\
\label{S2.4ME8Bis}
\hskip -0.33cm \hbox{and then}\hskip 2cm &\int _{x-\frac {1} {8}}^{\frac {3x} {2}} |T_2(x, x')|dx'\le C\frac {x} {\sinh \frac {x} {8}},\,\,\forall x>1.
\end{align}
If $x'>3x/2$, by (\ref{S2RK1E5}),
\begin{align*}
|T_1(x, x')|\le \frac {Cx'} {x }\left(e^{-C_1x'^2}+\frac {x^2} {x'^4}\right)
\end{align*}
and we may write,
\begin{align*}
&e^{-C_1x'^2}\le e^{-\frac {C_1x^2} {2}}e^{-\frac {C_1x'^2} {2}}\,\,\hbox{and}\,\,\,\,\,\frac {x} {x'^3}\le \frac {1} {x'^2 }
\end{align*}
from where,
\begin{align}
\label{S2.4ME9}
\int _{ \frac {3x} {2} }^\infty |T_1(x, x')|dx'\le Ce^{-\frac {C_1x^2} {2}} +\int _{ \frac {3x} {2} }^\infty \frac {dx'} {x'^2}
\le C\left(e^{-\frac {C_1x^2} {2}}+\frac {1} {x}\right),\,\,\forall x>1.
\end{align}
By (\ref{S2RK2E5}), 
\begin{align*}
|T_2(x, x')|&\le  
\begin{cases}
Ce^{-(x'^2-x^2 )}\left( e^{-(x'^2 -x^2)}+\frac {x^2 } {x'^2} \right)\frac {x'^3} {x^3},\,\,\,\,\forall x>\frac {3x} {2}\\
Ce^{-\frac {5x'^2} {9}}\left( e^{-\frac {5x'^2} {9}}+1\right)\frac {x'^3} {x^3},\,\,\,\,\forall x>\frac {3x}{2},
\end{cases}\nonumber\\
\int _{ \frac {3x} {2} }^\infty & |T_2(x, x')|dx'\le Ce^{-\frac {5x^2 } {9}},\,\,\forall x>1.
\end{align*}
\vskip -1cm 
\end{proof}
Similar arguments show the second Corollary,

\begin{cor}
\label{S4PMu}
$$
\widetilde M=\sup  _{ x'>0 }\mu (x')=\sup _{ x'>0 }\int _0^\infty |T(x, x')|dx<\infty
$$
\end{cor}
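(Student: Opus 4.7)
The argument parallels the proof of Corollary \ref{S4PM}, but with $x'$ held fixed and the integration carried out over $x$. Essentially all estimates of Propositions \ref{S2RK1} and \ref{S2RK2} remain applicable because they are phrased symmetrically in $\min(x,x')$ and $\max(x,x')$ or in $|x-x'|$ and $x+x'$; the only asymmetry to watch for is the prefactor $x'/x$ in $T_1$ and $x'^3/x^3$ in $T_2$, which blow up as $x\to 0$ with $x'$ fixed and large.

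\textbf{Small $x'$ (say $x'\in(0,1)$).} Split $\int_0^\infty dx=\int_0^2 dx+\int_2^\infty dx$. For $x\in(0,2)$ both variables lie in a compact set, so (\ref{S2RK1E1Sm}) and (\ref{S2RK2E1}) give $|T(x,x')|\le C_R(xx'+1)\le C$, integrable on $(0,2)$. For $x>2>2x'$ we have $x'<x/2$; estimates (\ref{S2RK1E5ter}) and (\ref{S2RK2E4}) yield
\begin{equation*}
|T(x,x')|\le \frac{Cx'}{x}\Big(\frac{1}{\sinh(3x^2/4)}+\frac{x'^2}{x^4}\Big)+\frac{C}{\sinh(3x^2/4)},
\end{equation*}
whose integral over $(2,\infty)$ is finite and, uniformly in $x'\in(0,1)$, bounded.

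\textbf{Large $x'$ ($x'>1$).} Decompose $\int_0^\infty dx$ into the five zones
$(0,x'/2)$, $(x'/2,x'-\tfrac18)$, $(x'-\tfrac18,x'+\tfrac18)$, $(x'+\tfrac18,3x'/2)$, $(3x'/2,\infty)$,
mimicking (\ref{S2.4ME5}). On $(x'/2,x'-\tfrac18)\cup(x'+\tfrac18,3x'/2)$ use (\ref{S2RK1E4}) and (\ref{S2RK2E3}): $|x^2-x'^2|\gtrsim x'$ there, so the bound is $\le C x'/(x\sinh(x'/8))+\tfrac{Cx'\min(x,x')^2}{x(x'+x)(x^2+x'^2)}$, whose integral in $x$ contributes $O(\ln x')/\sinh(x'/8)+C$. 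On the middle strip $|x-x'|<\tfrac18$, (\ref{S2RK1E2}) and (\ref{S2RK2E3Bis}) give a pointwise bound times a length-$\tfrac14$ set. On $(3x'/2,\infty)$ apply (\ref{S2RK1E5}) and (\ref{S2RK2E5}) with $x>x'$: the factor $e^{-(x^2-x'^2)}$ yields integrable exponential decay in $x$, while the algebraic tail $x'^2/x^4$ integrates to $O(1/x')$.

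\textbf{Main obstacle.} The delicate range is $x\in(0,x'/2)$ with $x'$ large, precisely where the naive use of (\ref{S2RK1E5}) produces a non-integrable $Cx'/x$ near $x=0$. The remedy is to split further into $(0,1)\cup(1,x'/2)$. On $(0,1)$, since $x'>2>\min(2,3x/2)$, estimate (\ref{S2RK1E3}) yields the sharper $|T_1(x,x')|\le Cx/x'^3$, which reflects the cancellation $T_1(x,x')=2xx'\,h'(\xi)$ inside the bracket and integrates to $C/x'^3$; similarly (\ref{S2RK2E2}) (with $\delta=1$) gives $|T_2|\le Cxx'/\sinh(x'^2)$ there. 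On $(1,x'/2)$, the conditions $x>1$ and $|x-x'|>x/2$ hold, so (\ref{S2RK1E5}) and (\ref{S2RK2E5}) apply; the factors $1-e^{-\beta(x^2+x'^2)}$ and $1-e^{-C_1 x'^2}$ are bounded below uniformly, and the two resulting contributions, $Ce^{-C_1 x'^2}x'\ln x'$ and $\int_1^{x'/2}Cx/x'^3\,dx\le C/x'$, are both bounded uniformly in $x'>1$. Summing the five zone contributions yields $\mu(x')\le C$ uniformly in $x'$, which is the claim.
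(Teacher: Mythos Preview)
Your proposal is correct and takes essentially the same approach as the paper: a zone decomposition in $x$ combined with the pointwise bounds of Propositions~\ref{S2RK1}--\ref{S2RK2}, together with the key observation that the range $x$ small, $x'$ large requires the sharper estimate (\ref{S2RK1E3}) (and (\ref{S2RK2E2})) rather than (\ref{S2RK1E5}) in order to avoid a non-integrable $x'/x$ factor near $x=0$. The paper's own proof is in fact terse on precisely this point (for $x'>15/2$ it only writes out $\int_5^\infty$ explicitly), so your ``Main obstacle'' paragraph is a genuine clarification.

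One minor technical remark: your intermediate zone boundaries $x'/2$ and $3x'/2$ encode the condition $|x-x'|<x'/2$, whereas the hypotheses of (\ref{S2RK1E4}) and (\ref{S2RK1E5}) are stated for $|x-x'|\lessgtr x/2$. The matching cut points are therefore $2x'/3$ and $2x'$, which is what the paper uses. With your choice there remain small slivers, e.g.\ $x\in(x'/2,2x'/3)$ and $x\in(3x'/2,2x')$, where the estimate you cite does not formally apply; the complementary one does, and the resulting contributions are easily seen to be $O(1/x')$ or exponentially small, so the fix is immediate.
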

\begin{proof}
As before,
\begin{align*}
&\mu (x')\le \mu _1(x¡)+\mu _2(x')\\
&\mu _\ell (x')=\int _0^\infty |T_\ell (x, x')|dx,\,\ell =1, 2.
\end{align*}
Suppose first that $x'\in (0, 2)$. We have then,
\begin{align}
\label{S4PMuE1}
\mu _1(x')\le \int _0^5|T_1(x, x')|dx+\int _5^\infty |T_1(x, x')|dx
\end{align}
We use (\ref{S2RK1E1Sm}) in the first integral of the right hand side of (\ref{S4PMuE1}).  Since $x>5>2x'$ in the second integral of the right hand side of (\ref{S4PMuE1}), we may use (\ref{S2RK1E5ter}) and deduce that $\sup _{ x'\in (0, 2) }\mu_1 (x')<\infty$. Similarly,
\begin{align*}
\mu _2(x')\le \int _0^5|T_2(x, x')|dx+\int _5^\infty |T_2(x, x')|dx
\end{align*}
where (\ref{S2RK2E1}) and (\ref{S2RK2E4}) yield $\sup _{ x'\in (0, 2) }\mu _2(x')<\infty$. 

Suppose now that $x'\in (2, 15/2)$. We write then,
\begin{align}
\label{S4PMuE2}
\int _5^\infty |T_1(x, x')|dx=\int  _{ 5 }^{2x'}|T_1(x, x')|dx+\int_{2x'}^\infty|T_1(x, x')|dx
\end{align}
The first integral in the right hand side of (\ref{S4PMuE2}) is estimated using (\ref{S2RK1E1Sm}),
$$
\int  _{ 5 }^{2x'}|T_1(x, x')|dx\le \int  _{ 5 }^{15}|T_1(x, x')|dx\le C.
$$
The second integral in the right hand side of (\ref{S4PMuE2}) may be estimated using (\ref{S2RK1E5ter}) to obtain,
\begin{align*}
\int_{2x'}^\infty|T_1(x, x')|dx\le Cx' \int  _{ 2x' }^\infty \frac {1} {x}\left(\frac {1} {\sinh \frac {3x^2} {4}}+\frac {x'^2} {x^4}\right)dx
\le C\int _0^\infty e^{-\frac {3x^2} {8}}dx+Cx'^{-1}.
\end{align*}
and it follows that $\sup _{ x'\in (2, 15/2 )}\mu _1(x')<\infty$.  A similar argument using  (\ref{S2RK2E1}) and  (\ref{S2RK2E4}) gives $\sup _{ x'\in (2, 15/2 )}\mu _2(x')<\infty$.

Suppose now that $x'>15/2$. Then, 
\begin{align}
\label{S4PMuE3}
\int _5^\infty |T_1(x, x')|dx=\int  _{ 5 }^{\frac {2x'} {3}}|T_1(x, x')|dx+\int  _{ \frac {2x'} {3} }^{2x'}|T_1(x, x')|dx+\int_{2x'}^\infty|T_1(x, x')|dx
\end{align}
In the first and third integrals  of the righthand side of (\ref{S4PMuE3}) we use (\ref{S2RK1E5}),
\begin{align*}
\int  _{ 5 }^{\frac {2x'} {3}}|T_1(x, x')|dx\le Cx'\int _5^{\frac {2x'} {3}} \frac {1} {x }\left(e^{-C_1x'^2} +\frac {x^2} {x'^4}\right)dx<C\\
\int_{2x'}^\infty|T_1(x, x')|dx\le Cx'\int _{2x'} ^\infty \frac {1} {x }\left(e^{-C_1x^2} +\frac {x'^2} {x^4}\right)dx<C
\end{align*}
Since $|x-x'|<x/2$ in the second integral at the right hand side of (\ref{S4PMuE3}) we write,
\begin{align}
\label{S4PMuE4}
\int  _{ \frac {2x'} {3} }^{2x'}|T_1(x, x')|dx=\int  _{ |x'-x|<\frac {1} {8} } |T_1(x, x')|dx+\int  _{ \frac {1} {8}<|x'-x|<x/2 } |T_1(x, x')|dx
\end{align}
Using (\ref{S2RK1E2})  
$$
\int  _{ |x'-x|<\frac {1} {8} } |T_1(x, x')|dx\le C
$$
We use now  (\ref{S2RK1E4}) in the second integral at the righthand side of (\ref{S4PMuE4}),
\begin{align*}
&\int  _{ \frac {1} {8}<|x'-x|<x/2 } |T_1(x, x')|dx\le \\
&\le Cx' \int  _{\frac {1} {8}<|x-x'|<x/2 }\frac {1} {x}\left(\frac{2\min(x',x)^2} {(x^2-x'^2)(x^2+x'^2)}+\left|\frac {1} {\sinh |x^2-x'^2|}-\frac {1} {\sinh (x^2 +x'^2)}\right|\right)dx\\
&\le Cx' \int  _{\frac {1} {8}<|x-x'|<x/2 }\frac {1} {x}\left(\frac{\min(x',x)^2} {(x+x')(x^2+x'^2)}+\frac {1} {\sinh |(x+x')/8|}+\frac {1} {\sinh (x^2 +x'^2)}\right)dx<C.
\end{align*}
from where $\sup _{ x'>15/2 }\mu _1(x')<\infty$. A similar argument shows $\sup _{ x'>15/2 }\mu _2(x')<\infty$ , using  (\ref{S2RK2E3}), (\ref{S2RK2E3Bis}) instead of  (\ref{S2RK1E2})  and 
(\ref{S2RK2E5}), (\ref{S2RK2E4}) instead of (\ref{S2RK1E4}).
\end{proof}
 
\begin{proof}
[\upshape\bfseries{Proof of Proposition  \ref{S2AFF1}}]

The proof of (i) and (ii)  are now straightforward:
\begin{align}
|F(g(x))|&\le |g(x)|\int _0^\infty |T(x, x')|dx'+\int_0^\infty |g(x')|T(x, x')|dx' \label{S2AFF1E1}\\
&\le 2M||g||_\infty \nonumber \\
||F(g)||_1&\le \int _0^\infty |g(x)|\int _0^\infty |T(x, x')|dx'dx+\int _0^\infty\int _0^\infty|g(x')| |T(x, x')|dx'dx \nonumber \\
&\le \widetilde M||g||_1+M||g||_1. \nonumber
\end{align}
{\it Proof of (iii).} Consider again the right hand side of (\ref{S2AFF1E1}) and notice that,
by (\ref{S2RK1E1Sm}) and (\ref{S2RK2E1}), for $x\in (0, 1)$,
\begin{align*}
&\int_0^1 |g(x')||T(x, x')|dx'\le \int_0^1 |g(x')|\Big(|T_1(x, x')|+|T_2(x, x')|\Big)dx'\\
&\le C\int_0^1 |g(x')|\Big(1+xx'\Big)dx'\le C\sup _{ 0\le x'\le 1 } x'^\theta |g(x')| \int _0^1x'^{-\theta}\Big(1+xx' \Big)dx'\\
&\le  C|||f(s)|||_\theta.
\end{align*}
by Corollary \ref{S4PM},
\begin{align*}
&\int_1^\infty  |g(x')||T(x, x')|dx'\le ||g|| _{ L^\infty(1, \infty) }\int _0^\infty |T(x, x')|dx'\le M ||g|| _{ L^\infty(1, \infty) }
\end{align*}
and then, 
\begin{align*}
\sup _{ 0\le x'\le 1 } x'^\theta |F(g)(x')|\le M \sup _{ 0\le x'\le 1 } x'^\theta |g(x')|+C|||g||| _{ \theta }.
\end{align*}
By (\ref{S2RK1E5ter}) and (\ref{S2RK2E4}), for $x>2$,
\begin{align*}
\int _0^1 |g(x')| \Big(|T_1(x, x')|+|T_2(x, x')|\Big)dx'\le \sup _{ 0\le x'\le 1 } x'^\theta |g(x')|\times \\
\times \int _0^1 x'^{-\theta} \Big(|T_1(x, x')|+|T_2(x, x')|\Big)dx'\le C \sup _{ 0\le x\le 1 } x'^\theta |g(x')|\times \\
 \int _0^1 x'^{1-\theta} \Big(1+x'^2\Big)dx'= C \sup _{ 0\le x'\le 1 } x'^\theta |g(x')|
\end{align*}
and by, Corollary \ref{S4PM},
\begin{align*}
&\int_1^\infty  |g(x')||T(x, x')|dx'\le ||g|| _{ L^\infty(1, \infty) }\int _0^\infty |T(x, x')|dx'\le M ||g|| _{ L^\infty(1, \infty) }.
\end{align*}
and $ |||F(g)||| _{ \theta }\le C|||g|||_\theta$.
{\it Proof of (iv).} By (ii)  only  $F(g)\in L^\infty _{ loc }(0, \infty)$ remains to be proved. For  $K=[a, b] \subset (0, \infty)$, $[a, b] \in (\alpha , \beta )$ and all $x\in [a, b]$,
\begin{equation*}
 \int _0^\infty T(x, x') g (x')dx'= \int  _{ y\in (\alpha , \beta ) } T(x, x') g (x')dx'+ \int  _{ y\in (\alpha , \beta )^c }T(x, x') g (x')dx'=I_1+I_2.
\end{equation*}
where,
\begin{align*}
|I_1(x)|=\left| \int _\alpha ^\beta  T(x, x') g (x')dx\right|&\le ||g|| _{ L^\infty(\alpha , \beta ) } \int _\alpha ^\beta  |T(x, x')|dx'\\
&\le m(x)||g|| _{ L^\infty(\alpha , \beta ) } \le M||g|| _{ L^\infty(\alpha , \beta ) } .
\end{align*}
and
\begin{equation*}
|I_2(x)|\le \int_0^\alpha |T(x, x')| |g (x')|dx' +\int_\beta ^\infty |T(x, x')| |g (x')|dx'.
\end{equation*}
By (\ref{S2RK1E1Sm}) and (\ref{S2RK2E1}),
\begin{align*}
\int_0^\alpha |T(x, x')| |g (x')|dx' &\le \sup \{|T(x, y)|;x\in [a, b],\,y<\alpha  \}\int _0^\infty |g(x')|dx' \\
&\le C_\beta (1+b\alpha )\int _0^\infty |g(x')|dx' 
\end{align*}
A simple inspection of the expression of $T(x, x')$ given by (\ref{S3EInt2}) shows that $C(K)=\sup \{|T(x, y)|;x\in [a, b],\,y>\beta \}<\infty$. Then,
\begin{equation*}
\int_\beta ^\infty |T(x, x')| |g (x')|dx'\le C(K)\int _0^\infty |g(x')|dx'.
\end{equation*}
and,
\begin{align*}
\sup _{ x\in K }|F(g)(x)|\le ||g|| _{ L^\infty((\alpha , \beta )) } M+( C_\beta +C(K)) (1+b\alpha )||g||_1.
\end{align*}
{\it Proof of (v).} For all $x>0$,
\begin{align*}
\left|\int _0^\infty T(x, x')g(x')dx'\right|\le \int _0^R T(x, x')|g(x')|dx'+\int _R^\infty T(x, x')|g(x')|dx'\\
\le  \int _0^R T(x, x')|g(x')|dx'+M||g|| _{ L^\infty(R, \infty) }.
\end{align*}
If $x\le 2R$, by (\ref{S2RK1E1Sm}) and (\ref{S2RK2E1})
\begin{equation*}
\int _0^R T(x, x')|g(x')|dx'\le C _{ 2R }x\int _0^Rx' |g(x')dx'\le 2C_{ 2R }R^2||g||_1.
\end{equation*}
For $x>2R$, we use the original expression of $T(x, x')$ in (\ref{S3EInt2}):
\begin{align*}
&0\le \frac {x'^3\sinh x^2} {x^3\sinh x'^2} \left(\frac {1} {\sinh|x^2-x'^2|}-\frac {1} {\sinh (x^2+x')^2 } \right) \le
C\frac {e^{x^2}} {x^3} \left( e^{-x^2+x'^2} \right)\le \frac {C_R} {x^3}\\
&0\le \frac {x'} {x}\left(\frac {1} {|x^2-x'^2|}-\ \frac {1} {x^2+x'^2} \right)\le \frac {R} {x^3}
\end{align*}
then,
\begin{equation*}
|T(x, x')|\le \frac {C_R} {x^3},\,\,\forall x>2R,\,x'\in (0, R).
\end{equation*}
and (iv) follow since,
\begin{equation*}
\int _0^R T(x, x')|g(x')|dx'\le \frac {C _{ R }} {x^3}||g||_1,\,\,\forall x>2R.
\end{equation*}
\vskip-1cm
\end{proof}
\section{Existence of global solution $f$.}
\setcounter{equation}{0}
\setcounter{theo}{0}
Using the properties of the operator $L$, Proposition  \ref{S2AFF1} and a  fixed point argument, classical solutions $f\in C([0, \infty); L^1(0, \infty))$ of the Cauchy problem for   (\ref{S3E1.1}) with  initial data $f_0\in L^1(0, \infty)$ are obtained.
If moreover $f_0\in L^1(0, \infty)\cap L^\infty (0, \infty)$ then 
$f\in C([0, \infty); L^1(0, \infty))\cap L^\infty((0, \infty; L^\infty)$. However it is interesting to consider initial data slightly more general  than in $f_0\in L^1(0, \infty)\cap L^\infty (0, \infty)$ but whose solutions are more regular than just integrable with respect to $x$ in $(0, \infty)$.

\begin{theo}
\label{S6WeakSolL2}
Suppose that $f_0\in L^1(0, \infty)$ satisfies
\begin{equation}
\label{S4E1244}
|||f_0|||_\theta\equiv \sup _{ 0<x<1 }x^\theta |f_0(x)|+\sup _{ x>1 }|f_0(x)|<\infty
\end{equation}
for  some $\theta\in (0, 1)$. Then,  there exists a function 
\begin{equation}
\label{S4E124}
f\in C([0, \infty); L^1 (0, \infty))\cap L^\infty _{ loc }((0, \infty); L^\infty(0, \infty))
\end{equation}
satisfying,  
\begin{align}
\label{S3EFPf}
f(t)=S(t)f_0+\int _0^tS(t-s)F(f(s))ds.
\end{align}
For all $t>0$,  $f(t)\in C(0, \infty)$ and for all $T>0$ and $t\in (0, T)$,
\begin{align}
&||f(t)||_1\le C_T||f_0||_1 \label{S4E164-1}\\
&||f(t)||_1+|||f(t)||| _{ \theta }\le C(T)(||f_0||_1+|||f_0||| _{ \theta }),\,\forall t\in (0, T)\label{S4E164-2}\\
&||f(t)||_\infty \le 
C(T, \theta) \left(||f_0|| _{ L^\infty(1, \infty) }+ t^{-\theta }\sup _{ 0<y<1 }y^\theta|f_0(y)|+ ||f_0||_1\right). \label{S4E168}
\end{align}
\end{theo}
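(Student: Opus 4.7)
The strategy is a Duhamel-based fixed point argument, treating (\ref{S3E1.1}) as a perturbation of (\ref{S3E1.L1}) via the forcing term $F$, where the bounds collected in Proposition \ref{S2AFF1} play the role of Lipschitz estimates. Specifically, I would define, for $T>0$, the Banach space
\begin{equation*}
X_T=\Bigl\{g\in C([0,T];L^1(0,\infty)):\ \sup_{0\le t\le T}\bigl(\|g(t)\|_1+|||g(t)|||_\theta\bigr)<\infty\Bigr\},
\end{equation*}
equipped with the obvious norm, and consider the map $\Phi(g)(t)=S(t)f_0+\int_0^t S(t-s)F(g(s))\,ds$. A solution of (\ref{S3EFPf}) is a fixed point of $\Phi$.

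First I would verify that $\Phi$ sends $X_T$ into itself. Since $f_0\in L^1$ and $|||f_0|||_\theta<\infty$, the regularity properties of $S(t)$ recalled from \cite{m} (in particular the representation (\ref{S2ESG}), continuity in $L^1$, and the $L^1\to L^\infty$ smoothing on bounded time intervals) yield $S(\cdot)f_0\in C([0,T];L^1)$ with $\|S(t)f_0\|_1\le C\|f_0\|_1$; the self-similar form (\ref{S2ESG}) together with $|f_0(y)|\le |||f_0|||_\theta\,y^{-\theta}$ for $0<y<1$ produces $|S(t)f_0(x)|\lesssim t^{-\theta}|||f_0|||_\theta$ after rescaling $y\mapsto ty$, so $|||S(t)f_0|||_\theta$ is controlled. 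For the integral term, Proposition \ref{S2AFF1}(ii), (iii) give $\|F(g(s))\|_1\le C_F\|g(s)\|_1$ and $|||F(g(s))|||_\theta\le C(\theta)|||g(s)|||_\theta$, so by the same semigroup bounds,
\begin{equation*}
\Bigl\|\int_0^t S(t-s)F(g(s))\,ds\Bigr\|_1+\Bigl|\Bigl|\Bigl|\int_0^t S(t-s)F(g(s))\,ds\Bigr|\Bigr|\Bigr|_\theta\le C\,T\,\sup_{0\le s\le T}\bigl(\|g(s)\|_1+|||g(s)|||_\theta\bigr).
\end{equation*}
Linearity of $\Phi-S(\cdot)f_0$ then makes it a bounded linear operator on $X_T$, so for $T=T_0$ sufficiently small it is a strict contraction. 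A unique local fixed point $f\in X_{T_0}$ follows from the Banach fixed point theorem, and since all constants are independent of $\|f_0\|_1+|||f_0|||_\theta$, the standard iteration argument (restart at $t=T_0$ using $f(T_0)$ as the new initial datum, which satisfies the same bounds by construction) extends $f$ to $[0,\infty)$, giving (\ref{S4E164-1}) and (\ref{S4E164-2}) with constants $C_T,C(T)$ growing in $T$.

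The last point, the $L^\infty$ bound (\ref{S4E168}), is where the real work is. I would split $f(t)=S(t)f_0+\int_0^t S(t-s)F(f(s))\,ds$ and treat each term using the smoothing of $S$ from \cite{m}. Decomposing $f_0=f_0\mathbf{1}_{(0,1)}+f_0\mathbf{1}_{(1,\infty)}+0$ and using the scaling identity (\ref{S2ESG}) applied to each piece yields
\begin{equation*}
\|S(t)f_0\|_\infty\le C\bigl(\|f_0\|_{L^\infty(1,\infty)}+t^{-\theta}\sup_{0<y<1}y^\theta|f_0(y)|+\|f_0\|_1\bigr),
\end{equation*}
where the middle term comes from the self-similar action on $y^{-\theta}$ and the last from $L^1\to L^\infty$ smoothing. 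For the Duhamel term, Proposition \ref{S2AFF1}(v) provides the crucial bound $\|F(f(s))\|_\infty\le C\bigl(\|f(s)\|_1+\|f(s)\|_{L^\infty(R,\infty)}\bigr)$ once $s>0$, and combined with (\ref{S4E164-2}) and the integrability of $s\mapsto (t-s)^{-\theta}$ near $s=t$, one obtains a uniform bound of the form (\ref{S4E168}).

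The main obstacle is precisely this $L^\infty$ estimate with the sharp $t^{-\theta}$ blow-up rate as $t\downarrow 0$: one must carefully separate the behaviors of $f_0$ near the origin, in a middle region, and at infinity, then show that the Duhamel integral does not worsen the exponent. Here the kernel estimates of Propositions \ref{S2RK1}, \ref{S2RK2} on $T(x,y)$ — particularly the $L^1$ integrability in both variables (Corollaries \ref{S4PM}, \ref{S4PMu}) and Proposition \ref{S2AFF1}(v) which isolates the $L^1$ part of $F(g)$ for $x$ away from the $L^\infty$ region — are what make the bound close up consistently with the norms carried by $X_T$. Continuity $f(t)\in C(0,\infty)$ for each $t>0$ follows from the pointwise continuity properties of $S(t)$ quoted from \cite{m} applied to both terms in the Duhamel formula.
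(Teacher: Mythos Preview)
Your overall strategy---Duhamel formulation plus contraction mapping---matches the paper's, and the way you invoke Proposition~\ref{S2AFF1} for the Lipschitz control of $F$ is correct. The difference lies in the choice of fixed-point space, and this is where there is a genuine gap in your argument.

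You work in $X_T$ with norm $\|g(t)\|_1+|||g(t)|||_\theta$, and for this to close you need $|||S(t)g|||_\theta\le C\bigl(|||g|||_\theta+\|g\|_1\bigr)$ uniformly for $t\in[0,T]$. Your justification is that the rescaling $y\mapsto ty$ in (\ref{S2ESG}) gives $|S(t)f_0(x)|\lesssim t^{-\theta}|||f_0|||_\theta$, ``so $|||S(t)f_0|||_\theta$ is controlled''. But this does not follow: an $L^\infty$ bound of size $t^{-\theta}$ only yields $x^\theta|S(t)f_0(x)|\lesssim x^\theta t^{-\theta}$, which blows up for fixed $x\in(0,1)$ as $t\downarrow 0$. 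What you actually need is the pointwise bound $x^\theta|S(t)f_0(x)|\lesssim |||f_0|||_\theta$ in the regime $t<x<1$, and this requires a separate kernel estimate on $\Lambda(t/y,x/y)$ for $t/y$ small and $x/y$ of order one---not supplied by Proposition~\ref{S5P1}, which only gives the $t^{-\theta}$-type bound for $x\in(0,2)$.

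The paper avoids this difficulty by choosing a different norm,
\[
\|f\|_{Z_T}=\sup_{0<s<T}\Bigl(\|f(s)\|_1+\|f(s)\|_{L^\infty(1,\infty)}+s^\theta\sup_{0<x<1}|f(s,x)|\Bigr),
\]
i.e.\ replacing your spatial weight $x^\theta$ by a \emph{time} weight $s^\theta$ on the full $L^\infty(0,1)$ norm. This is exactly tailored to Proposition~\ref{S5P1}, which gives $|S(t)g(x)|\le C\|g\|_{L^\infty(1,\infty)}+Ct^{-\theta}\sup_{0<y<1}y^\theta|g(y)|$ for $x\in(0,2)$, so $s^\theta\sup_{0<x<1}|S(s)f_0(x)|$ is bounded directly. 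The contraction then closes because for the Duhamel term one only needs to control $\|F(f(s))\|_{L^\infty(1,\infty)}+\|F(f(s))\|_1$ (via Proposition~\ref{S2AFF1}(v)), and the singular factor $(t-s)^{-\theta}$ is integrable. Estimate (\ref{S4E168}) is then immediate from the $Z_T$ norm, while (\ref{S4E164-2}) is recovered a~posteriori. Your space $X_T$ could in principle be made to work, but it would require proving the missing semigroup bound on $|||\cdot|||_\theta$ from the fine asymptotics of $\Lambda$ in~\cite{m}; as written, that step is the gap.
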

\begin{proof}  Given $f_0$ fixed and  satisfying the hypothesis,  consider the operator
\begin{equation*}
\mathscr L (f)(t, x)=S(t)f_0(x)+\int _0^t S(t-s)(F(f(s)))ds
\end{equation*}
on the space
\begin{align*}
&Z_T=C\left([0, T); L^1 (0, \infty)\right) \cap C\left((0, T);L^\infty (0, \infty)\right), \\
&||f|| _{ Z_T }=\sup _{ s\in (0, T) } \left(||f(s)|| _{ 1 }+||f(s)|| _{ L^\infty(1, \infty) }+s^{\theta}\sup _{ 0<x<1 }|f(s, x)|\right)
\end{align*}
By (ii) in Proposition  (\ref{S2AFF1}), 
\begin{align}
||\mathscr L (f(t))|| _{ 1 }\le C _{ s }||f_0|| _{ 1 }+C_sC_F\int _0^t||f(s)|| _{ 1 }ds \label{S4EV0}
\end{align}
On the other hand,
\begin{align*}
|\mathscr L (f(t))(x)|\le |S(t)f_0(x)|+\int _0^t |S(t-s)F(f(s))(x)|ds.
\end{align*}
By (iv) in Proposition  (\ref{S2AFF1}) and Proposition \ref{S5P1} in the Appendix, for all $t>0$, $s\in (0, t)$ and $x\in (0, 2)$,
\begin{align*}
&|S(t-s)F(f(s))(x)|\le C||F(s)|| _{ L^\infty (1, \infty) }+C(t-s)^{-\theta}\sup _{ 0<x<1 }x^{\theta}|F(f(s))(x)|\\
&\le C(1+(t-s)^{-\theta})||F(f)||_\infty\le C(1+(t-s)^{-\theta})(||f(t) || _{ L^\infty (1, \infty) }+||f(t)||_1)
\end{align*}
Since we also have, for $x\in (0, 2)$,
\begin{align}
\label{S4EPMX1}
|S(t)f_0(x)|\le C\left(||f_0|| _{ L^\infty (1, \infty) }+t^{-\theta}\sup _{ 0<x<1 }x^\theta |f_0(x)|\right)
\end{align}
we deduce, for $t>0$, $s\in (0, t)$ and $x\in (0, 2)$,
\begin{align*}
|\mathscr L (f(t))(x)|\le C\left(||f_0|| _{ L^\infty (1, \infty) }+t^{-\theta}\sup _{ 0<x<1 }x^\theta |f_0(x)|\right)+\\
+C\int _0^t (1+(t-s)^{-\theta})\left(||f(s)||_1+||f(s)|| _{L^\infty(1, \infty)}\right)ds
\end{align*}
and then,
\begin{align}
\sup _{ 0<x<2 }t^\theta &|\mathscr L (f(t))(x)|
\le  C\left(t^{\theta }||f_0|| _{ L^\infty (1, \infty) }+\sup _{ 0<x<1 }x^\theta |f_0(x)|\right)+\nonumber \\
&+Ct^\theta \int _0^t (1+(t-s)^{-\theta})\left(||f(s)||_1+||f(s)|| _{L^\infty(1, \infty)}\right)ds.\label{S4EV1}
\end{align}
If $x>2$,
\begin{align*}
|S(t-s)F(f(s))(x)|\le &C||F(s)|| _{ L^\infty (1, \infty) }+C||F(f(s))||_1\\
\le &C\left(||f(s)||_1+||f(s)|| _{ L^\infty(1, \infty) }\right)\\
|S(t)f_0(x)|\le &C\left(||f_0|| _{ L^\infty (1, \infty) }+||f_0||_1\right),
\end{align*}
from where,
\begin{align}
\sup _{ x>2 }|\mathscr L (f(t))(x)|&\le C\left(||f_0|| _{ L^\infty (1, \infty) }+||f_0||_1\right)+\nonumber \\
&+C\int _0^t\left(||f(s)||_1+||f(s)|| _{ L^\infty(1, \infty) }\right)ds. \label{S4EV2}
\end{align}
Adding (\ref{S4EV0}), (\ref{S4EV1}) and (\ref{S4EV2}),
\begin{align}
||\mathscr L (f(t))|| _{ 1 }&+t^\theta \left|\sup _{ 0<x<1 }x^\theta \mathscr L (f(t))(x)\right|+\left|\sup _{ x>1 } \mathscr L (f(t))(x)\right|\le\nonumber\\
&\le ||\mathscr L (f(t))|| _{ 1 }+ t^\theta  \left|\sup _{ 0<x<1 } \mathscr L (f(t))(x)\right|
+\left|\sup _{ x>1 } \mathscr L (f(t))(x)\right| \nonumber\\
&\le C\left(||f_0|| _{ L^\infty (R, \infty) }+||f_0||_1\right)+C\left(t^{\theta }||f_0|| _{ L^\infty (1, \infty) }+ \sup _{ 0<x<1 }x^\theta |f_0(x)|\right)+ \nonumber\\
&+C t^\theta \int _0^t (1+(t-s)^{-\theta})\left(||f(s)||_1+||f(s)|| _{L^\infty(1, \infty)}\right)ds+ \nonumber\\
&+C\int _0^t\left(||f(s)||_1+||f(s)|| _{ L^\infty(1, \infty) }\right)ds \nonumber\\
&\le C(1+ t^\theta) ||f_0|| _{ L^\infty (1, \infty) }+||f_0||_1+C  \sup _{ 0<x<1 }x^\theta |f_0(x)|+ \nonumber\\
&+C t^\theta \sup _{ 0<s<t } \left(||f(s)||_1+||f(s)|| _{L^\infty(R, \infty)}\right) (t+t^{1-\theta}) +T \!\!\! \sup_ 
 {\substack{0<x<1\\0<s<T }} s^\theta |f(t, x)| \nonumber\\
\le  &C_1\left((1+T^\theta) ||f_0|| _{ L^\infty (1, \infty) }+||f_0||_1+  \sup _{ 0<x<1 }x^\theta |f_0(x)|\right)+C_2 T ||f|| _{ Z_T }.
\label{S4EV5-1}
\end{align}
If we denote
\begin{equation}
\label{S4EV5}
\gamma _0=C_1(2 ||f_0|| _{ L^\infty (1, \infty) }+||f_0||_1+ \sup _{ 0<x<1 }x^\theta |f_0(x)|)
\end{equation}
we have then proved,
\begin{align}
\label{S4EV8}
||\mathscr L(f)|| _{ Z_T }\le \gamma _0+C _2T ||f|| _{ Z_T },\,\,\forall T\in (0, 1).
\end{align}
Let   $\rho >0$ and $T>0$  be such that:
\begin{align}
\label{S4EV6}
&T<\min \left(1, \frac {1} {2C_2} \right)\\
&\rho >2\gamma _0.\label{S4EV6B}
\end{align}
Then,  for all $f\in Z_T$ such that $||f|| _{ Z_T }\le \rho $,
\begin{equation}
\label{S4EV10}
||\mathscr L(f)|| _{ Z_T }\le \gamma _0+C _2T\rho\le \rho 
\end{equation}
and then,
\begin{align}
&\mathscr L: B _{ Z_T }(0, \rho )\mapsto B _{ Z_T }(0, \rho ),\label{S4EV12}\\
&B _{ Z_T }(0, \rho )=\left\{f\in Z_T; ||f|| _{ Z_T }\le \rho  \right\}.\label{S4EV14}
\end{align}
On the other hand,
\begin{align*}
\mathscr L (f(t))-\mathscr L (g(t))=\int _0^t S(t-s)F(f(s)-g(s))ds
\end{align*}
and arguing as before,
\begin{align*}
||\mathscr L (f)-\mathscr L (g)|| _{ Z_T }\le C T||f-g|| _{ Z_T }
\end{align*}
The map $\mathscr L$ is then a contraction form $B _{ Z_T }(0, \rho )$ into itself if $T$ is small enough, and has  a fixed point $u\in B _{ Z_T }(0, \rho )$ that satisfies
\begin{equation}
\label{S4EV18}
f(t)=S(t)f_0+\int _0^tS(t-s)F(f(s))ds
\end{equation}
in $Z_T$. Property (\ref{S4E164-1})  follows from  and Gronwall's Lemma on $(0, T)$  and by (\ref{S4EV1}) and (\ref{S4EV2}), 
\begin{align}
\label{S4EV18mm}
&||f(t)||_1+|||f(t)||| _{ \theta }\le C(||f_0||_1+|||f_0||| _{ \theta })+\\
&+C\left(1+t^\theta\right) \int _0^t\Big(1+(t-s)^{-\theta}\Big)\Big(||f(s) || _{ L^\infty (1, \infty) }+||f(s)||_1\Big),\,\,\forall t\in (0, T).
\end{align}
Then, there exists a constant $C=C(T)>0$ such that, (\ref{S4E164-2}) holds true.

On the other hand, since $f_0\in L^1(0, \infty)$ and $|||f_0|||_\theta <\infty$, by Proposition \ref{S5P1} in the Appendix,  $S(t)f_0\in L^\infty(0, \infty)$. Moreover, by  Proposition (\ref{S5P1}) and Proposition (\ref{S2AFF1}), for $t\in (0, T)$ and $x\in (0, 2)$,
\begin{align}
|S(t-s)F(f(s))(x)|\le &C(t-s)^{-\theta} \sup _{0<x<1 }x^\theta|F(f(s)) (x)|+||F(f(s)|| _{ L^\infty(1, \infty) } \nonumber\\
& \le   C (1+ (t-s)^{-\theta} ||F(f(s))||_\infty)\nonumber \\
&\le C(1+  (t-s)^{-\theta})(||f(s)||_1+||f(s)|| _{ L^\infty(1, \infty) })\label{S4E82}\\
&\le C(1+ (t-s)^{-\theta})\rho \nonumber
\end{align}
It immediately follows that for all $t\in (0, T)$ and $x\in (0, 2)$,
\begin{align*}
|f(t, x)|\le C||f_0|| _{ L^\infty(1, \infty) }+Ct^{-\theta }\sup _{ 0<y<1 }y^\theta|f_0(y)|+C\rho \int _0^t (1+  (t-s)^{-\theta})ds.
\end{align*}
and then, $f(t)\in L^\infty(0, \infty)$ for all $t\in (0, T)$. 
We wish to extend now this function $f$ for all $t>0$. We notice to this end that, for all $x>1$,

\begin{align*}
|f(t, x)|\le C||f_0|| _{ L^\infty(1, \infty) }&+Ct^{-\theta }\sup _{ 0<y<1 }y^\theta|f_0(y)|+ \int _0^t |S(t-s)F(f(s))(x)|ds\\
\le C||f_0|| _{ L^\infty(1, \infty) }+&Ct^{-\theta }\sup _{ 0<y<1 }y^\theta|f_0(y)|+\\
&+C\int _0^t(1+  (t-s)^{-\theta})(||f(s)||_1+||f(s)|| _{ L^\infty(1, \infty) })
\end{align*}
Since by Proposition \ref{S2AFF1}
\begin{equation}
\label{S4R164}
||f(t)||_1\le C||f_0||_1+C\int _0^t||f(s)||_1ds,
\end{equation}
we obtain, 
\begin{align*}
\sup _{ x>1 } |f(t, x)|+||f(t)||_1\le 
C\left(t^{-\theta }\sup  _{ x\in (0, 1)}x^\theta|f_0(x)|+||f_0||_1\right)+\\
+C\int _0^t(1+  (t-s)^{-\theta})(||f(s)||_1+||f(s)|| _{ L^\infty(1, \infty) }).
\end{align*}
It follows by Gronwall's Lemma,  that for some constant $C$ depending on $T$ and $\theta$,
\begin{align}
\label{S4E80}
\sup _{ x>1 } |f(t, x)|+||f(t)||_1\le 
C(T, \theta)\left(||f_0|| _{ L^\infty (1, \infty) }+t^{-\theta}\sup _{ 0<x<1 }x^\theta |f_0(x)| +||f_0||_1\right).
\end{align}
On the other hand, for $x\in (0, 2)$, using (\ref{S4E82}) 
\begin{align*}
|f(t, x)|&\le C||f_0|| _{ L^\infty(1, \infty) }+Ct^{-\theta } \sup _{ 0<y<1 }y^\theta|f_0(y)|+\nonumber\\
&+C \int _0^t (1+  (t-s)^{-\theta})(||f(s)||_1+||f(s)|| _{ L^\infty(1, \infty) })ds
\end{align*}
and by  (\ref{S4E80}), for all $x\in (0, 2)$,
\begin{align}
|f(t, x)|&\le C||f_0|| _{ L^\infty(1, \infty) }+Ct^{-\theta }\sup _{ 0<y<1 }y^\theta|f_0(y)|+\nonumber\\
+C(T&, \theta) \int _0^t (1+  (t-s)^{-\theta}) \left(||f_0|| _{ L^\infty(1, \infty) }+s^{-\theta}\sup _{ 0<x<1 }x^\theta |f_0(x)| +||f_0||_1\right)ds\nonumber\\
&\le C(T, \theta) \left(||f_0|| _{ L^\infty(1, \infty) }+ t^{-\theta }\sup _{ 0<y<1 }y^\theta|f_0(y)|+ ||f_0||_1\right).
\label{S4E84}
\end{align}
We obtain from (\ref{S4R164}), (\ref{S4E80}) and  (\ref{S4E84}) for all $t\in (0, T)$,
\begin{equation}
\label{S4EPMX5}
||f(t)||_1+||f(t)|| _\infty \le
C(T, \theta) \left(||f_0|| _{ L^\infty(1, \infty) }+ t^{-\theta }\sup _{ 0<y<1 }y^\theta|f_0(y)|+ ||f_0||_1\right)
\end{equation}

By a classical argument it follows that the function $f$ may be extended to a function, still denoted $f$, for all $t>0$ such that $f\in Z_t$ for all $t>0$ and satisfies  (\ref{S4EV18}) for all $t>0$. 

The same arguments used to prove  the estimates (\ref{S4E164-1}), (\ref{S4E164-2}) and (\ref{S4E168}) on the interval of time given by (\ref{S4EV6}) may now be applied to obtain (\ref{S4E164-1}), (\ref{S4E164-2}), (\ref{S4E168}) on all finite interval $(0, T)$ for all $T>0$. 

Since $f_0\in L^1(0, \infty)$ and $|||f_0|||_\theta<\infty$ it follows from Proposition \ref{S3P7} that $S(t)f_0\in C(0, \infty)$ for every $t>0$. Since  $f(s)\in L^\infty(0, \infty)$ for all $s>0$ it follows by  Proposition \ref{S2AFF1}  that $F(f(s))\in L^\infty(0, \infty)$ too and therefore,  again by \ref{S3P7}, $S(t-s)F(f(s))\in C(0, \infty)$ for all $t>0$ and $s\in (0, t)$. This shows that $f(t)\in C(0, \infty)$ for all $t>0$.
\end{proof}
\begin{theo}
\label{S6WeakSolL2B}
Suppose that $f_0\in L^1(0, \infty)$ satisfies (\ref{S4E1244}) and $f$ is the function given by Theorem \ref{S6WeakSolL2B}. Then, 
\begin{equation}
 \frac {\partial f} {\partial t},\, \mathcal L(f) \in L^\infty _{ loc }((0, \infty)\times (0, \infty))\cap L^1((0, T)\times (0, \infty)),\,\forall T >0,\label{S4E140-4X} 
\end{equation}
and, for all almost every $t>0,\, x>0$,
\begin{equation}
\label {S4E140-2X}
\frac {\partial f(t, x)} {\partial t}=\mathcal L(f(t))(x).
\end{equation}
There exists a function $\tilde H\in L^\infty ((\delta , \infty)\times (\delta , \infty))$ for all $\delta >0$ such that
\begin{align}
\left|\frac {\partial f(t, x)} {\partial t} \right|&+ |\mathcal L(f)(t, x)|\le C\left(\sup _{ 0<y<1 }y^{\theta}|f_0(y)|+||f_0|| _{ L^\infty(1, \infty) }+ ||f_0||_1\right)\tilde H(t, x)\label{S4E125}
\end{align}
($\tilde H$ is defined in (\ref{S3EHH1}) below), and
\begin{align}
&\left|\frac {\partial f} {\partial x}(t, x) \right|\le \tilde\lambda _{ \beta , \delta  } (t, x)|||f_0||| _{ \theta, 1 }+\sup _{ 0\le s\le t }(|||f(s)|||_\theta+|||f(s)|||_1)
\int _0^t \tilde\lambda _{\beta, \delta   } (s, x)ds, 
\label{S3fx1}\\
&\hbox{where:}\,\,\,\forall \delta \in (0, 1),\forall \beta \in (0, 1),\,\quad \tilde\lambda _{\beta, \delta   } (t, x)=
\begin{cases}
x^{-1+\delta }t^{-\theta-\delta },\,\,0<x<t<1\\
t^2,\,\,t>1, x\in (0, t)\\
x^{-1-\beta  }t^{\beta  -1},\,\,\,x>t.
\end{cases}
\end{align}

For all $\varphi \in C^1_0([0, \infty))$, the map
$
t\mapsto \displaystyle{\int _0^\infty \varphi (x)f(t, x)dx}
$
belongs to $W _{ loc }^{1,1}(0, \infty)$ and for almost every $t>0$,
\begin{equation}
\frac {d} {dt}\int _0^\infty \varphi (x)f(t, x)dx=\int_0^\infty \mathcal L(f(t))(x)\varphi (x)dx \label{S4cor1E2}
\end{equation}
\end{theo}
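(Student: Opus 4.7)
The plan is to start from the integral identity
\begin{equation*}
f(t) = S(t)f_0 + \int_0^t S(t-s)\,F(f(s))\,ds
\end{equation*}
of Theorem \ref{S6WeakSolL2} and convert it into the pointwise equation $\partial_t f = \mathcal L(f) = L(f) + F(f)$. Since $S(t)$ is the semigroup associated to $L$, one has formally $\partial_t S(t)g = L(S(t)g)$; differentiating the Duhamel formula in $t$ yields
\begin{equation*}
\partial_t f(t) = L(S(t)f_0) + F(f(t)) + \int_0^t L(S(t-s)F(f(s)))\,ds = L(f(t)) + F(f(t)),
\end{equation*}
which is (\ref{S4E140-2X}). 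The technical content is that each of these terms must be meaningful pointwise, which is the role of the regularizing estimates on $S$ recalled from \cite{m} combined with the bounds on $F$ from Proposition \ref{S2AFF1}.

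For the $L^\infty_{loc}$ and $L^1$ membership in (\ref{S4E140-4X}), I would first use Proposition \ref{S2AFF1}(ii), (iv), (v) together with the bounds (\ref{S4E164-1})--(\ref{S4E168}) on $f$ to control $F(f(t))$ in $L^1(0,\infty) \cap L^\infty_{loc}(0,\infty)$. For $L(f(t))$ I would use the kernel bounds on $S(t)$ and $L\circ S(t)$ proved in \cite{m}: there is a locally bounded function $H_1(t,x)$ such that $|L(S(t)g)(x)| \le H_1(t,x)\,(\|g\|_1 + |||g|||_\theta)$ for every $g$ in the class (\ref{S4Etheta}). Applied to $g = f_0$ and, after time translation, to $g = F(f(s))$, together with Proposition \ref{S2AFF1}(iii) to bound $|||F(f(s))|||_\theta$ by $|||f(s)|||_\theta$, this yields an estimate of the form $|L(f)(t,x)| \le C(\|f_0\|_1 + \sup_{x>1}|f_0(x)| + \sup_{0<x<1}x^\theta|f_0(x)|)\,\tilde H_1(t,x)$. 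Adding the $F$-contribution, controlled directly by Proposition \ref{S2AFF1}, produces (\ref{S4E125}) with $\tilde H$ the sum of the two kernels. Integration in $x$ combined with the $L^1$ boundedness of $L$ (from \cite{m}) and of $F$ (Proposition \ref{S2AFF1}(ii)) gives the $L^1((0,T)\times(0,\infty))$ integrability.

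For the spatial derivative bound (\ref{S3fx1}), I would differentiate the explicit formula (\ref{S2ESG}) under the integral sign, obtaining
\begin{equation*}
\partial_x S(t)g(x) = \int_0^\infty (\partial_2 \Lambda)\!\left(\frac{t}{y},\frac{x}{y}\right) g(y)\,\frac{dy}{y^2},
\end{equation*}
and invoke the pointwise estimates on $\partial_2\Lambda$ established in \cite{m}. These estimates split naturally into the three scaling regimes $0<x<t<1$, $t>1$ with $x \in (0,t)$, and $x>t$, which is exactly the origin of the three cases in the definition of $\tilde\lambda_{\beta,\delta}$. Applied first to $S(t)f_0$ this yields the leading term $\tilde\lambda_{\beta,\delta}(t,x)|||f_0|||_{\theta,1}$, and then applied to each $S(t-s)F(f(s))$ in the Duhamel integral, with $F(f(s))$ controlled by Proposition \ref{S2AFF1} in the norm $|||\cdot|||_\theta + \|\cdot\|_1$, it produces the convolution term. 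This step is the main obstacle: one must track the singular behavior of $\partial_2\Lambda$ across all three regions and choose $\beta,\delta \in (0,1)$ so that $\int_0^t \tilde\lambda_{\beta,\delta}(s,x)\,ds$ remains finite, which is the reason the free parameters appear in the statement.

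Finally, for (\ref{S4cor1E2}), given $\varphi \in C^1_0(0,\infty)$, the $L^1((0,T)\times(0,\infty))$ integrability of $\partial_t f$ from (\ref{S4E140-4X}) implies by Fubini that $t \mapsto \int_0^\infty \varphi(x) f(t,x)\,dx$ is absolutely continuous on $(0,T)$ with derivative $\int_0^\infty \varphi(x)\,\partial_t f(t,x)\,dx$ a.e.; replacing $\partial_t f$ by $\mathcal L(f)$ via (\ref{S4E140-2X}) yields the announced identity.
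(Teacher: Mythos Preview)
Your proposal is correct and follows essentially the same route as the paper: differentiate the Duhamel formula using $\partial_t S(t)g = L(S(t)g)$ from \cite{m}, control $F(f(s))$ via Proposition \ref{S2AFF1}, and handle $\partial_x$ via the estimates on $\partial_2\Lambda$ (supplied here as Lemma \ref{S5LCX1} and Corollary \ref{S4C26m}, including a correction to \cite{m} in Lemma \ref{S6lemFF}). The paper makes explicit the dominated-convergence step for differentiating under the Duhamel integral through the auxiliary functions $\xi,\zeta_\theta$ of (\ref{S1Exi})--(\ref{S1Zetai}) and the time-integral estimates of Proposition \ref{SAP5}, which together assemble into the concrete $\tilde H$ of (\ref{S3EHH1}); this is precisely the ``technical content'' you flag.
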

\begin{proof} We begin proving  (\ref{S4E140-4X}),  (\ref{S4E140-2X}), and  (\ref{S4E125}).
Since $f_0\in L^1(0, \infty)$ and $|||f_0|||_\theta <\infty$, by Proposition \ref{S5P1} in the Appendix $S(t)u_0\in L^\infty(0, \infty)$ and  Theorem 1.2  in \cite{m}, 
\begin{align*}
&\frac {\partial } {\partial t}((S(t)f_0)\in L^\infty(0, \infty);\,\,\,\, L(S(t)f_0)(x)\in L^\infty(0, \infty),\,\,\forall t>0\\
&\frac {\partial } {\partial t}((S(t)f_0)(x))=L(S(t)f_0)(x),\,\, \hbox{\it for a. e.}\,\, t>0, x>0.
\end{align*}

Since for all $s>0$, $F(f(s))\in L^\infty(0, \infty)\cap L^1(0, \infty)$, for almost every  $x>0$, $t\in (0, T)$ and $s\in (0, t)$, by the same argument, 
\begin{equation}
\frac {\partial}{\partial t}[(S(t-s)F(f(s)))(x)]=L(S(t-s)F(f(s)))(x),\label{S4Evyu1}
\end{equation}
where both terms belong to $L^\infty(0, \infty)$. Let us define
\begin{align}
&\xi (t, x)=\frac {t^3} {\max (t^4, x^4) } \label{S1Exi}\\
&\zeta _\theta (t, x)=\frac {\min (t, t^{2-\theta})} {x^3}\1 _{ t<2x/3 }+\frac {x\1 _{ 2x< t}} {\max(t^{2+\theta}, t^3)} +\frac {\1 _{ 2x/3<t<2x }} {\max(x^2, x^{1+\theta})}, \label{S1Zetai}.
\end{align}
By (\ref{S5R1}),  and  (iv) of  Proposition (\ref{S2AFF1}),
for all $t\in (0, T)$, $s\in (0, t)$ and $x>0$,

\begin{align}
\label{S4Evyu2B}
\left|\frac {\partial}{\partial t}(S(t-s)F(f(s)))(x)\right| & \le C||F(f(s)||_\infty \left(1+\xi (t-s, x)\right)+\zeta_0 (t-s, x)||F(f(s)|| _{ 1, 0 } \nonumber \\
& \le C\left(||f(s)|| _{ L^\infty(R, \infty) } +||f(s)||_1\right) \left(1+\xi (t-s, x)\right)+\nonumber \\
&\hskip 2.5cm +\zeta_0 (t-s, x)\left(||f(s)||_1+||f(s)|| _{L^ \infty (R, \infty) } \right) \nonumber \\
& \le C(1+\rho )\left(1+\xi (t-s, x)+\zeta_0 (t-s, x) \right).
\end{align}
The right hand side of (\ref{S4Evyu2B}) may now be estimated for all $s\in (0, t)$ using:
\begin{align*}
\zeta_0   (t-s, x)+\xi  (t-s, x)\le\begin{cases}
\displaystyle{\frac {C} {x}+ \frac {\1 _{ {0<s<t-x} }} {x}+\frac {\1 _{ t-x<s<t }t^3} {x^4}},\,\,\hbox{when}\,\,x\in (0, t)\\
\displaystyle{\,\,\,\,\,\frac {C} {x}+ \frac {1} {x^4}},\,\,\hbox{when}\,\,x>t
\end{cases}
\end{align*}
We deduce, for all $t>0$, $x>0$, 
\begin{align}
\label{S4E142}
\frac {\partial } {\partial t}\left(\int _0^tS(t-s)F(f(s))(x)ds \right)=F(f(t))(x)+\int _0^t\frac {\partial } {\partial t}\left(S(t-s)F(f(s))(x)\right)ds.
\end{align}
and,
\begin{align}
\label{S4Evyu2}
&\left|\frac {\partial}{\partial t}(S(t-s)F(f(s)))(x)\right|  \le C||F(f(s)||_\infty \left(1+\xi (t-s, x)\right)+\zeta (t-s, x)||F(f(s)||_1 \nonumber \\
&\le C\left(||f(s)|| _{ L^\infty(R, \infty) } +||f(s)||_1\right) \left(1+\xi (t-s, x)+\zeta_0 (t-s, x)\right).
\end{align}
Then,
\begin{align*}
\left|\frac {\partial f(t, x)} {\partial t} \right|\le \left|\frac {\partial } {\partial t}(S(t)f_0)(x)\right|+|F(f(t)(x)|+
\left|\int _0^t \frac {\partial } {\partial t}\left(S(t-s)F(f(s))\right)(x)ds \right| \nonumber \\
\le  C||S(t)u_0)||_\infty \left(1+\xi (t, x)\right)+\zeta_0 (t, x)||S(t)f_0|| _{ 1, 0 }+||F(f(t)|| _{ \infty }+\nonumber \\
+C\int _0^t
\left(||f(s)|| _{ L^\infty(1, \infty) } +||f(s)||_1\right) \left(1+\xi (t-s, x)+\zeta _0(t-s, x)\right)ds 
\end{align*}
Then, for all $T>0$ and $t\in (0, T)$, there exists $C=C(T, \theta)$ such that,
\begin{align*}
&\left|\frac {\partial f(t, x)} {\partial t} \right|\le C\left(||f_0|| _{ L^\infty(1, \infty) }+ t^{-\theta }\sup _{ 0<y<1 }y^\theta|f_0(y)|+ ||f_0||_1\right) (1+\xi (t, x)+\zeta _0(t, x))+\\
&+C\int _0^t\left(||f_0|| _{ L^\infty(1, \infty) }+ s^{-\theta }\sup _{ 0<y<1 }y^\theta|f_0(y)|+ ||f_0||_1\right)\left(1+\xi (t-s, x)+\zeta _0(t-s, x)\right)ds
\end{align*}
Using Proposition \ref{SAP5} in the Appendix it follows,
\begin{align*}
&\int _0^t\left(||f_0|| _{ L^\infty(1, \infty) }+ s^{-\theta }\sup _{ 0<y<1 }y^\theta|f_0(y)|+ ||f_0||_1\right)\left(1+\xi (t-s, x)+\zeta _0(t-s, x)\right)ds
\\ 
&\le C(||f_0|| _{ L^\infty(1, \infty) }+||f_0||_1)\left(\Xi_1(t, x)+ \omega  _{ 1, 0 }(t, x)\right)+\sup _{ 0<y<1 }y^\theta|f_0(y)|\left( \Xi_2(t, x)+\omega  _2(x, t)\right).
\end{align*}
We deduce, 

\begin{align}
&\left|\frac {\partial f(t, x)} {\partial t} \right| \le  C\left(||f_0|| _{ L^\infty(1, \infty) }+ t^{-\theta }\sup _{ 0<y<1 }y^\theta|f_0(y)|+ ||f_0||_1\right) (1+\xi (t, x)+\zeta _0(t, x))+\nonumber\\
&+ C(||f_0|| _{ L^\infty(1, \infty) }+||f_0||_1)\left(\Xi_1(t, x)+ \omega  _{ 1, 0 }(t, x)\right)+\sup _{ 0<y<1 }y^\theta|f_0(y)|\left( \Xi_2(t, x)+\omega  _{ 2 }(x, t)\right)\nonumber\\
&\le C\left(||f_0|| _{ L^\infty(1, \infty) }+ t^{-\theta }\sup _{ 0<y<1 }y^\theta|f_0(y)|+ ||f_0||_1\right)\tilde H(t, x)
\end{align}
with
\begin{align}
\tilde H(t, x)=(1+\xi (t, x)+\zeta _0(t, x))+\Xi_1(t, x)+ \omega  _{ 1, 0 }(t, x)+ \Xi_2(t, x)+\omega  _{ 2 }(x, t).\label{S3EHH1}
\end{align}
Estimates (\ref{S4E140-4X}) and (\ref{S4E140-2X}) for $\frac {\partial f} {\partial t}$  follow using again Proposition \ref{SAP5}. Moreover, by (\ref{S4E142}), (\ref{S4Evyu2})  and (\ref{S4Evyu1}),
\begin{align*}
\frac {\partial } {\partial t}\left(\int _0^tS(t-s)F(f(s))ds \right)=F(f(t))+\int _0^t\frac {\partial } {\partial t}\left(S(t-s)F(f(s))\right)ds.\\
=F(f(t))+\int _0^t L(S(t-s)F(f(s))) ds,
\end{align*}
for almost every $t>0$ and $x>0$,
\begin{align*}
\frac {\partial f(t, x)} {\partial t}&=\frac {\partial } {\partial t}((S(t)f_0)(x))+\frac {\partial } {\partial t}\left(\int _0^tS(t-s)F(f(s))(x)ds\right)\\
&=L(S(t)f_0)(x)+F(f(t))(x)+\int _0^t L(S(t-s)F(f(s)))(x) ds\\
&=L\left(S(t)f_0+ \int _0^t S(t-s)F(f(s)) ds\right)(x)+F(f(t))(x)\\
&=L(f(t))(x)+F(f(t))(x).
\end{align*}
By Proposition \ref{S2AFF1} this shows $\mathcal L(f)\in L^\infty _{ loc }((0 ,\infty); L^\infty(\delta , \infty)\cap L^1(0, \infty))$ for all $\delta >0$. This ends the proof of (\ref{S4E140-4X}) and  (\ref{S4E140-2X}), and proves (\ref{S4E125}). 

To prove (\ref{S3fx1}) notice  that by Corollary \ref{S4C26m}, $\partial x(S(t)f_0)$ exists for almost every $t>0$ and $x>0$ and satisfies   \ref{S4C26mE1} and by Proposition \ref{S2AFF1} and Corollary \ref{S4C26m}, $\partial x(S(t-s) F(f(s)))$ exists for almost every $t>0$, $s\in (0, t)$ and $x>0$ and satisfies:
\begin{align*}
\left|\frac {\partial S(t-s)f(s)(x)} {\partial x} \right|&\le h(t-s)g(x)|||F(f(s))||| _{ 1, \theta }\\
&\le C h(t-s)g(x)\left(||f(s)||_1+|||f(s)|||_\theta \right)\\
&\le C h(t-s)g(x)\sup _{ 0\le s\le t }\left(||f(s)||_1+|||f(s)|||_\theta \right).
\end{align*}
Since $h(t-s)\in L^1(0, t)$, (\ref{S3fx1}) easily follows.

On the other hand,  if we multiply both sides of (\ref{S4EV18}) by $\varphi \in C^1_0([0, \infty))$ and integrate,
\begin{equation*}
\int_0^\infty  f(t, x) \varphi (x)dx=\int_0^\infty \varphi (x)S(t)f_0(x)dx+\int _0^\infty \varphi (x)\int _0^tS(t-s)F(f(s))(x)dsdx.
\end{equation*}
In order to derivate this expression with respect to $t$, we use again (\ref{S4Evyu1}) and (\ref{S4Evyu2}) to obtain,
\begin{align}
\frac {d} {dt}\int_0^\infty  f(t, x) \varphi (x)dx=&\,\int_0^\infty \varphi (x)L(S(t)f_0)(x)dx+\int _0^\infty\varphi (x)F(f(t, x)dx+\nonumber\\
&+\int _0^\infty \varphi (x)\int _0^t L(S(t-s)F(f(s))(x))dsdx, \label{S4E38}
\end{align}
and for all $t\in (0, T)$,
\begin{align*}
&\left|\frac {d} {dt}\int_0^\infty  f(t, x) \varphi (x)dx\right|\le C||\varphi ||_1||f_0||_\infty+C(1+T)||\varphi ||_1||f|| _\infty.
\end{align*}
which  shows 
$$\frac {d} {dt}\int_0^\infty  f(t, x) \varphi (x)dx\in L^\infty _{ loc }([0, \infty).$$
Identity (\ref{S4cor1E2}) follows now, since
\begin{align*}
\frac {d} {dt}\int_0^\infty  f(t, x) \varphi (x)dx&=\int_0^\infty \varphi (x)L\left( (S(t)f_0)(x)+\int _0^t S(t-s)F(f(s))(x)ds\right)dx+\\
+\int _0^\infty&\varphi (x)F(f(t, x)dx=\int_0^\infty \varphi (x)L\left(f\right)dx+\int _0^\infty\varphi (x)F(f(t, x)dx.
\end{align*}
\vskip -0.5cm 
\end{proof}
\section{Further properties of the solution $f$.}
\setcounter{equation}{0}
\setcounter{theo}{0}
We describe in this Section  some further properties of the solutions $f$ given by Theorem \ref{S6WeakSolL2}. We first consider what are the variations of mass and  energy induced by  the initial perturbation $n_0(1+n_0)x^2f(\tau )$ of the equilibrium $n_0$ introduced in (\ref{S2Elinzn2}), (\ref{S2CcV}). Then we prove that for all $\delta >0$, $f\in L^\infty ((\delta , \infty)\times (0, \infty))$ and that for every $t>0$  the function $f(t)$ has a limit as $x\to 0$.

\subsection{Mass and Energy.}
It will be sometimes denoted in what follows 
\begin{align*}
&N_0(x)=\nu_0(x^2)(1+\nu_0(x^2))x^6;\,\,\,\,\,\,d\mu (x)=N_0(x)dx,\\
&\int _0^\infty \int _0^\infty  W(x, x')|f(\tau , x')-f(\tau , x)|^2x'^4x^4dx'dx=D(f(\tau )). 
\end{align*}
A first basic property  is the following,
\begin{prop}
\label{S4Ep}
Let $f_0$ and $f$ be as in Theorem \ref{S6WeakSolL2}. Then for all $p>1$,
\begin{align}
&\frac {d} {dt}\int _0^\infty|f(t, x)|^pd\mu (x) \le 0,\,\,\forall t>0.\label{S4EpE1}\\
&||f(t_2)|| _{ L^\infty(d\mu ) }\le ||f(t_1)|| _{ L^\infty(d\mu ) },\,\,\forall t_2>t_1\ge 0.\label{S4EpE2}
\end{align}
\end{prop}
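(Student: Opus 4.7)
\medskip
\noindent\textbf{Proof proposal.}
The plan is to exploit a detailed-balance symmetry between the kernel $M$ and the measure $d\mu$. Starting from the definition (\ref{S3E2359UU}) of $W$, which factors as $W(x,y)=M(x,y)\,x^2/((\sinh x^2)^2 y^4)$ and is manifestly symmetric in $(x,y)$ via the alternative expression in the same display, a rearrangement yields the detailed-balance identity
\begin{equation*}
M(x,y)\,N_0(x)=M(y,x)\,N_0(y),\qquad\text{equivalently}\qquad 4\,M(x,y)N_0(x)=W(x,y)\,x^4y^4.
\end{equation*}
This is the only structural fact beyond Theorems~\ref{S6WeakSolL2} and~\ref{S6WeakSolL2B} that the proof will need.

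With the symmetry in hand, my plan for (\ref{S4EpE1}) is the standard Lyapunov calculation. Setting $\phi(r)=|r|^{p-2}r$ and differentiating under the integral sign, which is justified by (\ref{S4E140-4X}), (\ref{S4E140-2X}) and the $L^\infty$ bound (\ref{S4E168}),
\begin{equation*}
\frac{1}{p}\frac{d}{dt}\int_0^\infty |f(t)|^p\,d\mu=\iint \phi(f(x))(f(y)-f(x))\,M(x,y)N_0(x)\,dy\,dx.
\end{equation*}
Relabelling $x\leftrightarrow y$ and using the symmetry identity rewrites this quantity with $\phi(f(x))$ replaced by $-\phi(f(y))$; averaging the two forms produces
\begin{equation*}
\frac{1}{p}\frac{d}{dt}\int_0^\infty |f|^p\,d\mu=-\tfrac{1}{2}\iint (f(y)-f(x))\bigl(\phi(f(y))-\phi(f(x))\bigr)\,M(x,y)N_0(x)\,dy\,dx\le 0,
\end{equation*}
since $M\ge 0$ and $r\mapsto\phi(r)$ is non-decreasing for $p>1$. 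To make this rigorous I would test (\ref{S4cor1E2}) against $\varphi_{\varepsilon,R}(x)=(|f(t,x)|^2+\varepsilon)^{(p-2)/2}f(t,x)\,N_0(x)\,\chi_R(x)$, with $\chi_R\in C^1_0(0,\infty)$ equal to $1$ on $[2/R,R/2]$ and supported in $[1/R,R]$, then let $\varepsilon\to 0$ and $R\to\infty$.

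The hard part will be justifying the limit in the cutoff, since $M$ has a non-integrable diagonal singularity $\sim|x-y|^{-1}$. The key input is the $p=2$ instance of the computation: it yields $\frac{d}{dt}\int f^2\,d\mu=-\tfrac{1}{4}\iint W(x,y)(f(y)-f(x))^2 x^4y^4\,dy\,dx$, which controls $|f(y)-f(x)|^2\,M(x,y)N_0(x)$ in $L^1_{x,y}$. Combining this with the $L^\infty$ bound on $f$ and standard estimates on $\phi$ dominates the symmetrized integrand for every $p>1$, permitting the removal of the cutoff. The endpoint $t_1=0$ is covered by $f\in C([0,\infty);L^1)$ from Theorem~\ref{S6WeakSolL2}. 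Finally, (\ref{S4EpE2}) will follow by letting $p\to\infty$: since $N_0$ behaves like $x^2$ near $0$ and decays exponentially at infinity, $d\mu$ is a finite measure on $(0,\infty)$ and $\|g\|_{L^p(d\mu)}\to\|g\|_{L^\infty(d\mu)}$ for every $g\in L^\infty(d\mu)$; passing to the limit in (\ref{S4EpE1}) between $t_1$ and $t_2$ then yields (\ref{S4EpE2}).
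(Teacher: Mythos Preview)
Your proposal is correct and follows essentially the same route as the paper: the detailed-balance identity $M(x,y)N_0(x)=M(y,x)N_0(y)$ is exactly the symmetry of $W(x,y)x^4y^4$ that the paper invokes, the symmetrization of $\int \phi(f)\mathcal L(f)\,d\mu$ is identical to the paper's display (\ref{S5EDEp}), and the passage to $L^\infty(d\mu)$ via $p\to\infty$ on the finite measure $d\mu$ is the paper's argument verbatim. The only difference is one of presentation: you spell out a cutoff-and-regularize scheme and use the $p=2$ dissipation to dominate the symmetrized integrand, whereas the paper dispatches the justification in one line by citing the regularity (\ref{S4E140-4X}), (\ref{S4E125}) and the fact that $x^6n_0(1+n_0)|f|^{p-2}f\in L^1$.
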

\begin{proof}
Since $f$ satisfies (\ref{S4E140-4X} ), (\ref{S4E125}), and $x^6n_0(1+n_0)|f|^{p-2}f \in L^1(0, \infty)$, multiplication of both sides of (\ref{S4E125}) and integration over $(0, \infty)$ gives, using (\ref{S1ERRR10fx}) and the symmetry of $W(x, x')$ as follows,
\begin{align}
&\frac {d} {dt}\int _0^\infty \nu_0(x^2)(1+\nu_0(x^2))|f(t, x)|^px^6dx= \nonumber\\
&=\int _0^\infty \int _0^\infty  W(x, x')(f(\tau , x')-f(\tau , x)) |f|^{p-2}fx'^4x^4dx'dx\nonumber\\
&=-\frac {1} {2}\int _0^\infty \int _0^\infty  W(x, x')(f(\tau , x')-f(\tau , x)) \left(|f'|^{p-2}f'-|f|^{p-2}f\right) x'^4x^4dx'dx\le 0.
\label{S5EDEp} 
\end{align}
Since $d\mu $ is a non negative finite measure on $(0, \infty)$,
\begin{align*}
||f(t_2)|| _{ L^\infty(d\mu ) }=\lim _{ p\to \infty }||f(t_2)|| _{ L^p(d\mu ) }\le \lim _{ p\to \infty }||f(t_1)|| _{ L^p(d\mu ) }=||f(t_1)|| _{ L^\infty(d\mu ) }.
\end{align*}
\end{proof}
\begin{lem} 
\label{S5Etheta}
For all $\theta\in [0, 3)$,
\begin{align}
&(i)\qquad  ||f||_{L^\infty (d\mu )}\le \max \left(||f|| _{ L^\infty(1, \infty)}, \sup _{ x\in (0, 1) }x^\theta|f(x)|\right) \label{S5Etheta1}\\
&(ii)\qquad  ||x^6n_0(1+n_0)f|| _{ \infty }\le  ||f||_{L^\infty (d\mu )},\,\,\forall f\in L^\infty (d\mu )\cap L^\infty(0, \infty)
 \label{S5Etheta2}
\end{align}
\end{lem}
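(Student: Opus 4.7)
The plan is to prove (i) and (ii) separately, with both assertions resting on pointwise estimates for the density $N_0(x)=\nu_0(x^2)(1+\nu_0(x^2))x^6$ of $d\mu$. From the explicit form $\nu_0(x^2)(1+\nu_0(x^2))=1/(4\sinh^2(\beta x^2/2))$ inherited from (\ref{S2Elinzn2}), the density $N_0$ vanishes at order $x^2$ as $x\to 0^+$, decays exponentially as $x\to\infty$, and (after the scaling chosen in (\ref{S2CcV})) is bounded above by $1$ on $(0,\infty)$.

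For (ii), I would first establish the pointwise inequality $N_0(x)\le 1$ for all $x>0$. After the substitution $u=\beta x^2/2$ this reduces to a one-variable inequality of the form $u^3\le C\sinh^2 u$ on $[0,\infty)$, which is elementary: the two sides agree to leading order at $u=0$ with the difference $4\sinh^2 u - u^3$ vanishing to order $u^2$ with positive coefficient, while at infinity $\sinh^2 u$ grows exponentially and dominates the cubic. The monotonicity of the difference can be confirmed by differentiating twice and checking that $8\cosh(2u)-6u\ge 0$. Once $N_0\le 1$ is in hand, for any $f\in L^\infty(d\mu)\cap L^\infty(0,\infty)$ and for $\mu$-a.e.\ $x$ (hence Lebesgue-a.e., since $d\mu$ has strictly positive density on $(0,\infty)$) we have $|f(x)|\le \|f\|_{L^\infty(d\mu)}$, so that $N_0(x)|f(x)|\le \|f\|_{L^\infty(d\mu)}$, and taking essential supremum over $x$ yields (\ref{S5Etheta2}).

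For (i), my plan is to use the characterization $\|f\|_{L^\infty(d\mu)}=\lim_{p\to\infty}\|f\|_{L^p(d\mu)}$, already invoked in the proof of Proposition \ref{S4Ep} and valid because $d\mu$ is a finite positive measure on $(0,\infty)$, and to bound each $L^p$ norm by splitting the integral at $x=1$. Setting $A=\|f\|_{L^\infty(1,\infty)}$ and $B=\sup_{0<x<1}x^\theta|f(x)|$, on $(1,\infty)$ the bound $\int_1^\infty |f|^p\,d\mu\le A^p\,\mu((1,\infty))$ is immediate, while on $(0,1)$ the estimate $|f(x)|\le Bx^{-\theta}$ together with $N_0(x)\le Cx^2$ near the origin yields $\int_0^1 |f|^p\,d\mu\le CB^p\int_0^1 x^{2-p\theta}\,dx$. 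Adding the two pieces, taking $p$-th roots, and invoking the identity $(a^p+b^p)^{1/p}\to\max(a,b)$ as $p\to\infty$ would give the maximum on the right-hand side of (\ref{S5Etheta1}).

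The main obstacle in this approach is that $\int_0^1 x^{2-p\theta}\,dx$ diverges once $p\theta\ge 3$, so the naive bound on $\|f\|_{L^p(d\mu)}$ cannot be carried to $p\to\infty$ directly when $\theta>0$. The resolution should exploit the order-two vanishing of $N_0$ at the origin more carefully: since $d\mu$ assigns mass of order $\varepsilon^3$ to the interval $(0,\varepsilon)$, any apparent blow-up of $|f|$ of the form $x^{-\theta}$ with $\theta<3$ is invisible from the $\mu$-essential-supremum viewpoint, and the true $L^\infty(d\mu)$ norm is controlled by $B$ after dealing with a shrinking neighborhood of the origin separately before passing to the limit. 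Making this rigorous, rather than attempting to push the naive $L^p$ computation through, is the crux of (i); the exponent $3$ in the restriction $\theta\in[0,3)$ is precisely the order of vanishing of $d\mu$ at the origin.
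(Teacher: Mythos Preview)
For part (ii) your approach is correct and in fact more transparent than the paper's. The paper argues via the level set $B_C=\{x:N_0(x)|f(x)|\ge C\}$, chains $m(B_C)\le C^{-1}\|f\|_\infty\,d\mu(B_C)$, and then asserts $d\mu(B_C)=0$ for $C>\|f\|_{L^\infty(d\mu)}$; that last step tacitly requires $N_0\le 1$ (so that $N_0|f|\ge C$ forces $|f|\ge C>\|f\|_{L^\infty(d\mu)}$, placing $x$ in a $\mu$-null set), which is exactly the pointwise inequality you propose to establish directly.

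For part (i) the obstacle you identify is not a technicality to be finessed: it reflects that the inequality as written fails for $\theta>0$. Since $N_0(x)>0$ for every $x\in(0,\infty)$, the measures $d\mu$ and $dx$ have identical null sets, and therefore $\|f\|_{L^\infty(d\mu)}=\|f\|_{L^\infty(dx)}$. Your heuristic that ``$d\mu$ assigns mass of order $\varepsilon^3$ to $(0,\varepsilon)$, so blow-up of order $x^{-\theta}$ with $\theta<3$ is invisible to the $\mu$-essential supremum'' is incorrect: the essential supremum cares only whether the exceptional set is null, not how much mass it carries. Concretely, $f(x)=x^{-\theta}\1_{(0,1)}(x)$ makes the right-hand side of (\ref{S5Etheta1}) equal to $1$ while $\|f\|_{L^\infty(d\mu)}=+\infty$. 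The paper takes a different route---working directly with $A_C=\{|f|>C\}$ and attempting to show $d\mu(A_C)=0$ once $C$ exceeds the maximum---but its decisive step, concluding $\int_{A_C\cap(0,1)}x^\theta|f(x)|\,dx=0$ from $C>\sup_{(0,1)}x^\theta|f|$, has the same gap: a bound on $x^\theta|f|$ does not force $A_C\cap(0,1)$ to be Lebesgue-null. So the divergence in your $L^p$ computation is not an artifact of the method; (\ref{S5Etheta1}) simply does not hold for $\theta>0$ without additional hypotheses on $f$.
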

\begin{proof}
(i)\,\,For all  $C>0$ denote,
\begin{align*}
A_C=\{x\in (0, \infty); |f(x)|>C\}.
\end{align*}
Then, if $C>||f|| _{ L^\infty(1, \infty) }$, the Lebesgue measure of $A_c\cap (1, \infty)$ is zero and
\begin{align*}
d\mu (A_c)&=\int  _{ A_C\cap (0, 1) } n_0(1+n_0)x^6dx+\int  _{ A_C\cap (1, \infty) } n_0(1+n_0)x^6dx\\
&=\int  _{ A_C\cap (0, 1) } n_0(1+n_0)x^6dx
\end{align*}
On the other hand,  
\begin{align*}
\int  _{ A_C\cap (0, 1) } n_0(1+n_0)x^6dx&\le \sup _{ x>0 }\left(n_0(1+n_0)x^4 \right)\int  _{ A_C\cap (0, 1) }x^2dx\\
&\le \sup _{ x>0 }\left(n_0(1+n_0)x^4 \right)\frac {1} {C}\int  _{ A_C\cap (0, 1) }|f(x)|x^2dx\\
&\le\sup _{ x>0 }\left(n_0(1+n_0)x^4 \right)\frac {1} {C} \int _{ A_C \cap (0, 1) }x^\theta |f(x)|dx
\end{align*}
Then, since $x^\theta f\in L^1(0, 1)$, if $C>\sup _{ x\in (0, 1) }x^\theta |f(x)|$,
\begin{equation*}
 \int _{ A_C \cap (0, 1) }x^\theta |f(x)|dx=0.
\end{equation*}
It follows that for all $C>\max(||f|| _{ L\infty(1, \infty) }, \sup _{ x\in (0, 1) }x^\theta |f(x)|)$,
\begin{align*}
|f(x)|\le C,\,\,\,\hbox{except in a set of zero measure},
\end{align*}
and this proves (\ref{S5Etheta1}).

(ii) Let us denote now,
\begin{equation*}
B_C=\{x>0;\, x^6n_0(1+n_0)|f(x)|\ge C\}
\end{equation*}
and suppose that $C>||f|| _{ L^\infty(d\mu ) }$. Then
\begin{align*}
m(B_C)&=\int  _{ B_C }dx\le \frac {1} {C}\int  _{ B_C } x^6n_0(1+n_0)|f(x)|dx\\
&\le  \frac {||f||_\infty} {C}\int  _{ B_C } x^6n_0(1+n_0)dx=  \frac {||f||_\infty} {C}d\mu (B_C)=0.
\end{align*}
\vskip -0.6cm 
\end{proof}
The following easily follows now,
\begin{cor}
\label{SeECf}
Let $f_0$ and $f$ be as in Theorem \ref{S6WeakSolL2}. Then, 
\begin{align*}
\frac {d} {dt} \int _0^\infty n_0(x)(1+n_0(x)f(t , x)x^6dx=0,\,\,\forall t>0.
\end{align*}
\end{cor}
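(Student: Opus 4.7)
The plan is to mirror the structure of the proof of Proposition \ref{S4Ep} but with constant weight in place of $|f|^{p-2}f$. Specifically, I would multiply both sides of the pointwise equation (\ref{S4E140-2X}) by $N_0(x) := n_0(x)(1+n_0(x))x^6 = x^6/(4\sinh^2 x^2)$ and integrate in $x$ over $(0,\infty)$. Since $N_0$ is bounded, integrable on $(0,\infty)$, and decays exponentially at infinity while behaving like $x^2/4$ near the origin, and since $f(t)\in L^1\cap L^\infty(0,\infty)$ for each $t>0$ by Theorem \ref{S6WeakSolL2}, the integral $\int_0^\infty f(t,x)N_0(x)\,dx$ is well-defined. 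Exchange of $d/dt$ and $\int dx$ is justified by the pointwise bound (\ref{S4E125}) on $\partial_t f$ and the fact that the dominating function $N_0(x)\tilde H(t,x)$ is locally integrable in $t$, allowing dominated convergence to apply.

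The resulting identity
\begin{equation*}
\frac{d}{dt}\int_0^\infty f(t,x)\,N_0(x)\,dx \;=\; \int_0^\infty \mathcal L(f(t))(x)\,N_0(x)\,dx
\end{equation*}
is then unwound using (\ref{S3E2359Mf}) together with the key relation (\ref{S3E2359UU}), which in the form $M(x,y)\,N_0(x) = W(x,y)\,x^4 y^4/4$ converts the $M$-kernel to the symmetric $W$-kernel. Applying Fubini gives
\begin{equation*}
\int_0^\infty \mathcal L(f(t))(x)\,N_0(x)\,dx \;=\; \tfrac{1}{4}\int_0^\infty\!\!\int_0^\infty W(x,y)\bigl(f(t,y)-f(t,x)\bigr)x^4 y^4\,dy\,dx .
\end{equation*}
Renaming $x\leftrightarrow y$ and invoking the symmetry $W(x,y)=W(y,x)$ shows that this double integral equals its own negative, hence is zero, which is exactly the $p=1$ analogue of the computation in (\ref{S5EDEp}) with constant weight replacing $|f|^{p-2}f$.

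The main obstacle is justifying Fubini on the doubly singular integral, since $W(x,y)$ has an apparent singularity along the diagonal $x=y$ coming from the $1/\sinh|x^2-y^2|$ factor. This is handled by the fact that $f(t)$ is locally Lipschitz on $(0,\infty)$ by Theorem \ref{MainThm}, so that the difference $f(t,y)-f(t,x)$ kills the singularity and leaves an absolutely integrable integrand; this is the same cancellation exploited in the proof of Proposition \ref{S4Ep}. A cleaner alternative, avoiding any direct discussion of the diagonal singularity, is to first apply the already-established identity (\ref{S4cor1E2}) to a sequence $\varphi_n\in C^1_0((0,\infty))$ with $0\le\varphi_n\nearrow N_0$, and then pass to the limit $n\to\infty$: on the left using $|f(t,x)\varphi_n(x)|\le\|f(t)\|_\infty N_0(x)\in L^1$, and on the right using $|\mathcal L(f(t))(x)\varphi_n(x)|\le |\mathcal L(f(t))(x)|N_0(x)$, which is integrable by (\ref{S4E140}). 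The resulting limit identity, combined with the symmetrization step above, yields the conclusion.
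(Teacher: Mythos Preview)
Your proposal is correct and follows essentially the same route as the paper: rewrite $\int \mathcal L(f)\,N_0\,dx$ as a double integral against the symmetric kernel $W$ via the identity $M(x,y)N_0(x)=W(x,y)x^4y^4$, and conclude by the antisymmetry of $f(t,y)-f(t,x)$ under $x\leftrightarrow y$. The paper is simply terser: it invokes (\ref{S4cor1E2}) directly with $\varphi=n_0(1+n_0)x^6$ (asserting this function lies in $C^1_0([0,\infty))$, i.e.\ is $C^1$ and vanishes at both endpoints) and then appeals to the symmetry of $W$, without spelling out the Fubini justification or the approximation by compactly supported $\varphi_n$ that you include as a safeguard.
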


\begin{proof} 
Since  $n_0(1+n_0)x^6\in C_0^1([0, \infty))$, identity  (\ref{S4cor1E2}) with  $\varphi =n_0(1+n_0)x^6$ gives, by the definition of $\mathcal L$ (cf. (\ref{S1ERRR10fx})),
\begin{align*}
\frac {d} {dt}\int _0^\infty n_0(1+n_0)x^4f(\tau , x)x^2dx=\int _0^\infty n_0(1+n_0)x^4\mathcal L(f(\tau ))(x)dx\\
=\int _0^\infty \int _0^\infty  W(x, x')(f(\tau , x')-f(\tau , x))x'^4dx'x^4dx.
\end{align*}
The result follows from the symmetry of the kernel $W(x, y)$. 
\end{proof}

The following property will show the boundedness of the variation of the mass.
\begin{prop}
\label{S5EBN1}
Let $f_0$ and $f$ be  as in Theorem \ref{S6WeakSolL2}. Then  for all $t>0$ and all $p>3$,
\begin{align*}
&\int _0^\infty n_0(1+n_0)x^4|f(t, x)|dx\le C_p\max \left(||f_0|| _{ L^\infty(1, \infty)}, \sup _{ x\in (0, 1) }x^\theta|f_0(x)|\right)^{\frac {p-1} {p}}\!\!\!  ||f_0||^{1/p} _{ L^1(d\mu ) }\\
&C_p=\left(\int _0^\infty n_0(1+n_0) x^{\left(\frac {4p} {p-1}-\frac {6} {p-1} \right) }dx\right)^{\frac {p-1} {p}}
\end{align*}
\end{prop}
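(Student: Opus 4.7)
The natural strategy is a single Hölder interpolation between $L^1(d\mu)$ and $L^\infty(d\mu)$, using the monotonicity of $L^p(d\mu)$-norms established in Proposition~\ref{S4Ep}.

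\textbf{Step 1 (Hölder with the weight split).} I would rewrite
\begin{align*}
\int_0^\infty n_0(1+n_0)\,x^4\,|f(t,x)|\,dx
=\int_0^\infty \bigl(n_0(1+n_0)\bigr)^{1-1/p}x^{4-6/p}\cdot\bigl(n_0(1+n_0)\bigr)^{1/p}x^{6/p}|f(t,x)|\,dx,
\end{align*}
and apply Hölder's inequality with conjugate exponents $p/(p-1)$ and $p$. This produces
\begin{align*}
\int_0^\infty n_0(1+n_0)\,x^4\,|f(t,x)|\,dx
\le \left(\int_0^\infty n_0(1+n_0)\,x^{\frac{4p-6}{p-1}}\,dx\right)^{\!(p-1)/p}
\|f(t)\|_{L^p(d\mu)},
\end{align*}
where the first factor is exactly $C_p$. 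The condition $p>3$ is needed here: near $x=0$ one has $n_0(1+n_0)\sim x^{-4}$, so the integrand behaves like $x^{(4p-6)/(p-1)-4}$, which is locally integrable iff $p>3$; at infinity the exponential decay of $n_0(1+n_0)$ takes care of integrability for all $p$.

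\textbf{Step 2 (monotonicity of $L^p(d\mu)$).} By Proposition~\ref{S4Ep} applied with exponent $p$,
$\|f(t)\|_{L^p(d\mu)}\le \|f_0\|_{L^p(d\mu)}$ for every $t>0$. Interpolating between $L^1(d\mu)$ and $L^\infty(d\mu)$ gives
\begin{align*}
\|f_0\|_{L^p(d\mu)}^p=\int_0^\infty |f_0|^p\,d\mu
\le \|f_0\|_{L^\infty(d\mu)}^{p-1}\,\|f_0\|_{L^1(d\mu)},
\end{align*}
hence $\|f(t)\|_{L^p(d\mu)}\le \|f_0\|_{L^\infty(d\mu)}^{(p-1)/p}\|f_0\|_{L^1(d\mu)}^{1/p}$.

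\textbf{Step 3 (control of $\|f_0\|_{L^\infty(d\mu)}$).} Here I invoke Lemma~\ref{S5Etheta}(i), which is applicable because $\theta\in(0,1)\subset[0,3)$. It yields
\begin{align*}
\|f_0\|_{L^\infty(d\mu)}\le \max\Bigl(\|f_0\|_{L^\infty(1,\infty)},\ \sup_{x\in(0,1)}x^\theta|f_0(x)|\Bigr).
\end{align*}

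\textbf{Step 4 (assembly).} Substituting Steps 2 and 3 into the inequality from Step 1 gives exactly the claimed bound. There is essentially no obstacle; the only subtle points are checking that the powers in the weight line up so that the first Hölder factor is the asserted $C_p$ (a direct computation of $(4-6/p)\cdot p/(p-1)=(4p-6)/(p-1)$), and verifying the condition $p>3$ for finiteness of $C_p$.
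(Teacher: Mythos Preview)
Your proof is correct and follows essentially the same approach as the paper: Hölder with exponents $p$ and $p/(p-1)$ to produce $C_p\,\|f(t)\|_{L^p(d\mu)}$, then Proposition~\ref{S4Ep} for the monotonicity $\|f(t)\|_{L^p(d\mu)}\le\|f_0\|_{L^p(d\mu)}$, the $L^\infty$--$L^1$ interpolation, and finally Lemma~\ref{S5Etheta}(i). Your explicit weight split and the check that $p>3$ ensures $C_p<\infty$ via the $n_0(1+n_0)\sim x^{-4}$ behavior near the origin are exactly the points the paper relies on.
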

\begin{proof}
By Holder's inequality,
\begin{align*}
\int _0^\infty n_0(1+n_0)x^4|f(t, x)|dx\le \left( \int _0^\infty n_0(1+n_0)x^6|f(t, x)|^pdx\right)^{\frac {1} {p}}\times \\
\times \left(\int _0^\infty n_0(1+n_0) x^{\left(4-\frac {6} {p} \right) \frac {p} {p-1}}dx \right)^{\frac {p-1} {p}}\\
=||f(t)|| _{ L^p(d\mu ) }\left(\int _0^\infty n_0(1+n_0) x^{\left(\frac {4p} {p-1}-\frac {6} {p-1} \right) }dx \right)^{\frac {p-1} {p}}.
\end{align*}
If $p>3$, 
\begin{align*}
&\frac {4p} {p-1}-\frac {6} {p-1}>3,\,\,\,\hbox{and then},\\
&\int _0^\infty n_0(1+n_0) x^{\left(\frac {4p} {p-1}-\frac {6} {p-1} \right) }dx=C^{\frac {p} {p-1}}_p<\infty,\\
&\lim _{ p\to 3 }C_p=\infty.
\end{align*}
It follows by Proposition \ref{S4Ep},  for all $t> 0$,
\begin{equation*}
\int _0^\infty n_0(1+n_0)x^4|f(t, x)|dx\le C_p ||f(t)|| _{ L^p(d\mu ) }\le C_p ||f_0|| _{ L^p(d\mu ) }
\end{equation*}
Since,
\begin{align*}
||f_0|| _{ L^p(d\mu ) } \le ||f_0|| _{L^ \infty(d\mu ) }^{\frac {p-1} {p}}
||f_0||^{1/p} _{ L^1(d\mu ) }
\end{align*}
it follows from Lemma \ref{S5Etheta} that
\begin{align*}
||f_0|| _{ L^p(d\mu ) } &\le \max \left(||f_0|| _{ L^\infty(1, \infty)}, \sup _{ x\in (0, 1) }x^\theta|f_0(x)|\right)^{\frac {p-1} {p}}  ||f_0||^{1/p} _{ L^1(d\mu ) }
\end{align*}
and the Proposition follows.
\end{proof}
It immediately follows from Proposition \ref{S5EBN1},
\begin{cor}
\label{SeECM}
Let $f_0$ and $f$ be as in Theorem \ref{S6WeakSolL2}. Then, for $N(\tau )$ defined in (\ref{S2ENP}), and all $p>3$,
\begin{align*}
N(\tau )\le  C_p\max \left(||f_0|| _{ L^\infty(1, \infty)}, \sup _{ x\in (0, 1) }x^\theta|f_0(x)|\right)^{\frac {p-1} {p}}\!\!\!  ||f_0||^{1/p} _{ L^1(d\mu ) },\,\,\,\forall \tau >0,
\end{align*} 
where $C_p$ is given in Proposition \ref{S5EBN1}.
\end{cor}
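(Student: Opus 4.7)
\textbf{Proof plan for Corollary \ref{SeECM}.}

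The corollary is essentially a direct transcription of Proposition \ref{S5EBN1}, so the plan is short. First I would observe that, by the definition (\ref{S2ENP}) of $N(\tau)$ (with $u$ replaced by $f$, via the change of time variable (\ref{S2NewTime})), one has the trivial pointwise inequality
\begin{equation*}
N(\tau) \le \int_0^\infty n_0(x)(1+n_0(x))\,|f(\tau,x)|\,x^4\,dx.
\end{equation*}
Thus the whole task reduces to bounding the right-hand side uniformly in $\tau$.

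Second, I would invoke Proposition \ref{S5EBN1} directly with $t = \tau$, which produces exactly
\begin{equation*}
\int_0^\infty n_0(1+n_0)x^4|f(\tau,x)|\,dx \le C_p\max\!\left(\|f_0\|_{L^\infty(1,\infty)},\,\sup_{x\in(0,1)}x^\theta|f_0(x)|\right)^{\!(p-1)/p}\!\!\|f_0\|_{L^1(d\mu)}^{1/p},
\end{equation*}
with the constant $C_p$ given explicitly there. Combining with the previous step yields the stated estimate, with the same $C_p$. No additional ingredients are needed.

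There is no real obstacle: the work (Hölder interpolation between $L^1(d\mu)$ and $L^\infty(d\mu)$, the fact that the $L^p(d\mu)$ norm decays in time by Proposition \ref{S4Ep}, and the comparison between $\|\cdot\|_{L^\infty(d\mu)}$ and the quasinorm $|||\cdot|||_\theta$ from Lemma \ref{S5Etheta}) has already been packaged into Proposition \ref{S5EBN1}. The only point worth checking is that one is allowed to use that proposition for every $\tau>0$, which is guaranteed because $f$ is globally defined on $(0,\infty)$ by Theorem \ref{S6WeakSolL2}, and this is why the resulting bound is uniform in $\tau$. One might also remark that, since $\lim_{p\to 3}C_p = \infty$, the estimate degenerates as $p\searrow 3$ and is therefore informative only for $p$ bounded away from $3$; this does not affect the statement but is the natural practical limit of the argument.
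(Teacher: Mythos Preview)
Your proposal is correct and matches the paper's approach exactly: the paper simply states that the corollary ``immediately follows from Proposition \ref{S5EBN1}'', which is precisely the two-step argument you spell out (bound $N(\tau)$ by the integral of $|f|$ with weight $n_0(1+n_0)x^4$, then apply Proposition \ref{S5EBN1}). Your additional remarks on the blow-up of $C_p$ as $p\searrow 3$ and on the global-in-time applicability are accurate and consistent with the paper.
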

The following property also follows from similar arguments.
\begin{prop}
\label{S5P56}
Suppose $f_0$ and $f$ are as in Theorem \ref{S6WeakSolL2}, $C>0$ is a constant and $f_0\le C$. Then $f(t)\le C$  for all $t>0$.
\end{prop}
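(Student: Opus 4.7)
The plan is to adapt the $L^p$ identity from the proof of Proposition \ref{S4Ep} to the positive part $(f-C)_+$, rather than $|f|$, and combine it with the monotonicity of $s\mapsto s_+^{p-1}$. Set $g(t,x)=f(t,x)-C$. Since the constant function $1$ lies in the kernel of $\mathcal L$ (the integrand $(1-1)M(x,y)$ vanishes), one has $\mathcal L(f)=\mathcal L(g)$, and therefore by (\ref{S4E140-2X}),
\begin{equation*}
\frac{\partial g}{\partial t}(t,x)=\mathcal L(g(t))(x)\qquad\text{for a.e. }t>0,\;x>0,
\end{equation*}
with $g(0,x)=f_0(x)-C\le 0$ by assumption.

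For fixed $p>1$ (which will be sent to $+\infty$ at the end), introduce the Lyapunov functional
\begin{equation*}
\Phi_p(t)=\int_0^\infty g_+(t,x)^p\,d\mu(x),\qquad d\mu(x)=n_0(x^2)(1+n_0(x^2))x^6\,dx.
\end{equation*}
Since $f(t)\in L^\infty(0,\infty)$ for $t>0$ by Theorem \ref{S6WeakSolL2} and $d\mu$ is a finite measure, $\Phi_p(t)$ is well-defined, and for $p>1$ the map $s\mapsto s_+^p$ is $C^1$ with derivative $p\,s_+^{p-1}$. The first step is to justify differentiation under the integral sign, using the bound (\ref{S4E125}) on $\partial_t f$ together with the bound $\|f(t)\|_{L^\infty(d\mu)}\le \|f_0\|_{L^\infty(d\mu)}$ from Proposition \ref{S4Ep}, which gives a dominating integrable function on compact time intervals.

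The heart of the argument is the symmetrization. Multiplying the equation by $p\,g_+(t,x)^{p-1}$ and integrating against $d\mu$, the key identity (which is exactly the computation in (\ref{S5EDEp}), using that $n_0(1+n_0)(\sinh x^2)^2\equiv 1/4$) becomes
\begin{equation*}
\frac{d}{dt}\Phi_p(t)=\frac{p}{4}\int_0^\infty\!\!\int_0^\infty W(x,y)\bigl(g(t,y)-g(t,x)\bigr)g_+(t,x)^{p-1}x^4y^4\,dx\,dy.
\end{equation*}
Swapping $x$ and $y$ and using the symmetry $W(x,y)=W(y,x)$, this integral equals
\begin{equation*}
-\frac{p}{8}\int_0^\infty\!\!\int_0^\infty W(x,y)\bigl(g(t,x)-g(t,y)\bigr)\bigl(g_+(t,x)^{p-1}-g_+(t,y)^{p-1}\bigr)x^4y^4\,dx\,dy.
\end{equation*}
Since $s\mapsto s_+^{p-1}$ is non-decreasing on $\mathbb R$, the integrand is pointwise non-negative, and therefore $\frac{d}{dt}\Phi_p(t)\le 0$. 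Because $g_+(0,\cdot)\equiv 0$, this forces $\Phi_p(t)\equiv 0$, hence $g_+(t,x)=0$ for $d\mu$-a.e.~$x$; since $d\mu$ has a strictly positive density on $(0,\infty)$ and $f(t,\cdot)\in C(0,\infty)$ (by Theorem \ref{S6WeakSolL2}), we conclude $f(t,x)\le C$ everywhere on $(0,\infty)$ for every $t>0$.

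The main obstacle is the technical justification of the symmetrization step: one must ensure that the double integral converges absolutely so that Fubini applies. This requires bounding $W(x,y)x^4y^4$ in regions where it is singular (namely near the diagonal $x=y$), which is handled using the kernel estimates in Propositions \ref{S2RK1}--\ref{S2RK2} (adapted via $W=M/((\sinh x^2)^2)\cdot x^2/y^4$) together with the boundedness of $f(t)$ granted by Theorem \ref{S6WeakSolL2}; the factor $g(y)-g(x)$ provides the cancellation that tames the singularity, exactly as in the proof of Proposition \ref{S4Ep} (formula (\ref{S5EDEp})). Once this integrability is in hand, the rest is a direct application of sign monotonicity.
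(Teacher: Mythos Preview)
Your proposal is correct and follows essentially the same route as the paper's proof: subtract the constant, use $\mathcal L(C)=0$, multiply by the positive part, symmetrize via $W(x,y)=W(y,x)$, and conclude from the sign of $(g(y)-g(x))(g_+(y)^{p-1}-g_+(x)^{p-1})$. The only cosmetic differences are that the paper works directly with $p=2$ and verifies the sign by an explicit case analysis, whereas you invoke monotonicity of $s\mapsto s_+^{p-1}$; your parenthetical about sending $p\to\infty$ is unnecessary, since a single value of $p>1$ already yields $g_+\equiv 0$.
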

\begin{proof}
Since $\mathcal L (C)\equiv 0$ it  follows 
\begin{equation*}
\frac {\partial (f(t, x)-C)} {\partial t}=\mathcal L\Big(f(t)-C\Big)(x).
\end{equation*}
If we multiply the equation by $n_0(1+n_0)x^6f_C(t)^+$, with $f_C(t)=(f(t)-C)$
\begin{align*}
\frac {d} {dt}\int _0^\infty\!\!\! n_0(1+n_0)|f_C^+(t, x)|^2x^6dx=\int _0^\infty\int _0^\infty W(x, y)(f_C(t, y)-f_C(t, x))f_C^+(t, x)x^4y^4dydx\\
=-\int _0^\infty\int _0^\infty W(x, y)(f_C(t, y)-f_C(t, x))f^+(t, y)x^4y^4dydx\\
=-\frac {1} {2}\int _0^\infty\int _0^\infty W(x, y)(f_C(t, y)-f_C(t, x))(f_C^+(t, y)-f_C^+(t, x))x^4y^4dydx
\end{align*}
If $f_C(t, y)>f_C(t, x)$ and $f_C^+(t, y)\le f_C^+(t, x)$ then $f_C(t, y)\le 0$ because if $f_C(t, y)>0$ we would have $f_C^+(t, y)=f_C(t, y)>f(t, x)=f_C^+(t, x)$, a contradiction. But this implies 
$f_C^+(t, x)=f_C^+(t, y)=0$. On the other hand, if  $f_C(t, y)<f_C(t, x)$ and $f_C^+(t, y)\ge f_C^+(t, x)$ then, $f_C(t, x)<0$ because if $f_C(t, x)>0$ then $f_C^+(t, x)=f_C(t, x)>0$ then, as before we would have,
$f_C^+(t, y)\ge f_C^+(t, x)>0$ and so $f_C(t, y)=f_C^+(t, y)$ but this would give $_Cf^+(t, y)=f_C(t, y)<f_C(t, x)=f_C^+(t, x)$, which is a contradiction. So in that case again $f_C^+(t, x)=f_C^+(t, y)=0$. We deduce,
\begin{align*}
&\frac {d} {dt}\int _0^\infty n_0(1+n_0)|f_C^+(t, x)|^2x^6dx\le 0\\
&\int _0^\infty n_0(1+n_0)|f_C^+(t, x)|^2x^6dx= 0,
\end{align*}
from where $f_C^+(t)(x)=(f(t, x)-C)^+=0\,\, a. e.$ and the Proposition follows.
\end{proof}
The following Corollary immediately follows,
\begin{cor}
\label{S4CBorn1}
Suppose that $f$ and $g$ are two solutions of (\ref{S4E125})  given by Theorem  \ref{S6WeakSolL2} with  initial data $f_0$ and $g_0$  satisfying the hypothesis of Theorem  \ref{S6WeakSolL2}  and such that $f_0\le g_0$. Then, $f(t)\le g(t)$ for all $t>0$.
\end{cor}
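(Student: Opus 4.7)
\noindent\textbf{Proof plan for Corollary \ref{S4CBorn1}.}

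The idea is to reduce the comparison principle to the sub/supersolution argument already carried out in Proposition \ref{S5P56}, by exploiting the linearity of $\mathcal{L}$. Since both $f$ and $g$ are produced by Theorem \ref{S6WeakSolL2} and therefore, by Theorem \ref{S6WeakSolL2B}, satisfy $\partial_t f = \mathcal{L}(f)$ and $\partial_t g = \mathcal{L}(g)$ for a.e.\ $(t,x)$, with the regularity in (\ref{S4E140-4X}), I set $w(t,x) = f(t,x) - g(t,x)$. By (\ref{S3E2359Mf}) the operator $\mathcal{L}$ is linear, so
\begin{equation*}
\frac{\partial w(t,x)}{\partial t} = \mathcal{L}(w(t))(x),\qquad w(0,x)=f_0(x)-g_0(x)\le 0.
\end{equation*}
Moreover, $w$ inherits from $f$ and $g$ all the integrability and local boundedness properties needed to justify multiplication by test functions of the form $n_0(1+n_0)x^6\, w^+(t,x)$, since $n_0(1+n_0)x^6$ decays fast enough at infinity and $w\in L^\infty_{\mathrm{loc}}((0,\infty)\times(0,\infty))\cap C([0,\infty);L^1)$.

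Next, I repeat verbatim the computation in the proof of Proposition \ref{S5P56}, replacing the function $f_C = f-C$ there by $w$ here (the only property of $f_C$ that was actually used, beyond the equation, is that it solves $\partial_t f_C = \mathcal{L}(f_C)$, which used $\mathcal{L}(C)=0$; for $w$ this holds by linearity). Multiplying the equation by $n_0(1+n_0)x^6\, w^+(t,x)$, integrating in $x$, and symmetrizing the resulting double integral using the symmetry $W(x,y)=W(y,x)$, I obtain
\begin{equation*}
\frac{d}{dt}\int_0^\infty n_0(1+n_0)|w^+(t,x)|^2 x^6\,dx
=-\frac{1}{2}\iint_{(0,\infty)^2} W(x,y)\bigl(w(t,y)-w(t,x)\bigr)\bigl(w^+(t,y)-w^+(t,x)\bigr)x^4y^4\,dy\,dx.
\end{equation*}
The pointwise sign analysis carried out in the proof of Proposition \ref{S5P56} shows that the integrand on the right is nonnegative almost everywhere, so the right-hand side is $\le 0$.

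Since $w_0\le 0$ gives $w^+(0)\equiv 0$ and the $L^2(d\mu)$-norm of $w^+$ is nonincreasing in time, I conclude that $w^+(t,x)=0$ $d\mu$-a.e., which by Lemma \ref{S5Etheta}(ii) and the continuity properties (\ref{S4E124}) (each $f(t)$, $g(t)$ lies in $C(0,\infty)$ for $t>0$) upgrades to $w(t,x)\le 0$ for every $x>0$ and every $t>0$. This is exactly $f(t)\le g(t)$. The main (and essentially only) technical point is the sign analysis of the symmetrized integral, but this is already contained in the proof of Proposition \ref{S5P56}, so no new difficulty arises; the corollary is therefore a direct consequence of linearity plus that earlier $L^2$-entropy-type estimate.
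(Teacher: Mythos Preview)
Your proof is correct and is exactly the argument the paper has in mind when it writes ``The following Corollary immediately follows'': by linearity, $w=f-g$ solves $\partial_t w=\mathcal L(w)$ with $w(0)\le 0$, and the $L^2(d\mu)$-entropy computation of Proposition~\ref{S5P56} applied to $w$ in place of $f_C$ gives $w^+(t)=0$. One small remark: the appeal to Lemma~\ref{S5Etheta}(ii) is not quite the right citation for passing from $\int_0^\infty n_0(1+n_0)x^6|w^+(t,x)|^2\,dx=0$ to $w^+(t)\equiv 0$; it is enough to observe that the weight $n_0(1+n_0)x^6$ is strictly positive on $(0,\infty)$, so $w^+(t,\cdot)=0$ Lebesgue-a.e., and then continuity of $f(t),g(t)$ on $(0,\infty)$ upgrades this to all $x>0$.
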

And let us also deduce,
\begin{cor}
\label{S4CBorn2}
Suppose that $f_0$ and $f$ are as in Theorem \ref{S6WeakSolL2}. Then, for all $\delta >0$ and all $t\ge \delta $,
\begin{align*}
||f(t)||_\infty\le C(\theta)\Big(||f_0|| _{ L^\infty (1, \infty) }+ \delta ^{-\theta} \sup _{ 0\le y\le 1} y^\theta |f_0(y)|+||f_0||_1\Big).
\end{align*}
\end{cor}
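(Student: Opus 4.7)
The argument will combine the short-time $L^\infty$ bound (\ref{S4E168}) from Theorem~\ref{S6WeakSolL2} with the one-sided maximum principle of Proposition~\ref{S5P56}. Since equation (\ref{S4E140-2X}) is autonomous and $\mathcal L$ is linear, every time-shift of a solution is again a solution of the same problem, and the $L^\infty$ norm is non-increasing once it becomes finite.

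First, I would apply (\ref{S4E168}) at the single time $t=\delta$, with a fixed horizon $T\ge\delta$, to obtain
\begin{equation*}
\|f(\delta)\|_\infty \;\le\; M \;:=\; C(\theta)\Bigl(\|f_0\|_{L^\infty(1,\infty)}+\delta^{-\theta}\sup_{0<y<1}y^\theta|f_0(y)|+\|f_0\|_1\Bigr).
\end{equation*}
Together with the $L^1$ estimate (\ref{S4E164-1}) this yields $f(\delta)\in L^1(0,\infty)\cap L^\infty(0,\infty)$, so $|||f(\delta)|||_\theta\le 2M$, and in particular $f(\delta)$ is an admissible initial datum for Theorem~\ref{S6WeakSolL2}.

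Next, I would propagate this bound to all later times by a time-shift argument. Define $\tilde f(s,x):=f(s+\delta,x)$ for $s\ge 0$. By autonomy, $\tilde f$ solves (\ref{S4E140-2X}) with initial datum $\tilde f(0)=f(\delta)$; by the uniqueness implicit in the contraction-mapping proof of Theorem~\ref{S6WeakSolL2} (the map $\mathscr L$ is a strict contraction on $Z_T$ for small $T$), $\tilde f$ coincides with the solution produced by that theorem from $f(\delta)$. Proposition~\ref{S5P56}, applied with the constant $M$ to $\tilde f$ (whose datum satisfies $f(\delta,x)\le M$ a.e.), gives $\tilde f(s,x)\le M$ for every $s\ge 0$. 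Applying the same proposition to $-\tilde f$, which by linearity of $\mathcal L$ solves the same Cauchy problem with initial datum $-f(\delta)\le M$, yields $-\tilde f(s,x)\le M$. Combining, $\|f(t)\|_\infty=\|\tilde f(t-\delta)\|_\infty\le M$ for all $t\ge\delta$, which is the claim.

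The main obstacle is the dependence of the constant in (\ref{S4E168}) on the horizon $T$: a direct application at $t=\delta$ produces a constant $C(T,\theta)$ rather than the uniform $C(\theta)$ asked for. I would dispose of this by splitting into two regimes. When $\delta\le 1$, one applies (\ref{S4E168}) with the fixed horizon $T=1$, so that the constant $C(1,\theta)$ depends only on $\theta$. When $\delta>1$, one first invokes the case $\delta=1$ to control $\|f(1)\|_\infty$ by $C(1,\theta)(\|f_0\|_{L^\infty(1,\infty)}+\sup_{0<y<1}y^\theta|f_0(y)|+\|f_0\|_1)$, then propagates the bound to all $t\ge 1\ge\delta-\delta+1$ by the same maximum-principle argument above; the resulting constant no longer depends on $\delta$. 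A small technical check is that the shifted solution $\tilde f$ really agrees with the one produced by Theorem~\ref{S6WeakSolL2} from the datum $f(\delta)$, which follows from the mild formulation (\ref{S3EFPf}) and the uniqueness of the fixed point in $Z_T$.
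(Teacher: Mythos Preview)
Your proof is correct and follows essentially the same route as the paper: bound $\|f(\delta)\|_\infty$ via (\ref{S4E168}), then propagate for all $t\ge\delta$ using the comparison principle (the paper invokes Corollary~\ref{S4CBorn1}, which is equivalent to your application of Proposition~\ref{S5P56} to $\pm\tilde f$). Your additional splitting into the cases $\delta\le 1$ and $\delta>1$ to eliminate the $T$-dependence of the constant in (\ref{S4E168}) is a point the paper glosses over, and your treatment is in fact more careful here.
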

\begin{proof}
By  Theorem \ref{S6WeakSolL2}, for all$\delta >0$, $f(\delta )\in L^\infty(0, \infty)\cap L^1(0, \infty)$ and,
\begin{align*}
||f(\delta )||_\infty\le C(\theta)\Big(||f_0|| _{ L^\infty (1, \infty) }+ \delta ^{-\theta} \sup _{ 0\le y\le 1} y^\theta |f_0(y)|+||f_0||_1\Big).
\end{align*}
Since the constant $||f(\delta )||_\infty$ is a solution of (\ref{S4E125}), the result  follows by  Corollary \ref{S4CBorn1}.
\end{proof}

Our next result concerns the long time behavior of  the solution $f$.
The $L^2(d\mu )$ norm and $D(f(\tau ))$ play here the usual roles of entropy and entropy's dissipation through the identity (cf. (\ref{S5EDEp})),
\begin{equation}
\label{S5E1}
\frac {d} {dt}\int _0^\infty|f(t, x)|^2d\mu (x)=-D(f(\tau ))
\end{equation}
Consider the convex, proper lower semi continuous function
\begin{align*}
j(u, v)=
\begin{cases}
|u-v|^2,\,\,\hbox{if}\,\,u>0, v>0,\,\\
0,\,\,\hbox{if}\,\,u\le 0, v\le 0\\
\infty\,\,\,\hbox{elsewhere}
\end{cases}
\end{align*}
 and define the following regularized kernel for all $n\in \NN\setminus\{0\}$,
\begin{equation*}
W_n(x, x')=\left(\frac {\sinh (x^2+y^2)-\sinh|x^2-y^2 |} {\sinh (x^2+y^2)\sinh|x^2-y^2 | +\frac {1} {n}}\right) \frac {1} {xy (\sinh x^2)\,(\sinh y^2)}
\end{equation*}
so that,  $d\sigma _n(x, x')=W_n(x, x') x^4x'^4dx'dx$  is now a bounded measure on $\RR^2$.  Consider then for any pair of functions $f, g$ defined on $(0, \infty)$ the function $U$ defined on $(0, \infty)^2$ as,
\begin{equation*}
U(x, y)=(f(x), g(y))\in \RR^2
\end{equation*}
and denote,
\begin{align*}
&J_n(U)=
\begin{cases}
\displaystyle {\,\,\,\iint\limits  _{ (0, \infty)^2 } j(f(x), g(x')) d\sigma _n(x'x),\,\,\,\hbox{if}\,\, j(f, g')\in L^1(d\sigma _n)}\\
\infty,\,\,\hbox{elsewhere.}
\end{cases}
\end{align*}
It follows that $J_n$ is convex and l.s.c. on $L^2(d\mu )$.

\begin{prop}
\label{SeEC2f}
Let $f_0$ and $f$ be as in Theorem \ref{S6WeakSolL2}. Then, for all $\varphi \in L^2(d\mu)$,
\begin{equation*}
\lim _{ \tau \to  \infty } \int _0^\infty f(\tau, x )\varphi (x)d\mu (x)=\frac {E(0)} {\int _0^\infty d\mu (x)}\int _0^\infty \varphi (x)d\mu (x)
\end{equation*}
\end{prop}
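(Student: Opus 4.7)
The plan is to combine the dissipation identity (\ref{S5E1}) with the conservation of $\int f\,d\mu$ from Corollary \ref{SeECf} and weak compactness in $L^2(d\mu)$. Integrating (\ref{S5E1}) over $(0,\infty)$ yields
\[
\int_0^\infty D(f(\tau))\,d\tau \le ||f_0||_{L^2(d\mu)}^2 < \infty,
\]
so there exists $\tau_n \to \infty$ along which $D(f(\tau_n)) \to 0$. By Proposition \ref{S4Ep} with $p=2$, $||f(\tau)||_{L^2(d\mu)}$ is non-increasing; hence $\{f(\tau)\}_{\tau>0}$ is bounded in $L^2(d\mu)$, and after a further extraction one may assume $f(\tau_n) \rightharpoonup f_\infty$ weakly in $L^2(d\mu)$.

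The crux is to show every such $f_\infty$ is constant and equal to $C_* := E(0)/E_0$, where $E_0 = \int d\mu$. To pass to the limit in the singular dissipation $D$, I would use the regularized kernels $W_n$ (bounded and satisfying $W_n \uparrow W$) and the convex, lower semicontinuous functionals $J_n$ introduced above, evaluated at $U=(g,g)$. Convex strong-lsc of $J_n$ on $L^2(d\mu) \times L^2(d\mu)$ upgrades to weak-lsc by Mazur's theorem; combined with $d\sigma_n \le W(x,x')\,x^4x'^4\,dx\,dx'$ this gives
\[
J_n(f_\infty, f_\infty) \le \liminf_{k\to\infty} J_n(f(\tau_k), f(\tau_k)) \le \liminf_{k\to\infty} D(f(\tau_k)) = 0.
\]
Letting $n \to \infty$ via monotone convergence leads to
\[
\iint_{(0,\infty)^2} W(x,x')\,|f_\infty(x) - f_\infty(x')|^2\, x^4 x'^4\,dx\,dx' = 0,
\]
and since $W(x,x') > 0$ whenever $x \ne x' > 0$, $f_\infty$ must be constant $d\mu$-a.e. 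Testing the constant function $\1 \in L^2(d\mu)$ against $f(\tau_n) \rightharpoonup f_\infty$, Corollary \ref{SeECf} identifies the constant via $f_\infty \cdot E_0 = E(0)$, so $f_\infty = C_*$.

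To obtain convergence along the full family $\tau \to \infty$, I would use a standard subsequence argument: every sequence $\tau_n \to \infty$ admits a further subsequence along which both $D(f(\tau)) \to 0$ (since $D(f) \in L^1(0,\infty)$) and $f(\tau)$ converges weakly in $L^2(d\mu)$, and every such weak limit coincides with $C_*$ by the previous step. Hence $f(\tau) \rightharpoonup C_*$ weakly in $L^2(d\mu)$, which yields the stated convergence against every $\varphi \in L^2(d\mu)$.

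The main obstacle is the weak lower semicontinuity of the dissipation: the non-integrable singularity of $W$ on the diagonal forbids a direct Fatou-type argument on $D$ itself, which is precisely why the regularized pair $(W_n, J_n)$ was defined. A secondary technical subtlety is the sign constraint built into the convex function $j$: $J_n(g,g)$ is finite only for sign-definite $g$. To handle possibly sign-changing $f_\infty$, I would apply the argument to the shifts $f(\tau)-c$ (which obey the same dissipation estimate because $\mathcal{L}(c)\equiv 0$) for appropriate constants $c$, or equivalently use the unconstrained convex l.s.c. quadratic form $(u,v) \mapsto |u-v|^2$ inside the regularized integral to reach the same conclusion.
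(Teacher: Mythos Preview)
Your argument parallels the paper's in using the regularised dissipation functionals $J_n$ and weak lower semicontinuity to force any weak cluster point to be constant, and in identifying that constant via the conservation law of Corollary~\ref{SeECf}. The weak--lsc step itself is sound (and your remark that one may replace the sign-constrained $j$ by the unconstrained quadratic $(u,v)\mapsto|u-v|^2$ is the right fix for the secondary issue you flag).

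The genuine gap is in your last paragraph. You assert that ``every sequence $\tau_n\to\infty$ admits a further subsequence along which $D(f(\tau))\to 0$, since $D(f)\in L^1(0,\infty)$''. This does not follow from integrability: a nonnegative $L^1$ function can be large along every point of a prescribed sequence (e.g.\ $D(\tau)=\sum_n n^2\,\1_{[n,\,n+n^{-4}]}$ with $\tau_n=n+\tfrac12 n^{-4}$). What $D\in L^1$ gives is only (i) the existence of \emph{some} sequence along which $D\to 0$, and (ii) $\int_{\tau}^{\tau+T}D(f(s))\,ds\to 0$ as $\tau\to\infty$ for every $T>0$. Neither suffices, by itself, to show that an \emph{arbitrary} weakly convergent subsequence $f(\tau_{n_k})\rightharpoonup f_\infty$ has $f_\infty=C_*$, so your ``standard subsequence argument'' does not close.

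The paper avoids this precisely by using the time-averaged information (ii). It works with the shifted family $f_k(\tau)=f(\tau+\tau_k)$ in $L^\infty((0,T);L^2(d\mu))$; weak$^*$ compactness there, together with lower semicontinuity of the time-integrated functional $\int_0^T J_n(\cdot)\,ds$ and $\int_0^T D(f_k(s))\,ds=\int_{\tau_k}^{\tau_k+T}D(f(s))\,ds\to 0$, forces every weak$^*$ cluster point to be the constant $C_*$. The upgrade from this averaged statement to pointwise-in-$\tau$ convergence then uses extra time regularity: for $\varphi\in C_c^1(0,\infty)$ the map $t\mapsto\int f(t)\varphi\,d\mu$ has bounded derivative (controlled by $\|\mathcal L(f)(t)\|_{L^\infty}$ on $\mathrm{supp}\,\varphi$ via (\ref{S4E125})), so the shifted scalar functions are equicontinuous and their limit inherits the averaged value. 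Your proof is missing this time-regularity input; it is the ingredient that bridges from ``$D\in L^1$'' to convergence along \emph{every} $\tau\to\infty$.
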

\begin{rem}
Notice that, since $d\mu $ is a non negative bounded measure on $(0, \infty)$, $L^2(d\mu )\subset L^1(d\mu )$. Corollary \ref{SeEC2f} shows the weak convergence,
\begin{align*}
n_0(1+n_0)x^6\,f(\tau )\underset{\tau \to \infty}{\rightharpoonup}  \frac {E(0)} {\int _0^\infty d\mu (x)},\,\,\hbox{weakly in}\,L^2(0, \infty).
\end{align*}
\end{rem}

\begin{proof}
Consider  any sequence $\{\tau _k\} _{ k\in \NN }$ where $\tau _k\to \infty$ as $k\to \infty$ and  define
\begin{equation}
\label{S5ETk}
f_k(\tau )=f(\tau +\tau _k).
\end{equation}
By (\ref{S5E1}), for all $T>0$,
\begin{align*}
\frac {1} {2}\int _0^TD(f(s))dt=-\int _0^\infty |f(T, x)|^2d\mu (x)+\int _0^\infty |f(0, x)|d\mu (x)\le\int _0^\infty |f(0, x)|^2d\mu (x)
\end{align*}
from where we deduce  $D(f(s))\in L^1(0, \infty)$ and
\begin{align*}
\int _0^T D(f_k(t))dt= \int  _{ t_k }^{t_k+T} |Df(t)|dt\to 0,\,\,\hbox{as}\,\,k\to \infty.
\end{align*}
Then,  for all $T>0$,
\begin{equation}
\label{S5EDE1}
\lim _{ k\to \infty }\int _0^T D_n(f_k(t))dt=0,\,\,\,\forall n.
\end{equation}
and since $D_n(f_k(t))\ge 0$ for all $t>0$,
\begin{equation}
\label{S5EDE2}
\lim _{ k\to \infty } D_n(f_k(t))=0,\,\,\,\hbox{for a. e. }t\in (0, T).
\end{equation}
On the other hand, by (\ref{S5E1}) again,  the sequence $\{f_k\} _{ k\in \NN }$ is bounded in $L^\infty ((0, \infty); L^2(d\mu ))$ and there exists a subsequence still denoted  $\{t_k\} _{ k\in \NN }$, such that $t_k\to \infty$ as $k\to \infty$  and $g\in L^\infty ((0, \infty); L^2(d\mu ))$ satisfying
\begin{equation}
f_k \underset{k\to \infty }{\rightharpoonup}  g,\,\,\,\hbox{in the weak}^*\hbox{ topology of }  \,\,L^\infty ((0, \infty); L^2(d\mu )).\label{S5EDE3}
\end{equation}
It then follows by the lower semicontinuity of $D_n$,
\begin{equation*}
\int _0^T D_n(g(s))ds\le \liminf _{ k\to \infty }\int _0^T D_n(f_k(s))ds=0.
\end{equation*}
Then,
\begin{equation*}
D_n(g(s)=\int _0^\infty \int _0^\infty  W_n(x, x')|g(s , x')-g(s , x)|^2x'^4x^4dx'dx=0\\
\end{equation*}
and, for almost every  $s \in (0, T)$ the measure $|g(s , x')-g(s , x)|^2x'^4x^4dx'dx$ is concentrated on the diagonal $\{(x, x')\in (0, \infty)^2;\,x'=x\}$. Since $g(s )\in L^2(d\mu )$ for almost every $s\in (0, T)$, it follows that $g=C_*$ for some constant $C_*\in \RR$.  Since $\1 _{ (0, 1) }\in L^1(0, \infty; L^2(d\mu ))$, it follows that:
\begin{align*}
C\int _0^\infty d\mu (x)&=\lim _{ k\to \infty } \int _0^1\int _0^\infty f_k(t, x)d\mu (x)dt\\
&=\lim  _{ k\to \infty  }\int _0^1\int _0^\infty f_k(t, x)n_0(x)(1+n_0(x)x^6dxdt\\
&=\lim  _{ k\to \infty  }\int _0^1 E(t+t_k)dt=E(0)
\end{align*}
and then,
\begin{equation*}
C_*=\frac {E(0)} {\int _0^\infty d\mu (x)}.
\end{equation*}

For all $\varphi \in C_c^1(0, \infty)$, the function:
\begin{equation*}
t\to \int _0^\infty f(t, x)\varphi (x)n_0(x)(1+n_0(x))x^6dx\in C(0, \infty; \RR)\cap L^\infty(0, \infty).
\end{equation*}
and,
\begin{align*}
&\left|\frac {d} {dt}\int _0^\infty f(t, x)\varphi (x)n_0(x)(1+n_0(x))x^6dx\right| \le \\
&\le
\int_0^\infty \left|\int_0^\infty W(x, y)(f(t, y)-f(t, x))y^4dy\right|\varphi (x)dx\\
&\le ||\mathcal L(f)(t)|| _{ \infty }||\varphi ||_1\in L^\infty _{ loc }(0, \infty).
\end{align*}
There exists then a sequence of $\{t_j\} _{ k\in \NN }$,  and $C(\varphi )$ such that,
\begin{align*}
\lim _{ j\to \infty }\int _0^\infty f(t_j)\varphi (x)d\mu (x)=C(\varphi ).
\end{align*}
Since $\1 _{ (0, 1) }(t)\varphi (x) \in L^1(0, \infty; L^2(d\mu ))$, by  (\ref{S5EDE3})
\begin{align*}
\lim _{ j\to \infty }\int _0^1\int _0^\infty f(t_j)\varphi (x)d\mu (x)dt=C_*\int _0^1 \int _0^\infty \varphi (x)d\mu (x)dt=C_*\int _0^\infty \varphi (x)d\mu
\end{align*}
and 
\begin{equation*}
C(\varphi )=C_*\int _0^\infty \varphi (x)d\mu.
\end{equation*}
\end{proof}

The following auxiliary result is used in the proof of the next Proposition.
\begin{lem}
\label{SeElem1}
For all $\varepsilon >0$, there exists a constant $C_\varepsilon >0$ such that
\begin{align*}
&\forall (x, y)\in \left(\varepsilon , \frac {1} {\varepsilon }\right)\times \left(\varepsilon , \frac {1} {\varepsilon }\right),\\
&W(x, y)\ge  \left(\frac{C _{\varepsilon  }} {\sinh (x^2+y^2)\sinh|x^2-y^2 | }\right) \frac {1} {xy (\sinh x^2)\,(\sinh y^2)}
\end{align*}
\end{lem}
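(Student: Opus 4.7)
The statement amounts to a uniform positive lower bound on the numerator difference
\[
\sinh(x^2+y^2) - \sinh|x^2-y^2|
\]
over the compact box $(\varepsilon,1/\varepsilon)^2$. Indeed, writing the bracket in the definition of $W$ over a common denominator gives
\[
W(x,y) = \frac{\sinh(x^2+y^2)-\sinh|x^2-y^2|}{\sinh(x^2+y^2)\sinh|x^2-y^2|}\cdot\frac{1}{xy\sinh x^2\sinh y^2},
\]
so the claim reduces to showing $\sinh(x^2+y^2)-\sinh|x^2-y^2|\ge C_\varepsilon$ for $(x,y)\in(\varepsilon,1/\varepsilon)^2$.

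The plan is to use the standard identity $\sinh A - \sinh B = 2\cosh\!\bigl(\tfrac{A+B}{2}\bigr)\sinh\!\bigl(\tfrac{A-B}{2}\bigr)$ with $A=x^2+y^2$ and $B=|x^2-y^2|$. One checks that $(A+B)/2 = \max(x,y)^2$ and $(A-B)/2=\min(x,y)^2$, so
\[
\sinh(x^2+y^2)-\sinh|x^2-y^2| \;=\; 2\cosh\!\bigl(\max(x,y)^2\bigr)\,\sinh\!\bigl(\min(x,y)^2\bigr).
\]
On $(\varepsilon,1/\varepsilon)^2$ one has $\min(x,y)\ge\varepsilon$ and $\cosh(\max(x,y)^2)\ge 1$, so the right-hand side is bounded below by $2\sinh(\varepsilon^2)>0$. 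Taking $C_\varepsilon = 2\sinh(\varepsilon^2)$ and inserting this into the displayed identity for $W(x,y)$ above yields the claimed inequality.

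There is no serious obstacle here: the only substantive step is spotting the sinh-difference identity and observing that with $A=x^2+y^2$, $B=|x^2-y^2|$ the symmetric quantities $(A\pm B)/2$ simplify cleanly to $\max(x,y)^2$ and $\min(x,y)^2$, after which positivity on the compact box is immediate. Note that the upper cutoff $1/\varepsilon$ plays no role in the bound (the factor $\cosh(\max(x,y)^2)$ is only used through $\cosh\ge 1$); the constraint $x,y<1/\varepsilon$ is presumably retained because the Lemma will later be combined with factors like $1/(xy\sinh x^2\sinh y^2)$ that also need to be controlled.
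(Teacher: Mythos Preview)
Your proof is correct and follows essentially the same route as the paper: both reduce the claim to a uniform positive lower bound on $\sinh(x^2+y^2)-\sinh|x^2-y^2|$ over the compact box, using that $\min(x,y)^2\ge\varepsilon^2$ there. The paper simply notes $x^2+y^2-|x^2-y^2|\ge 2\varepsilon^2$ and appeals to continuity/compactness to get a positive minimum, whereas your use of the identity $\sinh A-\sinh B=2\cosh\bigl(\tfrac{A+B}{2}\bigr)\sinh\bigl(\tfrac{A-B}{2}\bigr)$ is a cleaner execution that even yields the explicit constant $C_\varepsilon=2\sinh(\varepsilon^2)$.
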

\begin{proof}
There certainly exists $C _{1, \varepsilon  } >0$ such that
\begin{align*}
x^2+y^2-|x^2-y^2|\ge C' _{ \varepsilon  },\,\,\,\forall (x, y)\in \left(\varepsilon , \frac {1} {\varepsilon }\right)\times \left(\varepsilon , \frac {1} {\varepsilon }\right)
\end{align*}
Then, by continuity,
\begin{align*}
\sinh (x^2+y^2)-\sinh|x^2-y^2 |\ge C _{ \varepsilon  },\,\, \forall (x, y)\in \left(\varepsilon , \frac {1} {\varepsilon }\right)\times \left(\varepsilon , \frac {1} {\varepsilon }\right)
\end{align*}
and the result follows.
\end{proof}

\begin{prop}
\label{SeEC2f2}{SeEC2f3}
Let $f_0$ and $f$ be as in Theorem \ref{S6WeakSolL2}. Then,
\begin{equation*}
\lim _{ \tau \to  \infty } \int _0^\infty |f(t, x)-C^*|^2d\mu (x)=0
\end{equation*}
\end{prop}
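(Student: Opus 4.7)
The plan is to reduce strong $L^2(d\mu)$ convergence to the convergence of the single scalar $\|f(\tau)\|_{L^2(d\mu)}^2$, and then extract that norm convergence from the dissipation together with the weak convergence already provided by Proposition \ref{SeEC2f}. By Corollary \ref{SeECf} and the definition of $C_*$ one has $\int_0^\infty f(\tau,x)\,d\mu(x)=E(0)=C_*\int_0^\infty d\mu(x)$ for every $\tau\ge 0$, and expanding the square yields
\begin{equation*}
\int_0^\infty |f(\tau,x)-C_*|^2\,d\mu(x)=\int_0^\infty |f(\tau,x)|^2\,d\mu(x)-C_*^2\int_0^\infty d\mu(x).
\end{equation*}
By Proposition \ref{S4Ep} with $p=2$ the right-hand side is non-increasing in $\tau$, hence admits a limit $L\ge 0$. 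It therefore suffices to exhibit a single sequence $\tau_k\to\infty$ with $f(\tau_k)\to C_*$ in $L^2(d\mu)$: monotonicity will then force $L=0$.

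Integrating \eqref{S5E1} on $(0,\infty)$ gives $\int_0^\infty D(f(s))\,ds\le \|f_0\|_{L^2(d\mu)}^2<\infty$, so one can choose $\tau_k\to\infty$ with $D(f(\tau_k))\to 0$. Fix $\varepsilon\in(0,1)$. By Lemma \ref{SeElem1} there is $c_\varepsilon>0$ with $W(x,y)\ge c_\varepsilon$ on $(\varepsilon,1/\varepsilon)^2$, whence
\begin{equation*}
D(f(\tau_k))\ge c_\varepsilon \iint_{(\varepsilon,1/\varepsilon)^2} |f(\tau_k,y)-f(\tau_k,x)|^2 x^4 y^4\,dx\,dy,
\end{equation*}
and the right-hand side is (a positive multiple of) the variance of $f(\tau_k)$ on $(\varepsilon,1/\varepsilon)$ with respect to $x^4\,dx$. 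Writing $\bar f_{k,\varepsilon}$ for the corresponding weighted mean, this forces $f(\tau_k)-\bar f_{k,\varepsilon}\to 0$ in $L^2((\varepsilon,1/\varepsilon),x^4\,dx)$. Now $\varphi_\varepsilon(x)=\1_{(\varepsilon,1/\varepsilon)}(x)/(n_0(x^2)(1+n_0(x^2))x^2)$ is bounded and compactly supported in $(0,\infty)$, hence lies in $L^2(d\mu)$; applying Proposition \ref{SeEC2f} to $\varphi_\varepsilon$ identifies $\bar f_{k,\varepsilon}\to C_*$. Since on $(\varepsilon,1/\varepsilon)$ the density of $d\mu$ is bounded above and below by positive constants, this yields $f(\tau_k)\to C_*$ in $L^2(d\mu)$ restricted to $(\varepsilon,1/\varepsilon)$.

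To pass from compacta to all of $(0,\infty)$, Proposition \ref{S4Ep} together with Lemma \ref{S5Etheta}(i) gives a uniform bound $|f(\tau_k,x)|\le M$ for a.e.\ $x$, while the finite measure $d\mu$ satisfies $\mu((0,\varepsilon)\cup(1/\varepsilon,\infty))\to 0$ as $\varepsilon\to 0$; the tail contribution to $\|f(\tau_k)-C_*\|_{L^2(d\mu)}^2$ is thus bounded by $(M+|C_*|)^2\mu((0,\varepsilon)\cup(1/\varepsilon,\infty))$ uniformly in $k$, and a standard $\varepsilon/2$ argument closes the proof. The main obstacle is the middle step: upgrading the single scalar bound $D(f(\tau_k))\to 0$ to genuine strong $L^2$ convergence on compact subsets of $(0,\infty)$. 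This rests on two ingredients, namely the strict positivity of $W$ on compacta in $(0,\infty)^2$ provided by Lemma \ref{SeElem1}, which turns the dissipation into a Poincar\'e-type variance control, and Proposition \ref{SeEC2f}, which is needed to identify the resulting weighted averages with the constant $C_*$.
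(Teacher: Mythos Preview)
Your proof is correct and takes a cleaner route than the paper's. The decisive simplification is your opening reduction: since $\int f(\tau)\,d\mu = C_*\int d\mu$ for all $\tau$ by Corollary~\ref{SeECf}, the quantity $\|f(\tau)-C_*\|_{L^2(d\mu)}^2$ equals $\|f(\tau)\|_{L^2(d\mu)}^2-C_*^2\int d\mu$ and is therefore nonincreasing by Proposition~\ref{S4Ep}; hence it suffices to exhibit a single sequence $\tau_k\to\infty$ along which it vanishes. You then select $\tau_k$ with $D(f(\tau_k))\to 0$, convert this via Lemma~\ref{SeElem1} into a Poincar\'e-type variance bound on each compact interval $(\varepsilon,1/\varepsilon)$, identify the floating mean with $C_*$ through Proposition~\ref{SeEC2f} applied to $\varphi_\varepsilon$, and control the tails by the uniform pointwise bound on $f(\tau_k)$ together with $\mu\big((0,\varepsilon)\cup(1/\varepsilon,\infty)\big)\to 0$. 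The paper's argument is heavier: it works with the shifted family $f_k(\tau)=f(\tau+\tau_k)$ and a time-integrated dissipation, proves an auxiliary compactness lemma (Lemma~\ref{S5LP1}, via $W^{1,1}(0,T)\hookrightarrow L^1(0,T)$) to upgrade the weak convergence of Proposition~\ref{SeEC2f} to $L^1$-in-time convergence of the scalars $t\mapsto\int f_k(t)\varphi\,d\mu$, and uses this to kill cross terms in an expansion of $\int\theta_\varepsilon|f_k-C_*|^2\,d\mu$. Your monotonicity shortcut bypasses all of that time-averaging. One minor point on the tail step: for the uniform bound $|f(\tau_k,x)|\le M$, Corollary~\ref{S4CBorn2} (equivalently estimate~\eqref{S4E168}) is a more robust citation than Proposition~\ref{S4Ep} combined with Lemma~\ref{S5Etheta}(i), since the latter route requires $\|f_0\|_{L^\infty(d\mu)}<\infty$, which is delicate when the hypotheses allow $f_0(x)\sim x^{-\theta}$ near the origin; Corollary~\ref{S4CBorn2} gives $\|f(\tau)\|_\infty\le C$ directly for all $\tau\ge 1$, which is all you need since $\tau_k\to\infty$.
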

\begin{proof} 
It follows from Lemma \ref{SeElem1}, for all $\varepsilon >0$, for all  $x\in (\varepsilon /2, 2/\varepsilon )$ and $y\in (\varepsilon /2, 2/\varepsilon )$
\begin{align*}
x^4 y^4W_n(x, y)\ge  \left(\frac{C _{\varepsilon  }} {\sinh (x^2+y^2)\sinh|x^2-y^2 |+1/n }\right) \frac {x^3y^3} { (\sinh x^2)\,(\sinh y^2)}\\
=\frac {x^6} {(\sinh x^2)^2}\left\{
\left(\frac{C _{\varepsilon  }} {\sinh (x^2+y^2)\sinh|x^2-y^2 |+1/n }\right) \frac {y^3(\sinh x^2)} { x^3\,(\sinh y^2)}\right\}\\
\ge \frac {x^6} {(\sinh x^2)^2}\left\{
\left(\frac{C _{\varepsilon  }} {\sinh (x^2+y^2)^2+1/n }\right) \frac {y^3(\sinh x^2)} { x^3\,(\sinh y^2)}\right\}\\
\ge \frac {x^6} {(\sinh x^2)^2}\left\{
\left(\frac{C _{\varepsilon  }} {\sinh (2\varepsilon ^{-2})^2+1/n }\right) \frac {\varepsilon ^6(\sinh \varepsilon ^2)} {(\sinh \varepsilon ^{-2})}\right\}
=\gamma  _{ \varepsilon , n } \frac {x^6} {(\sinh x^2)^2}
\end{align*}
Let then be $\theta_\varepsilon \in C_0(0, \infty)$, $\theta_\varepsilon (x)=1$ for $x\in (\varepsilon , 1/\varepsilon )$ and $\theta_\varepsilon (x)=0$ if $x\in (0, \varepsilon /2)$ or $x>2/\varepsilon )$.   Consider   $\{f_k\} _{ k\in \NN }$ the sequence constructed in  (\ref{S5ETk}).

\begin{align}
\gamma  _{ \varepsilon , n }&\int _0^T\int _0^\infty \theta_\varepsilon (x) n_0(1+n_0)x^6 |f_k(t, x)-C_*|^2dx\le \nonumber\\
&\le \int _0^T\int _0^\infty \int _0^\infty \theta_\varepsilon (x) \theta_\varepsilon (y)W_n(x, y)\left( |f_k(t, x)-C_*|^2+ |f_k(t, y)-C_*|^2\right)x^4y^4dxdydt \nonumber\\
&\le \int _0^TD(f_k(t))dt+2\int _0^\infty \int _0^\infty \theta_\varepsilon (x) \theta_\varepsilon (y) W_n(x, y)\times \nonumber \\
&\times \left(f_k(t, x)f_k(t, y)+C_*^2-C_*f_k(t, x)-C_*f_k(t, y)\right)x^4y^4dxdydt \label{S5EC2f21}
\end{align}
In order to prove that the last term in the right hand side tends to zero  the following Lemma is needed,
\begin{lem}
\label{S5LP1}
For all $\varphi \in C_c(0, \infty)$ and all $T>0$,
\begin{align*}
\lim _{ k \to \infty } \int _0^T \left|\int _0^\infty f_k(t, x)\varphi (x) d\mu (x)-C_* \int _0^\infty\varphi (x) d\mu (x) \right|dt=0
\end{align*}
\end{lem}
\begin{proof}
For all $\varphi \in C_0(0, \infty)$,
\begin{align*}
\int_0^\infty \int_0^\infty \mathcal U(x, x')(f'-f)\varphi x^4x'^4dx'dx=-\int_0^\infty \int_0^\infty \mathcal U(x, x')(f'-f)\varphi 'x^4x'^4dx'dx\\
=\frac {1} {2}\int_0^\infty \int_0^\infty \mathcal U(x, x')(f'-f)(\varphi -\varphi') x^4x'^4dx'dx=\\
=\frac {1} {2}\int_0^\infty \int_0^\infty \mathcal U(x, x')f'(\varphi -\varphi') x^4x'^4dx'dx-\frac {1} {2}\int_0^\infty \int_0^\infty \mathcal U(x, x')f(\varphi -\varphi') x^4x'^4dx'dx\\
=\frac {1} {2}\int_0^\infty \int_0^\infty \mathcal U(x, x')f'(\varphi -\varphi') x^4x'^4dx'dx-
\frac {1} {2}\int_0^\infty \int_0^\infty \mathcal U(x, x')f'(\varphi' -\varphi) x^4x'^4dx'dx\\
=\int_0^\infty \int_0^\infty \mathcal U(x, x')f'(\varphi -\varphi') x^4x'^4dx'dx
\end{align*}
\begin{align*}
I_k(t )=&\left|\frac {d} {dt}\int _0^\infty n_0(1+n_0)x^4f_k(t, x)\varphi (x)dx\right|\le \\
& \hskip 3cm \le \int_0^\infty \int_0^\infty \mathcal U(x, x') \left|\varphi (x')-\varphi (x) \right| |f_k(t, x)|x^4x'^4dx'dx
\end{align*}
If $\rho >0$ and $R>0$  are such that ${\rm supp } \varphi  \subset (\rho , R)$, then $\left|\varphi (y)-\varphi (x) \right|=0$ for $x\in (0, \rho )$ and $y\in (0, \rho )$. If on the other hand, $x\ge \rho $ or $y\ge \rho $,
\begin{align*}
\sinh|x^2-y^2|=\sinh \left((x+y)|x-y| \right)\ge \sinh (2\rho |x-y|)\\
\frac{|\varphi (y)-\varphi (x)|}{\sinh|x^2-y^2|}\le C_\varphi \frac {|\varphi (y)-\varphi (x)|} {\sinh (2\rho  |x-y|)}
\end{align*}
If $x>R$ and $y>R$,   then $\left|\varphi (y)-\varphi (x) \right|=0$ again and therefore
\begin{align*}
&I_k(t )\le\\
&\le \int_0^R \int_0^\infty \left(\frac {\sinh (x^2+y^2)-\sinh|x^2-y^2 |} {\sinh (x^2+y^2)}\right) \frac{|\varphi (y)-\varphi (x)|}{\sinh|x^2-y^2 |}
\frac { |f_k(t, x)|x^3x'^3dx'dx} { (\sinh x^2)\,(\sinh y^2)}+\\
&+ \int_0^\infty \int_0^R \left(\frac {\sinh (x^2+y^2)-\sinh|x^2-y^2 |} {\sinh (x^2+y^2)}\right) \frac{|\varphi (y)-\varphi (x)|}{\sinh|x^2-y^2 |}
\frac { |f_k(t, x)|x^3x'^3dx'dx} { (\sinh x^2)\,(\sinh y^2)}\\
&=I _{ k, 1 }(t)+I _{ k,2 }(t)
\end{align*}
The term $I _{ k, 1 }$ is easily estimated as follows,
\begin{align*}
&I _{ k,1 }(t )\le\\
&=\int_0^R \int_0^\infty \left(\frac {\sinh (x^2+y^2)-\sinh|x^2-y^2 |} {\sinh (x^2+y^2)}\right) \frac{|\varphi (y)-\varphi (x)|}{\sinh|x^2-y^2 |}
\frac { |f_k(t, x)|x^3x'^3dx'dx} { (\sinh x^2)\,(\sinh y^2)}\\
&\le C_\varphi \int_0^R \int_0^\infty \frac { |f_k(t, x)|x^3dx} { \sinh x^2}  \frac {y^3dy} {\sinh y^2} 
\end{align*}
Using H\"older's inequality and   Proposition \ref{S4Ep}
\begin{align*}
I _{ k,1 }(t)&\le C_\varphi \sqrt R  \left(\int _0^R \frac {|f(t, x)|^2x^6dx} {(\sinh x^2)^2}\right)^{1/2}\int_0^\infty\frac {y^3dy}{\sinh y^2}\\
&\le C'_\varphi  \left(\int _0^\infty \frac {|f_0(x)|^2x^6dx} {(\sinh x^2)^2}\right)^{1/2}.
\end{align*}
The integral $I _{ k, 2 }$ may be split again as follows,
\begin{align*}
I _{ k, 2 }=I _{ k, 2 , 1}+I _{ k, 2 , 2},
\end{align*}
where,
\begin{align*}
I _{ k, 2 , 1}\le \int_0^{2R} \int_0^R  \frac{|\varphi (y)-\varphi (x)|}{\sinh|x^2-y^2 |}
\frac { |f_k(t, x)|x^3x'^3dx'dx} { (\sinh x^2)\,(\sinh y^2)}
\end{align*}
and $I _{ k, 2 , 1}$ is then estimated as $I _{ k,1 }$.   The estimate of the term $I _{ k, 2 , 2}$ uses that when $y\le R<x/2$ then $\sinh |x^2-y^2|\ge \sinh (3x^2/4)$ as follows
\begin{align*}
I _{ k, 2 , 1}\le \int_{2R} ^\infty \int_0^R\frac{|\varphi (y)-\varphi (x)|}{\sinh|x^2-y^2 |}
\frac { |f_k(t, x)|x^3x'^3dx'dx} { (\sinh x^2)\,(\sinh y^2)}\\
\le C_\varphi \int_{2R} ^\infty \int_0^R  \frac { |f_k(t, x)|x^3x'^3dx'dx} {\sinh(3x^2/4) (\sinh x^2)\,(\sinh y^2)}
\end{align*}
and using Holder's inequality again,
\begin{align*}
I _{ k, 2 , 1}
\le &C_\varphi   \left(\int _{2R} ^\infty \frac {|f_k(t, x)|^2x^6dx} {(\sinh x^2)^2}\right)^{1/2}\left(\int _{2R} ^\infty \frac {dx} {(\sinh (3x^2/4)^2}\right)^{1/2} \int_0^R  \frac {y^3dy} {\sinh y^2}
\end{align*}
We deduce, that 
\begin{align*}
\int _0^T\left|\frac {d} {dt}\int _0^\infty f_k(t, x)\varphi (x)d\mu (x) \right|dt\le C' _{ \varphi  } T \left(\int _0^\infty \frac {|f_0(x)|^2x^6dx} {(\sinh x^2)^2}\right)^{1/2}.
\end{align*}
We have then, for all $T>0$,
\begin{align*}
\int _0^\infty f_k(t, x)\varphi (x)d\mu (x) \in W^{1, 1}(0, T),\,\,\,
\end{align*}
and by compactness of the injection $W^{1, 1}(0, T)\subset L^1(0, T)$, there exists a sequence $t_j \underset{j\to \infty }{\to}~\infty$ and $h\in L^1(0, T)$ satisfying
\begin{align*}
\lim _{ k\to \infty }\int _0^T\left|\int _0^\infty f_k(t, x)\varphi (x)d\mu (x) -h(t)\right|dt=0.
\end{align*}
But, since by Corollary \ref{SeEC2f},
\begin{align*}
\lim _{ k\to \infty }\int _0^T \int _0^\infty f_k(t, x)\varphi (x)d\mu (x)dt=C_* T\int _0^\infty \varphi (x)d\mu (x) 
\end{align*}
we deduce that,
\begin{align*}
\int _0^Th(t)dt=C_* T\int _0^\infty \varphi (x)d\mu (x) ,\,\,\forall T>0
\end{align*}
and by the fundamental Theorem of Calculus,
\begin{align*}
C_* T\int _0^\infty \varphi (x)d\mu (x) =\frac {d} {dT}\int _0^Th(t)dt=h(T).
\end{align*}
\vskip -0.6cm
\end{proof}
The second term in the right hand side of (\ref{S5EC2f21}) may be split  as follows
\begin{align*}
2&\int _0^\infty  \int _0^\infty \theta_\varepsilon (x) \theta_\varepsilon (y)\mathcal U_n(x, y)\times \nonumber \\
&\times \left(f_k(t, x)f_k(t, y)+C_*^2-C_*f_k(t, x)-C_*f_k(t, y)\right)x^4y^4dxdydt =A_1+A_2\\
A_1&=\int _0^T  \int _0^\infty  \int _0^\infty \theta_\varepsilon (x) \theta_\varepsilon (y)\mathcal U_n(x, y)
\frac {f_k(t, x)(f_k(t, y)-C_*)x^4y^4dxdydt} {xy (\sinh x^2)\,(\sinh y^2)}\\
A_2&=\int _0^T  \int _0^\infty  \int _0^\infty \theta_\varepsilon (x) \theta_\varepsilon (y)\mathcal U_n(x, y)
\frac {C_*(C_*-f_k(t, y))x^4y^4dxdydt} {xy (\sinh x^2)\,(\sinh y^2)}
\end{align*}
The first term may be written,
\begin{align*}
A_1=&\int _0^T\int_0^\infty  \theta_\varepsilon (y)\frac {y^3(f_k(t, y)-C_*)dy} {\sinh y^2}\times \\
&\times \int _0^\infty \theta_\varepsilon (x) \frac {f_k(t, x) x^3} {(\sinh x^2)}\left(\frac {\sinh (x^2+y^2)-\sinh|x^2-y^2 |} {\sinh (x^2+y^2)(\sinh|x^2-y^2 | +1/n)}\right)dxdt\\
\end{align*}
Since,
\begin{align*}
\int _0^\infty \theta_\varepsilon (x)& \frac {|f_k(t, x)| x^3} {(\sinh x^2)}\left(\frac {\sinh (x^2+y^2)-\sinh|x^2-y^2 |} {\sinh (x^2+y^2)(\sinh|x^2-y^2 | +1/n)}\right)dx\le \\
&\le n\int _0^\infty \theta_\varepsilon (x) \frac {|f_k(t, x)| x^3} {\sinh x^2}\\
&\le \frac {n} {\varepsilon ^2}\left(\int _0^\infty |f_k(t, x)|^2d\mu   \right)^{1/2}\le \frac {n} {\varepsilon ^2}\left(\int _0^\infty |f_0(x)|^2d\mu   \right)^{1/2}
\end{align*}
we deduce,
\begin{align*}
|A_1|&\le \frac {n} {\varepsilon ^2}||f_0|| _{ L^2(d\mu ) }\int _0^T \left|\int_0^\infty \theta_\varepsilon (y) \frac {y^3(f_k(t, y)-C_*)dy} {\sinh y^2}\right|dt\\
&= \frac {n} {\varepsilon ^2}||f_0|| _{ L^2(d\mu ) }\int _0^T \left|\int_0^\infty  (f_k(t, y)-C_*) \theta_\varepsilon (y)\frac {\sinh y^2}{y^3}d\mu (y) \right|dt
\underset{k\to \infty }{\to} 0.
\end{align*}
The same argument shows that $A_2$ tends to zero too  as $k$ goes to $\infty$ and this shows that, for all $\varepsilon >0$ fixed,
\begin{align}
\lim _{ k\to \infty }\int _0^T\int _0^\infty \theta_\varepsilon (x) n_0(1+n_0)x^6 |f_k(t, x)-C_*|^2dx=0. \label{S5EC2f22}
\end{align}
On the other hand, for $\varepsilon >0$,
\begin{align}
\int _0^T\int _0^\varepsilon  n_0(1+n_0)x^6 |f_k(t, x)-C_*|^2dx=\int _0^T\int _0^\varepsilon   |f_k(t, x)-C_*|^2d\mu (x) \nonumber\\
\le  T  ||f_k(t)-C_*|| _{ L^2(d\mu ) }\int _0^\varepsilon d\mu (x)\le T \left(C_*+ ||f_0|| _{ L^2(d\mu ) }\right)\int _0^\varepsilon d\mu (x) \label{S5EC2f23}
\end{align}
and similarly,
\begin{align}
\int _0^T\int _{1/\varepsilon} ^\infty n_0(1+n_0)x^6 |f_k(t, x)-C_*|^2dx
\le  T \left(C_*+ ||f_0|| _{ L^2(d\mu ) }\right)\int _{1/\varepsilon}^\infty d\mu (x) \label{S5EC2f24}
\end{align}
We deduce  from (\ref{S5EC2f22})-(\ref{S5EC2f24}),
\begin{align*}
\lim _{ k\to \infty }\int _0^T \int _0^\infty |f(t, x)-C^*|^2d\mu (x)dt=0.
\end{align*}
Since by Proposition \ref{S4Ep}
\begin{equation*}
\int _0^\infty |f(t, x)-C^*|^2d\mu (x)\ge \int _0^\infty |f(T, x)-C^*|^2d\mu (x),\,\,\forall t\in (0, T),
\end{equation*}
and Corollary \ref{SeEC2f2} follows since the right hand sides of  (\ref{S5EC2f23}) and (\ref{S5EC2f24}) may be done arbitrarily small for $\varepsilon >0$ small enough.
\end{proof}

\begin{cor}
\label{SeEC2f3}
Let $f_0$ and $f$ be as in Theorem \ref{S6WeakSolL2}. Then,
\begin{equation*}
\lim _{ \tau \to  \infty } \int _0^\infty |f(\tau , x)-C^*|x^4n_0(x)(1+n_0(x))dx=0
\end{equation*}
\end{cor}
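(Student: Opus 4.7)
The plan is to deduce the weighted $L^1$ convergence from the weighted $L^2$ convergence in Proposition \ref{SeEC2f2} via the Cauchy--Schwarz inequality. The natural choice of Cauchy--Schwarz,
$$\int_0^\infty |f(\tau,x) - C^*|\,x^4 n_0(1+n_0)\,dx \le \|f(\tau)-C^*\|_{L^2(d\mu)}\,\Bigl(\int_0^\infty x^2 n_0(1+n_0)\,dx\Bigr)^{1/2},$$
does not work directly, because $x^2 n_0(x)(1+n_0(x)) \sim x^{-2}$ as $x \to 0^+$ fails to be integrable near the origin. This singularity is the only obstacle, and it will be removed by a truncation.

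Fix any $\delta > 0$. By Corollary \ref{S4CBorn2}, there exists a constant $M = M(\delta, f_0)$ such that $\|f(\tau)\|_\infty \le M$ for every $\tau \ge \delta$. For any $\varepsilon \in (0,1)$ I would split
$$\int_0^\infty |f(\tau,x) - C^*|\,x^4 n_0(1+n_0)\,dx = I_\varepsilon(\tau) + J_\varepsilon(\tau),$$
with $I_\varepsilon(\tau) = \int_0^\varepsilon$ and $J_\varepsilon(\tau) = \int_\varepsilon^\infty$. Since $x^4 n_0(x)(1+n_0(x))$ is bounded near zero (it tends to a positive constant as $x \to 0^+$, because $n_0(x) \sim x^{-2}$), the uniform $L^\infty$ bound yields
$$I_\varepsilon(\tau) \le (M + |C^*|) \int_0^\varepsilon x^4 n_0(1+n_0)\,dx \xrightarrow[\varepsilon \to 0^+]{} 0,$$
uniformly in $\tau \ge \delta$.

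For the outer piece, the Cauchy--Schwarz inequality with the measure $d\mu(x) = x^6 n_0(1+n_0)\,dx$ gives
$$J_\varepsilon(\tau) \le \Bigl(\int_\varepsilon^\infty |f(\tau,x) - C^*|^2\,d\mu(x)\Bigr)^{1/2}\Bigl(\int_\varepsilon^\infty x^2 n_0(1+n_0)\,dx\Bigr)^{1/2}.$$
The second factor is a finite constant depending only on $\varepsilon$ (the integrand is bounded on $(\varepsilon,\infty)$ and decays exponentially at infinity), and the first factor tends to zero as $\tau \to \infty$ by Proposition \ref{SeEC2f2}. Given any $\eta > 0$, I first choose $\varepsilon$ small enough that $I_\varepsilon(\tau) \le \eta/2$ for every $\tau \ge \delta$, then choose $\tau_0$ large enough that $J_\varepsilon(\tau) \le \eta/2$ for every $\tau \ge \tau_0$; this yields the claim. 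The only mildly delicate point is the singularity of the weight $x^2 n_0(1+n_0)$ near the origin, which is exactly what the truncation $(0,\varepsilon) \cup (\varepsilon, \infty)$ together with the uniform $L^\infty$ bound of Corollary \ref{S4CBorn2} is designed to overcome.
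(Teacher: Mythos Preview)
Your proof is correct. The outer piece $J_\varepsilon(\tau)$ is handled exactly as in the paper, via Cauchy--Schwarz against the $L^2(d\mu)$ convergence of Proposition \ref{SeEC2f2}. The difference lies in the treatment of the inner piece $I_\varepsilon(\tau)$: you invoke the uniform $L^\infty$ bound of Corollary \ref{S4CBorn2} (valid for $\tau\ge\delta$) together with the local integrability of $x^4 n_0(1+n_0)$ near the origin, whereas the paper instead applies H\"older's inequality with exponent $p>3$,
\[
\int_0^\varepsilon n_0(1+n_0)x^4|f(\tau,x)-C_*|\,dx \le \|f(\tau)\|_{L^p(d\mu)}\Bigl(\int_0^\varepsilon n_0(1+n_0)\,x^{\frac{4p-6}{p-1}}\,dx\Bigr)^{\frac{p-1}{p}},
\]
and then uses the monotonicity of $\|f(\tau)\|_{L^p(d\mu)}$ from Proposition \ref{S4Ep}. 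Your route is a bit more direct and uses a stronger pointwise input; the paper's route stays within the $L^p(d\mu)$ framework and would survive even without the global $L^\infty$ control. Both are perfectly valid.
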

\begin{proof}
Arguing as in Proposition \ref {S5EBN1}, for all $\varepsilon >0$ and $t>1$,
\begin{align*}
\int _0^\varepsilon n_0(1+n_0)x^4|f(t, x)-C_*|dx\le 
\left( \int _0^\varepsilon  n_0(1+n_0)x^6(|f(t, x)|^p+C_*^p)dx\right)^{\frac {1} {p}}\times \\
\times \left(\int _0^\varepsilon  n_0(1+n_0) x^{\left(4-\frac {6} {p} \right) \frac {p} {p-1}}dx \right)^{\frac {p-1} {p}}\\
\le \left(C+ ||f(1)|| _{ L^p(d\mu ) }\right)\left(\int _0^\varepsilon  n_0(1+n_0) x^{\left(\frac {4p} {p-1}-\frac {6} {p-1} \right) }dx \right)^{\frac {p-1} {p}}.
\end{align*}
For $p>3$, $n_0(1+n_0) x^{\left(\frac {4p} {p-1}-\frac {6} {p-1} \right)}\in L^\infty(0, \infty)$ and then
\begin{align*}
\lim _{ \varepsilon \to 0 }\int _0^\varepsilon n_0(1+n_0)x^4|f(t, x)-C_*|dx=0.
\end{align*}
On the other hand,
\begin{align*}
\int _\varepsilon^\infty  n_0(1+n_0)x^4|f(t, x)-C_*|dx&\le \left(\int _\varepsilon ^\infty x^6 n_0(x)(1+n_0(x)) |f(t, x)-C_*|^2dx \right)^{1/2}\times \\
&\hskip 4cm \times \left( \int _\varepsilon ^\infty n_0(1+n_0)x^2 dx\right)\\
&\le C_\varepsilon  \left(\int _0 ^\infty  |f(t, x)-C_*|^2d\mu (x) \right)^{1/2}
\end{align*}
and the Corollary \ref{SeEC2f3}  follows from Corollary \ref{SeEC2f2}.
\end{proof}

\subsection{The limit of  $f(t,x)$ as $x\to0$ for $t>0$.}
We show now the existence of the limit, 
\begin{equation}
b(t)=\lim _{ x\to 0 }f(t, x)\,\,\,\forall t>0,
\end{equation}
for all $f_0$ and  $f$ as in Theorem \ref{MainThm}, and describe its time evolution.  The  following property, proved in Proposition 1.3 of  \cite{m}, is used
\begin{align}
&\forall f_0\in L^1(0, \infty),\,\forall t>0,\,\,\,\,\lim _{ x \to 0 }S(t)f_0(x)=\ell(f_0; t) \in (-\infty, \infty) \label{S4L25E1}\\
&\ell(f_0; t)=A_1t^{-3}\int _0^tf_0(y)y^2dy+A_2t^{-4}\int _0^t f_0(y)y^3dy +\int _0^t f_0(y)\,b_1\!\!\left(\frac {t} {y} \right)\frac {dy} {y}\label{S4L25E2}
\end{align}
for some constants $A_1$, $A_2$ and a function $b_1$ such that $b _1(t)=\mathcal O( t^{-8})$ for $t>1$.
A slightly more precise information may be obtained and is shown here,  since it is of further interest.

\begin{prop}
\label{S6Limzero}
Suppose that  $f_0$ and  $f$ are as in Theorem  \ref{S6WeakSolL2}. Then for all $t>0$, and $\delta >0$ as small as desired,
\begin{align}
\label{S6LimzeroE0}
f(t, x)=&b(t)+\left(||f_0|| _{ L^\infty (1, \infty) }+||f_0||_1\right)\mathcal O\left(t^\delta x^{1-\delta } \right)+\nonumber\\
&+\sup _{ 0<x<1 }x^\theta |f_0(x)|\mathcal O\left( t^{\delta -\theta} x^{1-\delta } \right)
,\,\,x\to 0,
\end{align}
where,
\begin{align}
&b (t)=\ell (f_0; t)+\int _0^t \ell (F(f(s)); t-s)ds \label{S6LimzeroE1}\\
&\ell(F(f(s)); t-s)=\frac{A_1}{(t-s)^{3}}\int _0^{t-s}\!\!\!\!\!\!F(f(s, y))y^2dy+\frac{A_2}{(t-s)^{4}}\int _0^{t-s} \!\!\!\!\!\!F(f(s, y))y^3dy+\nonumber \\
&\hskip 8cm  +\int _0^{t-s} \!\!\!\!\!\! F(f(s, y))\,b_1\!\!\left(\frac {t-s} {y} \right)\frac {dy} {y} \label{S6LimzeroE2}
\end{align}
and  there exists a constant $C>0$ such that
\begin{align}
\label{S6LimzeroE25}
|b(t)|\le C|||f_0|||_\theta \left(t^{-\theta}+t \right),\,\,\forall t>0.
\end{align}
\end{prop}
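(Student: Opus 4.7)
The starting point is the Duhamel identity $f(t, x) = S(t)f_0(x) + \int_0^t S(t-s) F(f(s))(x)\, ds$ established in Theorem \ref{S6WeakSolL2}. The plan is to take the pointwise limit $x \to 0^+$ on the right-hand side term by term, using the known asymptotic (\ref{S4L25E1})--(\ref{S4L25E2}) for the semigroup $S$ acting on $L^1$ functions, and then to derive (\ref{S6LimzeroE25}) by direct estimation of $\ell$.

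First, for $S(t)f_0(x)$, since $f_0 \in L^1(0, \infty)$, (\ref{S4L25E1}) gives $\lim_{x \to 0^+} S(t)f_0(x) = \ell(f_0; t)$ for free. I would sharpen this into a quantitative form: for every $\delta \in (0, 1)$,
\begin{equation*}
|S(t) f_0(x) - \ell(f_0; t)| \le C_\delta\, x^{1-\delta} \left( t^{\delta -\theta} \sup_{0<y<1} y^\theta |f_0(y)| + t^\delta \bigl(\|f_0\|_{L^\infty(1,\infty)} + \|f_0\|_1\bigr) \right),
\end{equation*}
extracted by revisiting the asymptotic expansion of the kernel $\Lambda(t/y, x/y)$ at $x/y \to 0$ that underlies (\ref{S4L25E2}), keeping the next-order term in $x/y$ and bounding it with the modest loss $(x/y)^{1-\delta}$ so as to preserve integrability at the endpoints.

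Second, for fixed $s \in (0, t)$, Proposition \ref{S2AFF1} and Theorem \ref{S6WeakSolL2} ensure $F(f(s)) \in L^1(0, \infty) \cap L^\infty(0, \infty)$ with $|||F(f(s))|||_\theta \le C |||f(s)|||_\theta$ and $\|F(f(s))\|_1 \le C\|f(s)\|_1$. Applying the same quantitative statement with $g = F(f(s))$ and time $t - s$ gives both $\lim_{x \to 0^+} S(t-s) F(f(s))(x) = \ell(F(f(s)); t-s)$ and a remainder bounded by $x^{1-\delta} \Phi(s, t)$, where $\Phi$ involves $(t-s)^{\delta - \theta}$ and $(t-s)^\delta$ times norms of $f(s)$. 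By (\ref{S4E164-1})--(\ref{S4E168}) these norms are controlled by $|||f_0|||_\theta + \|f_0\|_1$ up to the integrable-in-$s$ factor $s^{-\theta}$, so $\Phi(\cdot, t) \in L^1(0, t)$ as soon as $\delta < 1 - \theta$. Dominated convergence then passes $x \to 0^+$ through $\int_0^t \cdots ds$, yielding (\ref{S6LimzeroE1}) with the explicit representation (\ref{S6LimzeroE2}), while collecting the $x$-dependent prefactors gives (\ref{S6LimzeroE0}).

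For the bound (\ref{S6LimzeroE25}), I would split each of the three terms in (\ref{S4L25E2}) according to $y \in (0, \min(1, t))$ versus $y \in (\min(1, t), t)$, applying $|f_0(y)| \le |||f_0|||_\theta\, y^{-\theta}$ in the first region and $|f_0(y)| \le |||f_0|||_\theta$ in the second; the $b_1$-term is handled via the substitution $z = t/y$ and the decay $b_1(z) = O(z^{-8})$ for $z > 1$. This yields $|\ell(f_0; t)| \le C |||f_0|||_\theta (t^{-\theta} + 1)$. Applying the same estimate to $g = F(f(s))$ with time $t - s$, and using $|||F(f(s))|||_\theta \le C(T)|||f_0|||_\theta$ on $s \in (0, T)$, one obtains
\begin{equation*}
\int_0^t |\ell(F(f(s)); t-s)|\, ds \le C |||f_0|||_\theta \int_0^t \bigl((t-s)^{-\theta}+1\bigr)\, ds \le C |||f_0|||_\theta (t^{1-\theta} + t),
\end{equation*}
and since $t^{1-\theta} \le t^{-\theta} + t$, the desired inequality follows. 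The main obstacle is the quantitative rate $|S(\tau) g(x) - \ell(g; \tau)| = O(x^{1-\delta})$ of the first step, since (\ref{S4L25E1})--(\ref{S4L25E2}) as recalled only assert qualitative convergence; extracting it requires returning to the construction of $\Lambda$ in \cite{m} and retaining the first-order correction in $x/y$, uniformly in the norms $|||g|||_\theta$ and $\|g\|_1$. Once this rate is in hand, the remaining steps are essentially bookkeeping with Proposition \ref{S2AFF1} and Theorem \ref{S6WeakSolL2}.
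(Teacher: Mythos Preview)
Your proposal is correct and follows essentially the same route as the paper: Duhamel formula, quantitative expansion of $S(\tau)g(x)$ as $x\to 0$, control of $F(f(s))$ via Proposition~\ref{S2AFF1}, and direct estimation of $\ell$. The quantitative remainder $|S(\tau)g(x)-\ell(g;\tau)|=\mathcal O_\delta(x^{1-\delta})$ that you flag as the main obstacle is in fact already available as Proposition~1.5 in \cite{m}, which the paper simply invokes; it gives precisely
\[
S(\tau)g(x)=\ell(g;\tau)+\Bigl(\tau^{-2+\delta}\int_0^\tau |g|\,dy+\tau^{5+\delta}\int_\tau^\infty |g(y)|y^{-7}dy\Bigr)\mathcal O_\delta(x^{1-\delta}),
\]
so no return to the construction of $\Lambda$ is needed. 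For the bound on $b(t)$, the paper organizes the estimate slightly differently---rather than first bounding $|\ell(F(f(s));t-s)|$ and then integrating in $s$, it treats the double integrals $\int_0^t(t-s)^{-n}\int_0^{t-s}F(f(s,y))y^{n-1}\,dy\,ds$ directly, splitting the $s$-range into three pieces when $t>1$---but your route via $|\ell(g;\tau)|\le C|||g|||_\theta(\tau^{-\theta}+1)$ is equally valid and arguably cleaner.
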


\begin{proof} By construction, for all $t>0$ and $x>0$,
\begin{equation*}
f(t, x)=S(t)f_0(x)+\int _0^t S(t-s)(F(f(s))(x)ds.
\end{equation*}
By Proposition 1.5  in \cite{m},
\begin{align*}
&(S(t)f_0)(x)=\ell (f_0; t)+\left( t^{-2+\delta }\int _0^t|f_0(y)|dy+t^{5+\delta }\int _t^\infty \frac {|f_0(y)|dy} {y^7}\right)\mathcal O_\delta (x^{1-\delta})\\
&S(t-s)F(f(s))(x)=\ell(F(f(s)); t-s)+\Bigg( (t-s)^{-2+\delta }\int _0^{t-s}|F(f(s, y))|dy+\\
&+(t-s)^{5+\delta }\int _{t-s}^\infty \frac {|F(f(s, y))|dy} {y^7}\Bigg)\mathcal O_\delta (x^{1-\delta}).
\end{align*}
By (iv) in Proposition (\ref{S2AFF1})
\begin{align*}
 (t-s)^{-2+\delta }\int _0^{t-s}|F(f(s, y))|dy\le  (t-s)^{-2+\delta }\int _0^{t-s}(||f(s)||_1+||f(s)|| _{ L^\infty(1, \infty) }))dy\\
 \le (t-s)^{-1+\delta }(||f(s)||_1+||f(s)|| _{ L^\infty(1, \infty) }))\\
 (t-s)^{-1+\delta }\int _{t-s}^\infty \frac {|F(f(s, y))|dy} {y^7}\le (t-s)^5(||f(s)||_1+||f(s)|| _{ L^\infty(1, \infty) })\int  _{ t }^\infty \frac {dy} {y^7}\\
 = \frac {(t-s)^{-1+\delta }} {6}(||f(s)||_1+||f(s)|| _{ L^\infty(1, \infty) })
\end{align*}
and, by (\ref{S4E80})  for $T>0$,
\begin{align*}
\sup _{ x>1 } |f(t, x)|+||f(t)||_1\le 
C(T, \theta)\left(||f_0|| _{ L^\infty (1, \infty) }+t^{-\theta}\sup _{ 0<x<1 }x^\theta |f_0(x)| +||f_0||_1\right).
\end{align*}
\begin{align*}
&S(t-s)F(f(s))(x)=\ell(F(f(s)); t-s)+(||f(s)||_1+||f(s)|| _{ L^\infty(1, \infty) })\mathcal O\left((t-s)^{-1+\delta }x^{1-\delta } \right)\\
&= \ell(F(f(s)); t-s)+\left(||f_0|| _{ L^\infty (1, \infty) }+s^{-\theta}\sup _{ 0<x<1 }x^\theta |f_0(x)| +||f_0||_1\right)\times \\
&\times \mathcal O_T\left((t-s)^{-1+\delta }x^{1-\delta } \right)
\end{align*}
and then,
\begin{align*}
&\int _0^t S(t-s)F(f(s))(x)= \int _0^t \ell(F(f(s)); t-s)ds+\\
&+(||f_0|| _{ L^\infty (1, \infty) }+||f_0||_1)\mathcal O\left(t^\delta x^{1-\delta } \right)+\sup _{ 0<x<1 }x^\theta |f_0(x)|\mathcal O\left( t^{\delta -\theta} x^{1-\delta } \right)
\end{align*}
This shows (\ref{S6LimzeroE0}), (\ref{S6LimzeroE1}), (\ref{S6LimzeroE2}).
On the other hand, by property (iv) in Proposition \ref{S2AFF1}, there exists a constant $C$ that depends on $\theta$ such that,
\begin{align*}
|||F(f(s))||| _{ \theta }\le C|||f(s)|||_\theta,\,\,\,\forall s>0.
\end{align*}
Then, for $t\in (0, 1)$ and $s\in (0, t)$,
\begin{align*}
\int _0^t (t-s)^{-n}\int _0^{t-s}F(f(s, y))y^{n-1}dyds&\le
C|||f_0|||_\theta \int _0^t (t-s)^{-n}\int _0^{t-s}y^{n-1-\theta}dyds\\
&=C|||f_0|||_\theta t^{-\theta}.
\end{align*}
and for $t>1$,
\begin{align*}
\int _0^t (t-s)^{-n}\int _0^{t-s}F(f(s, y))y^{n-1}dyds=J_1+J_2+J_3\\
J_1=\int _0^{t-1} (t-s)^{-n}\int _0^1F(f(s, y))y^{n-1}dyds\\
J_2=\int _0^{t-1} (t-s)^{-n}\int _1^{t-s}F(f(s, y))y^{n-1}dyds\\
J_3=\int _{t-1}^t (t-s)^{-n}\int _0^{t-s}F(f(s, y))y^{n-1}dyds.
\end{align*}
with,
\begin{align*}
|J_1|&\le C|||f_0|||_\theta \int _0^{t-1} (t-s)^{-n}\int _0^1y^{n-1-\theta}dyds\\
&=C(n, \theta)\int _0^{t-1}(t-s)^{-n}ds\le C|||f_0|||_\theta,\\
|J_2|&\le C|||f_0|||_\theta \int _0^{t-1} (t-s)^{-n}\int _1^{t-s}y^{n-1}dyds\\
&=C|||f_0|||_\theta \int _0^{t-1} \left(1-(t-s)^{-n} \right)ds=\le C|||f_0|||_\theta\, t
\end{align*}
and
\begin{align*}
|J_3|&\le C|||f_0|||_\theta \int _{t-1}^t (t-s)^{-n}\int _0^{t-s}y^{n-1-\theta}dyds\\
&=C|||f_0|||_\theta \int _{t-1}^t (t-s)^{-\theta}ds\le C|||f_0|||_\theta.
\end{align*}
It follows, for $t>1$
\begin{align*}
\left|\int _0^t (t-s)^{-n}\int _0^{t-s}F(f(s, y))y^{n-1}dyds\right|\le C|||f_0|||_\theta\, t,
\end{align*}
and,
\begin{align*}
|b(t)-\ell (f_0; t)|\le C|||f_0|||_\theta \left(t^{-\theta}+t \right),\,\,\forall t>0.
\end{align*}
The same arguments show that  $|\ell (f_0; t)|\le C|||f_0|||_\theta (1+t^{-\theta})$ for all $t>0$ and (\ref{S6LimzeroE25}) follows.
\end{proof}

\section{The functions $u(t)$ and  $p_c(t)$.}
\label{Sup_c}
\setcounter{equation}{0}
\setcounter{theo}{0}
We now return to the notation of the time variable as in sub Section \ref{MR}.  Then, given the function $f(\tau , x)$ obtained in Theorem \ref{S6WeakSolL2},   $t=t(\tau )$ and $p_c(t)$ must be determined in order to define
\begin{equation*}
u(t, x)=f(\tau , x),\,\,\forall t>0, \,\forall x>0.
\end{equation*}
The functions $t, \tau $ and $p_c(t)$ are related by the change of time variable (\ref{S2NewTime}), i.e. 
\begin{align}
\label{S7newtime}
\tau =\int _0^{t}p_c(s)ds.
\end{align}
\begin{prop}
\label{S5Em}
For all $\tau >0$, 
\begin{align}
\int _0^\tau \int_0^\infty \left|\mathcal L(f(\sigma  ))(x)\right| n_0(x^2)(1+n_0(x^2))x^4dxd\sigma  <\infty, \label{S5Em1}
\end{align}
and, if 
\begin{align}
&m(\tau )=\int_0^\infty \mathcal L(f(\tau ))(x) n_0(x^2)(1+n_0(x^2))x^4dx,\,\,\forall \tau >0 \label{S5Em2}\\
&|m(\tau )|<\infty,\,\,\forall \tau >0. \label{S5Em3}
\end{align}
\end{prop}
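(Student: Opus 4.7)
The plan is to combine the integrability properties of the weight $\varphi(x) := n_0(x^2)(1+n_0(x^2))x^4$ with the estimates on $\mathcal L(f)$ already established in Theorem \ref{S6WeakSolL2B}.

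First I would record that, under the normalization used in (\ref{S2CcV}), $n_0(x^2)(1+n_0(x^2)) = (4\sinh^2 x^2)^{-1}$, so
$$\varphi(x) = \frac{x^4}{4\sinh^2 x^2}.$$
Since $\sinh(x^2)\sim x^2$ as $x\to 0^+$ and $\sinh(x^2)\ge \tfrac14 e^{x^2}$ for $x\ge 1$, the function $\varphi$ extends continuously to $[0,\infty)$ with $\varphi(0)=\tfrac14$ and satisfies $\varphi(x)=O(x^4 e^{-2x^2})$ as $x\to\infty$. In particular $\varphi\in L^\infty(0,\infty)\cap L^1(0,\infty)$.

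For assertion (\ref{S5Em1}), I would invoke (\ref{S4E140-4X}), which gives $\mathcal L(f)\in L^1((0,T)\times(0,\infty))$ for every $T>0$. Combining with $\varphi\in L^\infty$, Fubini yields
$$\int_0^\tau\!\!\int_0^\infty |\mathcal L(f(\sigma))(x)|\,\varphi(x)\,dx\,d\sigma\le \|\varphi\|_\infty\, \|\mathcal L(f)\|_{L^1((0,\tau)\times(0,\infty))}<\infty.$$

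For assertion (\ref{S5Em3}), fix $\tau>0$ and apply the pointwise bound (\ref{S4E125}):
$$|\mathcal L(f(\tau))(x)|\le C_0\,\tilde H(\tau,x),\qquad C_0=C\Bigl(\sup_{0<y<1}y^\theta|f_0(y)|+\|f_0\|_{L^\infty(1,\infty)}+\|f_0\|_1\Bigr),$$
with $\tilde H$ as in (\ref{S3EHH1}). Thus $|m(\tau)|\le C_0\int_0^\infty\tilde H(\tau,x)\varphi(x)\,dx$ and it remains only to check that $\tilde H(\tau,\cdot)\varphi\in L^1(0,\infty)$ for every fixed $\tau>0$. Using the explicit formulas (\ref{S1Exi})--(\ref{S1Zetai}), a direct case analysis shows that at each fixed $\tau>0$ both $\xi(\tau,\cdot)$ and $\zeta_0(\tau,\cdot)$ are uniformly bounded on $(0,\infty)$, so $(1+\xi+\zeta_0)\varphi\in L^1$. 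The auxiliary functions $\Xi_1,\Xi_2,\omega_{1,0},\omega_2$ provided by Proposition \ref{SAP5} are, for fixed $\tau>0$, at worst polynomially growing in $x$ at infinity and have at most an integrable singularity near $x=0$; the exponential decay of $\varphi$ at infinity and its boundedness near zero then absorb each of these contributions, giving $\tilde H(\tau,\cdot)\varphi\in L^1(0,\infty)$ and hence $|m(\tau)|<\infty$.

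The main obstacle is the term-by-term verification of the integrability of $\Xi_i\varphi$ and $\omega_j\varphi$, which requires extracting the precise $x$-dependence of these functions from Proposition \ref{SAP5}; this is routine bookkeeping once the relevant asymptotics are in hand, and no new estimate on the solution $f$ is needed beyond those already proved.
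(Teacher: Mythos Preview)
Your proof is correct and follows essentially the same approach as the paper: the paper's proof is a one-liner stating that the proposition is a direct consequence of the integrability property (\ref{S4E140-4X}) of $\mathcal L(f)$, and you have simply unpacked this, using $\varphi\in L^\infty$ together with $\mathcal L(f)\in L^1((0,\tau)\times(0,\infty))$ for (\ref{S5Em1}), and the pointwise bound (\ref{S4E125}) for (\ref{S5Em3}). If anything, your argument for (\ref{S5Em3}) is more careful than the paper's, since the $L^1$ property in (\ref{S4E140-4X}) alone only gives $m(\tau)<\infty$ for almost every $\tau$, whereas your use of the explicit bound $\tilde H$ yields the conclusion for every $\tau>0$ as stated.
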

\begin{proof}
The proposition is a direct and straightforward  consequence of the integrability property (\ref{S4E140-4X}) of $\mathcal L(f)$.
\end{proof}
Let us denote,
\begin{align}
\mathcal M(r )=\int _0^r m(\rho )d\rho,\,\,\forall r>0. \label{S5Em5}\\
q_c(\tau )=q_c(0)e^{-\mathcal M(\tau )},\,\,\forall \tau >0. \label{S5Em6}
\end{align}
\begin{prop}
\label{S5Emn}
For all $t>0$ there exists a unique $\tau >0$ such that
\begin{equation}
t=\int _0^\tau \frac {d\sigma } {q_c(\sigma )},\,\,\forall \tau >0.\label{S5CPt1Inv}
\end{equation}
\end{prop}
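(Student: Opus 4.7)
The plan is to show that $\Phi(\tau) := \int_0^\tau \frac{d\sigma}{q_c(\sigma)}$ is a continuous, strictly increasing bijection from $(0,\infty)$ onto $(0,\infty)$. Once this is established, existence and uniqueness of $\tau$ for each $t > 0$ follow immediately from monotonicity and the intermediate value theorem. The core analytic input is a uniform bound on $q_c$ above and below by positive constants.

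The first and main step is to identify $\mathcal{M}(r) = N(r) - N(0)$. By Theorem \ref{S6WeakSolL2B}, $\partial_\sigma f(\sigma,x) = \mathcal{L}(f(\sigma))(x)$ for a.e. $(\sigma,x)$, so
\[
\mathcal{M}(r) = \int_0^r\!\!\int_0^\infty \partial_\sigma f(\sigma,x)\, n_0(x^2)(1+n_0(x^2))x^4\,dx\,d\sigma.
\]
The integrability bound (\ref{S5Em1}) of Proposition \ref{S5Em} legitimizes an application of Fubini's theorem, and the Fundamental Theorem of Calculus in $\sigma$ then yields
\[
\mathcal{M}(r) = \int_0^\infty (f(r,x)-f(0,x))\,n_0(x^2)(1+n_0(x^2))x^4\,dx = N(r) - N(0).
\]
From Proposition \ref{S5EBN1} (equivalently Corollary \ref{SeECM}), the same argument applied to $|f|$ gives $|N(\tau)| \le K$ for all $\tau \ge 0$, where $K$ depends only on $f_0$. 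Hence there exists a constant $C > 0$ with $|\mathcal{M}(\tau)| \le C$ uniformly in $\tau > 0$.

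Consequently, from (\ref{S5Em6}),
\[
q_c(0)\,e^{-C}\;\le\; q_c(\tau)\;\le\; q_c(0)\,e^{C},\qquad \forall \tau>0,
\]
so $1/q_c$ is bounded above and below by positive constants. In particular $\Phi$ is continuous on $[0,\infty)$ (integrand locally bounded), $\Phi(0)=0$, and $\Phi$ is strictly increasing because the integrand is strictly positive. The lower bound on $1/q_c$ gives
\[
\Phi(\tau)\;\ge\;\frac{e^{-C}}{q_c(0)}\,\tau\;\longrightarrow\;+\infty\quad \text{as } \tau\to\infty.
\]

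Since $\Phi:[0,\infty)\to[0,\infty)$ is continuous, strictly increasing, with $\Phi(0)=0$ and $\Phi(\tau)\to\infty$, it is a bijection onto $[0,\infty)$. Therefore for every $t>0$ there is a unique $\tau>0$ satisfying $t=\Phi(\tau)$, which is precisely (\ref{S5CPt1Inv}). The only delicate point is the Fubini-type identification of $\mathcal{M}$ with the variation of mass $N$; everything else is an elementary consequence of the a priori $L^1(d\mu)$-type bound already proved for $f$.
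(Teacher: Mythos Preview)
Your proof is correct, and the identification $\mathcal{M}(\tau)=N(\tau)-N(0)$ is exactly the paper's computation (\ref{S5EmnEMM}). The difference lies in how you then force $\Phi(\tau)\to\infty$: you invoke the uniform bound on $\int n_0(1+n_0)x^4|f(\tau,x)|\,dx$ from Proposition~\ref{S5EBN1}, which yields $|\mathcal{M}(\tau)|\le C$ and hence two-sided bounds on $q_c$; the paper instead appeals to Corollary~\ref{SeEC2f3} (the $L^1$-convergence of $f(\tau)$ to $C_*$) to obtain only a lower bound on $\mathcal{M}(\tau)$ for large $\tau$. Your route is the more economical one, since Proposition~\ref{S5EBN1} is an immediate consequence of H\"older and the $L^p(d\mu)$ monotonicity of Proposition~\ref{S4Ep}, whereas Corollary~\ref{SeEC2f3} requires the full long-time convergence analysis (Propositions~\ref{SeEC2f}--\ref{SeEC2f2}). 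The paper's approach has the slight conceptual payoff of linking the global-in-time well-posedness of the change of variable to the asymptotic behaviour of $f$, but your argument reaches the same conclusion with strictly less machinery. One minor remark: your Fubini+FTC step is cleaner if phrased directly through the weak formulation (\ref{S4cor1E2}) with $\varphi(x)=n_0(x^2)(1+n_0(x^2))x^4$, which already encodes the absolute continuity in $\sigma$ you need.
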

\begin{proof}
By Proposition \ref{S5Em}, $|\mathcal M(\tau )|<\infty$ for all $\tau >0$ and then $q_c(\tau )\in (0, \infty)$ for all $\tau >0$ and the integral in the right hand side of (\ref{S5CPt1Inv}) is well defined and convergent. Since $q_c(t)>0$ this integral is a monotone increasing function of $\tau $. It only remains to check that its range is $[0, \infty)$.

By Corollary \ref{SeEC2f3}, for $\varepsilon >0$ and $\tau_\varepsilon  $ large enough,
$$
\int _0^\infty f(\tau , x)n_0(1+n_0)x^4dx>C_*\int _0^\infty n_0(1+n_0)x^4dx-\varepsilon,\,\,\forall \tau \ge \tau _\varepsilon .
$$
Since, on the other hand,
\begin{align}
 \mathcal M(\tau )&=\int _0 ^\tau \int _0^\infty \int _0^\infty W(x, y)(f(\sigma  , y)-f(\sigma  , x)) y^4x^2dydxd\sigma \nonumber \\
&=\int _0^\tau \frac {d} {d\sigma  }\int _0^\infty n_0(1+n_0) f(\sigma  , x)x^4dxd\sigma \nonumber \\
&=\int _0^\infty n_0(1+n_0)f(\tau, x)x^4dx - \int _0^\infty n_0(1+n_0)f_0(x)x^4dx. \label{S5EmnEMM}
\end{align}
It follows, for $\tau >\tau _\varepsilon $,
\begin{align*}
\mathcal M(\tau )\ge C_*\int _0^\infty n_0(1+n_0)x^4dx-\varepsilon - \int _0^\infty n_0(1+n_0)f_0(x)x^4dx.
\end{align*}
Therefore, the function $e^{\mathcal M(\sigma )}$ is not integrable at infinity and
$$
\lim _{ \tau \to \infty }\int _0^\tau \frac {d\sigma } {q_c(\sigma )}=\infty
$$
and, for all  $t>0$, there exists a unique $\tau>0$ satisfying (\ref{S5CPt1Inv}).
\end{proof}

\begin{proof}
[\upshape\bfseries{Proof of Theorem  \ref{MainThm}}]  
For all $t>0$, let $\tau >0$ be given by Proposition \ref{S5Emn} and define, 
\begin{align}
&u(t, x)=f(\tau , x),\,\,\,\forall x>0,\label{S5Euu}\\
&p_c(t)=q_c(\tau ).\label{S5Eppcc}
\end{align}
where $f$ is obtained by Theorem \ref{S6WeakSolL2} with initial data $f_0=u_0$. From the definition of $p_c$,
\begin{equation*}
\frac {d t} {d \tau }=\frac {1} {q_c(\tau )}\Longrightarrow \frac {d \tau } {d t}=q_c(\tau )=p_c(t)
\end{equation*}
and  (\ref{S7newtime}) is satisfied. On the other hand,
\begin{align}
\frac {dp_c(t)} {dt}=\frac {dq_c(\tau )} {d\tau }\frac {d\tau } {dt}
=-q_c(\tau )m(\tau )\frac {d\tau } {dt}=-p_c(t)m(\tau )\frac {d\tau } {dt}. \label{S5Em8}
\end{align}
By (\ref{S5Em2}) and  (\ref{S5Euu}),
\begin{align*}
m(\tau )&=\int_0^\infty \mathcal L(f(\tau  )) n_0(x^2)(1+n_0(x^2))x^4dx\\
&=\int_0^\infty \mathcal L(u(t)) n_0(x^2)(1+n_0(x^2))x^4dx=\mu (t).
\end{align*}
It follows from (\ref{S5Em6})
\begin{align*}
\frac {dp_c(t)} {dt}&=-p_c(t)m(\tau )\frac {d\tau } {dt}=-p_c(t)\frac {d\mu (t)} {dt}\\
&= -p_c(t)\frac {d} {dt} \int_0^\infty \mathcal L(u(s))n_0(x^2)(1+n_0(x^2))x^4dxds
\end{align*}
and then, $p_c\in C^1(0, \infty)$ and satisfies (\ref{S1ERRR10xp}). On the other hand, by  (\ref{S7newtime}) and Theorem  \ref{S6WeakSolL2B}
\begin{align*}
&\frac {\partial u} {\partial t}(t, x)=\frac {d\tau } {dt}\frac {\partial f} {\partial \tau }(\tau , x)=-p_c(t)\frac {\partial f} {\partial \tau }(\tau , x).
\end{align*}
Then, Theorem \ref{MainThm} follows from Theorem \ref{S6WeakSolL2} and  Theorem  \ref{S6WeakSolL2B}, where  the functions $H$ in (\ref{S4E500}) and $\lambda  _{ \beta , \delta  }$ in (\ref{S1EPLXl}) are given by,
\begin{align}
&H(t, x)=\tilde H(\tau , x), \,\,\,a. e. \,\,(0, \infty)\times (0, \infty) \label{S5EH}\\
&\lambda  _{ \beta , \delta }(t, x)=\tilde \lambda  _{ \beta , \delta }(\tau , x) \,\,\,a. e. \,\,(0, \infty)\times (0, \infty), \label{S5Ell}
\end{align}
with $\tilde H$ given in (\ref{S3EHH1}), and  $\tilde \lambda  _{ \beta , \delta  }$ defined in (\ref{S3fx1}).
\end{proof}
\begin{proof}
[\upshape\bfseries{Proof of Corollary   \ref{SeEC}, Corollary \ref{SeEC2} and Corollary \ref{SeEpc1}}]  
It follows from (\ref{S4E125u}), and Corollary \ref{SeECf},
\begin{align*}
\frac {dE(t)} {dt }&=p_c(t) \int _0^\infty n_0(1+n_0)x^4\mathcal L(u(t ))(x)dx\\
&=p_c(t)  \int_0^\infty n_0(x)(1+n_0(x)\mathcal L f(\tau , x)x^4dx \\
&=p_c(t)\frac {d} {d\tau }\int _0^\infty n_0(1+n_0)x^4f(\tau , x)x^2dx=0
\end{align*}
and this yields property  (\ref{SeEC1}) in Corollary \ref{SeEC}. Property (\ref{SeEC2}) of Corollary \ref{SeEC} follows from (\ref{S3E23590}) and (\ref{S1ERRR10xp}), since  by (\ref{S4E140}), integration  of (\ref{S3E23590})  on $(0, \infty)$ yields,
\begin{align*}
\frac {d} {dt}\int _0^\infty n_0(1+n_0)u(t, x)x^4dx=-\frac {dp_c(t)} {dt}.
\end{align*}
From Proposition \ref{SeEC2f2} property  (\ref{SeEC2E1}) in Corollary \ref{SeEC2} follows, and  property  (\ref{SeEC2E2}) is deduced from Corollary \ref{SeEC2f3}.

Since $f\in C([0, \infty); L^1(0, \infty))$, by the identity (\ref{S5EmnEMM}) and (\ref{S5Em6}), $q_c\in C([0, \infty))$. It follows that $p_c\in C([0, \infty))$ and it is bounded on any compact subset of $[0, \infty)$. Passing to the limit in (\ref{S5EmnEMM}) as $t\to \infty$ and using Corollary \ref{SeEC2f3} property (\ref{SeEpc11}) is obtained. Then, it also follows that $p_c$ is bounded on $(0, \infty)$.
\end{proof}

\begin{prop}
\label{S6ELimzerou}
Suppose that  $u_0$ and  $u$ are as in Theorem  \ref{MainThm}. Then for all $t>0$, and $\delta >0$ as small as desired,
\begin{align}
\label{S6ELimzerou1}
u(t, x)&=a(t )+\left(||f_0|| _{ L^\infty (1, \infty) }+||f_0||_1\right)\mathcal O\left(\tau ^\delta x^{1-\delta } \right)+\nonumber\\
&+\sup _{ 0<x<1 }x^\theta |f_0(x)|\mathcal O\left( \tau ^{\delta -\theta} x^{1-\delta } \right)
,\,\,x\to 0,\\
\hbox{where,}\hskip 0.3cm a(t)&=b\left(\int _0^{t}p_c(s)ds\right)\label{S6ELimzerou2}
\end{align}
satisfies, for some constant $C>0$,
\begin{equation}
|a(t)|\le C|||u_0|||_\theta \left( \left(\int _0^{t}p_c(s)ds \right)^{-\theta}+\int _0^{t}p_c(s)ds\right),\,\,\forall t>0.\label{S6ELimzerou3}
\end{equation}

\end{prop}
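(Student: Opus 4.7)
The plan is to observe that Proposition \ref{S6ELimzerou} is essentially a transcription of Proposition \ref{S6Limzero} under the change of time variable $\tau = \int_0^t p_c(s)\,ds$, so no new work is needed beyond a careful substitution.

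Recall from the construction in the proof of Theorem \ref{MainThm} that $u(t,x) = f(\tau,x)$ where $\tau = \int_0^t p_c(s)\,ds$, and $f$ is the function provided by Theorem \ref{S6WeakSolL2} applied to the initial datum $f_0 = u_0$. The function $p_c$ belongs to $C([0,\infty))$ and is strictly positive, so the map $t \mapsto \tau(t)$ is a well-defined monotone $C^1$ bijection from $[0,\infty)$ onto $[0,\infty)$ (positivity of $\tau(t)$ for $t>0$ is used to make $\tau^{-\theta}$ meaningful; cf. Proposition \ref{S5Emn}). Consequently, for each fixed $t>0$ the limit $x \to 0$ in $u(t,x)$ coincides with the limit $x \to 0$ in $f(\tau,x)$ at the value $\tau = \tau(t)$, and the hypothesis $f_0 = u_0$ clearly satisfies (\ref{S4E1244}) whenever $u_0$ satisfies (\ref{S4Etheta}).

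First, I would apply Proposition \ref{S6Limzero} at the value $\tau = \int_0^t p_c(s)\,ds$. This gives, for any $\delta>0$ sufficiently small and as $x \to 0$,
\begin{align*}
f(\tau,x) = b(\tau) &+ \bigl(\|f_0\|_{L^\infty(1,\infty)} + \|f_0\|_1\bigr)\,\mathcal{O}\!\left(\tau^{\delta} x^{1-\delta}\right) \\
&+ \sup_{0<y<1} y^{\theta}|f_0(y)|\,\mathcal{O}\!\left(\tau^{\delta-\theta} x^{1-\delta}\right).
\end{align*}
Defining $a(t) := b(\tau(t))$ and using $u_0 = f_0$ yields the expansion (\ref{S6ELimzerou1}) with error terms in exactly the stated form (note that the error estimates are pointwise in $\tau$, hence need no rewriting in the $t$ variable).

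Second, to obtain the bound (\ref{S6ELimzerou3}), I would simply substitute $\tau = \int_0^t p_c(s)\,ds$ into the estimate (\ref{S6LimzeroE25}) of Proposition \ref{S6Limzero}, which gives $|b(\tau)| \le C|||f_0|||_{\theta}(\tau^{-\theta} + \tau)$, and use $|||f_0|||_\theta = |||u_0|||_\theta$. This produces (\ref{S6ELimzerou3}) verbatim.

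There is essentially no obstacle: the one point worth double-checking is that the change of variable does not require any additional regularity on $p_c$ beyond continuity and positivity (which are guaranteed by Theorem \ref{MainThm} and Corollary \ref{SeEpc1}), and that $\tau(t) > 0$ for every $t > 0$, so $\tau^{-\theta}$ in the bound (\ref{S6ELimzerou3}) is finite — both facts are already established earlier in the paper.
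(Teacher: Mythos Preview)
Your proposal is correct and follows exactly the paper's own argument: the proof there simply notes that $u(t,x)=f(\tau,x)$ with $\tau=\int_0^t p_c(s)\,ds$, so (\ref{S6ELimzerou1}),(\ref{S6ELimzerou2}) follow from (\ref{S6LimzeroE0}) and (\ref{S6ELimzerou3}) follows from (\ref{S6LimzeroE25}). Your write-up is in fact more careful than the paper's in verifying that $\tau(t)>0$ and that the substitution requires nothing beyond continuity and positivity of $p_c$.
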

\begin{proof}
By construction, $u(t, x)=f(\tau , x)$ were $\tau $ is given in terms of $t $ by (\ref{S7newtime}) and therefore,  (\ref{S6ELimzerou1}),(\ref{S6ELimzerou2}) follow from (\ref{S6LimzeroE0}) and  (\ref{S6ELimzerou3}) follows from (\ref{S6LimzeroE25}).
\end{proof}

\section{Appendix}
\setcounter{equation}{0}
\setcounter{theo}{0}

\subsection{Some further properties of $S(t)$.}

We prove in this Appendix two   properties of the solution $S(t)f_0$ of  (\ref{S3E1.L1}) with initial data $f_0$, that are not given in \cite{m}. The first is just an elementary continuity result. We seek for the continuity of $S(t)f_0$ with respect to $x$ in order to deduce the same property for the solution $u$ of (\ref{S3E23590}) and to be able later  to speak of $\ell (u(t))$.

We first briefly recall ...
\begin{equation}
W(s)=-2\gamma_e -2\psi \left(\frac {s} {2} \right)-\pi \cot \left(\frac {\pi s} {4} \right),\,\,\,s\in \CC,\,\,\mathscr Re(s)\in (-2, 4) \label{S4E7}
\end{equation}
where $\gamma _e$ is the Euler constant and $\psi (z)=\Gamma '(z)/\Gamma (z)$ is the Digamma function.

For any $\beta \in (0, 2)$ fixed, the function
\begin{align}
\label{S3PBE0}
B(s)=\exp\left(\int  _{ {\mathscr Re} (\rho) =\beta  }  \log (-W(\rho ))\left( \frac {1} {1-e^{2i\pi (s-\rho) } }-\frac {1} {1+e^{-2i\pi \rho }}\right)d\rho\right)
\end{align}
is analytic in the domain $\{s\in \CC;\,\mathscr Re(s)\in (\beta , \beta +1)\}$ and satisfies
\begin{align}
\label{S3PBE1}
B(s)=-W(s-1)B(s-1),\,\,\,\forall s\in \CC; {\mathscr Re}(s)\in (\beta , \beta +1).
\end{align}

\begin{prop}
\label{S3P7}
If $f_0\in L^1(0, \infty)\cap L^\infty _{ loc }(0, \infty)$, then $S(t)f_0\in L^\infty _{ loc  }\cap  C((0, \infty))$ for every $t>0$ and $S(\cdot)f_0(x)\in C(0, \infty)$ for all $x>0$.
\end{prop}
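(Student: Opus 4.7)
The plan is to establish both continuity statements by dominated convergence applied to the explicit representation
\begin{equation*}
S(t)f_0(x)=\int_0^\infty \Lambda\!\left(\tfrac{t}{y},\tfrac{x}{y}\right) f_0(y)\,\frac{dy}{y}.
\end{equation*}
The two inputs I will take from \cite{m} are: (a) pointwise continuity of $(a,b)\mapsto\Lambda(a,b)$ on $(0,\infty)^2$, and (b) locally uniform $L^\infty$ and $L^1$ bounds on $\Lambda$ in each variable, both inherited from the Mellin representation built on the functions $W$ and $B$ in (\ref{S4E7})--(\ref{S3PBE0}).

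For continuity in $x$ at a point $x_0>0$ (with $t>0$ fixed) I would fix a compact neighborhood $I=[x_0/2,2x_0]$ and split the $y$-integral into $(0,x_0/4)$, $[x_0/4,4x_0]$, and $(4x_0,\infty)$. On the two outer regions $|x-y|$ stays uniformly bounded below for $x\in I$, so the bounds on $\Lambda$ from \cite{m} give $|\Lambda(t/y,x/y)|/y\le G(y)$ with $G$ uniformly bounded in $x\in I$; combined with $f_0\in L^1(0,\infty)$ this yields an integrable dominant. On the middle piece, $f_0$ is bounded by hypothesis, so I only need
\begin{equation*}
\sup_{x\in I}\int_{x_0/4}^{4x_0}\left|\Lambda\!\left(\tfrac{t}{y},\tfrac{x}{y}\right)\right|\frac{dy}{y}<\infty,
\end{equation*}
which follows from the $L^1$-stability of the semigroup on this compact interval proved in \cite{m}. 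Since $x\mapsto\Lambda(t/y,x/y)/y$ is continuous in $x$ for each fixed $y$, dominated convergence then gives $S(t)f_0(x)\to S(t)f_0(x_0)$.

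Continuity in $t$ for fixed $x>0$ will follow by the symmetric argument: split the $y$-integral around $y=t_0$ rather than $y=x_0$ and use continuity of $a\mapsto\Lambda(a,b)$ together with the same family of bounds. The local boundedness $S(t)f_0\in L^\infty_{\rm loc}$ that appears in the statement is already available from Proposition~\ref{S5P1} (invoked in the proof of Theorem~\ref{S6WeakSolL2}).

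The hard part will be the middle region, where the self-similar arguments $(t/y,x/y)$ may approach the singular diagonal of $\Lambda$ and no pointwise $L^\infty$ bound on the integrand is available. I would bypass any pointwise attempt there and instead invoke the $L^1$-stability of $\Lambda$ from \cite{m}; this converts the potential diagonal singularity into a uniformly bounded $L^1$-mass in $y$, which is precisely what the dominated convergence argument requires.
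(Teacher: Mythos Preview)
Your overall strategy—split the $y$-integral and use dominated convergence—matches the paper's, but the argument on the middle region has a genuine gap. The function $(a,b)\mapsto\Lambda(a,b)$ is \emph{not} continuous on all of $(0,\infty)^2$: it has a (locally integrable) singularity along the diagonal $b=1$, behaving like $|1-b|^{-1+2a}$ for small $a$. Consequently, for $y$ in your middle interval $[x_0/4,4x_0]$, the integrand $y\mapsto|\Lambda(t/y,x/y)|/y$ has its singularity at the \emph{moving} point $y=x$, and there is no single $x$-independent integrable $G(y)$ dominating it for all $x\in I$. The uniform $L^1$ bound you invoke (``$L^1$-stability'') is true but is not the same thing as an integrable dominant; dominated convergence therefore does not apply, and even a Vitali/uniform-integrability argument would still require the quantitative diagonal estimate you are trying to avoid.

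The paper's fix is precisely to further split the middle region into $\{|1-x/y|>\rho\}$ and $\{|1-x/y|<\rho\}$. On the first piece $\Lambda$ is genuinely bounded and continuous, so dominated convergence works. On the second piece one uses the explicit H\"older-type bound $|\Lambda(t/y,x/y)|\le C|1-x/y|^{-1+\alpha}$ (with $\alpha=\alpha(t,x_0)>0$) together with $f_0\in L^\infty_{\rm loc}$ to show that the integrals over $\{|1-x/y|<\rho\}$ and $\{|1-x'/y|<\rho+\delta/\delta_1\}$ are $O(\rho^\alpha)$, uniformly for $x'$ near $x$. This near-diagonal excision is the idea missing from your proposal. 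A minor additional point: Proposition~\ref{S5P1} assumes $|||f_0|||_\theta<\infty$, which is strictly stronger than $f_0\in L^1\cap L^\infty_{\rm loc}$ (the latter does not control behaviour at $0$), so it cannot be invoked for the $L^\infty_{\rm loc}$ conclusion here.
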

\begin{proof}
By definition,
\begin{align*}
S(t)f_0(x)-S(t)f_0(x')=\int _0^\infty \left(
\Lambda\left(\frac {t} {y}, \frac {x} {y} \right)-\Lambda\left(\frac {t} {y}, \frac {x'} {y} \right)
\right)f_0\left( y\right)\frac {dy} {y}
\end{align*}

\begin{align*}
|S(t)f_0(x)-S(t)f_0(x')|&\le \int  _{ y<\delta _1 }  \left(
\Lambda\left(\frac {t} {y}, \frac {x} {y} \right)-\Lambda\left(\frac {t} {y}, \frac {x'} {y} \right)
\right)f_0\left( y\right)\frac {dy} {y}+\\
&+\int  _{ y>R }  \left(
\Lambda\left(\frac {t} {y}, \frac {x} {y} \right)-\Lambda\left(\frac {t} {y}, \frac {x'} {y} \right)
\right)f_0\left( y\right)\frac {dy} {y}+\\
&+ \int  _{ \left| 1-\frac {x} {y}\right| >\rho,\, y \in (\delta _1, R) } \left(
\Lambda\left(\frac {t} {y}, \frac {x} {y} \right)-\Lambda\left(\frac {t} {y}, \frac {x'} {y} \right)
\right)f_0\left( y\right)\frac {dy} {y}+\\
&+\int  _{ \left| 1-\frac {x} {y}\right| <\rho,\, y\in (\delta_1, R) } \left(
\Lambda\left(\frac {t} {y}, \frac {x} {y} \right)-\Lambda\left(\frac {t} {y}, \frac {x'} {y} \right)
\right)f_0\left( y\right)\frac {dy} {y}\\
&=\int  _{ y<\delta _1 }|..| dy+\int  _{ y>R } |...| dy+J_1+J_2
\end{align*}
We choose $\delta _1$ small enough and $R$ large enough, both depending on $t$, to have, :
\begin{align*}
\int  _{ y<\delta _1 }|..| dy+\int  _{ y>R } |...| dy\le \varepsilon 
\end{align*}
using that $f_0\in L^1$ and the asymptotics of $\Lambda$ for large and small arguments. Consider for example, the integral for $y<\delta $, with $\delta < t$. Then $t/y>1$ and 
\begin{equation*}
\left|\Lambda\left(\frac {t} {y}, \frac {x} {y} \right)\right|+\left|\Lambda\left(\frac {t} {y}, \frac {x'} {y} \right)\right|\le Cy^3 (\max (x, t)^{-3}+\max (x', t)^{-3})
\end{equation*}
and, if we use that $x'>x/2$, 
\begin{align*}
&\left|\Lambda\left(\frac {t} {y}, \frac {x} {y} \right)-\Lambda\left(\frac {t} {y}, \frac {x'} {y} \right)
\right| \frac {|f_0\left( y\right)|} {y}\le Cy^2 (\max (x, t)^{-3}+\max (x/2, t)^{-3})\\
&\lim _{ \delta \to 0 } \int  _{ y<\delta _1 }  \left(
\Lambda\left(\frac {t} {y}, \frac {x} {y} \right)-\Lambda\left(\frac {t} {y}, \frac {x'} {y} \right)
\right)f_0\left( y\right)\frac {dy} {y}=0
\end{align*}
A similar argument yields the limit of the integral for $y>R$ as $R\to \infty$.

On the other hand, if we suppose $y\in (\delta _1, R),\,\,\,|x-x'|<\delta $
then,
\begin{align}
&\left| 1-\frac {x} {y}\right| <\rho\Longrightarrow \left| 1-\frac {x'} {y}\right|\le \left| 1-\frac {x} {y}\right|+
\left| \frac {x} {y}-\frac {x'} {y}\right|\le \rho +\frac {\delta } {\delta _1}\label{S3xr1}\\
&\left| 1-\frac {x} {y}\right| >\rho\Longrightarrow \left| 1-\frac {x'} {y}\right| \ge \left| 1-\frac {x} {y}\right|-
\left| \frac {x} {y}-\frac {x'} {y}\right|\ge \rho -\frac {\delta } {\delta _1}\label{S3xr2}
\end{align}
Then,
\begin{align*}
J _{ 2, 1 }=&\int  _{ \left| 1-\frac {x} {y}\right| <\rho,\, y\in (\delta_1, R) }
\Lambda\left(\frac {t} {y}, \frac {x} {y} \right)|f_0\left( y\right)|\frac {dy} {y}\\
&\le C(f_0)
\int  _{ \left| 1-\frac {x} {y}\right| <\rho,\, y\in (\delta_1, R) }
\Lambda\left(\frac {t} {y}, \frac {x} {y} \right)\frac {dy} {y}
\end{align*} 
Since  $t/y\ge t/R$ for all $y\in (\delta_1 , R)$. If $t/R>1/2$, then  the function,
\begin{equation*}
y\mapsto \Lambda\left(\frac {t} {y}, \frac {x} {y} \right)
\end{equation*}
is continuous on 
\begin{equation*}
D=\left\{y\in (\delta _1, R) \right\}
\end{equation*}
and by the Lebesgue's convergence Theorem,
\begin{equation*}
\lim _{ \rho \to 0 }J _{ 2, 1 }=0.
\end{equation*}
On the other hand, for $|x-x'|<\delta $, by (\ref{S3xr1}):
\begin{align*}
J _{ 2, 2 }=&\int  _{ \left| 1-\frac {x} {y}\right| <\rho,\, y\in (\delta_1, R) }
\Lambda\left(\frac {t} {y}, \frac {x'} {y} \right)|f_0\left( y\right)|\frac {dy} {y}\\
&\le C(f_0)
\int  _{ \left| 1-\frac {x'} {y}\right| <\rho+\frac {\delta } {\delta _1},\, y\in (\delta_1, R) }
\Lambda\left(\frac {t} {y}, \frac {x'} {y} \right)\frac {dy} {y}
\end{align*} 
and 
\begin{equation*}
\lim _{ \rho+\frac {\delta } {\delta _1}  \to 0 }\int  _{ \left| 1-\frac {x} {y}\right| <\rho,\, y\in (\delta_1, R) }
\Lambda\left(\frac {t} {y}, \frac {x'} {y} \right)|f_0\left( y\right)|\frac {dy} {y}=0.
\end{equation*}

If $t/R<1/2$ we must divide the domain $D$ in two sub domains 
\begin{align*}
D_+=\left\{y\in (\delta _1, R); \frac {t} {y}>\frac {1} {2}\right\},\,\,\,\,D_-=\left\{y\in (\delta _1, R); \frac {t} {R}<\frac {t} {y}<\frac {1} {2}\right\}.
\end{align*}
With the previous argument,
\begin{align*}
&\lim _{ \rho  \to 0 } \int  _{ \left| 1-\frac {x} {y}\right| <\rho,\, y\in D^+ }
\Lambda\left(\frac {t} {y}, \frac {x} {y} \right)|f_0\left( y\right)|\frac {dy} {y}=0\\
&\lim _{ \rho +\frac {\delta } {\delta _1} \to 0 } \int  _{ \left| 1-\frac {x} {y}\right| <\rho,\, y\in D^+ }
\Lambda\left(\frac {t} {y}, \frac {x'} {y} \right)|f_0\left( y\right)|\frac {dy} {y}=0.
\end{align*}
In the domain $D_-$, with $r=t/R>0$ and $\alpha=r/2$
\begin{align*}
&\left|\frac {\log\left(\frac {x} {y} \right)} {|1-x/y|^\alpha }\Lambda\left(\frac {t} {y}, \frac {x} {y} \right)\right|\le C\\
&\left| \Lambda\left(\frac {t} {y}, \frac {x} {y} \right)\right|\le C\frac {|1-x/y|^\alpha } {\log\left(\frac {x} {y} \right)} \le C'|1-x/y|^{\alpha-1}
\end{align*}
and then,
\begin{align*}
J _{ 2, 1 }^-\le C(f_0)\int  _{ \left| 1-\frac {x} {y}\right| <\rho,\, y\in D^- } |1-x/y|^{\alpha-1} dy\to 0,\,\,\hbox{as}\,\,\rho \to 0
\end{align*}
and arguing as before, 
\begin{align*}
&\lim _{ \rho +\frac {\delta } {\delta _1} \to 0 } \int  _{ \left| 1-\frac {x} {y}\right| <\rho,\, y\in D^- }
\Lambda\left(\frac {t} {y}, \frac {x'} {y} \right)|f_0\left( y\right)|\frac {dy} {y}=0.
\end{align*}
By (\ref{S3xr2}), if $|x-x'|<\delta $ and $|1-x/y|>\rho $ then $|1-x'/y|>\rho-\frac {\delta } {\delta_1 } $ and,  by...
\begin{align*}
&\Lambda \in C((0, \infty)\times \tilde D)\\
&\tilde D=\left\{z>0; |z-1|>\rho -\frac {\delta } {\delta _1}\right\}
\end{align*}
It follows that, for all $y\in (\delta _1, R)$, such that $|1-x/y|>\rho $ the function
\begin{align*}
x'\mapsto \Lambda\left(\frac {t} {y}, \frac {x'} {y} \right)
\end{align*}
is continuous at $x$. Then, for all $t>0$ fixed and $y\in (\delta _1, R)$ such that  $|1-x/y|>\rho $:
\begin{align*}
&\lim _{ x'\to x }\Lambda\left(\frac {t} {y}, \frac {x'} {y} \right)=\Lambda\left(\frac {t} {y}, \frac {x} {y} \right)\\
&\left|\Lambda\left(\frac {t} {y}, \frac {x'} {y} \right)-\Lambda\left(\frac {t} {y}, \frac {x} {y} \right) \right|\le 2
\sup\left\{\Lambda(\tau , z);  \tau \in \left(\frac {t} {R}, \frac {t} {\delta _1} \right), \,z\in \tilde D \right\}.
\end{align*}
We deduce from the Lebesgue's convergence Theorem,
$$
\lim _{ x\to x' }J_1=0
$$
and this gives the continuity of with respect to $x$. On the other hand, fix $x>0$ and $t>0$ and suppose that $t_n\to t$.
\begin{align*}
\int _0^\infty \left| \Lambda \left(\frac {t} {y}, \frac {x} {y} \right)- \Lambda \left(\frac {t_n} {y}, \frac {x} {y} \right)\right|f_0(y)\frac {dy} {y}\le
\int  _{ \left| 1-\frac {x} {y}\right| <\rho }\left| \Lambda \left(\frac {t} {y}, \frac {x} {y} \right)- \Lambda \left(\frac {t_n} {y}, \frac {x} {y} \right)\right|f_0(y)\frac {dy} {y}+\\
+\int _{ \left| 1-\frac {x} {y}\right| >\rho } \left| \Lambda \left(\frac {t} {y}, \frac {x} {y} \right)- \Lambda \left(\frac {t_n} {y}, \frac {x} {y} \right)\right|f_0(y)\frac {dy} {y}=I_1+I_2.
\end{align*}
In the first integral, for $\rho \in (0, 1)$, we have 
$$
\frac {x} {1+\rho }<y<\frac {x} {1-\rho }.
$$
In that range of values of $y$, we have the estimates,
\begin{align*}
\left| \Lambda \left(\frac {t} {y}, \frac {x} {y} \right)\right|\le  C\left| 1-\frac {x} {y}\right| ^{-1+2\frac {t} {y}}\le C\left| 1-\frac {x} {y}\right| ^{-1+2\frac {t(1-\rho )} {x}}
\le C\left| 1-\frac {x} {y}\right| ^{-1+\frac {t} {x}}\\
\left| \Lambda \left(\frac {t_n} {y}, \frac {x} {y} \right)\right|\le  C\left| 1-\frac {x} {y}\right| ^{-1+2\frac {t_n} {y}}\le C\left| 1-\frac {x} {y}\right| ^{-1+\frac {t(1-\rho )} {x}}
\le C\left| 1-\frac {x} {y}\right| ^{-1+\frac {t} {2x}}
\end{align*}
since we may assume that $t_n\ge t$ and $1-\rho >1/2$. We deduce,
\begin{align*}
I_1&\le C\int  _{ \left| 1-\frac {x} {y}\right| <\rho }\left| 1-\frac {x} {y}\right| ^{-1+\frac {t} {2x}}f_0(y)\frac {dy} {y}\\
&\le C||g|| _{ L^\infty\left(x/2, 2x\right) }\int  _{ |1-z|<\rho  }(1-z)^{-1+\frac {t} {2x}}\frac {dz} {z}\to 0,\,\,\hbox{as}\,\,\rho \to 0.
\end{align*}
This fixes $\rho $. We then have,
\begin{align*}
&\lim _{ n\to \infty } \Lambda \left(\frac {t_n} {y}, \frac {x} {y} \right)=\Lambda \left(\frac {t} {y}, \frac {x} {y} \right)=0,\,\,\hbox{for}\,\,\left| 1-\frac {x} {y}\right| >\rho\\
&\Lambda \left(\frac {t_n} {y}, \frac {x} {y} \right)\frac {|f_0(y)|} {y}\le C(t, x)|f_0(y)|
\end{align*}
and by the Lebesque's convergence Theorem, $J_2\to 0$ as $n\to \infty$.
\end{proof}

The next result is  useful to consider  initial data $f_0$ that are unbounded near the origin.
\begin{prop}
\label{S5P1}
Suppose that $g\in L^1(0, \infty)$ is such that, for some $\theta>0$,
\begin{equation*}
|||g|||_\theta=\sup _{ 0<x<1 }x^\theta |g(x)|+\sup _{ x>1 }|g(x)|<\infty
\end{equation*}
Then, $S(t)g\in L^\infty(0, \infty)$ and more precisely,
\begin{align}
|S(t)(g)(x)|&\le  C_G||g|| _{ L^\infty(1, \infty) }+Ct^{-\theta}\sup _{ 0<y<1 }y^\theta |g(y)|\,\,\,\forall x\in (0, 2) \label{S5P1E1}\\
|S(t)(g)(x)|&\le  C_G||g|| _{ L^\infty(1, \infty) }+C||g||_1\,\,\,\forall x>2\label{S5P1E2}
\end{align}
\end{prop}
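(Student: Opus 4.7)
The proof proceeds by splitting $g = g_1 + g_2$, with $g_1 = g\,\mathbf{1}_{(0,1)}$ and $g_2 = g\,\mathbf{1}_{(1,\infty)}$, and estimating $S(t)g_i$ separately via the kernel representation
\[
S(t)h(x)=\int_0^\infty \Lambda\!\left(\tfrac{t}{y},\tfrac{x}{y}\right) h(y)\,\tfrac{dy}{y}.
\]
The argument exploits the following properties of $\Lambda$, all established or implicit in \cite{m}: the off-diagonal decay $|\Lambda(\tau,\xi)|\le C/\max(\tau,\xi)^3$ for $\tau\ge 1$, the near-diagonal bound $|\Lambda(\tau,\xi)|\le C|1-\xi|^{-1+2\tau}$ already used in the proof of Proposition \ref{S3P7}, and the conservation identity $S(t)\mathbf{1}\equiv\mathbf{1}$ coming from $L(\mathbf{1})=0$.

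The $||g||_{L^\infty(1,\infty)}$ term in both (\ref{S5P1E1}) and (\ref{S5P1E2}) is handled by writing $|g_2|\le ||g||_{L^\infty(1,\infty)}\mathbf{1}_{(1,\infty)}$ and using the uniform integrability of $|\Lambda|$ along rays: $\int_1^\infty |\Lambda(t/y,x/y)|\,dy/y\le C_G$, which follows from $\int_0^\infty \Lambda(t/y,x/y)\,dy/y=1$ together with the pointwise bounds on $|\Lambda|$ from \cite{m}. For $x>2$ and $y\in(0,1)$, the ratio $x/y>2$ places $(t/y,x/y)$ far off the diagonal, and the decay bound yields $|\Lambda(t/y,x/y)|\le Cy^3/\max(x,t)^3\le Cy^3/8$; hence $|S(t)g_1(x)|\le C\int_0^1 y^2|g(y)|\,dy\le C||g||_1$, which combined with the previous step produces (\ref{S5P1E2}).

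The technical heart of the proof, and the expected main obstacle, is showing for $x\in(0,2)$
\[
I(t,x):=\int_0^1 |\Lambda(t/y,x/y)|\,y^{-\theta-1}\,dy \le Ct^{-\theta},
\]
which, via $|g_1(y)|\le y^{-\theta}\sup_{(0,1)} y^\theta|g|$, delivers the $t^{-\theta}\sup y^\theta|g|$ term in (\ref{S5P1E1}). The substitution $s=t/y$ recasts this as $I(t,x)=t^{-\theta}\int_t^\infty |\Lambda(s,xs/t)|\,s^{\theta-1}\,ds$, so it suffices to bound the remaining integral uniformly in $t>0$ and $x\in(0,2)$. I would split the range of $s$ into three pieces: for $s$ large, the decay $|\Lambda(s,\cdot)|\le C/s^3$ controls the integrand by $Cs^{\theta-4}$, integrable since $\theta<1<3$; for $s$ of order one, $|\Lambda|$ is uniformly bounded and the contribution is finite; the delicate range is $s$ near $t$, where $\xi=xs/t$ may traverse $1$ at $s=t/x$, forcing use of the near-diagonal bound $|1-\xi|^{-1+2s}$ together with the scaling identity $\int_0^\infty \Lambda(r\xi,\xi)\,d\xi/\xi=1$ (with $r=t/x$) to absorb the borderline singularity against the narrow range in $s$. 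This near-diagonal analysis, and the careful tracking of the $t$-dependence of both the cutoff at $s=t$ and the diagonal crossing at $s=t/x$, is the main difficulty; modulo it, combining the three steps yields both (\ref{S5P1E1}) and (\ref{S5P1E2}).
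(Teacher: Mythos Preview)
Your overall architecture---split $g$ at $y=1$, control the tail by $\|g\|_{L^\infty(1,\infty)}$ times a uniform bound on $\int|\Lambda|\,dy/y$, use off-diagonal decay for $x>2$, and reduce $x\in(0,2)$ to the estimate $I(t,x)\le Ct^{-\theta}$---is exactly the paper's. The paper bounds $I(t,x)$ by splitting at $y=t$ and citing pointwise estimates on $\Lambda$ from \cite{m}: Propositions~3.1--3.2 give $|\Lambda(\tau,\xi)|\le C\max(\tau,\xi)^{-3}$ for $\tau>1$, so $|\Lambda(t/y,x/y)|y^{-1-\theta}\le C\max(t,x)^{-3}y^{2-\theta}$ integrates over $(0,t)$ to $Ct^{-\theta}$; for $y\in(t,1)$ the paper simply invokes Proposition~3.5 of \cite{m} with the phrase ``similar arguments''.

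Your route via the conservation identity is in fact cleaner than the paper's once it is separated from the near-diagonal discussion. In your $s$-variable, on $(t,1)$ one has $s^{\theta-1}\le s^{-1}$ (since $\theta>0$, $s<1$), hence
\[
\int_t^1|\Lambda(s,xs/t)|\,s^{\theta-1}\,ds\le\int_0^\infty|\Lambda(s,xs/t)|\,\frac{ds}{s}=1,
\]
using $\Lambda\ge0$ (which holds because $L$ has the form $\int K(x,y)(f(y)-f(x))\,dy$ with $K\ge0$, so $S(t)$ is positivity-preserving). Combined with the $s\ge1$ decay this gives $I\le Ct^{-\theta}$ without appealing to Proposition~3.5. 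The near-diagonal bound $|1-\xi|^{-1+2s}$ that you propose to use alongside is a red herring here: integrating it alone near $y=x$ gives a contribution of order $x^{1-\theta}/t$, which for fixed $x\in(0,2)$ and $t\to0$ is strictly worse than $t^{-\theta}$, so it cannot close the estimate by itself. One small genuine gap remains in your $x>2$ step: the decay you quote requires $t/y\ge1$, so when $t<1$ the range $y\in(t,1)$ is not covered; the paper closes this with Proposition~3.5 of \cite{m}, or you can reuse the same conservation argument there.
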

\begin{proof}
By hypothesis, for all $\delta >0$,  $g\in L^\infty _{ loc }(\delta , \infty)$  and for all $x>0$,
\begin{align*}
\left|\int _2^\infty \Lambda\left(\frac {t} {y}, \frac {x} {y} \right)g(y)\frac {dy} {y}\right|\le C_G||g|| _{ L^\infty(2, \infty) }.
\end{align*}
On the other hand, for $x>2$
\begin{align*}
\left|\int _0^1 \Lambda\left(\frac {t} {y}, \frac {x} {y} \right)g(y)\frac {dy} {y}\right|\le
\sup _{ y\in (0, 1) }\left(\Lambda\left(\frac {t} {y}, \frac {x} {y} \right) \frac {1} {y}\right)||g||_1
\end{align*}
where  $x/y>2$ for $y\in (0, 1)$.  
Several cases are now possible. If $t>x$, then $t>y$  and it follows from Proposition 3.1 and Proposition 3.2 of \cite{m},
\begin{equation*}
\Lambda\left(\frac {t} {y}, \frac {x} {y} \right)\frac {1} {y}\le Ct^{-3}y^2\le C.
\end{equation*}
The same argument yields, for $t\in (2, x)$, 
\begin{equation*}
\Lambda\left(\frac {t} {y}, \frac {x} {y} \right)\frac {1} {y}\le Cx^{-3}y^2\le C
\end{equation*}
and, for $0<y<t<2$,
\begin{equation*}
\Lambda\left(\frac {t} {y}, \frac {x} {y} \right)\frac {1} {y}\le \frac {C} {y} \left( \frac {t} {y}\right)^{-3}\left(\frac {x} {t} \right)^{-1}\le Ct^{-2}x^{-1}y^2\le C
\end{equation*}
By Proposition 3.5 of \cite{m},  when  $t<1$, and $y\in (t, 1)$,
\begin{align*}
\Lambda\left(\frac {t} {y}, \frac {x} {y} \right)\frac {1} {y} &\le C _{ \varepsilon  }\left( \frac {x} {y}\right)^{-3+\varepsilon }\left( \frac {t} {y}\right)^{9-\varepsilon }
+C \left( \frac {x} {y}\right)^{-5}\left( \frac {t} {y}\right)^{7}\\
&= C _{ \varepsilon  } t^{9-\varepsilon }x^{-3+\varepsilon }y^{-6}+Ct^7x^{-5}y^{-2}\le C _{ \varepsilon  }t^{3-\varepsilon }x^{-3+\varepsilon }+Ct^5x^{-5}\le C
\end{align*}
and this ends the proof of (\ref{S5P1E2}).\\
When  $x\in (0, 2)$ we first write,
\begin{align*}
\left|\int _0^1\Lambda\left(\frac {t} {y}, \frac {x} {y} \right)g(y)\frac {dy} {y}\right|\le 
\sup _{ 0<y<1 }y^\theta |g(y)|\int _0^1\Lambda\left(\frac {t} {y}, \frac {x} {y} \right)\frac {dy} {y^{1+\theta}}.
\end{align*}
When $t>y$ Proposition 3.1 and Proposition 3.2 may be applied. It follows that, for $x<t$,
\begin{align*}
\left|\Lambda\left(\frac {t} {y}, \frac {x} {y} \right)\frac {1} {y^{1+\theta}}\right|\le Ct^{-3}y^{2-\theta}\le Ct^{-1-\theta}\\
\end{align*}
and for $x>t$,
\begin{align*}
\left|\Lambda\left(\frac {t} {y}, \frac {x} {y} \right)\frac {1} {y^{1+\theta}}\right|\le Cx^{-3}y^{2-\theta}\le Ct^{-1-\theta}.
\end{align*}
in both cases,
\begin{align*}
\int _0^t\left|\Lambda\left(\frac {t} {y}, \frac {x} {y} \right)\frac {1} {y^{1+\theta}}\right|dy\le Ct^{-\theta}.
\end{align*}
Similar arguments show the same estimate for $y\in (t, 1)$. Therefore, 
$$
\int _0^1\Lambda\left(\frac {t} {y}, \frac {x} {y} \right)\frac {dy} {y^{1+\theta}}\le C t^{-\theta}\,\,\,\forall x\in (0, 2)
$$
and (\ref{S5P1E1}) follows.
\end{proof}

\begin{lem}
\label{S5LCX1}
There exists a constant $\sigma _0^*\in (-2, -1)$ such that, for   $\theta \in [0, 1)$, $\beta \in (0, 1)$

\noindent
(i)  there exists  $C>0$ such that for all $f_0\in L^1(0, \infty)$ satisfying (\ref{S4Etheta}) and for all $t\in (0, 1)$
\begin{align}
\left|\frac {\partial } {\partial x}S(t) f_0(x)\right|\le
C\left(t^{-\theta-1}+x^{-1-\sigma _0^*}t^{\sigma _0^*-\theta }\right)\sup _{ 0\le y\le t }(y^\theta |f_0(y)|)+\nonumber\\
+Cx^{-1-\sigma _0^*}||f_0|| _1+Ct^2||f_0||_1,\,\,\,\forall x\in (0, t) \label{S5LCX1E1}
\end{align}
and
\begin{align}
\left|\frac {\partial } {\partial x}S(t) f_0(x)\right|\le
Cx^{-1-\beta }t^{\beta-1} ||f_0||_1,\,\,\,\forall x >t.\label{S5LCX1E10}
\end{align}

\noindent
(ii) There also exists a constant $C>0$ such that for all $f_0\in L^1(0, \infty)$ satisfying (\ref{S4Etheta}) for all $t>1$,
\begin{align}
\label{S5LCX1E20}
\left|\frac {\partial } {\partial x}S(t) f_0(x)\right|\le Ct^{-4}\sup _{ 0\le y\le t }(y^\theta |f_0(y)|)+Ct^2||f_0||_1,\,\,\forall x\in (0, t),
\end{align}
and
\begin{align}
\label{S5LCX1E2}
\left|\frac {\partial } {\partial x}S(t) f_0(x)\right|\le 
Ct^{\beta -1}x^{-1-\beta }||f_0||_1,\,\,\forall x >t.
\end{align}
\end{lem}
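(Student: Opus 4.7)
The strategy is to differentiate the representation (\ref{S2ESG}) under the integral sign,
\begin{equation*}
\frac{\partial}{\partial x}S(t)f_0(x)=\int_0^\infty \partial_z\Lambda\!\left(\frac{t}{y},\frac{x}{y}\right)f_0(y)\,\frac{dy}{y^2},
\end{equation*}
and to estimate each piece using the pointwise bounds on $\partial_z\Lambda$ obtained in \cite{m} via the Mellin representation of the fundamental solution. These bounds naturally split into three regimes: (a) the diagonal region $z=x/y\sim 1$, where $\partial_z\Lambda(\tau,z)$ carries a $|1-z|^{-1+2\tau}$-type singularity coming from the logarithmic singularity of $L$; (b) the region $z\ll 1$, where the size of $\partial_z\Lambda$ is governed by the first nontrivial pole $\sigma_0^*\in(-2,-1)$ of the Mellin symbol $-W(s)$ from (\ref{S4E7}), yielding $|\partial_z\Lambda(\tau,z)|\le C z^{-1-\sigma_0^*}\tau^{\sigma_0^*}$ for $\tau\in(0,1)$; (c) the region $z\gg 1$, where a contour shift to any vertical line $\mathrm{Re}(s)=-\beta$ with $\beta\in(0,1)$ yields the algebraic decay $|\partial_z\Lambda(\tau,z)|\le C z^{-1-\beta}\tau^{\beta}$.

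To prove (i) for $x<t<1$, I would split the $y$--integral as $(0,x)\cup(x,2t)\cup(2t,1)\cup(1,\infty)$. On the near--diagonal piece $(x,2t)$, the bound in regime (a) combined with $|f_0(y)|\le y^{-\theta}\sup_{0<s<t}(s^\theta|f_0(s)|)$ gives a contribution of size $C\,t^{-\theta-1}\sup_{0\le y\le t}(y^\theta|f_0(y)|)$. On $(0,x)$, regime (b) together with the same weighted bound on $f_0$ produces the term $C\,x^{-1-\sigma_0^*}t^{\sigma_0^*-\theta}\sup_{0\le y\le t}(y^\theta|f_0(y)|)$. On $(2t,1)$, still of type (b) but with $|f_0|$ controlled only by $\|f_0\|_1$, one obtains the $C\,x^{-1-\sigma_0^*}\|f_0\|_1$ term. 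For $y>1$, $\tau=t/y$ is small and a further contour shift yields $|\partial_z\Lambda(\tau,z)|\le C\tau^2 z^{-3}$, which produces the $t^2\|f_0\|_1$ term. The bound (\ref{S5LCX1E10}) for $x>t$ follows from regime (c) alone, integrating against $|f_0|\le \|f_0\|_\infty+\|f_0\|_1$ via an $L^\infty$ estimate of the resulting kernel in $y$.

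Part (ii) is handled similarly, except that the long--time asymptotics $|\Lambda(\tau,z)|\le C\tau^{-3}$ for $\tau\gg 1$ (and therefore $|\partial_z\Lambda|\le C\tau^{-4}$) replace regimes (a)--(b) on the small--$y$ part of the integral, producing the $t^{-4}$ factor in (\ref{S5LCX1E20}); the large--$y$ contribution and the case $x>t$ are treated exactly as above. The main obstacle in the whole argument is establishing the sharp pointwise bound on $\partial_z\Lambda$ with the precise exponent $\sigma_0^*$: this requires pushing the Mellin contour past $\mathrm{Re}(s)=-1$ and identifying $\sigma_0^*$ as the first real obstruction coming from a zero of $W$, and checking that no other poles intervene in the strip $(\sigma_0^*,-1)$. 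Once these pointwise bounds on $\partial_z\Lambda$ are in place, the remainder is a lengthy but routine bookkeeping of the subregions against $|||f_0|||_\theta$ and $\|f_0\|_1$.
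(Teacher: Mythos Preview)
Your overall strategy --- differentiate (\ref{S2ESG}) and control the integral via pointwise Mellin bounds on $\partial_z\Lambda$ --- is the same as the paper's, but your region decomposition for $x<t<1$ is misaligned with the actual asymptotic regimes of $\partial_z\Lambda$, and this causes the bookkeeping to fail. On your piece $(0,x)$ one has $z=x/y>1$ and $\tau=t/y>t/x>1$, so neither the small-$z$ asymptotic (your regime (b)) nor any small-$\tau$ bound applies there; that region is governed by the \emph{large}-$\tau$ decay $|\partial_z\Lambda(\tau,z)|\le C\tau^{-4}$ from Proposition~3.4 of \cite{m}. In the paper the natural split is at $y=t$, separating $\tau>1$ from $\tau<1$: the $\tau^{-4}$ bound on all of $(0,t)$ produces the $t^{-\theta-1}$ term, while the $\sigma_0^*$-terms come from $y\in(t,\infty)$, where $\tau<1$ and (since $x<t<y$) also $z<1$. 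There the bound from Proposition~3.5 of \cite{m} reads $|\partial_z\Lambda(\tau,z)|\le C(\tau z^{-1-\sigma_0^*}+\tau z^{-1-\sigma_1^*}+\tau^2)$, with $\tau^{1}$ rather than your $\tau^{\sigma_0^*}$; your version is a valid but strictly weaker upper bound, and it is \emph{too} weak: combined with only $\|f_0\|_1$ on the tail it would yield $x^{-1-\sigma_0^*}t^{\sigma_0^*}\|f_0\|_1$, which blows up as $t\to 0$, whereas the statement needs $x^{-1-\sigma_0^*}\|f_0\|_1$ with no negative power of $t$. Your ``near-diagonal'' piece $(x,2t)$ is also not the right object here: for $x<t$ the diagonal sits at $\tau>1$ where $|1-z|^{-1+2\tau}$ is not singular at all, so no regime-(a) analysis is needed.

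For $x>t$ there is a further point you should be aware of: the paper explicitly notes that the estimate (3.32) of Proposition~3.5 in \cite{m} is in error in this range and supplies a corrected bound (Lemma~\ref{S6lemFF}) via a more careful double contour shift, obtaining $|\partial_z\Lambda(\tau,z)|\le C(z^{-1-\beta}+z^{-1-c'}\tau^{c'-\beta'})$ for $\tau<1$, $z>\tau$. Your single-term bound $z^{-1-\beta}\tau^{\beta}$ in regime (c) is not what the corrected analysis gives, and (\ref{S5LCX1E10}) does not follow ``from regime (c) alone'': the piece $y\in(0,t)$ (where $\tau>1$, $z>1$) has to be handled separately with the $z^{-4}$-type decay from Proposition~3.4 and then absorbed.
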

\begin{proof}
Suppose first that $x\in (0, t)$ and write
\begin{align*}
&\frac {\partial f} {\partial x}(t, x)=I_1+I_2\\
&I_1=\int _0^t \frac {\partial \Lambda } {\partial z}  \left(\frac {t} {y}, \frac {x} {y} \right)  f_0(y) \frac {dy} {y^2},\,\,\,
I_2=\int _t^\infty  \frac {\partial \Lambda } {\partial z}   \left(\frac {t} {y}, \frac {x} {y} \right)  f_0(y) \frac {dy} {y^2}.
\end{align*}
In the term $I_1$, $y\in (0, t)$ and then,  Proposition 3.4 in \cite{m} may be applied to obtain,
\begin{align}
|I_1|&\le C\int _0^t  \left(\frac {t} {y} \right)^{-4} \left(A+\mathcal O\left(\left|\frac {x} {t}\right|^{\delta } \right)\right)f_0(y)\frac {dy} {y^2}\le Ct^{-4}\int _0^t f_0(y) y^2dy\nonumber\\
&\le 
\begin{cases}
\displaystyle{C\sup _{ 0\le y\le t }(y^\theta |f_0(y)|)t^{-4}\int _0^t y^{2-\theta}dy=Ct^{-1-\theta}\sup _{ 0\le y\le t }(y^\theta |f_0(y)|),\,\,t<1},\\
\displaystyle{Ct^{-4}\int _0^1 f_0(y) y^2dy+Ct^{-4}\int _1^t f_0(y) y^2dy\,\,\,t>1}
\end{cases}\nonumber\\
&\le
\begin{cases}
\displaystyle{C\sup _{ 0\le y\le t }(y^\theta |f_0(y)|)t^{-4}\int _0^t y^{2-\theta}dy=Ct^{-1-\theta}\sup _{ 0\le y\le t }(y^\theta |f_0(y)|),\,\,t<1},\\
\displaystyle{Ct^{-4}\sup _{ 0\le y\le t }(y^\theta |f_0(y)|)+Ct^{-2}\begin{pmatrix}||f_0||_1\\or\\ t||f_0|| _{ L^\infty(1, \infty) }    \end{pmatrix}\,\,\,t>1.} \label{S6EI11}
\end{cases}
\end{align}
In the term  $I_2$, $t/y<1$ and then,  by (3.32) in Proposition 3.5 of \cite{m}, there exists   constant $C$ such that,
\begin{align*}
\label{S4P9E1DXZ}
\left|\frac {\partial \Lambda } {\partial z}  \left(\frac {t} {y}, \frac {x} {y} \right)\right|
\le C\left( tx^{-1-\sigma _0^*}y^{\sigma _0^*}+ tx^{-1-\sigma _1^*}y^{\sigma _1^*}+t^{2} y^{-2}\right),\forall x\in (0, t).
\end{align*}
where $\sigma _j^*$ are given real numbers such that $\sigma _j^*\in (-2(2j+1), -2(2j+1)+1)$. Since $y>t>x$ in the integration's domain of $I_2$ and $\sigma _1^*<\sigma _0^*<0$ it follows that
\begin{align*}
 tx^{-1-\sigma _1^*}y^{\sigma _1^*}\le tx^{-1-\sigma _0^*}y^{\sigma _0^*}
 \end{align*}

Then,
\begin{equation*}
|I_2|\le C tx^{-1-\sigma _0^*}  \int _t^\infty y^{\sigma _0^*-2}|f_0(y)|dy+Ct^2\int _t^\infty |f_0(y)|dy
\end{equation*}
where the first term in the right hand side may estimated as follows. If $t\in (0, 1)$, 
\begin{align*}
&\int _t^\infty y^{\sigma _0^*-2}|f_0(y)|dy\le  \int _t^1y^{\sigma _0^*-2}|f_0(y)|dy+\int _1^\infty y^{\sigma _0^*-2}|f_0(y)|dy\\
&\le C\sup _{ 0\le y\le 1 }(y^\theta |f_0(y)|) t^{-1+\sigma _0^*-\theta}+
||f_0|| _{ 1 }
\end{align*}
and then
\begin{align*}
tx^{-1-\sigma _0^*}  \int _t^\infty y^{\sigma _0^*-2}|f_0(y)|dy\le  
C   t^{\sigma _0^*-\theta}x^{-1-\sigma _0^*}\sup _{ 0\le y\le 1 }(y^\theta |f_0(y)|)+ tx^{-1-\sigma _0^*}||f_0|| _1.
\end{align*}
Since $x\in (0, t)$ and $\sigma _0^*<0$, it follows that,
\begin{align*}
& t^{\sigma _0^*-\theta}x^{-1-\sigma _0^*}=t^{-\theta}x^{-1}\left( \frac {x} {t}\right)^{-\sigma _0^*}<t^{-\theta}x^{-1},\\
& tx^{-1-\sigma _0^*}=t^{-\theta}x^{-1}\left( x^{-\sigma _0^*}t^{1+\theta}\right)\le t^{-\theta}x^{-1} t^{-\sigma _0^*+1+\theta}\le t^{-\theta}x^{-1}
\end{align*}
and then,
\begin{align}
|I_2|
\le C   t^{-\theta}x^{-1}\left(\sup _{ 0\le y\le 1 }(y^\theta |f_0(y)|)+ ||f_0|| _1\right)+Ct^2||f_0||_1.
 \label{S6EI21}
\end{align}
If on the contrary $t>1$,
\begin{align}
&|I_2|\le C tx^{-1-\sigma _0^*} \int _t^\infty y^{\sigma _0^*-2}|f_0(y)|dy+Ct^2\int _t^\infty |f_0(y)|dy\nonumber\\
&\le \displaystyle{tx^{-1-\sigma _0^*}\begin{pmatrix} Ct^{\sigma _0^*-1}||f_0|| _{ L^\infty (1, \infty) } \\ or \\    t^{\sigma _0^*-2}||f_0|| _1 \end{pmatrix}}
+Ct^2||f_0||_1. \label{S6EI22}
\end{align}
This shows the result for all $t>0$ and $x<t$. If $x>t>y$, by  Proposition 3.4, for $\varepsilon >0$ as small as desired there exists $C_\varepsilon >0$ such that,
\begin{align*}
\left|\frac {\partial \Lambda } {\partial z}  \left(\frac {t} {y}, \frac {x} {y} \right)\right|
\le C_\varepsilon \left(x^{-4}y^4+y^4x^{-4-\varepsilon }t^\varepsilon \right)\le C_\varepsilon x^{-4}y^4,\,\,\,\forall x>t.
\end{align*}
Then,
\begin{align}
|I_1|&\le Cx^{-4}\int _0^t y^2 f_0(y)dy\nonumber\\
&\le 
\begin{cases}
\displaystyle{C\sup _{ 0\le y\le t }(y^\theta |f_0(y)|)x^{-4}\int _0^t y^{2-\theta}dy=Cx^{-4}t^{3-\theta}\sup _{ 0\le y\le t }(y^\theta |f_0(y)|),\,\,t<1},\nonumber \\
\displaystyle{Cx^{-4}\int _0^1 f_0(y) y^2dy+Cx^{-4}\int _1^t f_0(y) y^2dy\,\,\,t>1}
\end{cases} \nonumber\\
&\le
\begin{cases}
\displaystyle{Cx^{-4}t^{3-\theta}\sup _{ 0\le y\le t }(y^\theta |f_0(y)|),\,\,t<1},\\
\displaystyle{Cx^{-4}\sup _{ 0\le y\le t }(y^\theta |f_0(y)|)+Cx^{-4}t^{2}\begin{pmatrix}||f_0||_1\\or\\ t||f_0|| _{ L^\infty(1, \infty) }    \end{pmatrix}\,\,\,t>1.}
\end{cases} \label{S6EI12}
\end{align}
For $y>t$ and $x>t$, we must argue as in the previous cases, unfortunately there is an error in the estimate (3.32) of Proposition 3.5 in \cite{m}. The error is corrected in Lemma \ref{S6lemFF} below and gives
\begin{align*}
\left|\frac {\partial \Lambda } {\partial z}  \left(\frac {t} {y}, \frac {x} {y} \right)\right|
\le C\left( \left(\frac {x} {y} \right)^{-\beta -1}+  \left(\frac {x} {y} \right)^{-1-c'}\left(\frac {t} {y} \right)^{-\beta '+c'}\right)
\end{align*}
Therefore
\begin{align*}
|I_2|\le C_\beta x^{-\beta -1}\int _t^\infty y^{\beta-1}|f_0(y)|dy+Cx^{-1+c'}t^{-\beta '+c'}\int _t^\infty y^{-1+\beta '}|f_0(y)|dy
\end{align*}
and then, for $\beta \in (0, 1)$,
\begin{align}
|I_2|\le C\left(t^{\beta -1}x^{-1-\beta }+x^{-1-c'}t^{c'-1}\right)||f_0||_1,\,\,\forall x> t>0\label{S6EI23}
\end{align}
Since  $x^{-4}t^{3-\theta}<x^{-1-\sigma _0^*}t^{\sigma _0^*-\theta }$  when $x>t$, this ends the proof for all $t>0$ and $x>t$.
\end{proof}
The following straightforward Corollary simplifies somewhat Lemma's  \ref{S5LCX1} statement, 
at the price of a little loss in the estimates.
\begin{cor}
\label{S4C26m}
There exists a constant $C>0$ such that, for all $f_0\in L^1$, $y^\theta f_0\in L^\infty(0, 1)$,
\begin{align}
\label{S4C26mE1}
\left|\frac {\partial } {\partial x}S(t) f_0(x)\right|\le 
\begin{cases}
\displaystyle{C x^{-1+\delta }t^{-\theta-\delta }|||f_0||| _{\theta, 1 },\,\,\,0<x<t<1},\\
\displaystyle{Ct^2 |||f_0||| _{\theta, 1 },\,\,\,t>1,\,\,0<x<t},\\
\displaystyle{Ct^{\beta -1}x^{-1-\beta }|||f_0||| _{ \theta, 1 },\,\,\,x>t.}
\end{cases}
\end{align}
for all $\delta \in (0, 1)$,
where $|||f_0||| _{\theta, 1 }=\sup _{ 0\le y\le t }(y^\theta |f_0(y)|)+||f_0||_1$.
\end{cor}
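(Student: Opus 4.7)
\smallskip
\noindent\textbf{Proof proposal for Corollary \ref{S4C26m}.} The plan is to simply deduce the three estimates from the four estimates in Lemma \ref{S5LCX1}, merging the two source terms $\sup_{0\le y\le t}(y^\theta|f_0(y)|)$ and $\|f_0\|_1$ into the single quantity $|||f_0|||_{\theta,1}$ and dominating each individual contribution by the right--hand side claimed in \eqref{S4C26mE1}.

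First, for $x>t$ (with $t$ either in $(0,1)$ or in $(1,\infty)$), the bounds \eqref{S5LCX1E10} and \eqref{S5LCX1E2} of Lemma \ref{S5LCX1} are already exactly of the required form, with only $\|f_0\|_1$ on the right. Replacing $\|f_0\|_1$ by the larger $|||f_0|||_{\theta,1}$ yields the third line of \eqref{S4C26mE1}. Next, for $t>1$ and $0<x<t$, the bound \eqref{S5LCX1E20} gives a term $Ct^{-4}\sup_{0\le y\le t}(y^\theta|f_0(y)|)$ and a term $Ct^2\|f_0\|_1$; since $t\ge 1$ we have $t^{-4}\le t^2$, so both terms are majorized by $Ct^2|||f_0|||_{\theta,1}$, which is the second line of \eqref{S4C26mE1}.

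The only case requiring a little work is $0<x<t<1$. Starting from \eqref{S5LCX1E1}, I would verify, for any fixed $\delta\in(0,1)$, the three pointwise inequalities
\begin{align*}
t^{-\theta-1} &\le x^{-1+\delta}\,t^{-\theta-\delta},\\
x^{-1-\sigma_0^*}\,t^{\sigma_0^*-\theta} &\le x^{-1+\delta}\,t^{-\theta-\delta},\\
x^{-1-\sigma_0^*}+t^2 &\le x^{-1+\delta}\,t^{-\theta-\delta}.
\end{align*}
The first one follows from $t^{-\theta-1}=t^{-\theta-\delta}\,t^{-1+\delta}\le t^{-\theta-\delta}\,x^{-1+\delta}$, using $x<t<1$ and $-1+\delta<0$. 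The second one is equivalent to $(x/t)^{-\sigma_0^*-\delta}\le 1$, which holds because $\sigma_0^*\in(-2,-1)$ and $\delta\in(0,1)$ together ensure $-\sigma_0^*-\delta>0$, and $x/t<1$. For the third one, $x^{-1-\sigma_0^*}\le 1\le x^{-1+\delta}t^{-\theta-\delta}$ since $-1-\sigma_0^*>0$ and $x<1$, and $t^2\le 1\le t^{-\theta-\delta}\le x^{-1+\delta}t^{-\theta-\delta}$. Combining the three inequalities with \eqref{S5LCX1E1} and collecting the coefficients into $|||f_0|||_{\theta,1}$ yields the first line of \eqref{S4C26mE1}.

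There is no real obstacle here; the only thing to watch is that the algebraic comparisons in Case 1 use the range $\sigma_0^*\in(-2,-1)$ to guarantee $-\sigma_0^*-\delta>0$ for every $\delta\in(0,1)$, which is what makes the estimate uniform in $\delta$.
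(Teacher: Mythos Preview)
Your proposal is correct and follows exactly the intended route: the paper itself gives no proof beyond calling the corollary a ``straightforward'' simplification of Lemma \ref{S5LCX1}, and you have supplied precisely those straightforward pointwise comparisons. The only thing worth adding is that your check of the second inequality in Case~1 uses $-\sigma_0^*-\delta>0$, which indeed holds for every $\delta\in(0,1)$ since $\sigma_0^*<-1$; you noted this correctly at the end.
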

\begin{rem}
\label{S4C26mR}
The right hand side of  (\ref{S4C26mE1}) may be denoted as $h(t)g(x)|||f_0||| _{ \theta, 1 }$ and  the expressions  of the functions $g$ and $h$ in the different domains of $t$ and $x$ are given  in (\ref{S4C26mE1}).
\end{rem}
\begin{lem}
\label{S6lemFF}
For all $\beta \in (0, 2), \beta '\in (-1, 0)$ and $c'$ such that $-1<\beta '<0<c'<1+\beta '<1$ there exists a  constant $C0$ such that 
\begin{align*}
\left|\frac {\partial \Lambda } {\partial x}  \left(t, x \right)\right|
\le C_\beta  \left(x^{-\beta -1}+x^{-1-c'}t^{-\beta '+c'}\right)\,\,\,\forall t\in (0, 1),\,\,x>t.
\end{align*}
\end{lem}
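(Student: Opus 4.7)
The plan is to revisit the Mellin--Barnes representation of $\Lambda(t,x)$ developed in \cite{m} and track more carefully the decay coming from the contour shift, which is precisely where the original (3.32) was mis-estimated. Recall that $\Lambda$ is built from a Bromwich integral of the form
\begin{equation*}
\Lambda(t,x) = \frac{1}{2i\pi}\int_{\mathrm{Re}(s)=\sigma} B(s)\,\Phi(t,s)\,x^{-s}\,ds,
\end{equation*}
for $\sigma$ in the strip of analyticity of the integrand, where $B$ is given by (\ref{S3PBE0}) and $\Phi(t,s)$ encapsulates the $t$-dependence inherited from the functional equation (\ref{S3PBE1}). Differentiating in $x$ under the integral gives
\begin{equation*}
\partial_x\Lambda(t,x) = -\frac{1}{2i\pi}\int_{\mathrm{Re}(s)=\sigma} sB(s)\,\Phi(t,s)\,x^{-s-1}\,ds,
\end{equation*}
with absolute convergence ensured by the polynomial decay of $|B(s)|$ along vertical lines, which follows from the standard asymptotics of $\psi$ and $\pi\cot(\pi s/4)$ entering $W(s)$ in (\ref{S4E7}).

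For the first term $x^{-\beta-1}$, I would shift the contour to $\mathrm{Re}(s) = \beta$. Since $\beta\in(0,2)$ stays inside the natural strip of analyticity, and since $x > t$ with $t \in (0,1)$ keeps $|\Phi(t,s)|$ uniformly bounded on that vertical line, the Bromwich integral is majorised by $C x^{-\beta-1}\int_{\mathbb R}|sB(\beta+i\tau)|d\tau$, which is a finite constant depending on $\beta$. The residues picked up between $\sigma$ and $\beta$ (from poles of $B$ and from the $s$-factor in front of poles of $\Phi$) are either absent under an appropriate choice of the original $\sigma$ or produce terms of the same order $x^{-\beta-1}$, absorbed in the bound.

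For the second term, I would instead shift the contour to $\mathrm{Re}(s) = c'$ with $0 < c' < 1+\beta'$. On this line $\Phi(t,s)$ carries a factor $t^{\mathrm{Re}(s) - \beta'} = t^{c'-\beta'}$ coming from the shifted argument in the functional equation (\ref{S3PBE1}): trading $x$-decay for $t$-decay through the contour shift is exactly what produces the combined factor $x^{-1-c'}t^{c'-\beta'}$. The bounds $c' > 0$ and $c' < 1+\beta'$ are dictated respectively by the integrability of $x^{-1-c'}$ against $|sB(s)|$ (uniformly in $\tau = \mathrm{Im}\,s$) and by the requirement that $\mathrm{Re}(s)-\beta' > 0$ so that $t^{c'-\beta'}$ is a positive (hence harmless) power of $t \in (0,1)$.

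The main obstacle is the careful accounting of the residues at the poles of $B(s)$ swept during both contour deformations: it is precisely an over-hasty treatment of one such residue that produced the flaw in the original estimate (3.32) of \cite{m}. The remedy is to introduce the intermediate exponent $c'$ strictly between $0$ and $1+\beta'$, which realises an additive (rather than multiplicative) decomposition $x^{-\beta-1} + x^{-1-c'}t^{c'-\beta'}$; this avoids forcing a single contour to carry both the $x$-decay and the $t$-decay simultaneously, which was the source of the previous error. Once the residue bookkeeping is fixed, the estimates above glue together to give exactly the bound claimed in the lemma, uniformly in $t \in (0,1)$ and $x > t$.
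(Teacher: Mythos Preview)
Your high-level strategy---Mellin--Barnes representation plus contour shifts trading $x$-decay against $t$-decay---is exactly right, but the implementation has a structural gap. You work with a \emph{single} Bromwich integral in $s$ and an unspecified $\Phi(t,s)$, then propose to shift the $s$-contour to $\mathrm{Re}(s)=\beta$ ``for the first term'' and to $\mathrm{Re}(s)=c'$ ``for the second term''. But one integral has one contour: shifting it to two different abscissae does not produce an additive decomposition, it produces two alternative bounds on the \emph{same} quantity. You also never say where $\beta'$ enters a one-variable representation, so the claim that $\Phi(t,s)$ acquires a factor $t^{c'-\beta'}$ on $\mathrm{Re}(s)=c'$ is unsubstantiated.

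In the paper the representation is a genuine \emph{double} integral,
\[
J(t,x)=-\frac{1}{4\pi^2}\int_{\mathrm{Re}(s)=c}\int_{\mathrm{Re}(\sigma)=\beta}
\frac{t^{-\sigma+s}B(s)\Gamma(\sigma-s)}{B(\sigma)}\,s^{-2}x^{-s-1}\,d\sigma\,ds,
\]
with $\partial_x\Lambda=(x\partial_x)^3 J$. The additive split is produced by a concrete residue: pushing the $s$-contour from $c<\beta$ to $c'>\beta$ crosses the pole of $\Gamma(\sigma-s)$ at $s=\sigma$, leaving a \emph{residue term} $\tfrac{1}{2i\pi}\int_{\mathrm{Re}(\sigma)=\beta}\sigma^{-2}x^{-\sigma-1}\,d\sigma=O(x^{-\beta-1})$ plus a remainder $J_1$. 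Only \emph{then} is the second variable exploited: in $J_1$ the $\sigma$-contour is moved to $\mathrm{Re}(\sigma)=\beta'\in(-1,0)$ (no poles are crossed since $B(\sigma)^{-1}$ is analytic on $(-1,2)$), and the explicit factor $t^{-\sigma+s}$ with $\mathrm{Re}(s)=c'$, $\mathrm{Re}(\sigma)=\beta'$ yields $|t^{-\sigma+s}|=t^{c'-\beta'}$, hence $J_1=O(x^{-1-c'}t^{c'-\beta'})$. The constraint $c'<1+\beta'$ is what keeps $\Gamma(\sigma-s)$ pole-free during the $\sigma$-shift, not an integrability condition on $x^{-1-c'}$ as you suggest. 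So the missing ingredient in your sketch is precisely this two-variable structure and the residue at $s=\sigma$ of $\Gamma(\sigma-s)$; without it there is no mechanism generating the sum of two terms.
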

\begin{proof}
The proof is based on the following representation formula of the function $\partial _x\Lambda (t, x)$:

\begin{align*}
&\frac {\partial \Lambda } {\partial x}(t, x)=\left( x\frac {\partial } {\partial x}\right)^3(J(t, x))\\
&J(t, x)=-\frac {1} {4\pi^2} \int  _{ c-i\infty }^{c+i\infty} 
\int  _{ \mathscr Re(\sigma   )=\beta  }\frac {t^{-\sigma+s }B(s)\Gamma (\sigma  -s) } { B(\sigma   )}
s^{-2}x^{-s-1}d\sigma   ds.
\end{align*}
for all $\beta \in (0, 2)$ and $c\in (0, \beta )$. As indicated in \cite{m}, when $x/t>x>1$  it is suitable to deform first the  $s$ contour integrals  in $J$ towards larger values of $\mathscr Re (s)$. Since by construction $c<\beta $ this  first requires to cross  the pole of  $\Gamma (\sigma  -s)$ at $s =\sigma$, from where, for $c'\in (\beta , 2)$,
\begin{align*}
J(t, x)&=\frac {1} {2 i\pi }
\int  _{ \mathscr Re(\sigma   )=\beta  }\sigma  ^{-2}x^{-\sigma-1 }d\sigma +J_1(t, x)\\
J_1(t, x)&=-\frac {x^{-1}} {4\pi^2}
\int  _{ \mathscr Re(\sigma   )=\beta  }\int  _{ c'-i\infty }^{c'+i\infty} \frac {t^{-\sigma}B(s)\Gamma (\sigma  -s) } { B(\sigma   )s^2}
\left(\frac {x} {t} \right)^{-s}ds   d\sigma
\end{align*}
Since  $B(s)$ is analytic on $\mathscr Re(s)\in (0, 1)$ and $(B(\sigma ))^{-1}$ is analytic  on $\mathscr Re(\sigma )\in (-1, 2)$ it is possible to move the $\sigma $-integration contour towards lower values of $\mathscr Re(\sigma   )$ and 
\begin{align}
J_1(t, x)&=-\frac {x^{-1}} {4\pi^2}
\int  _{ \mathscr Re(\sigma   )=\beta'  }\int  _{ c'-i\infty }^{c'+i\infty} \frac {t^{-\sigma}B(s)\Gamma (\sigma  -s) } { B(\sigma   )s^2}
\left(\frac {x} {t} \right)^{-s}ds   d\sigma \label{S6lemFFE1}
\end{align}
where $\beta '$ and $c'$ satisfy $-1<\beta '<0<c'<1+\beta '<1$.
For $\beta \in (0, 2)$
\begin{align*}
\left|\int  _{ \mathscr Re(\sigma   )=\beta  }\sigma  ^{-2}x^{-\sigma-1 }d\sigma \right|\le \sqrt 2
x^{-1-\beta }\int  _{ \RR }\frac {dv} {(\beta^2+v^2)}.
\end{align*}
Since $x>t$, $c'>0$ and  $0<t<1$, $\beta '<0$,  $J_1(t, x)=\mathcal O\left( x^{-1-c'}t^{-\beta '+c'}\right)$ and the result~follows.
\end{proof}
We  close this Appendix with the following Remark and  some simple estimates.
\begin{rem}
\label{S5R1}
If $f_0\in L^\infty\cap L^1(0, \infty)$ it follows from the estimate  (1.37) in \cite{m}
\begin{align}
&\frac {\partial } {\partial t}\int _0^\infty f_0(z) \Lambda \left(\frac {t} {z}, \frac {x} {z} \right)\frac {dz} {z}\le
\frac {C t^{3-\theta}||x^\theta f_0|| _{ L^\infty(0, t) }} {\max (t^4, x^4)}+C||f_0|| _{ \infty }\1 _{ 2x/3<t<2x }+\nonumber\\
&\hskip 9cm +C\zeta _{ \theta  } (t,x) ||f_0|| _{ 1, \theta }
\label{S5R1}
\end{align}
where the function $\zeta  _{ \theta }(t, x)$ is defined in (\ref{S1Zetai}). 
\end{rem}
\begin{prop} 
\label{SAP5}
For all $t>0$ fixed,
\begin{align}
&\int _0^t \zeta _{ \theta } (s, x)ds\le \frac {C\min \left(t^2, t^{3-\theta}\right)} {x^3} \1 _{ t<2x/3 }+C\min (x^{-1}, x^{-\theta})\1 _{2x/3< t}=:\omega  _{ 1,\theta }(t, x)\label{SAEw1}\\
&\int _0^t \zeta _{0 } (s, x)(t-s)^{-\theta}ds\le \frac {C \min (t^{2-\theta}, t^{3-\theta})} {x^3} \1 _{ t<2x/3 }+\nonumber\\
&\hskip 6.5cm +C\min (x^{-1-\theta}, x^{-\theta})\1 _{2x/3< t}=:\omega  _{ 2 } (t, x) \label{SAEw2} \\
&\int _0^t\left(1+\xi (t-s, x)\right)ds=\left(t+\frac {\min(t^4, x^4)} {4x^4} \right)+\log\left( \frac {t} {x}\right)_+=:\Xi_1(t, x) \label{SAEXi1}\\
&\int _0^t s^{-\theta} \xi (t-s, x)ds\le C x^{-4}t^{4-\theta}\1 _{ x\ge t }+Ct^{-\theta}\left(1+\log\left( \frac {t} {x}\right)\right)\1 _{ x\le t }=:\Xi_2(t, x) \label{SAEXi2}
\end{align}
\end{prop}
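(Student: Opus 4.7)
\textbf{Proof plan for Proposition \ref{SAP5}.}
All four estimates reduce to elementary integration once the integration variable is split according to the natural regimes of the integrands. Recall that
\[
\zeta _\theta (s, x)=\frac {\min (s, s^{2-\theta})} {x^3}\1 _{ s<2x/3 }+\frac {\1 _{ 2x/3<s<2x }} {\max(x^2, x^{1+\theta})}+\frac {x\1 _{ 2x<s }} {\max(s^{2+\theta}, s^3)},
\]
and $\xi (t-s, x)$ equals $(t-s)^{-1}$ when $t-s>x$ and $(t-s)^{3}/x^{4}$ when $t-s<x$. The plan is to treat each integral by splitting $(0,t)$ at the thresholds $s=2x/3$, $s=2x$, $s=t-x$, $s=1$ or $s=t/2$ as appropriate, bounding each piece by a direct primitive.

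For \eqref{SAEw1}, distinguish $t\le 2x/3$ from $t>2x/3$. In the first case only the first term of $\zeta _\theta$ is active and $\int _0^t\min(s,s^{2-\theta})x^{-3}ds$ is controlled by $t^{3-\theta}/((3-\theta)x^3)$ and by $t^{2}/(2x^3)$ respectively, yielding $C\min(t^{2},t^{3-\theta})/x^{3}$. In the second case, integrate the middle piece over $(2x/3,\min(t,2x))$, picking up $\le 2x/\max(x^{2},x^{1+\theta})=2\min(x^{-1},x^{-\theta})$; integrate the first piece over $(0,2x/3)$, splitting at $s=1$ when $2x/3>1$ to produce the same bound; and integrate the third piece $x/\max(s^{2+\theta},s^{3})$ over $(2x,t)$, splitting at $s=1$ as needed. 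Each of the three contributions is $\le C\min(x^{-1},x^{-\theta})$, giving \eqref{SAEw1}. For \eqref{SAEw2} the same four regions are used, but an extra split of $(0,t)$ at $s=t/2$ is inserted: on $(0,t/2)$ use $(t-s)^{-\theta}\le(t/2)^{-\theta}$ and invoke the bounds just obtained (with a different $\theta=0$ in $\zeta_0$) together with \eqref{SAEw1} evaluated at $t/2$; on $(t/2,t)$ bound $\zeta _0$ by its supremum over $(t/2,t)$ (the supremum is $\min(1,x^{-1})$ on the relevant region) and integrate the singular factor to get a harmless $t^{1-\theta}/(1-\theta)$. Combining yields \eqref{SAEw2}.

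For \eqref{SAEXi1}, the integral of the constant $1$ contributes $t$. If $t\le x$, then $t-s<x$ on $(0,t)$ and $\int_0^t\xi(t-s,x)ds=\int_0^t(t-s)^{3}/x^{4}ds=t^{4}/(4x^{4})$. If $t>x$, split at $s=t-x$: on $(0,t-x)$ we have $t-s>x$ so $\int_0^{t-x}(t-s)^{-1}ds=\log(t/x)$; on $(t-x,t)$ we have $t-s<x$ so $\int_{t-x}^t(t-s)^{3}/x^{4}ds=1/4$. The two cases fit the right-hand side with a term $\min(t^{4},x^{4})/(4x^{4})$ and an added $\log(t/x)_{+}$, proving \eqref{SAEXi1}. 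For \eqref{SAEXi2} the same split at $s=t-x$ is combined with the split at $s=t/2$ to handle the singular factor $s^{-\theta}$. When $x\ge t$ one simply estimates $\xi (t-s,x)\le (t-s)^{3}/x^{4}$ throughout and obtains $\int_0^t s^{-\theta}(t-s)^{3}x^{-4}ds\le Cx^{-4}t^{4-\theta}$ by a standard Beta-function calculation. When $x<t$ the outer integral $\int_0^{t-x}s^{-\theta}(t-s)^{-1}ds$ is bounded, via the split at $t/2$, by $Ct^{-\theta}(1+\log(t/x))$, while the remaining $\int_{t-x}^t s^{-\theta}(t-s)^{3}x^{-4}ds$ contributes only $Ct^{-\theta}$, yielding \eqref{SAEXi2}.

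The main (but still modest) obstacle is bookkeeping for \eqref{SAEw2}: the three-piece structure of $\zeta_0$, the splits at $2x/3$, $2x$, $1$, and at $t/2$ needed to separate the two singularities $s\to 0^{+}$ (of $\zeta_0$ through the factor $s$ on the first piece) and $s\to t^{-}$ (of the kernel $(t-s)^{-\theta}$), must all be tracked carefully so that the resulting estimates match the unified form $\omega _{2}(t,x)$; everything else is routine.
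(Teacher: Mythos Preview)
Your plan is correct and, for \eqref{SAEw1} and \eqref{SAEXi1}, is essentially identical to the paper's argument: the paper also splits at $t=2x/3$, $t=2x$ and integrates each piece of $\zeta_\theta$ directly, and computes $\int_0^t\xi(t-s,x)ds$ by the same split at $s=t-x$.

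The genuine difference is in \eqref{SAEw2} and \eqref{SAEXi2}. The paper does \emph{not} split at $s=t/2$; instead it rescales $s=tr$ (or $s=t\rho$) and identifies each piece as a Beta-type integral, e.g.\ $\int_0^t s(t-s)^{-\theta}ds=t^{2-\theta}\int_0^1 r(1-r)^{-\theta}dr$, $\int_{2x}^t s^{-3}(t-s)^{-\theta}ds=t^{-2-\theta}\int_{2x/t}^1\rho^{-3}(1-\rho)^{-\theta}d\rho$, and for \eqref{SAEXi2} it even writes an explicit closed form involving the incomplete Beta function and expands a remainder $r(t,x)$ asymptotically as $x/t\to 0$. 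Your ``split at $t/2$'' device is a legitimate and more conceptual alternative: it separates the two endpoint singularities (of $\zeta_0$ or $s^{-\theta}$ near $0$, of $(t-s)^{-\theta}$ or $(t-s)^{-1}$ near $t$) and reduces each half to a problem with only one singular factor, avoiding the explicit special-function bookkeeping. The paper's route gives exact constants and formulas; yours gives the required bounds with less computation. One small slip: the supremum of $\zeta_0(\cdot,x)$ over $(t/2,t)$ is of order $\min(x^{-1},x^{-2})$ (from the middle piece $1/\max(x,x^2)$), not $\min(1,x^{-1})$; once you substitute the correct value, your estimate on $(t/2,t)$ still closes because in that regime $t$ and $x$ are comparable (or $t\gg x$, in which case the tail piece $x/\max(s^2,s^3)$ takes over).
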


\begin{proof}
\begin{align*}
\int _0^t \zeta _{ \theta } (s, x)ds&=\frac {1} {x^3}\int _0^t \min (s, s^{2-\theta})ds\le \frac {C} {x^3}\min \left(t^2, t^{3-\theta}\right),\,\,\forall t<2x/3\\
\int _0^t \zeta _{ \theta } (s, x)ds&=\frac {1} {x^3}\int _0^{2x/3}\min (s, s^{2-\theta})ds+\frac {1} {\max(x^2, x^{1+\theta}))} \int  _{ 2x/3 }^{t}ds\\
&\le \frac {C} {x^3}\min \left(x^2, x^{3-\theta}\right)+\frac {C(t-2x/3)} {\max(x^2, x^{1+\theta}))},\,\,\, \hbox{if}\,\,2x/3<t<2x.\\
\int _0^t \zeta _{ \theta } (s, x)ds&=\frac {1} {x^3}\int _0^{2x/3}\min (s, s^{2-\theta})ds+\frac {1} {\max(x^2, x^{1+\theta})} \int  _{ 2x/3 }^{2x}ds+\\
&+x \int _{ 2x }^t \min (s^{-2-\theta}, s^{-3})ds\le \frac {C} {x^3}\min \left(x^2, x^{3-\theta}\right)+\frac {C} {\max(x, x^{\theta}))}+
\\&+Cx\min \left((2x)^{-1-\theta}-t^{-1-\theta}, (2x)^{-2}-t^{-2} \right)\,\,\,\hbox{if}\,\,\,2x<t.
\end{align*}
On the other hand, we easily obtain if $t<2x/3$,
\begin{align*}
\int _0^t \zeta _{0 } (s, x)&(t-s)^{-\theta}ds=\frac {1} {x^3}\int _0^t \min (s, s^{2})(t-s)^{-\theta}ds\le \frac {C} {x^3} \min (t^{2-\theta}, t^{3-\theta})\\
\hskip -2.5cm  \hbox{since}:\hskip 2.2cm &\int _0^t s(t-s)^{-\theta}ds=t^{2-\theta}\int _0^1 r(1-r)^{-\theta}dr\\
&\int _0^t s^{2}(t-s)^{-\theta}ds=t^{3-\theta}\int _0^1 r^{2-\theta}(1-r)^{-\theta}dr,
\end{align*}
and, if $2x/3<t<2x$,
\begin{align*}
\int _0^t \zeta _{ 0 } (s, x)(t-s)^{-\theta}ds & \le \frac {C} {x^3} \min (x^{2-\theta}, x^{3-\theta})
+\frac {1} {\max(x^2, x))} \int  _{ 2x/3 }^{t} (t-s)^{-\theta}ds\\
&\le  C \min (x^{-1-\theta}, x^{-\theta})+\frac {Ct^{1-\theta}} {\max(x^2, x)}.
\end{align*}
For $2x<t$,
\begin{align*}
&\int _0^t \zeta _{ 0 } (s, x)(t-s)^{-\theta}ds  \le   C \min (x^{-1-\theta}, x^{-\theta})+\frac {Cx^{1-\theta}} {\max(x^2, x)}+\\
&+x \int _{ 2x }^t \min (s^{-2}, s^{-3})(t-s)^{-\theta}ds\le C \min (x^{-1-\theta}, x^{-\theta})+\\
&+C\min (x^{-1-\theta}, x^{-\theta})+\min \left(Ct^{-\theta}, \frac {Ct^{-\theta}} {x}\right)\,\,\,\hbox{since}\\
&\int  _{ 2x }^ts^{-3}(t-s)^{-\theta}ds=t^{-2-\theta}\int  _{ 2x/t }^1 \rho ^{-3}(1-\rho )^{-\theta}d\rho\le C t^{-2-\theta}\frac {t^2} {x^2}=\frac {Ct^{-\theta}} {x^2}\\
&\int  _{ 2x }^ts^{-2}(t-s)^{-\theta}ds=t^{-1-\theta}\int  _{ 2x/t }^1 \rho ^{-2}(1-\rho )^{-\theta}d\rho\le C t^{-1-\theta}\frac {t} {x}=\frac {Ct^{-\theta}} {x}
\end{align*}
On the other hand,
\begin{align*}
&\int _0^t\left(1+\xi (t-s, x)\right)ds=\left(t+\frac {\min(t^4, x^4)} {4x^4} \right)+\log\left( \frac {t} {x}\right)_+\,\,\,\,\forall x>0,\\
&\int _0^t s^{-\theta}  \xi (t-s, x)ds=\frac {6x^{-4}t^{4-\theta}} {24-50 \theta+35\theta^2-10\theta ^3+\theta^4},\,\,\forall x\ge t>0\\
&\int _0^t s^{-\theta} \xi (t-s, x)ds=t^{-\theta}\beta \left( 1-\frac {x} {t}, 1-\theta, 0\right)+r(t, x),\,\,\,\forall t>x>0\
\end{align*}
\begin{align*}
&r(t, x)=\frac {x^{-4}(t(t-x))^{-\theta}} {(4-\theta)(3-\theta)(2-\theta)(1-\theta)} \times \\
&\hskip 1cm \times \Big(-6t^{4+\theta}+6t^4 (t-x)^\theta+6\theta t^{3+\theta}x+3(1-\theta)\theta t^{2+\theta}x^2+\\
&\hskip 1.5cm \quad+(2-\theta)(1-\theta)\theta t^{1+\theta}x^3+(3-\theta)(2-\theta)(1-\theta)t^\theta x^4 \Big),\,\,\forall t>x>0\\
&=(t-x)^{-\theta} \left(C _{ 1, \theta}\left( \frac {t} {x}\right)^{4}+C _{ 3, \theta }\left( \frac {t} {x}\right)^3
+C _{ 4, \theta }\left( \frac {t} {x}\right)^2+C _{ 4, \theta }\left( \frac {t} {x}\right)+C _{ 5, \theta } \right)+\\
&+C_2x^{-4}t^{4-\theta}
\end{align*}
\begin{align*}
=&t^{-\theta}\left(1-\frac {x} {t}\right)^{-\theta} \left(C _{ 1, \theta}\left( \frac {t} {x}\right)^{4}+C _{ 3, \theta }\left( \frac {t} {x}\right)^3
+C _{ 4, \theta }\left( \frac {t} {x}\right)^2+C _{ 4, \theta }\left( \frac {t} {x}\right)+C _{ 5, \theta } \right)+\\
&+C_2x^{-4}t^{4-\theta}=\frac {1} {4}\left(24-50\theta+35\theta^2-10\theta^3+\theta^4 \right)t^{-\theta}+\\
&+\theta t^{-\theta} \left( 24-50\theta+35\theta^2-10\theta^3+\theta^4\right) \frac {x} {5t}+t^{-\theta}\mathcal O\left(\frac {x} {t} \right)^2,\,\,\frac {t} {x} \to \infty\\
=& t^{-\theta} \left( 24-50\theta+35\theta^2-10\theta^3+\theta^4\right)\left(\frac {1} {4}-\frac {\theta x} {5t} \right)+t^{-\theta}\mathcal O\left(\frac {x} {t} \right)^2,\,\,\frac {t} {x} \to \infty.
\end{align*}
\vskip -0.5cm 
\end{proof}
\subsection{The system (\ref{S3E23590B}), (\ref{PBB}). }
We briefly present in this Section the results for the non linear system (\ref{S3E23590B}), (\ref{PBB}),
\begin{align*}
&\frac {\partial v} {\partial t }(t, x)=\tilde p_c(t)\int _0^\infty (v(t , y)-v(t , x)) M(x, y) dy\\
&\frac {\partial \tilde p_c} {\partial t}(t)=-\tilde p_c(t)\int  _{0}^\infty I_3\left(n_0+n_0(1+n_0)x^2 v(t, x)\right)x^2dx.
\end{align*}
The following change of time variable, similar to (\ref{S2NewTime}),
\begin{align}
\label{S2NewTimeB}
\tau =\int _0^t\tilde p_c(s)ds,\,\,f(\tau , x)=v(t, x)
\end{align}
leads again to the equation (\ref{S1ERRR10fx}). All the results of  Section 4 are then  available.

The argument goes now as in Section \ref{Sup_c}. First, define the auxiliary function,
\begin{align*}
\tilde m(\tau )= \int _0^\infty &I_3\Big(n_0+n_0(1+n_0)x^2 f(t, x)\Big)x^2dx.
\end{align*}

\subsubsection{The function  $I_3(n_0+n_0(1+n_0)|p|^2 f(t, |p|))$.} 
\label{S1SI3}
The first result is a simpler expression of the term  $I_3(n_0+n_0(1+n_0)|p|^2 f(t, x))$ when $f$ is the solution obtained in Section 4. 
\begin{prop}
\label{S1Ea}
There exists two numerical constants $C_1>0$ and $C_2>0$ such that, if $u$ is the solution of (\ref{S3E23590})--(\ref{S1ERRR10xp})
obtained in Theorem \ref{MainThm} for  $u_0$ satisfying (\ref{S4Etheta}) and $\rho >0$,

\begin{align*}
\lim _{ \delta \to 0 }\int _{|p|>\delta } &I_3\Big(n_0+n_0(1+n_0)x^2 f(t, x)\Big)x^2dx=-C_1(1+a(t))^2+\\
&+C_2\int _0^\infty x^3  \Big(n_0+n_0(1+n_0)x^2 f(t, x)\Big)dx,\,\,\forall t>0.
\end{align*}
\end{prop}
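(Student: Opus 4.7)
The strategy is to verify that the function
\[
n(t, p) = n_0(p) + n_0(p)(1+n_0(p))|p|^2 f(t, |p|)
\]
falls into the framework of the identity (\ref{S1EASN2}) established in \cite{S}, and then to read off the coefficient of $|p|^{-2}$ in its expansion as $|p|\to 0$. Since $f(t,\cdot)$ is continuous on $(0,\infty)$ by Theorem \ref{S6WeakSolL2}, the function $n(t,\cdot)$ is locally bounded away from the origin, so the only obstruction to integrability in the left-hand side comes from the behavior as $|p|\to 0$, which makes the cutoff $\{|p|>\delta\}$ and the passage $\delta\to 0$ the natural one.

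First I would make the asymptotic of $n(t,p)$ as $|p|\to 0$ explicit. In the dimensionless units chosen in (\ref{S2CcV}) one has $n_0(p)\sim |p|^{-2}$ and $n_0(p)(1+n_0(p))|p|^{2}\sim |p|^{-2}$ as $|p|\to 0$, with corrections that are real analytic in $|p|^2$. Combining this with the pointwise limit $f(t,x)\to a(t)$ provided by Proposition \ref{S6ELimzerou} yields
\[
n(t,p)\sim \bigl(1+a(t)\bigr)|p|^{-2},\qquad |p|\to 0,
\]
so that (\ref{S1EASN}) holds for $n(t,\cdot)$ with $(1+a(t))$ in the role of the coefficient $a(t)$ of the general discussion in Subsection \ref{motivation}.

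Second I would verify the modulus-of-continuity hypothesis invoked in \cite{S} to pass from (\ref{S1EASN}) to (\ref{S1EASN2}). The quantitative bound in Proposition \ref{S6ELimzerou} gives, for each $\delta\in(0,1)$,
\[
\bigl|f(t,x)-a(t)\bigr|\le C(t)\, x^{1-\delta},\qquad 0<x<1,
\]
with $C(t)$ depending on $t$, $|||f_0|||_\theta$ and $\|f_0\|_1$. Since $n_0(p)(1+n_0(p))|p|^2$ is real-analytic at $|p|=0$, this Hölder bound transfers to the difference $n(t,p)-(1+a(t))|p|^{-2}$, so that $n(t,\cdot)$ meets the regularity assumption required in \cite{S}.

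Third, with (\ref{S1EASN}) and the Hölder control in hand, the identity (\ref{S1EASN2}) from \cite{S} applies directly to $n(t,\cdot)$ and yields
\[
\lim_{\delta\to 0}\int_{|p|>\delta} I_3\bigl(n(t)\bigr)(p)\,dp = -C_1\bigl(1+a(t)\bigr)^2 + C_2\int_0^\infty x^3\, n(t,x)\,dx,
\]
where $C_1, C_2>0$ are the universal constants from \cite{S} (the spherical Jacobian $dp=4\pi x^2\,dx$ accounts for the $x^2$ factor in the statement and is absorbed into $C_2$). The main obstacle is the careful matching of the regularity actually needed in \cite{S} against the $\mathcal O(x^{1-\delta})$ estimate available through Proposition \ref{S6ELimzerou}: one should check that the $1-\delta$ Hölder exponent, uniform in $t$ on compact subsets of $(0,\infty)$, is strong enough to justify the identity, and otherwise sharpen Proposition \ref{S6ELimzerou} in that direction using the representation (\ref{S6LimzeroE1})--(\ref{S6LimzeroE2}) together with the finer $x$-regularity of $S(t)f_0$ from Corollary \ref{S4C26m}.
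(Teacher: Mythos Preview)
Your outline follows the same route as the paper: identify the coefficient $(1+a(t))$ in the $|p|^{-2}$ asymptotic of $n(t,\cdot)$ via Proposition~\ref{S6ELimzerou}, and feed this into Spohn's computation in \cite{S}. That is correct in spirit.

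The gap is in your third step. You cannot apply the identity from \cite{S} as a black box: the paper explicitly states that ``our function $h$ satisfies slightly different conditions than (A.13) and (A.14)'' of \cite{S}, and therefore reruns Spohn's decomposition $A_\delta(F,F)=A_\delta(h_0,h_0)+A_\delta(h_0,h)+A_\delta(h,h_0)+A_\delta(h,h)$ by hand. The entire analytic content of the proof lies in showing that the cross terms $A_\delta(h_0,h)+A_\delta(h,h_0)$ vanish in the limit, and this requires controlling \emph{differences} $h(t,\omega)-h(t,\omega')$ when both $\omega$ and $\omega-\omega'$ are small, against the singular factor $h_0(\omega-\omega')\sim (\omega-\omega')^{-1}$. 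The H\"older bound $|f(t,x)-a(t)|=\mathcal O(x^{1-\delta})$ from Proposition~\ref{S6ELimzerou} only controls $|h(\omega)|$, not $|h(\omega)-h(\omega')|$; what the paper actually uses is the Lipschitz bound $|f(t,x)-f(t,x')|\le Ct^{-2}|x-x'|$ for small $x,x'$ coming from Lemma~\ref{S5LCX1} (equivalently Corollary~\ref{S4C26m}), together with pointwise Lipschitz bounds on $\omega\mapsto n_0(1+n_0)\omega-\omega^{-1}$, to obtain the integrable majorant
\[
|(h(t,\omega)-h(t,\omega'))h_0(t,\omega-\omega')|\le C\Big(1+\frac{1}{t^{2}\omega(\omega-\omega')^{1/2}}+\frac{1}{\omega\,\omega'^{1/2+\delta}}\Big).
\]
You correctly name Corollary~\ref{S4C26m} as the tool to close this, but what you label the ``main obstacle'' is in fact the whole proof; the rest is bookkeeping.
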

\begin{proof}
[\upshape\bfseries{Proof of Proposition  \ref{S1Ea}}]  
Following \cite{S, Sv} the argument is more clear and the calculations simpler in the energy variable $\omega$. Define then the function $F=F(\omega )$ as follows
\begin{align*}
&n_0+n_0(1+n_0)x^2 f(t, x)=F(t, \omega ),\,\,\omega =x^2.
\end{align*}
After  suitable time rescaling to absorb a positive constant, 
\begin{align}
&I_3(n_0+n_0(1+n_0)|p|^2 u(t, |p|))=\nonumber\\
&=\frac {2} {\sqrt \omega } \int _0^\omega  \Big( F(\omega -\omega ')F(\omega ')-F(\omega )F(\omega -\omega ')
-F(\omega )F(\omega ')-F(\omega )\Big)d\omega '+ \nonumber\\
&+\frac {4} {\sqrt \omega } \int _\omega ^\infty \Big(F(\omega' )+F(\omega )F(\omega' )+F(\omega '-\omega )F(\omega ')-F(\omega )F(\omega '-\omega ) \Big)d\omega '.
\label{S124.E2}
\end{align}
We arrive now to the point. The integral in the right hand side of (\ref{PB}) is obtained by integration over $(0, \infty)$ of (\ref{S124.E2}) multiplied by $\sqrt \omega $. 
The singular behavior of the function $F(t, \omega )$ as $\omega \to 0$ makes  delicate the estimate of that integral. This was done in detail in \cite{S} under some H\"older conditions on $F$ that are not known to hold true for the function $F$ . Following the notations of \cite{S}, let us define
\begin{align*}
&A _{ \delta  }(F, G)=A_\delta ^1(F, G)+A_\delta ^2(F, g),\\
&A_\delta ^1(F, G)=2 \int _\delta ^\infty \int _0^\omega  \Big( F(\omega -\omega ')G(\omega ')-F(\omega )G(\omega -\omega ')
-F(\omega )G(\omega ')-F(\omega )\Big)d\omega 'd\omega,\\
&A_\delta ^2(F, G)=4 \int _\delta ^\infty \int _\omega ^\infty \Big(F(\omega' )+F(\omega )G(\omega' )+F(\omega '-\omega )G(\omega ')-F(\omega )G(\omega '-\omega ) \Big)d\omega 'd\omega.
\end{align*}
The right hand side of  (\ref{S124.E2}) is then strictly speaking,
\begin{align}
\lim _{ \delta \to 0 } \left(A_\delta (F(t), F(t))+4 \int _\delta ^\infty F(\omega' )(\omega '-\delta )d\omega ' -
2\int _\delta ^\infty \omega  F(\omega )d\omega \right)
\end{align}
If we define, as in \cite{S}, for some $d>0$ small fixed,
\begin{align*}
&g _{ < }(t, \omega )=F(t, \omega )\1 _{ \omega <d };\,\,\,\, g _{ > }(t, \omega )=F(t, \omega )\1 _{ \omega >d }\\
&g _{ < }(t, \omega )=h_0(t, \omega )\1 _{ \omega <d }+h(t, \omega ),\,\,\,h_0(t, \omega )=\frac {1+a(t)} {\omega }.
\end{align*}
Then,
\begin{align*}
A_\delta (F, F)=A_\delta (g_<, g_<)+A_\delta (g_<, g_>)+A_\delta (g_>, g_<)+A_\delta (g_>, g_>)
\end{align*}
As in \cite{S}, by Lebesgue's convergence,
\begin{align*}
\lim _{ \delta \to 0 }A_\delta (g_<, g_>)=\lim _{ \delta \to 0 }A_\delta (g_>, g_<)=\lim _{ \delta \to 0 }A_\delta (g_<, g_>)=0,
\end{align*}
and we are then left with, $A_\delta (g_<, g_<)$. Define now the function $h(t, \omega )$ such that, 
\begin{align*}
h(t, \omega )=g_<(t, \omega )-h_0(t, \omega ),\,\,\,\,\,\, h_0(t, \omega )=\frac {1+a(t)} {\omega }\1 _{ \omega <d }.
\end{align*}
Then, 
 \begin{align*}
h(t, \omega )&=\left(F(t, \omega )-\frac {1+a(t)} {\omega }\right)\1 _{ \omega <d }=\left(n_0+n_0(1+n_0)\omega u(t)-\frac {1+a(t)} {\omega }\right)\1 _{ \omega <d }
\end{align*}
and
\begin{align*}
A_\delta (g_<, g_<)=A_\delta (h, h)+A_\delta (h_0, h)+A_\delta (h, h_0)+A_\delta (h_0, h_0).
\end{align*}
The last term is explicit and gives $A_\delta (h_0, h_0)=-\pi ^2 a(t)^2/3$. 
On the other hand, by (\ref{S6LimzeroE0}),
\begin{align*}
u(t, x)&=a(t)+\mathcal O(\tau (t)^{-1-\theta} \omega^{\frac {1} {2}-\delta }\,),\,\omega \to 0,\,\,t\to 0
\end{align*}
we deduce,
\begin{align*}
h(t, \omega )&=n_0+n_0(1+n_0)\omega  (a(t)+\mathcal O(\tau (t)^{-1-\theta} \sqrt \omega \,))-\frac {1+a(t)} {\omega },\,\omega \to 0,\,\,t\to 0\\
&=\mathcal O\left(\tau (t)^{-1-\theta} \omega^{-\frac {1} {2}+\delta } \right),\,\,\,\omega \to 0,\,t\to 0.
\end{align*}
The function $h(t,\cdot)$ is then integrable for all $t>0$ fixed  and,
\begin{equation*}
\lim _{ \delta \to 0 }A _{ \delta} (h, h)=A _{ 0} (h, h)=0. 
\end{equation*}
We slightly rearrange now the term $A_\delta (h_0, h)+A_\delta (h, h_0)$  as 
\begin{align*}
A_\delta (h_0, h)+A_\delta (h, h_0)=A^1_\delta (h_0, h)+A^2_\delta (h_0, h)+A^1_\delta (h, h_0)+A^2_\delta (h, h_0)\\
=\left(A^1_\delta (h_0, h)+A^1_\delta (h, h_0)\right)+\left(A^2_\delta (h_0, h)+A^2_\delta (h, h_0)\right).
\end{align*}
with
\begin{align}
\label{S5EAD1}
&A^1_\delta (h_0, h)+A^1_\delta (h, h_0)=2\int _\delta ^\infty \int _0^\omega  \Big((h(\omega -\omega ')-h(\omega ))h_0(\omega ')+\nonumber\\
&+(h(\omega ')-h(\omega ))h_0(\omega -\omega' )-(h(\omega -w')+h(\omega' ))h_0(\omega) \Big)d\omega 'd\omega
\end{align}
and
\begin{align}
\label{S5EAD2}
&A^2_\delta (h_0, h)+A^2_\delta (h, h_0)=4\int _\delta ^\infty \int _\omega^\infty  \Big((h(\omega ')-h(\omega'-\omega  ))h_0(\omega)+\nonumber\\
&+(h(\omega)-h(\omega'-\omega  ))h_0(\omega' )+(h(\omega')-h(\omega ))h_0(\omega) \Big)d\omega 'd\omega.
\end{align}
The argument still  follows as in \cite{S}, even if our function $h$ satisfies slightly different conditions than  (A.13) and (A.14). Indeed we
claim that here also, the two functions under the integral signs in (\ref{S5EAD1}) and (\ref{S5EAD2}) are integrable on $(0, \infty)$. The only delicate region is where bot $\omega$ and $|\omega '-\omega |$ are arbitrarily small.

 Consider for example the term $(h(t, \omega)-h(t, \omega'))h_0(t, \omega -\omega' )$ for $\omega '\in (0, \omega )$ in (\ref{S5EAD1}). Rewrite first, with $u(t)=u(t, \sqrt \omega )$ and $u'(t)=u(t, \sqrt {\omega '})$
\begin{align}
&h(\omega)-h(\omega') =n_0+n_0(1+n_0)\omega u(t)-n'_0-n'_0(1+n'_0)\omega' u'(t)-\label{S1.24E3PX} \\
& -\frac {1+a(t)} {\omega }+\frac {1+a(t)} {\omega' }=\varphi (\omega )-\varphi (w')+\psi (\omega )-\psi (\omega ')\nonumber\\
&\varphi (\omega )=\left(n_0-\frac {1} {\omega }\right)\label{S1.24E3fi}\\
&\psi (\omega )=\left(n_0(1+n_0)\omega u(t)-\frac {a(t)} {\omega }\right)\label{S124E3}
\end{align}
It is now immediate that the function $\varphi $ is globally Lipschitz on $[0, \infty)$.
On the other hand, if the difference $\psi (\omega )-\psi (\omega ')$ is written as 
\begin{align}
\psi (\omega )-&\psi (\omega ')=n_0(1+n_0)\omega (u(t)-u'(t))+\nonumber\\
&+\left(n_0(1+n_0)\omega-n'_0(1+n'_0)\omega'\right)u'(t)-\frac {a(t)} {\omega }+\frac {a(t)} {\omega '} \label{S124E3psi}
\end{align}
the two terms in the right hand side of (\ref{S124E3psi}) are estimated as follows. In the first one, for $\omega $ and $\omega $ small 
\begin{align*}
\left|n_0(1+n_0)\omega (u(t)-u'(t))\right|\le \frac {C|u(t, \sqrt \omega )-u(t, \sqrt {\omega '}|} {\omega }
\end{align*}
By Lemma \ref{S5LCX1}, for each $t>0$ fixed, if $x$ and $x'$ are small enough,
\begin{align*}
|f(t, x )-f(t,x ')|\le Ct^{-2}|x-x'|
\end{align*}
then, if $\omega $ is small enough and $\omega '\in (0, \omega )$,
\begin{align*}
|u(t, \sqrt{\omega } )-u(t,\sqrt{\omega '})|\le Ct^{-2}(\sqrt{\omega }-\sqrt{\omega '})
\end{align*}
and
\begin{align}
\left|n_0(1+n_0)\omega (u(t)-u'(t))\right|\le \frac {C(\sqrt{\omega }-\sqrt{\omega '})} {t^2\omega }
\le   \frac {C\sqrt{\omega-\omega ' }} {t^2\omega } \label{S124E3psi5}
\end{align}
The second term in the right hand side of (\ref{S124E3psi}), is written,
\begin{align}
&\left(n_0(1+n_0)\omega-n'_0(1+n'_0)\omega'\right)u'(t)-\frac {a(t)} {\omega }+\frac {a(t)} {\omega '}=\nonumber\\
&=\Big(n_0(1+n_0)\omega-n'_0(1+n'_0)\omega'\Big)\left(a(t)+\mathcal O\left(\tau (t)^{-1-\theta}\omega'^{\frac {1} {2}-\delta }\right) \right)-\frac {a(t)} {\omega }+\frac {a(t)} {\omega '}\nonumber\\
&=\Bigg(n_0(1+n_0)\omega-\frac {1} {\omega }-n'_0(1+n'_0)\omega'+\frac {1} {\omega' }\Bigg)a(t)+\nonumber\\
&+\Big(n_0(1+n_0)\omega-n'_0(1+n'_0)\omega'\Big)\mathcal O\left(\tau (t)^{-1-\theta}\omega'^{\frac {1} {2}-\delta }\right).
\label{S124E3psi2}
\end{align}
The function $\omega \mapsto n_0(1+n_0)\omega-\omega ^{-1}$ is Lipschitz and then, the factor of $a(t)$ in the right hand side of (\ref{S124E3psi2}) yields,
\begin{align}
\Bigg|n_0(1+n_0)\omega-\frac {1} {\omega }-n'_0(1+n'_0)\omega'+\frac {1} {\omega' }\Bigg|\le C(\omega -\omega ').
\label{S124E3psi3}
\end{align}
For the last term in the right hand side of (\ref{S124E3psi2})we notice that,
\begin{align*}
\frac {d } {d \omega }(n_0(1+n_0)\omega)&=-\frac {1} {4}\Big(-1+\omega \hbox{coth} (\omega /2) \Big)
\left( \hbox{csch}(\omega /2)\right)^2=-\frac {1} {\omega ^2}+\mathcal O(1),\,\,\omega \to 0.
\end{align*}
from where, if we call $g(\omega )\equiv n_0(1+n_0)\omega $, for $\omega $ and $\omega '$ small,
\begin{align}
\Big|n_0(1+n_0)\omega-n'_0(1+n'_0)\omega'\Big|\le (\omega -\omega ')\int _0^1\Big| \frac {d g} {d \omega }(r \omega +(1-r)\omega ')\Big|dr \nonumber \\
\le C(\omega -\omega ')\int _0^1(r \omega +(1-r)\omega ')^{-2}dr\le  \frac {C|\omega -\omega '|} {\omega \omega '}.
\label{S124E3psi4}
\end{align}
It follows from (\ref{S1.24E3PX} ), (\ref{S124E3psi})-- (\ref{S124E3psi4}) that for $d$ small, $\omega \in (0, d)$ and $\omega '\in (0, \omega )$
\begin{align*}
|(h(t, \omega)-h(t, \omega'))h_0(t, \omega -\omega' )|&\le \frac {C} {(\omega -\omega ')}\Bigg((\omega -\omega ')+\frac {\sqrt{\omega-\omega ' }} {t^2\omega } + \frac {(\omega -\omega ')} {\omega\, \omega '^{\frac {1} {2}+\delta }}\Bigg)\\
&\le C\Bigg( 1+\frac {1} {t^2\omega (\omega -\omega ')^{1/2}}+\frac {1} {\omega\, \omega'^{\frac {1} {2}+\delta }} 
\Bigg).
\end{align*}
Moreover, if  $\omega '\in (0, \omega )$, were such that $\omega >d+\omega '$ or $\omega '>d$ it would follow that 
$(h(t, \omega)-h(t,\omega'))h_0(t, \omega -\omega' )=0$. Therefore,
\begin{align*}
&\int _0^\infty\int _0^\omega |(h(t, \omega)-h(t,\omega'))h_0(t, \omega -\omega' )|d\omega 'd\omega =\\
&=\int _0^{2d}\int _0^\omega |(h(t, \omega)-h(t,\omega'))h_0(t, \omega -\omega' )|d\omega 'd\omega \\
&\le C\int _0^{2d}\int _0^\omega \Bigg( 1+\frac {1} {t^2\omega (\omega -\omega ')^{1/2}}+\frac {1} {\omega\, \omega'^{\frac {1} {2}+\delta }} \Bigg)d\omega 'd\omega\to 0,\,\,\hbox{as}\,\,d\to 0.
\end{align*}
Arguing in the same way for all the other terms in  (\ref{S5EAD1}) and (\ref{S5EAD2}) it follows that,
\begin{align*}
&\lim _{ \delta \to 0 }A_\delta (h_0, h)+A_\delta (h, h_0)=0\\
\hskip -2.3cm \hbox{and then}\hskip 3cm
&\lim _{ \delta \to 0 }A_\delta (F, F)=-\frac {\pi ^2} {3}a(t)^2+
\int _0^\infty \frac {f(t, x)\,x^5 } {(\sinh(x^2/2))^2}dx.
\end{align*}
Proposition follows when the initial  time rescaling is inverted.
\end{proof}
By Proposition \ref{S1Ea}, $\tilde m$ is well defined and finite for all $\tau >0$. Let us then define
\begin{align*}
&\widetilde {\mathcal M}(\tau )=\int _0^\tau \tilde m(\sigma )d\sigma;\,\,\,\,\tilde q_c(\tau )=q_c(0)e^{-\widetilde {\mathcal M}(\tau )},\,\,\forall \tau >0.
\end{align*}
\begin{prop}
\label{S5EmnB}
There exists $T_*\in (0, \infty]$, such that for all $t\in (0, T_*)$ there exists a unique $\tau >0$ such that
\begin{equation}
t=\int _0^\tau \frac {d\sigma } {q_c(\sigma )},\,\,\forall \tau >0.\label{S5CPt1InvB}
\end{equation}
\end{prop}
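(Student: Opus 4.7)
The plan is to mimic the proof of Proposition \ref{S5Emn}, substituting Proposition \ref{S1Ea} for the integrability property (\ref{S4E140-4X}) that was available in the linear setting. First, Proposition \ref{S1Ea} gives that, for each $\tau > 0$, the pointwise value
$$
\tilde m(\tau) = -C_1(1+a(\tau))^2 + C_2\int_0^\infty x^3\bigl(n_0 + n_0(1+n_0)x^2 f(\tau,x)\bigr)dx
$$
is finite. Combining the control on $a(\tau)$ from Proposition \ref{S6Limzero} (estimate (\ref{S6LimzeroE25})) with the estimates of Theorem \ref{S6WeakSolL2} on $f(\tau)$, one checks that $\tilde m \in L^1_{\rm loc}([0,\infty))$. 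Consequently the primitive $\widetilde{\mathcal M}(\tau) = \int_0^\tau \tilde m(\sigma)d\sigma$ is continuous on $[0,\infty)$ with values in $\RR$, and therefore $\tilde q_c(\tau) = q_c(0) e^{-\widetilde{\mathcal M}(\tau)}$ is continuous and strictly positive on $[0,\infty)$.

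Next, define
$$
\Phi(\tau) = \int_0^\tau \frac{d\sigma}{\tilde q_c(\sigma)}, \qquad \tau \ge 0.
$$
Since $\tilde q_c$ is continuous and positive, $\Phi \in C^1([0,\infty))$ with $\Phi'(\tau) = 1/\tilde q_c(\tau) > 0$. Hence $\Phi$ is a strictly increasing homeomorphism of $[0,\infty)$ onto $[0, T_*)$, where
$$
T_* = \lim_{\tau \to \infty} \Phi(\tau) = \int_0^\infty \frac{d\sigma}{\tilde q_c(\sigma)} \in (0, \infty].
$$
For each $t \in (0, T_*)$ there is then a unique $\tau > 0$ solving (\ref{S5CPt1InvB}), obtained as $\tau = \Phi^{-1}(t)$.

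The essential contrast with Proposition \ref{S5Emn} lies precisely at the last step. In the linear case one could integrate the exact identity (\ref{S5EmnEMM}) and invoke Corollary \ref{SeEC2f3} to conclude that $\widetilde{\mathcal M}(\tau)$ stays bounded below as $\tau \to \infty$, which forced $e^{-\widetilde{\mathcal M}}$ to remain bounded away from $0$ and yielded $T_* = \infty$. Here the conservation law $\tilde p_c(t) + N(t) = \text{const.}$ is lost (as noted in the Introduction), so no analogous bound on $\widetilde{\mathcal M}$ from below is available. Thus $\widetilde{\mathcal M}(\tau)$ may tend to $-\infty$, making $1/\tilde q_c$ non-integrable beyond some finite $T_*$; this is exactly why the statement allows $T_* \in (0,\infty]$ rather than $T_* = \infty$. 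The main conceptual obstacle is precisely that one cannot rule out $T_* < \infty$ without additional structural information on the sign and size of the quadratic term $-C_1(1+a(\tau))^2$ appearing in $\tilde m$.
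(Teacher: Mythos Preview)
Your proof is correct and follows the same route as the paper's: invoke Proposition~\ref{S1Ea} to ensure $\tilde q_c$ is continuous and strictly positive on $[0,\infty)$, then set $T_* = \int_0^\infty d\sigma/\tilde q_c(\sigma)$ and use strict monotonicity of $\Phi$. Your version is in fact more careful than the paper's terse argument, since you explicitly check $\tilde m \in L^1_{\rm loc}$ rather than merely asserting $|\widetilde{\mathcal M}(\tau)|<\infty$.

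One small slip in your closing commentary (which lies outside the proof proper and mirrors the paper's Remark): if $\widetilde{\mathcal M}(\tau)\to -\infty$ then $\tilde q_c = q_c(0)e^{-\widetilde{\mathcal M}(\tau)} \to +\infty$, so $1/\tilde q_c \to 0$ and the danger is that $1/\tilde q_c$ becomes \emph{integrable} on $(0,\infty)$, not non-integrable; that integrability is exactly what would force $T_*<\infty$.
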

\begin{proof}
By Proposition \ref{S1Ea}, $|\mathcal M(\tau )|<\infty$ for all $\tau >0$ and then $q_c(\tau )\in (0, \infty)$ for all $\tau >0$ and the integral in the right hand side of (\ref{S5CPt1InvB}) is well defined and convergent. Since $q_c(t)>0$ this integral is a monotone increasing function of $\tau $. The value of $T^*$ is then given by
\begin{align*}
T^*=\int _0^\infty \frac {d\sigma } {q_c(\sigma )}
\end{align*}
\vskip -1cm 
\end{proof}
\begin{rem}
The function $\widetilde{\mathcal M}$ can not be estimated as $\mathcal M$ in Proposition \ref{S5Emn}, using the conservation of the total number of particles.By  Proposition \ref{S1Ea} and 
 Proposition \ref{S6Limzero}, 
\begin{align}
\label{S7Etiw}
\tilde m(\sigma )=-C_1(1+b(\sigma ))^2+C_2\int _0^\infty (n_0+n_0(1+n_0)x^2f(\sigma , x))x^3dx.
\end{align}
By Corollary \ref{SeEC2f2} and Corollary \ref{SeEC2f3},
\begin{align*}
\lim _{ t\to \infty }\int _0^\infty (n_0+n_0(1+n_0)x^2f(\sigma , x))x^3dx=
C_*\int _0^\infty (n_0+n_0(1+n_0)x^2x^3dx
\end{align*}
from where, for some constant $C>0$,
\begin{align*}
C_2\int _0^\tau \int _0^\infty (n_0+n_0(1+n_0)f(\sigma , x))x^5dx\le C\tau ,\,\,\forall \tau >0.
\end{align*}
But, the first term in the right hand side of (\ref{S7Etiw}) may only be estimated  using (\ref{S6LimzeroE25}),
\begin{align*}
\int _0^\tau (1+b(\sigma ))^2d\sigma &\le \int _0^\tau \left(1+C|||f_0|||_\theta\left(\sigma ^{-\theta}+\sigma  \right)^2 \right)d\sigma\le C\Big(\tau +|||f_0|||_\theta\left(\tau^{1-2\theta} +\tau ^3\right) \Big).
\end{align*}
It then follows
\begin{align}
\tilde m(\tau )\ge  C\Bigg(-\Big(\tau +|||f_0|||_\theta\left(\tau^{1-2\theta} +\tau ^3\right) \Big)+\tau 
\Bigg),
\end{align}
but this estimate is not sufficient to deduce that $T^*=\infty$.
\end{rem}

 \end{document}